\newtheorem{theorem}{Theorem}[section]
\newtheorem*{theorem*}{Theorem}
\newtheorem{claim}[theorem]{Claim}
\newtheorem{sclaim}[theorem]{Subclaim}
\newtheorem{lemma}[theorem]{Lemma}
\newtheorem{proposition}[theorem]{Proposition}
\newtheorem{corollary}[theorem]{Corollary}
\theoremstyle{definition}
\newtheorem{definition}[theorem]{Definition}
\newtheorem{question}[theorem]{Question}
\newtheorem*{question*}{Question}
\theoremstyle{remark}
\newtheorem{remark}[theorem]{Remark}
\newtheorem{notation}[theorem]{Notation}
\newcommand{\lusim}[1]{\smash{\underset{\raisebox{1.2pt}[0cm][0cm]{$\sim$}}
{{#1}}}}
\def\l{{\langle}}
\def\r{{\rangle}}
\def\br{\blacktriangleright}
\def\forces{\Vdash}
\newcommand{\s}{\subseteq}
\newcommand\diagonal{\bigtriangleup}
\newcommand{\Gal}[2]{\mathrm{Gal}(\mathscr{D}_{#1},{#1},{#2})}
\newcommand\z[1]{\mathring{#1}}
\newcommand{\dom}{\mathrm{dom}}
\newcommand{\len}{\mathrm{len}}
\newcommand{\cf}{{\rm cf}}
\newcommand{\crit}{\mathrm{crit}}
\newcommand\cat[1]{{}^\curvearrowright\langle #1\rangle}
 \newcommand{\one}{\mathop{1\hskip-3pt {\rm l}}}
\def\mathunderaccent#1#2 {\let\theaccent#1\skewfactor#2
\mathpalette\putaccentunder}
\def\putaccentunder#1#2{\oalign{$#1#2$\crcr\hidewidth
\vbox to.2ex{\hbox{$#1\skew\skewfactor\theaccent{}$}\vss}\hidewidth}}
\def\smallbox#1{\leavevmode\thinspace\hbox{\vrule\vtop{\vbox
   {\hrule\kern1pt\hbox{\vphantom{\tt/}\thinspace{\tt#1}\thinspace}}
   \kern1pt\hrule}\vrule}\thinspace}
\newcommand{\Add}{\mathrm{Add}}
\newcommand{\Col}{\mathrm{Col}}
\DeclareMathOperator{\Lim}{acc}
\def\qedref#1{$\qed_{\reforiginal{#1}}$}
\newcommand\ale[1]{\marginpar{Alejandro: #1}}
\newcommand\tom[1]{\marginpar{Tom: #1}}
\title[Negating the Galvin property]{Negating the Galvin property}
\author{Tom Benhamou}
\address{School of Mathematical Sciences, Raymond and Beverly Sackler Faculty of Exact Science, Tel-Aviv University, Ramat Aviv 69978, Israel}
\email{tombenhamou@tauex.tau.ac.il}
\author{Shimon Garti}
\address{Einstein Institute of Mathematics,
 The Hebrew University of Jerusalem,
 Jerusalem 91904, Israel}
\email{shimon.garty@mail.huji.ac.il}
\author{Alejandro Poveda}
\address{Einstein Institute of Mathematics,
 The Hebrew University of Jerusalem,
 Jerusalem 91904, Israel}
\email{alejandro.poveda@mail.huji.ac.il}
\subjclass[2010]{03E35, 03E55}
\keywords{Galvin's property, club filter, Prikry-type forcings, Radin forcing with interleaved collapses.}
\begin{document}
\let\labeloriginal\label
\let\reforiginal\ref
\def\ref#1{\reforiginal{#1}}
\def\label#1{\labeloriginal{#1}}

\begin{abstract}
We prove that  Galvin's property consistently fails at successors of strong limit singular cardinals. We also prove the consistency of this property failing at every successor of a singular cardinal. In \linebreak addition, the paper 
analyzes the effect of Prikry-type forcings on the  strong failure of the Galvin property, and explores stronger forms of this property in the context of large cardinals.
\end{abstract}

\maketitle

\tableofcontents

\newpage

\section{Introduction}
A classical theorem by Galvin establishes that if $\kappa^{<\kappa}=\kappa$ then \emph{Galvin's property holds at $\kappa$}; to wit, every family $\mathcal{C}$ 
consisting of clubs at $\kappa$
 and of size $\kappa^+$ contains a subfamily $\mathcal{D}\s \mathcal{C}$ of size $\kappa$ 
whose intersection yields a club at $\kappa$. This property originally appeared  
in a paper by Baumgartner, Hajnal and Maté \cite{MR0369081} where the authors attempt to answer 
the following problem attributed to Fodor: Given a family $\langle S_\alpha\mid \alpha<\kappa\rangle $ of stationary sets on $\kappa$, are there pairwise disjoint stationary sets $\langle T_\alpha\mid \alpha<\kappa\rangle$ such that $T_\alpha\s S_\alpha$ for all $\alpha<\kappa$? The former is quite a natural question in light of Solovay's decomposition theorem of stationary sets \cite{Sol}.
In \cite[\S3]{MR0369081} it is proved that the $\mathsf{CH}$ together with a version of the failure of Galvin's property for stationary sets yield a positive answer to Fodor's problem when $\kappa=\aleph_1$.  
On what this paper is concerned we are interested on Galvin's property at the level of successors of singular cardinals. In recent times this property has experienced a renewed interest after finding some deep connections with the structure of Prikry-type  generic extensions \cite{GitDensity, TomMotiII}. \linebreak In a different direction,  some interesting combinatorial implications of this principle have been discovered in the area of polarized relations. These ideas were extensively exploited by the first two authors and Shelah in \cite{BenGarShe}.

\smallskip

 Observe that Galvin's property holds at $\kappa$ whenever this latter  is a strongly inaccessible cardinal. Likewise, 
  the property holds at $\kappa^+$ provided $2^\kappa=\kappa^+$. 
  A natural query is how necessary is the arithmetical assumption $\kappa^{<\kappa}=\kappa$ in Galvin's theorem. This restriction turns to be  essential, as evidenced by Abraham and Shelah in \cite{MR830084}. More explicitly: 
\begin{theorem*}[Abraham-Shelah]
\label{thmabsh} Assume the $\mathsf{GCH}$ holds. Suppose that $\kappa<\lambda$ are infinite cardinals with 
$\kappa$  regular. 
Then, there is a forcing extension of the set-theoretic universe containing a family $\mathcal{C}$ of clubs at $\kappa^+$ with  $|\mathcal{C}|=\lambda$, that witnesses the following property:
\begin{center}
    For every subfamily $\mathcal{D}\s \mathcal{C}$ with $|\mathcal{D}|=\kappa^+$, $|\bigcap \mathcal{D}|<\kappa$.
\end{center}
Moreover,  $2^\kappa=2^{\kappa^+}=\lambda$ holds in this model provided $\lambda\geq\cf(\lambda)>\kappa^+$.
\end{theorem*}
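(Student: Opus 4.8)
The plan is to adjoin $\mathcal{C}$ by a forcing $\mathbb{P}=\mathbb{P}_{\kappa,\lambda}$ that builds, through $(<\kappa)$-sized approximations, a sequence $\langle\name{C}_\alpha\mid\alpha<\lambda\rangle$ of clubs on $\kappa^+$. A condition is a pair $p=(a_p,f_p)$ with $a_p\in[\lambda]^{<\kappa}$ and $f_p$ a function on $a_p$ such that each $f_p(\alpha)$ is a closed bounded subset of $\kappa^+$, all of them sharing the same maximum $\gamma_p$; set $q\le p$ iff $a_p\s a_q$, $\gamma_p\le\gamma_q$, and $f_q(\alpha)\cap(\gamma_p+1)=f_p(\alpha)$ for every $\alpha\in a_p$ (end‑extension of each coordinate). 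For generic $G$ put $C_\alpha:=\overline{\bigcup\{f_p(\alpha)\mid p\in G,\ \alpha\in a_p\}}$. A one‑line density argument — given $p$ and $\beta<\kappa^+$, extend $p$ so its common maximum becomes $\beta+1$, placed in every coordinate — shows each $C_\alpha$ is unbounded in $\kappa^+$, and it is closed by construction, while a similar argument makes the $C_\alpha$ pairwise distinct. Thus $\mathcal{C}:=\{C_\alpha\mid\alpha<\lambda\}$ is a family of $\lambda$ clubs on $\kappa^+$.

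The heart of the matter is the separation property: in $V[G]$, every $\mathcal{D}\s\mathcal{C}$ with $|\mathcal{D}|=\kappa^+$ satisfies $|\bigcap\mathcal{D}|<\kappa$. Fix a name $\name{A}$ for a subset of $\lambda$ of size $\kappa^+$ (indexing $\mathcal{D}$) and suppose toward a contradiction that some $p_0$ forces $|\bigcap_{\alpha\in\name{A}}C_\alpha|\ge\kappa$. Since $\name{A}$ is forced to have size $\kappa^+$, for $\kappa^+$-many $\alpha<\lambda$ there is $q\le p_0$ with $q\Vdash\alpha\in\name{A}$ (else $\name{A}$ would be forced of size $\le\kappa$); fix such conditions $q_\xi$ with pairwise distinct witnesses $\alpha_\xi\in a_{q_\xi}$, $\xi<\kappa^+$. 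The engine of the argument is an amalgamation step: one extracts an index set $I$ of size $\kappa$ and a single condition $q^{\ast}\le q_\xi$ $(\xi\in I)$ that forces the clubs $C_{\alpha_\xi}$ to \emph{diverge} — for instance, deciding $C_{\alpha_\xi}$ just above the common maximum $\delta^{\ast}$ of the amalgam to continue with the single ordinal $\delta^{\ast}+1+\mathrm{ot}(I\cap\xi)$, so that the $C_{\alpha_\xi}$ pairwise disagree throughout $(\delta^{\ast},\delta^{\ast}+\kappa]$; since end‑extension forbids any later condition from touching these coordinates below $\delta^{\ast}+\kappa+1$, one gets $q^{\ast}\Vdash\bigcap_{\xi\in I}C_{\alpha_\xi}\cap(\delta^{\ast},\delta^{\ast}+\kappa]=\varnothing$. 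As $q^{\ast}\Vdash\{\alpha_\xi\mid\xi\in I\}\s\name{A}$, this forces a ``gap'' into $\bigcap_{\alpha\in\name{A}}C_\alpha$; the remaining task is to orchestrate such gaps densely enough — over a cofinal set of heights, with suitable bookkeeping — that genericity forces $\bigcap_{\alpha\in A}C_\alpha$ to be bounded with trace of order type $<\kappa$, the desired contradiction.

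I expect this amalgamation‑and‑orchestration step to be the principal obstacle. First, extracting the size‑$\kappa$ pairwise‑amalgamable family from the $q_\xi$ is not a routine $\Delta$-system application: after thinning the domains to a $\Delta$-system with root $r\in[\lambda]^{<\kappa}$ one must still align the club‑values $f_{q_\xi}(\alpha)$ for $\alpha\in r$, and a priori these may sort the $q_\xi$ into $\kappa^+$ mutually incompatible ``types''. Overcoming this is where the ingenuity lies — one can run the construction over a prescribed ``scaffolding'' of $\kappa^+$ closed bounded sets arranged as a tree under end‑extension (so that amalgamability is decidable from the tree), or combine a head–tail $\Delta$-system refinement with the common‑maximum convention, or choose the $q_\xi$ inside an elementary submodel $N\prec H(\theta)$ with $\name{A},p_0\in N$, $|N|=\kappa$ and $N\cap\kappa^+\in\kappa^+$, and take an $(N,\mathbb{P})$-generic amalgam (the same device then also yields that $\kappa^+$ is not collapsed). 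Second, a single forced gap is too weak in isolation: the separation really must be produced by a fusion‑type construction — or by building the requisite divergence into the conditions themselves as side conditions — so that the union of the forced gaps cofinally exhausts $\kappa^+$ and the common part ends up genuinely of size $<\kappa$. This is the technical core; everything else is comparatively routine.

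Indeed, the cardinal arithmetic goes as follows. The poset $\mathbb{P}$ is $(<\kappa)$-closed — a decreasing sequence of length $<\kappa$ has its natural greatest lower bound, unioning coordinates and adjoining the common supremum — so cardinals $\le\kappa$ are preserved and no new bounded subsets of $\kappa$ appear; as $\mathbb{P}$ is patently not $\kappa^+$-closed, $2^\kappa\ge\lambda$ in $V[G]$ (one reads off $\lambda$ distinct subsets of $\kappa$ from the generic, or, if one prefers, one pre‑composes with $\Add(\kappa,\lambda)$, which costs nothing and already forces $2^\kappa=2^{\kappa^+}=\lambda$ — here one needs $\cf(\lambda)>\kappa^+$ so that $\lambda^{\kappa^+}=\lambda$). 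Using $\mathsf{GCH}$, a $\Delta$-system argument gives the $\kappa^{++}$-chain condition: from $\kappa^{++}$ conditions, thin to a $\Delta$-system of domains with root $r\in[\lambda]^{<\kappa}$, then — since $\mathsf{GCH}$ bounds the number of possibilities for $(\gamma_p,f_p\restriction r)$ by $(\kappa^+)^{<\kappa}=\kappa^+$ — thin further so that all conditions agree on $r$ and share their common maximum; any two such amalgamate. Hence cardinals $\ge\kappa^{++}$ are preserved, and a count of nice names bounds $2^\kappa,2^{\kappa^+}\le|\mathbb{P}|^{\kappa^+}=\lambda^{\kappa^+}$, which equals $\lambda$ under the hypothesis $\kappa^+<\cf(\lambda)\le\lambda$ by the usual $\mathsf{GCH}$ power computation; together with $2^\kappa\ge\lambda$ and $2^{\kappa^+}\ge 2^\kappa$ this yields $2^\kappa=2^{\kappa^+}=\lambda$. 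Preservation of $\kappa^+$ itself — not delivered by closure and chain condition — is obtained via the elementary‑submodel argument above (a properness argument when $\kappa=\omega$).
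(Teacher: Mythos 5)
Your poset is not the Abraham--Shelah forcing, and the separation property is provably \emph{false} for it, so the gap is not merely the admitted incompleteness of the ``amalgamation-and-orchestration'' step. Fix any ground-model set $E\subseteq\kappa^+$ that is closed of order type $\kappa$ with supremum $\delta$, and work with your $\mathbb{P}$. Given any condition $p$ and any $\alpha\in\lambda\setminus a_p$, one may extend $p$ by raising every coordinate to a common maximum ${\geq}\,\delta$ and installing $(E\setminus(\gamma_p+1))\cup\{\delta,\gamma_q\}$ at the new coordinate $\alpha$; the resulting $q$ forces $E\setminus(\gamma_p+1)\subseteq C_\alpha$. Consequently, setting $A:=\{\alpha<\lambda\mid\exists\gamma<\delta\,(E\setminus\gamma\subseteq C_\alpha)\}$, no condition can confine $A$ to a ground-model set that omits some coordinate: already for $\lambda\geq\kappa^{++}$ (an instance the theorem must cover), if $|A|\leq\kappa$ held in $V[G]$ then by the $\kappa^{++}$-c.c.\ $A$ would be covered by a ground-model $Y$ of size $\kappa^+$, and the density argument applied to any $\alpha\notin Y\cup a_p$ gives a contradiction. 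So $|A|\geq\kappa^+$. Since $|\delta|\leq\kappa$, the pigeonhole principle yields $A'\in[A]^{\kappa^+}$ and a single $\gamma<\delta$ with $E\setminus\gamma\subseteq\bigcap_{\alpha\in A'}C_\alpha$, a set of size $\kappa$. Thus your $\mathcal{C}$ admits a $\kappa^+$-sized subfamily with intersection of cardinality $\kappa$, refuting the conclusion. No orchestration of gaps above $\delta$ can repair this, because the damage occurs below $\delta$ and is cofinal in the construction.

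This is precisely the phenomenon the poset $\mathbb{S}(\kappa,\lambda)$ of Definition~\ref{AbSh forcing} is engineered to exclude, and it explains both of its features that your proposal drops. First, the club-shooting component at each coordinate only allows closed bounded sets containing \emph{no} $\kappa$-sized subset from the relevant intermediate model, which kills the set $E$ above at the root; second, each such component is preceded by an $\Add(\kappa,1)$ factor, because once old $\kappa$-sized sets are forbidden the naive closure argument for lower bounds of descending sequences breaks and the Cohen generic is what lets one prove $\kappa^+$ is preserved. The separation property then follows not from fusion or side conditions but from a mutual-genericity argument (sketched in the paper right after Definition~\ref{AbSh forcing}): any $\kappa$-sized $C\subseteq\kappa^+$ in the extension is added by the $\kappa^+$-cc Cohen part, hence lives in a sub-extension determined by only $\kappa$-many coordinates, and for every coordinate $\alpha$ outside that support the club $C_\alpha$ is generic over that sub-extension and therefore omits $C$. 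Your peripheral computations (chain condition, $({<}\kappa)$-closure, the nice-name count giving $2^\kappa=2^{\kappa^+}=\lambda$ when $\cf(\lambda)>\kappa^+$) are in the right spirit, but the core of the proof is absent and the forcing it is meant to analyze does not work.
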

Some comments are in order regarding the previous theorem. First, note that the above generic extensions  exhibit a rather strong failure of Galvin's property. For instance, in case $\kappa=\aleph_0$, the theorem yields a model of \textsf{ZFC} with a family $\mathcal{C}=\{C_\alpha\mid \alpha<\omega_2\}$
of clubs at $\omega_1$ such that every uncountable subfamily $\mathcal{D}\s \mathcal{C}$ has (even) finite intersection. The second noteworthy aspect is that the regularity assumption on $\kappa$ is essential. Indeed,  if $\kappa$ was singular 
then the construction pursued in \cite[\S1]{MR830084} would be void, for the relevant forcing   collapses $\kappa$.
In light of this one may  wonder whether the negation of  Galvin's property is possible at successors of singular cardinals:
\begin{question*}
\label{qstronglimit} Is it consistent with $\mathsf{ZFC}$ that Galvin's property fails at the successor of a strong limit singular cardinal?
\end{question*}

In the models produced by the above theorem one has $2^\kappa=2^{\kappa^+}=\lambda$. 
Actually, the equality $2^\kappa=\lambda$ is essential as proved in \cite{MR3604115}.
In effect, it is shown there that $2^\kappa<2^{\kappa^+}=\lambda$ entails Galvin's property at every collection of $\lambda$-many clubs at $\kappa^+$.
Hence any attempt to prove the consistency of the failure Galvin's property at successors of strong limit singular cardinals must rely on the violation of the \emph{Singular Cardinal Hypothesis} (SCH).
As a result,  a positive answer to the previous problem would require the existence of (very) large cardinals in the set-theoretic universe (see e.g., \cite{Gitikstrength,Mit}).

\smallskip

In Section~\ref{SectionLocalAndGlobal} of this paper we  would like to give a positive answer to the above question. The idea is simple when the relevant cardinal is rather large: Given
a supercompact cardinal $\kappa$ we shall preliminary make it indes\-tructible under $\kappa$-directed-closed forcings (see \cite{MR0472529}) and later force with the main poset of \cite[\S1]{MR830084}. This will produce a model of \textsf{ZFC} where $\kappa$ remains supercompact and Galvin's property fails at $\kappa^+$. Next, over the resulting model, we shall force with some Prikry-type forcing notion (e.g., Prikry forcing) towards singularizing $\kappa$. The point here is that, provided this Prikry-type poset is sufficiently well-behaved --to wit, it exhibits  a nice chain condition-- the original failure of Galvin's property at $\kappa^+$ will be preserved. Altogether, this yields a positive answer to the original question, provided, of course, the set-theoretic universe accommodates supercompact cardinals.

Nonetheless, the situation is less evident when the question is regarded for more \emph{down to earth} cardinals, such as $\aleph_{\omega+1}$ or $\aleph_{\omega_1+1}$. The usual technique to make a large cardinal $\kappa$ become $\aleph_\omega$ --without dramatically disturbing  the structure of the universe-- is to interleave \emph{collapses} in a Prikry-type forcing (see e.g., \cite{MagSing}).  However,  unless this process is done with sufficient care the resulting poset will not enjoy an optimal chain condition. To remedy this one uses the so-called technique of \emph{guiding generics}. These enable us to choose the \emph{collapsing part} in such a way that it does not contribute to the size of  maximal antichains. For instance, if Prikry forcing is endowed with \emph{Lévy collapses} arising from a guiding generic the resulting forcing keeps the chain condition of the original one; namely, $\kappa^+$-cc \cite[Example~8.6]{Cummings-handbook}.

A major caveat with the construction of guiding generics is that it heavily relies upon the arithmetical assumptions on $2^\kappa$. For instance, in \cite[Lemma~8.5]{Cummings-handbook} it is shown that if $\kappa$ is measurable and $2^\kappa=\kappa^{+}$ then one can build a guiding generic. On the contrary, the situation is less evident and the construction quite more challenging whenever $2^\kappa>\kappa^+$. For concrete examples on this vein see \cite{CumGCH,GitSha}. Note that in our case the construction of guiding generics needs to be performed in a context where not only $2^\kappa$ is large, but also Galvin's property fails at $\kappa^+$. This will introduce additional complications that we  overcome by elaborating on ideas of \cite{GitSha}.

As long as guiding generics are available, a mild variation of the above argument will produce a model where Galvin's property fails at $\aleph_{\omega+1}$. Once this is accomplished one can pose the following more ambitious question:
\begin{question*}
\label{qglobal}
Is it consistent with $\mathsf{ZFC}$ that Galvin's property fails at the successor of every singular cardinal, simultaneously?
\end{question*}
Bearing on our construction of guiding generics we  provide a positive answer to this question. 
The model of \textsf{ZFC} that will exemplify this is a generic extension  by certain \emph{Radin forcing with interleaved  collapses}. The proof idea and associated challenges are explained at length at the beginning of Section~\ref{SectionGlobal}. The reader is referred there for further details. Besides, in \S\ref{SectionImpossibility} we argue that a \emph{Ultimate failure of Galvin's property} is impossible.

\smallskip


Consider, yet again, clubs at $\aleph_1$. By  \cite{MR3604115}, the first cardinal $\kappa$ such that every $\lambda$-sized family of clubs at $\aleph_1$ witnesses Galvin's property depends on the size of $2^{\aleph_0}$ (and not of $2^{\aleph_1}$).  This striking fact leads to the definition of  Galvin's number $\mathfrak{gp}$ in the spirit of cardinal characteristics of the continuum. This concept naturally generalizes to $\mathfrak{gp}_\kappa$, for every cardinal $\kappa\geq \aleph_0$. A central problem here concerns the possible cofinalities of $\mathfrak{gp}_\kappa$. In the case of $\mathfrak{gp}$ (i.e., $\mathfrak{gp}_{\aleph_0}$) it is unknown whether $\cf(\mathfrak{gp})=\omega$ is consistent \cite{ghhm}. On the contrary, if $\kappa>\cf(\kappa)$ it turns that one can do better (cf. \S\ref{SectionGalvinsNumber}).


\smallskip

An intriguing feature of the models we produce is that in none of them it is clear that the strong conclusion of Abraham-Shelah theorem hold. Namely, we do not know if for a singular cardinal $\kappa$ there is a family $\mathcal{C}$ of clubs at $\kappa^+$, $|\mathcal{C}|=\kappa^{++}$, such that the intersection of any $\kappa^+$-sized subfamily $\mathcal{D}\s \mathcal{C}$ is of cardinality ${<}\kappa$. In Section~\ref{SectionTheStrongFailure} we coin this the name of \emph{strong failure of Galvin's property at $\kappa^+$}. All in all, this raises the following problem:
\begin{question*}
Is it a theorem of $\mathsf{ZFC}$ that Galvin's property does not strongly fail at the successor of a strong limit singular cardinal $\kappa$?

More verbosely, does $\mathsf{ZFC}$ prove that every family  $\mathcal{C}$ of clubs at $\kappa^{+}$ with cardinality $\kappa^{++}$ admits a subfamily $\mathcal{D}\s \mathcal{C}$ with $|\mathcal{D}|=\kappa^+$ and $|\bigcap\mathcal{D}|\geq \kappa$?
 \end{question*}
 In Section~\ref{SectionTheStrongFailure} we prove
 that the strong failure of  Galvin's property is indeed stronger than the mere failure of  Galvin's property (see Corollary~\ref{CorStrongNegationSuper}). 
 Specifically, in Proposition~\ref{propmagidor} we show that suitable Magidor/Radin-like generic  extensions satisfy  the following stronger assertion: Every family $\mathcal{C}$ of clubs at $\kappa^{+}$ with $|\mathcal{C}|=\kappa^{++}$ admits a subfamily $\mathcal{D}\s \mathcal{C}$ with $|\mathcal{D}|=\kappa^{++}$ such that the Radin club   eventually enters $\bigcap \mathcal{D}$. In particular, the  set $\bigcap \mathcal{D}$ is of cardinality at least $\kappa$.  On the contrary,  Theorem~\ref{thmprikryweak} 
demonstrates that 
 the above described behaviour of the Radin club cannot be expected for a Prikry sequence. Note, however, that this does not yet discard an scenario where the strong failure holds in a generic extension by Prikry forcing.  A natural strategy to show this would be to begin with the model of \cite[\S1]{MR830084} 
 and argue that Prikry forcing preserves the corresponding witnessing family. Unfortunately, Theorem~\ref{thmprikrydestroys} indicates that this will not work in general, for there are  witnessing families that are always destroyed. The section is later concluded with the proof of Theorem~\ref{thmudiamond}, which characterizes --in terms of a local guessing principle-- what witnessing families for the strong failure 
 are destroyed. The \textsf{ZFC} status of the  strong failure of Galvin's proper\-ty at successors of singular cardinals remains unknown to the authors (cf. \S\ref{SectionOpenProblems}).
\smallskip

In Section~\ref{SectionStronger} we explore some strong variations of Galvin's pro\-perty at large cardinals. Here we introduce the principle  ${\rm Gal}(\mathscr{F},\partial,\lambda)$ that naturally extends Galvin's statement to more general classes of filters. Later, we prove the consistency of the failure of this principle in the context of large cardinals (Theorem~\ref{thmnegmeasurable}). We also prove that $\mathrm{Gal}(\mathscr{F}, \partial,\lambda)$ can be used to characterize the fact that every set $A$ in a Prikry extension with $|A|=\lambda$ contains a ground model set $B$ with $|B|=\partial$ (Theorem~\ref{propdensity}). This extends a previous result by Gitik \cite{GitDensity} on density of old sets in Prikry generic extensions.

\smallskip

The paper concludes with Section~\ref{SectionOpenProblems} by presenting some  open problems.



\smallskip

All the notations and terminologies used are either standard or will be timely introduced. Additionally, the reader may find in Section~\ref{Prelimminaries} all the relevant preliminaries. In particular, he/she will find  a self-complete presentation of the Radin forcing with interleaved collapses that we use in Section~\ref{SectionGlobal}. Further consult of standard textbooks in set theory such as \cite{Kunen, Jech2003} might be required, although the paper has been composed aiming to be as self-contained as possible. Through the text we adopt the \emph{Jerusalem forcing convention}, writing \emph{$p\leq q$} whenever \emph{$q$ is stronger than $p$}.

\section{Preliminaries}\label{Prelimminaries}

\subsection{Notations}
Our notation is pretty standard. For a set $X$, we denote its cardinality by $|X|$. Given a cardinal $\kappa$, $\cf(\kappa)$ stands for its cofinality.
If $\kappa=\cf(\kappa)<\lambda$ then we denote $S^\lambda_\kappa:=\{\delta<\lambda\mid \cf(\delta)=\kappa\}$.
For a set of ordinals $A$, $\Lim(A):=\{\alpha<\sup(A)\mid \sup(A\cap\alpha)=\alpha\}$.
For an ordinal $\alpha$ and a set $X$, an $\alpha$-sequence of elements of $X$ is a function $f:\alpha\rightarrow X$. We denote by $^{\alpha}{}X$ the set of all $\alpha$-sequences of elements of $X$. If $f(i)=x_i$, we will denote $f=\l x_i\mid i<\alpha\r$. The point-wise image of $X$ under $f$ is denoted by $f``X:=\{f(x)\mid x\in X\}$.
If $X$ is a set of ordinals, $[X]^\alpha$ denotes the collection of $\alpha$-sequences of elements of $X$ which are increasing. For a cardinal $\rho$, we identify $[X]^\rho$ with the set $\{Y\in P(X)\mid |Y|=\rho\}$. 

\subsection{Clubs and Filters} 
Let $\kappa$ be an ordinal with $\cf(\kappa)\geq \omega_1$. A set $C\s \kappa$ is  a \textit{club at $\kappa$} if it is closed and unbounded in the order topology of $\kappa$. A set $S\s\kappa$ is  \textit{stationary} if it intersects every club at $\kappa$. The \textit{club filter} on $\kappa$ is,
$$\mathscr{D}_{\kappa}:=\{A\subseteq\kappa\mid \exists C\subseteq A\,\ (C\text{ is a club at }\kappa)\}.$$ 
This paper is mostly concerned with the following property of the club filter:
\begin{definition}[Galvin's property]\label{Galvins}
 Let $\kappa>\omega$ be  regular  and $ \mu\leq \lambda\leq 2^\kappa$. Denote by ${\rm Gal}(\mathscr{D}_\kappa,\mu,\lambda)$ the statement:
$$\text{``For every $\mathcal{C}\s  \mathscr{D}_\kappa$ with $|\mathcal{C}|=\lambda$ there is $\mathcal{D}\in[\mathcal{C}]^\mu$ such that  $\cap\mathcal{D}\in \mathscr{D}_\kappa$.''}$$
We say that \emph{Galvin's property holds at $\kappa$} if $\mathrm{Gal}(\mathscr{D}_\kappa,\kappa,\kappa^+)$ holds.
\end{definition}
The notation ${\rm Gal}(\mathscr{D}_\kappa,\partial,\lambda)$ is borrowed, with some modifications, from 
\cite{matet}.
In \cite{MR0369081}, Baumgartner, Hajnal and Maté present a proof due to Galvin that $\kappa^{<\kappa}=\kappa$ entails $\mathrm{Gal}(\mathscr{D}_\kappa,\kappa,\kappa^+)$. In particular,  $\mathsf{CH}$ yields $\mathrm{Gal}(\mathscr{D}_{\aleph_1},\aleph_1,\aleph_2)$ and \emph{Gödel's Axiom of Constructibility} $V=L$ entails $\mathrm{Gal}(\mathcal{D}_\kappa,\kappa,\kappa^+)$ for all regular cardinals $\kappa>\omega$. It is worth noting that the proof presented in \cite[\S3.2]{MR0369081} applies to all normal filters $\mathscr{F}$ on $\kappa$.

\smallskip

In Section~\ref{SectionStronger}, we will consider a generalization of Galvin's property to more abstract filters. Recall that a \emph{filter} over $X$ is a set $\mathscr{F}\subseteq \mathcal{P}(X)\setminus\{\emptyset\}$ stable under finite intersection, 
upwards-closed with respect to inclusion  and $X\in \mathscr{F}$. It is customary to refer to the elements of $\mathscr{F}$ as \emph{$\mathscr{F}$-large} or \linebreak \emph{$\mathscr{F}$-measure one}.
Interesting filters, however, enjoy of additional 
properties: 
\begin{definition}[Combinatorial properties associated to filters]\label{CombinatorialProperties} \hfill
\begin{enumerate}
\item $\mathscr{F}$ is \textit{uniform} if for every $A\in \mathscr{F}$, $|A|=|X|$.
\item $\mathscr{F}$ is $\lambda$-\textit{complete} if for every $\theta<\lambda$ and $\langle A_{\alpha}\mid \alpha<\theta\rangle\in\mathscr{F}^\lambda$ 
 $$\bigcap_{\alpha<
\beta}A_\alpha\in \mathscr{F}.$$
\item $\mathscr{F}$ is \textit{principal} if there is $x\in X$ such that $\{x\}\in \mathscr{F}$. 
\item $\mathscr{F}$ is an \textit{ultrafilter} if $\forall A\in \mathcal{P}(X) ( A\in \mathscr{F}\vee X- A \in \mathscr{F})$.
\item A filter $\mathscr{F}$ over a cardinal $\kappa$ is \emph{normal} if for every $\langle A_\alpha\mid \alpha<\kappa\rangle \in \mathscr{F}^\kappa$, 
$$\diagonal_{\alpha<\kappa}A_\alpha:=\{\beta<\kappa\mid \text{$\beta\in A_\alpha$, provided $\beta>\alpha$}\}\in \mathscr
{F}.$$
\end{enumerate}
\end{definition}
It is well-known that $\mathscr{D}_\kappa$ is a uniform, normal and $\kappa$-complete filter on $\kappa$.

\smallskip 

   
\subsection{Forcing preliminaries}
Many of the results of this paper use the technique of \emph{Forcing} to establish the consistency of \textsf{ZFC} with certain combinatorial principles. We take advantage of this section to fix notation and provide the reader with a comprehensive exposition of the paper's main poset: the so-called \emph{Radin forcing with interleaved collapses.} Nonetheless, the reader is assumed to be familiar with the theory of set-theoretic forcing.  For general background on this technique see  \cite{Kunen,Jech2003}.

\begin{definition}[Cohen forcing and Lévy Collapse]
Let $\kappa<\lambda$ be cardinals with $\kappa=\cf(\kappa)$.
The \emph{Cohen forcing} for adding $\lambda$-many subsets of $\kappa$ is
$$\Add(\kappa,\lambda):=\{f\colon \kappa\times\lambda\rightarrow\kappa\mid |f|<\kappa\}.$$
The \emph{Lévy collapse} forcings to 
collapse $\lambda$ to $\kappa$ is 
\begin{eqnarray*}
\Col(\kappa,\lambda)&:=&\{f\colon \kappa\rightarrow\lambda\mid |f|<\kappa\}.
\end{eqnarray*}
The order for both forcing notions is inclusion; namely,  $f\leq g$ iff $f\subseteq g$.
\end{definition}

\begin{definition}[Basic properties]
\hfill

$\br$ $\mathrm{Add}(\kappa,\lambda)$ is a $\kappa$-directed-closed and $(2^{<\kappa})^+$-cc forcing;

$\br$ $\Col(\kappa,\lambda)$ is  $\kappa$-directed-closed and $\lambda^+$-cc, provided $\lambda^{<\kappa}=\lambda$. 
\end{definition}

A  poset that will deserve a special attention in this text is the following:
\begin{definition}[Abraham-Shelah {\cite{MR830084}}]\label{AbSh forcing}
Let $\kappa<\lambda$ be cardinals with $\kappa$ regular. 
The following clauses yield the definition of the poset $\mathbb{S}(\kappa,\lambda)$:
\begin{itemize}
    
    \item $\lusim{\mathbb{R}}$ is a $\Add(\kappa,1)$-name for the forcing which consists of closed bounded sets $C\subseteq\kappa^+$ which do not contain a subset of cardinality $\kappa$ from $V$.
    The order of $\lusim{\mathbb{R}}$ is forced to be end-extension.
    \item Denote $\mathbb{S}=\Add(\kappa,1)*\lusim{\mathbb{R}}$.
    \item $\mathbb{S}(\kappa,\lambda)$ is a product of $\lambda$-many copies of $\mathbb{S}$ with mixed support, ${<}\kappa$-support on the $\Add(\kappa,1)$-side and $\kappa$-support on the $\lusim{\mathbb{R}}$-side.
    \end{itemize}
    Whenever $\lambda=\kappa^{++}$ we shall  write $\mathbb{S}(\kappa)$ instead of $\mathbb{S}(\kappa,\kappa^{++})$.
\end{definition}
The poset $\mathbb{S}(\kappa,\lambda)$ is $\kappa$-directed closed. Also, assuming that $2^\kappa=\kappa^+$, $\mathbb{S}(\kappa,\lambda)$ is a $\kappa^{++}$-cc forcing that moreover preserves $\kappa^+$ 
(see \cite[\S1]{MR830084}).

\smallskip

The main feature of $\mathbb{S}(\kappa,\lambda)$  is that it yields a generic extension where $\mathrm{Gal}(\mathscr{D}_{\kappa^+},\kappa^+,\lambda)$ fails in a quite strong sense. For the reader's convenience we shall sketch the proof of this: 
For each $\alpha<\lambda$, denote $\mathbb{S}(\kappa,\lambda)\restriction\{\alpha\}:=\mathbb{S}_\alpha=\mathrm{Add}(\kappa,1)*\lusim{\mathbb{R}}_{\alpha}$ and let \label{ExplanationonGalvin}
 $C_\alpha$ be the club introduced by  $\mathbb{R}_\alpha$. The claim is that $\l C_\alpha\mid \alpha<\lambda\r$ witnesses the failure of $\mathrm{Gal}(\mathscr{D}_{\kappa^+},\kappa^+,\lambda)$. In effect, let us argue that every $C\in[\kappa^+]^\kappa$ is included in at most $\kappa$ many $C_\alpha$'s. First, as a consequence of \cite[Lemma~1.7]{MR830084},  $C$ is introduced by $\mathbb{Q}(\kappa,\lambda)$, where $\mathbb{Q}(\kappa,\lambda)$ is the product of $\lambda$-many copies of  $\mathrm{Add}(\kappa,1)$ with ${<}\kappa$-support. Note that this is a $\kappa^+$-cc subforcing  of $\mathbb{S}(\kappa,\lambda)$ raised from the natural projection. Since $|C|=\kappa$ and every element of $C$ is decided by an antichain of size at most $\kappa$ it follows that $C\in V[G_{\mathbb{Q}(\kappa,\lambda)}\upharpoonright A]$, for some $A\in[\lambda]^\kappa$. Let $\alpha\in \lambda- A$. Since $\mathbb{S}(\kappa,\lambda)\cong \mathbb{S}(\kappa,\lambda)\upharpoonright(\lambda- \{\alpha\})\times\mathbb{S}_\alpha$ one can apply \cite[Lemma 1.13]{MR830084} and infer that $\neg(C\subseteq C_\alpha)$.
Hence $C$ is contained in at most $\kappa$-many $C_\alpha$'s.

\smallskip

The main results of this paper concern singular cardinals. The most basic forcing notion to singularize a regular cardinal (while not collapsing it) is the so-called \emph{Prikry forcing}: 
\begin{definition}[Prikry forcing]\label{Prikry forcing}
Let $\mathscr{U}$ be a normal ultrafilter over $\kappa$. The \textit{Prikry forcing} with $\mathscr{U}$ is  denoted by $\mathbb{P}(\mathscr{U})$ and
consist of all conditions of the form $\l \alpha_1,\dots,\alpha_n,A\r$ where $\alpha_1<\dots<\alpha_n<\min(A)$ and $A\in \mathscr{U}$. 

The order is defined by $\l \alpha_1,\dots,\alpha_n,A\r\leq \l\beta_1,\dots,\beta_m,B\r$ iff:
\begin{enumerate}
    \item $n\leq m$ and for every $i\leq n$, $\alpha_i=\beta_i$;
    \item $\langle \beta_{n+1},\dots,\beta_m\rangle \in [A]^{<\omega}$;
    \item $B\subseteq A$.
\end{enumerate}
For $p=\langle \alpha_1,\dots, \alpha_n,A\rangle\in\mathbb{P}(\mathscr{U})$ it is customary to refer to  $\langle \alpha_1,\dots,\alpha_n\rangle$ as \emph{the stem of $p$.}
\end{definition}
For more information about Prikry forcing and its properties we refer the reader to \cite{MR2768695, TomTreePrikry}. Other relevant forcings for  Section~\ref{SectionTheStrongFailure} are Magidor and Radin forcing, respectively introduced in  \cite{MR0465868} and \cite{Rad}. In the forthcoming section we present a more sophisticated version of these forcings. So, instead of providing more details, we refer the reader to  \cite{MR2768695}.

\subsection{Radin forcing with interleaved collapses}\label{SubsectionRadinwithcollapses}
In this section we provide the reader with  a self-contained exposition of the main forcing notion of Section~\ref{SectionGlobal}: namely, \emph{the Radin forcing with interleaved collapses}.   This construction has already appeared in previous works \cite{CumGCH, YairEskew, Golshani2016WeakDC} under slightly different formulations. Our presentation is mostly inspired by Cummings' work, and we shall particularly follow \cite[\S3]{CumGCH}.  The reader familiar with this work will notice that our construction is based on a weakening of Cumming's notion of \emph{constructing pair} (see Definition \ref{WeakConstruct}). As we will demonstrate, this milder notion turns to be  enough for our purposes. 

\subsubsection{Measure sequences and weak constructing pairs}
We commence by introducing  the notion of a \emph{weak constructing pair}.  Later we will use it to derive a suitable generalization of the notion of a measure sequence \cite{Rad}.

\begin{definition}[Weak constructing pair]\label{WeakConstruct}
A pair $(j,F)$ is called a \emph{weak constructing pair} if it satisfies the following properties:
\smallskip
\begin{enumerate}
    \item $j\colon V\rightarrow{M}$ is an elementary embedding into a transitive inner model with $\crit(j)=\kappa$ and ${}^\kappa M\s M$;
    \item $F$ is an $N$-generic filter for  $\Col(\kappa^{+4},i(\kappa))^N$, where  the map\linebreak  $i\colon V\rightarrow{N}\simeq \mathrm{Ult}(V,\mathscr{U})$ is the ultrapower embedding derived from $j$;
    \item $F\in M$. 
\end{enumerate}
\end{definition}
\begin{remark}
Note that $\mathscr{U}$ is definable from $F$ and $^{\kappa}{}M\s M$,  hence $$N_{i(\kappa)+1}^M=N_{i(\kappa)+1}\text{ and }\Col(\kappa^{+4},i(\kappa))^N\in M.$$ 
It is customary to refer to  $F$ as the \emph{guiding generic}. This terminology is motivated by the fact that  $F$ will be the responsible of \emph{guiding} the forcing  in the process of collapsing the intervals between points in the generic club.
\end{remark}


\begin{definition}
If $(j,F)$ is a weak constructing pair then put
$$
C^*:=\{f\colon\kappa\rightarrow V_\kappa\mid \dom(f)\in \mathscr{U}\,\wedge\, \forall\alpha\in\dom(f)\,(f(\alpha)\in \Col(\alpha^{+4},\kappa))\},
$$
$$
F^*:=\{f\in C^*\mid i(f)(\kappa)\in F\}.
$$
\end{definition}

\begin{definition}\label{MeasureSequenceDerived} 
Let $(j,F)$ be a weak constructing pair. 
A sequence $u$ is said to be \emph{inferred from $(j,F)$} iff
\begin{enumerate}
    \item $u\in M$;
    \item $u(0):=\crit(j)$;
    \item $u(1):=F^*$;
    \item $u(\alpha):=\{X\s V_{u(0)}\mid u\restriction\alpha\in j(X)\}$, for $\alpha\in [2,\mathrm{len}(u))$;
    \item $M\models |\mathrm{len}(u)|\leq u(0)^{++}$.
\end{enumerate}
We say that \emph{$(j,F)$ constructs $u$} if $u=w\restriction \alpha$, where  $w$ is the sequence inferred from $(j,F)$ and $\alpha\in [1,\len(w))$.\footnote{Note that if $w$ is inferred from $(j,F)$ then $\len(w)\geq 2$.}
\end{definition}

\begin{definition}
If $u$ is constructed by some pair $(j,F)$ then $\kappa_u:=u(0)$. Also, if $\mathrm{len}(u)\geq 2$ define 
\begin{enumerate}
    \item $F^*_u:=u(1)$;
    \item $\mu_u:=\{X\s \kappa_u\mid \exists f\in F^*_u\;\dom(f)=X\}$;
    \item $\bar{\mu}_u:=\{X\s V_{\kappa_u}\mid \{\alpha\mid \langle \alpha\rangle \in X\}\in \mu_u\},$
    \item $F_u:=\{i(f)(\kappa_u)\mid f\in F^*_u\}$,
    \item $\mathscr{F}_u:=\bar{\mu}_u\cap \bigcap\{u(\alpha)\mid \alpha\in [2,\mathrm{len}(u))\}$.\footnote{If $\len(u)=2$ we shall agree that  $\mathscr{F}_u:=\bar{\mu}_u$.}
\end{enumerate}
Otherwise, if  $\len(u)=1$,  we put $F^*_u=F_u:=\{\emptyset\}$ and $\mathscr{F}_u:=\{\emptyset\}$.
\end{definition}
At this point some explanations are in order. The set $F^*_u$ is the \emph{pull-back} of the generic filter $F$ and  $\bar{\mu}_u$ is, in essence,  the normal measure on $\kappa_u$ derived from $j$. Similarly, for $\alpha\geq 2$ the set $u(\alpha)$ yields a non-principal $\kappa_u$-complete ultrafilter on $V_{\kappa_u}$ that concentrates on sets  \emph{resembling}  $u\restriction\alpha$.  For instance, a typical element of $u(2)$  is  of the form 
$$\{\langle \alpha, G^*\rangle\mid \exists \mathscr{V} (\text{$\mathscr{V}$ normal measure on $\alpha$}\,\wedge\, \text{$G^*$ is a *-$\mathscr{V}$-generic})\},$$
where $G^*$ is  $*$-$\mathscr{V}$-generic  if there is a $\mathrm{Ult}(V,\mathscr{V})$-generic filter $G$ for the poset $\Col(\alpha^{+4},i_\mathscr{V}(\kappa))$ such that $G^*:=\{f\mid \dom(f)\in \mathscr{V}\,\wedge\,  i_\mathscr{V}(f)(\alpha)\in G\}$.
\begin{definition}
The family of \emph{measure sequences} $\mathcal{U}_{\infty}$ is defined as follows:
\begin{itemize}
    \item $\mathcal{U}_0:=\{u\mid \text{$\exists (j,F)$($(j,F)$   constructs $u$)}\}$;
    \item $\mathcal{U}_{n+1}:=\{u\in \mathcal{U}_n\mid \mathcal{U}_n\cap V_{\kappa_u}\in \mathscr{F}_u\};$
    \item $\mathcal{U}_\infty:=\bigcap_{n<\omega}\mathcal{U}_n.$
\end{itemize}
\end{definition}
The advantage of the set $\mathcal{U}_\infty$ is that every measure $u(\alpha)$ in a  sequence $u\in \mathcal{U}_{\infty}$ concentrates on measure sequences. Clearly, if $u\in \mathcal{U}_\infty$ then also $u\restriction\alpha\in\mathcal{U}_\infty$, for all $\alpha<\len(u)$.
\begin{definition}
Let $u\in \mathcal{U}_\infty$. For each $A\in\mathscr{F}_u$ denote $\z{A}:=\{\kappa_w\mid w\in A\}$.
\end{definition}

\begin{definition}\label{DiagonalIntersection}
Given $u\in \mathcal{U}_\infty$,  $A\in \mathscr{F}_u$ and $\{A_v\mid v\in A\}$, define 
$$\diagonal_{v} A_v:=\{w\in \mathcal{U}_\infty\mid v\in V_{\kappa_w}\,\Longrightarrow w\in A_v\}.$$
\end{definition}

It is routine to check that under the above conditions $\z{A}$   (resp. $\diagonal_{v\in A} A_v$) is $\mu_u$-large (resp. $\mathscr{F}_u$-large).

\subsubsection{The main poset}
In this section we present our version of the Radin forcing with interleaved collapses and prove some of its main properties. The forcing is devised to introduce  a club on a large cardinal $\kappa$ and simultaneously collapse the intervals between the points of the said club.

\smallskip
Let us begin defining the basic modules of the main poset:
\begin{definition}\label{BasicModules}
Let $u\in\mathcal{U}_\infty$. Denote by $\mathbb{R}^*_u$ the poset whose conditions are $5$-tuples $(u, \lambda,h, A, H)$ such that:
\begin{enumerate}
    \item $\lambda<\kappa_u$;
    \item $h\in \Col(\lambda^{+4},\kappa_u);$
    \item $A\in \mathscr{F}_u$. Also, in case $\len(u)\geq 2$,  $\min\z{A}>\max(\sup\mathrm{ran}(h),\lambda)$;
    \item $H$ is a function with
    $\dom(H)=\z{A}$ and $H\in F^*_u$.
\end{enumerate}
For two condition $p=(u,\lambda^p, h^p,A^p,H^p)$ and $q=(u, \lambda^q, h^q, A^q, H^q)$ in $\mathbb{R}^*_u$ we write $q\leq p$ (\emph{$p$ is stronger than $q$}) if the following hold:
\begin{itemize}
    \item $\lambda^p=\lambda^q$;
    \item $h^q\s h^p$;
    \item $A^p\s A^q$;
    \item $H^q(\kappa_w)\s H^p(\kappa_w)$, for all $w\in A^p$.
\end{itemize}
Whenever the relevant condition is clear from the context  we shall tend to suppress the superscript  and simply write $(u,\lambda, h, A,H)$. 
\end{definition}
\begin{remark}
If $\len(u)= 1$ then $A=\emptyset$ and so $H=\emptyset\in F^*_u$.  Additionally, for each $p\in \mathbb{R}^*_u$, the  poset  $\mathbb{R}^*_u/p$ is isomorphic to $\Col({\lambda}^{+4},\kappa_u)/h$.
\end{remark}

\begin{definition}\label{RadinWithCol} 
Let $u\in \mathcal{U}_\infty$. Denote by $\mathbb{R}_u$ the poset whose conditions are finite sequences $p=\langle p_n\mid n\leq \ell(p)\rangle$ such that:
\begin{enumerate}
    \item $p_n\in \mathbb{R}^*_{w_n}$ for some $w_n\in \mathcal{U}_\infty;$
    \item  
    $\lambda_{n+1}=\kappa_{w_n}$, for $n<\ell(p)$;    \item $w_{\ell(p)}=u$. 
\end{enumerate}
Given $p,q\in \mathbb{R}_u$ we write $q\leq^* p$ (\emph{$p$ is a pure extension of $q$}) iff $$\text{$\ell(p)=\ell(q)$ and $q_n\leq p_n$ for all $n\leq \ell(p)$}.$$
If no confusion arises   we shall tend to write $p_n=(w_n,\lambda_n,h_n,A_n,H_n)$. 
\end{definition}
\begin{notation}
For a condition $p=\langle p_n\mid n\leq \ell(p)\rangle\in \mathbb{R}_u$ and $n\leq \ell(p)$ we  denote $p^{\leq n}:=\langle p_i\mid i\leq n\rangle$. Similarly, $p^{>n}:=\langle p_i\mid n+1\leq  i\leq \ell(p)\rangle$.
\end{notation}
\begin{remark}
Note that Definition~\ref{BasicModules}(3) together with Definition~\ref{RadinWithCol}(2) imply that $A^{p_n}\cap A^{p_{n+1}}=\emptyset$, for all $n<\ell(p)$.
\end{remark}

We now define what will be the prototypical extensions of a condition:
\begin{definition}\label{Onepoint}
Let $u\in\mathcal{U}_\infty$ and $p=(u, \lambda, h, A, H)\in \mathbb{R}^*_u$. 

For each $v\in A$, put
$$p\cat v:=(v, \lambda,h, A_{v,-}, H_{v,-})^\smallfrown(u, \kappa_v, H(\kappa_v), A_{v,+}, H_{v,+}),$$
where $\langle A_{v,-}, H_{v,-}\rangle$ and $\langle A_{v,+}, H_{v,+}\rangle$ are defined as follows:

$\br$ $A_{v,-}:=A\cap V_\nu$ and $H_{v,-}:=H\restriction \z{A}_{v,-};$

$\br$ $A_{v,+}:= \{w\in A\mid \kappa_w> \sup(\mathrm{ran}(H(\kappa_v))\cup\{\kappa_v\})\}$ and $H_{v,+}:=H\restriction \z{A}_{v,+}.$ 

\smallskip

Similarly, for each $p=\langle p_n\mid n\leq \ell(p)\rangle \in\mathbb{R}_u$ and $v\in A_n$ define $$p\cat v:=\langle q_k\mid k\leq \ell(p)+1\rangle,$$
where $q_n{}^\smallfrown q_{n+1}=p_n\cat v$, and $q_k=p_k$ when $k\notin\{n,n+1\}$.

For $\vec{v}=\langle v_0,\dots,v_k\rangle\in \prod_{i=0}^k A_{n_i}$ define $p{}^\curvearrowright{\vec{v}}$ recursively as $(p{}^\curvearrowright{\vec{v}\restriction k})\cat {v_k}.$\footnote{By convention, $p{}^\curvearrowright\emptyset:=p.$}
\end{definition}
Note that the operation ${}^\curvearrowright{\vec{v}}$ is commutative with respect to the order of the measure sequences appearing in $\vec{v}$. The previous notion enables us to define the main ordering on $\mathbb{R}_u$:
\begin{definition}[The main poset]\label{MainPoset}
Let $u\in\mathcal{U}_\infty$. Given two conditions $p,q\in \mathbb{R}_u$ we write $q\leq p$ (\emph{$p$ is stronger than $q$}) iff there is a sequence $\vec{v}\in \prod_{i=0}^{|\vec{v}|} A_i$ such that $ q^\curvearrowright\vec{v}\leq^* p$.
\end{definition}
\begin{remark}\label{RemarkOnlength}
If $\len(u)=1$ and $p\in\mathbb{R}^*_u$ then $\mathbb{R}_u/p\simeq\Col(\lambda^{+4},\kappa_u)/h$.
\end{remark}
One may be wondering whether the operation ${}^\curvearrowright{\vec{v}}$ of Definition~\ref{Onepoint} is always well-defined. While this is not the case in general there is yet a $\leq^*$-dense subposet of $\mathbb{R}_u$ that satisfies this property. 
\begin{definition}
Let $u\in\mathcal{U}_\infty$. Given $p\in\mathbb{R}_u$ and $\vec{v}\in \mathcal{U}_\infty^{<\omega}$ we say that \emph{$\vec{v}$ is addable to $p$} iff $p{}^\curvearrowright \vec{v}\in \mathbb{R}_u$. A condition $p\in \mathbb{R}_u$ is called \emph{pruned} iff every $\vec{v}\in \prod_{i=0}^k A_{n_i}$ is addable to $p$.
\end{definition} 
\begin{remark}
Note that $v\in A_n$ is addable to $p$ iff:
\begin{enumerate}
    \item $A_n\cap V_{\kappa_v}\in \mathscr{F}_{v}$.
    \item $H_n\restriction (\z{A}_n)_{v,-}\in F^*_{v}$.
\end{enumerate}
Also, $p$ is pruned iff for every $i\leq m^p$ and $v\in A^{p_i}$, $v$ is addable to $p$.
\end{remark}
\begin{lemma}
Let $u\in \mathcal{U}_{\infty}$. For every $p\in \mathbb{R}_u$ there is $p^*\in \mathbb{R}_u$ with $p\leq^* p^*$ such that $p^*$ is pruned (i.e., for $n\leq \ell(p)$ and $v\in A^{p^*_n}$, $v$ is addable to $p^*$.)
\end{lemma}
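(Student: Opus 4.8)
The plan is to prune the condition $p$ one coordinate at a time, from the top down, using the fact that the relevant large sets ($\mathscr{F}_v$-large and $F^*_v$-related sets) are closed under the diagonal intersection of Definition~\ref{DiagonalIntersection} and that each $\mathscr{F}_u$ concentrates on measure sequences (this is precisely the point of passing to $\mathcal{U}_\infty$). So first I would fix $p=\langle p_n\mid n\leq \ell(p)\rangle$ with $p_n=(w_n,\lambda_n,h_n,A_n,H_n)$, and concentrate on a single coordinate $n$; it suffices to show that, shrinking $A_n$ inside $\mathscr{F}_{w_n}$ and correspondingly restricting $H_n$ inside $F^*_{w_n}$, one obtains a set all of whose members are addable. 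The point of doing it coordinate by coordinate is that addability of $v\in A_n$ is a condition referring only to $A_n$ and $H_n$ (by the Remark following Definition on pruned conditions: $v$ is addable iff $A_n\cap V_{\kappa_v}\in\mathscr{F}_v$ and $H_n\restriction(\z{A}_n)_{v,-}\in F^*_v$), so the coordinates do not interfere, and a pure extension in each coordinate assembles to a pure extension $p\leq^* p^*$ of the whole condition.

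Next, for the fixed coordinate, I would realize the two addability requirements as diagonal intersections. For the first requirement, since $\mathcal{U}_\infty\cap V_{\kappa_{w_n}}\in\mathscr{F}_{w_n}$, every $v\in A_n$ (after intersecting with this set) is itself a measure sequence with its own filter $\mathscr{F}_v$; set $A_v:=A_n\cap V_{\kappa_v}$... wait, more precisely one wants the set of $w$ such that ``if $v\in V_{\kappa_w}$ then $w\in A_v$'', i.e.\ form $\diagonal_{v} (A_n\cap V_{\kappa_v})$, no — the clean way is: define for each $v\in A_n$ the set $B_v:=\{w\in\mathcal{U}_\infty\mid A_n\cap V_{\kappa_v}\in\mathscr{F}_v \text{ fails}\}$ is not $\mathscr{F}_v$-large, hence its complement is, and then $A_n':=A_n\cap\diagonal_v(\mathcal{U}_\infty\setminus B_v)$ is $\mathscr{F}_{w_n}$-large and consists only of $v$ with $A_n'\cap V_{\kappa_v}\in\mathscr{F}_v$ — here one iterates the shrinking/uses that the diagonal intersection already witnesses closure, invoking the routine remark after Definition~\ref{DiagonalIntersection} that such diagonal intersections are $\mathscr{F}_u$-large. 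For the second requirement, $H_n\restriction(\z{A}_n)_{v,-}\in F^*_{w_n}$: here I would use that $H_n\in F^*_{w_n}$ together with the coherence of the pulled-back generic $F^*_u$ under restrictions (it is a filter of partial functions, and restricting domain to an $\mu_u$-large initial-segment set keeps you inside $F^*_u$, by genericity of $F$ and elementarity), and then pass to a diagonal intersection over $v$ to make this hold for all $v$ simultaneously. Intersecting the two large sets gives $A_n^*\in\mathscr{F}_{w_n}$ with $H_n^*:=H_n\restriction\z{A_n^*}\in F^*_{w_n}$, and $(w_n,\lambda_n,h_n,A_n^*,H_n^*)\leq p_n$ with every $v\in A_n^*$ addable to the resulting condition.

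The main obstacle I expect is the bookkeeping around the $F^*_u$-side, i.e.\ checking that restricting the collapse-guidance function $H_n$ to $\z{A}_{v,-}$ keeps it inside $F^*_v$ for $\mathscr{F}_{w_n}$-almost every $v$: unlike the pure measure-sequence part, this is exactly where the guiding generic $F$ (and its genericity over $N$) must be used, and one has to argue that $F^*_{w_n}$ is closed under these restrictions and then fold this into the diagonal intersection. Everything else — that addability is a coordinatewise condition, that diagonal intersections of $\mathscr{F}$-large sets are $\mathscr{F}$-large, that shrinking $A$ and restricting $H$ accordingly yields a $\leq^*$-extension, and that $\mathcal{U}_\infty$-membership propagates the needed concentration — is routine given the material already set up, in particular the remarks following Definitions~\ref{DiagonalIntersection} and~\ref{RadinWithCol} and the definition of $\mathcal{U}_\infty$. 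Finally I would remark that by construction $p^*$ is pruned in the sense of the definition, namely for each $n\leq\ell(p)$ and $v\in A^{p^*_n}$ the sequence $\vec v$ is addable, and that the operation ${}^\curvearrowright\vec v$ is therefore everywhere defined on $p^*$, completing the proof.
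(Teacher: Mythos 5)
Your overall skeleton matches the paper's: work coordinate by coordinate, define for each $n$ the set of $v\in A_n$ satisfying the two addability clauses, show that set is $\mathscr{F}_{w_n}$-large, and shrink. You also correctly flag as routine the point that restricting $H_n$ to a $\mu_{w_n}$-large domain keeps it in $F^*_{w_n}$. But the mechanism you propose for the central step --- showing that
$A^*_n:=\{v\in V_{\kappa_{w_n}}\mid H_n\restriction\kappa_v\in F^*_v\ \wedge\ A_n\cap V_{\kappa_v}\in\mathscr{F}_v\}$
is $\mathscr{F}_{w_n}$-large --- is the wrong one, and your own hedging (``wait, more precisely\dots no --- the clean way is'') signals that it does not go through. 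The diagonal intersection of Definition~\ref{DiagonalIntersection} takes a family of \emph{large sets} $\{A_v\}$ indexed by $v$ and returns one large set of $w$'s that respect that family; it cannot certify that the set of $v$ satisfying a given \emph{predicate} (``$A_n\cap V_{\kappa_v}\in\mathscr{F}_v$'', ``$H_n\restriction\kappa_v\in F^*_v$'') is large. Your set ``$B_v$'' does not actually depend on $w$, so there is no family to diagonalize. Likewise, ``coherence of $F^*_{w_n}$ under restrictions'' addresses membership in $F^*_{w_n}$, whereas addability demands membership in $F^*_v$, the guiding-generic object attached to the \emph{smaller} sequence $v$ --- a different filter that no amount of closure of $F^*_{w_n}$ will reach.

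What is actually needed, and what the paper does, is reflection along a weak constructing pair $(j,F)$ for $w_n$. One checks that $w_n\restriction\alpha\in j(A^*_n)$ for every $\alpha\in[2,\len(w_n))$, using the three identities $j(H_n)\restriction\kappa_{w_n}=H_n$, $F^*_{w_n\restriction\alpha}=F^*_{w_n}$ (all restrictions of length $\ge 2$ share the coordinate $u(1)$), and $j(A_n)\cap V_{\kappa_{w_n}}=A_n\in\mathscr{F}_{w_n}\s\mathscr{F}_{w_n\restriction\alpha}$; this gives $A^*_n\in w_n(\alpha)$ for each such $\alpha$, and a parallel (easier) computation gives $A^*_n\in\bar\mu_{w_n}$, whence $A^*_n\in\mathscr{F}_{w_n}$. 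This embedding argument is not optional bookkeeping: the measures $w_n(\alpha)$ concentrate on sequences resembling $w_n\restriction\alpha$ precisely via $j$, and that is the only route to largeness of a predicate-defined set here. Without it your proof has a genuine gap at its key step.
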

\begin{proof}
By the previous remark, it suffices to show that $(1)$ and $(2)$ hold for every $v\in A_n$.  Since $w_n\in \mathcal{U}_{\infty}$, there is $(j,F^*_{w_n})$ a weak constructing pair for $w_n$. By Definition~\ref{BasicModules}, $H_n\in F^*_{w_n}$, hence
$j(H_n)\restriction\kappa_{w_n}=H_n\in F^*_{w_n}=F^*_{w_n\restriction \alpha}$, for all $\alpha\in[2,\len(w_n))$. Also, $j(A_n)\cap V_{\kappa_{w_n}}=A_n\in \mathscr{F}_{w_n}\subseteq \mathscr{F}_{w_n\restriction \alpha}$, for every $\alpha\in[2, \len(w_n))$. Therefore,  setting $$A^*_n:=\{v\in V_{\kappa_{w_n}}\mid H_n\restriction \kappa_{v}\in F^*_{v}\wedge A_n\cap V_{\kappa_{v}}\in \mathscr{F}_{v}\},$$
we have
$w_n\restriction\alpha\in j(A^*_n),$
for all $\alpha\in[2, \len(w_n))$. Similarly, $A^*_n\in \bar{\mu}_{w_n}$.  Altogether, $A^*_n\in\mathscr{F}_{w_n}$.
Now, let $p^*$ be the $\leq^*$-extension of $p$ where  $A_n$ is shrunken to $A^*_n$, for every $n\leq m^p$. Clearly, $p^*$ is as wanted.
\end{proof}

By virtue of the previous lemma we are entitled to assume that  all the conditions we work with are pruned.

\smallskip

The next factoring lemma will be useful in the cardinal-structure analysis of the forthcoming section:
\begin{lemma}\label{Factoring}
Let $u\in\mathcal{U}_\infty$ with $\len(u)\geq 2$. For each $p\in\mathbb{R}_u$ and  $n< \ell(p)$,
\begin{enumerate}
    \item $\mathbb{R}_u/p \simeq \mathbb{R}_{w_n}/p^{\leq n}\times \mathbb{R}_{u}/p^{>n}$.
    \end{enumerate}
    Also, if $n+1<\ell(p)$ and $\len(w_{n+1})=1$ then
    \begin{enumerate}
    \setcounter{enumi}{1}
    \item $\mathbb{R}_u/p \simeq \mathbb{R}_{w_n}/p^{\leq n}\times \Col(\kappa_{w_n}^{+4},\kappa_{w_{n+1}})/h\times  \mathbb{R}_{u}/p^{>n+1}$.
\end{enumerate}
\end{lemma}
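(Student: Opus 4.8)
The plan is to construct the isomorphisms explicitly by splitting each condition at the coordinate $n$ (resp. $n+1$) and checking that the splitting respects the two orderings $\leq^*$ and $\leq$. For part (1), given $p=\langle p_k\mid k\leq\ell(p)\rangle\in\mathbb{R}_u$ with $n<\ell(p)$, I would send $p$ to the pair $(p^{\leq n},p^{>n})$, where $p^{\leq n}=\langle p_i\mid i\leq n\rangle$ is viewed as a condition in $\mathbb{R}_{w_n}/p^{\leq n}$ (its top measure sequence is $w_n$ by Definition~\ref{RadinWithCol}, and clause (2) of that definition makes $p^{\leq n}$ a legitimate element of $\mathbb{R}_{w_n}$), and $p^{>n}=\langle p_i\mid n+1\leq i\leq\ell(p)\rangle\in\mathbb{R}_u/p^{>n}$. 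Conversely, a pair of conditions in the product, both extending $(p^{\leq n},p^{>n})$, concatenates back to a single sequence in $\mathbb{R}_u$; the key point is that Definition~\ref{BasicModules}(3) together with Definition~\ref{RadinWithCol}(2) guarantee that the measure-one sets on the two sides live on disjoint parts of $V_\kappa$ (see the Remark following Definition~\ref{RadinWithCol}), so there is no interaction between the two halves and the concatenation automatically satisfies clause (2) at the splicing point $n$ (this uses that $\lambda_{n+1}=\kappa_{w_n}$ is already built into both pieces).

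The heart of the matter is checking that a one-point extension $q\cat v$ with $v\in A_i$ gets routed correctly under this bijection: if $i\leq n$ the new block $q_i\cat v$ lands entirely inside the $\mathbb{R}_{w_n}/p^{\leq n}$-coordinate, while if $i>n$ it lands in $\mathbb{R}_u/p^{>n}$; since the operation ${}^\curvearrowright\vec v$ is commutative in the coordinates of $\vec v$ (noted after Definition~\ref{Onepoint}), an arbitrary extension $q^\curvearrowright\vec v\leq^* p$ decomposes as a pair of extensions of the two halves. Thus $q\leq p$ in $\mathbb{R}_u$ if and only if the images are coordinatewise below $(p^{\leq n},p^{>n})$ in the product, which is exactly what is needed. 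One should also observe that the length $\ell$ of a condition in $\mathbb{R}_u/p$ corresponds to $\ell$ on $\mathbb{R}_{w_n}/p^{\leq n}$ plus $\ell$ on $\mathbb{R}_u/p^{>n}$ minus the overlap, so the bijection is onto.

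For part (2), I would apply part (1) twice: first split at $n$ to get $\mathbb{R}_u/p\simeq \mathbb{R}_{w_n}/p^{\leq n}\times \mathbb{R}_u/p^{>n}$, then split $\mathbb{R}_u/p^{>n}$ at its first coordinate (the block $p_{n+1}$) to peel off $\mathbb{R}_{w_{n+1}}/p_{n+1}\times \mathbb{R}_u/p^{>n+1}$. Now invoke the hypothesis $\len(w_{n+1})=1$: by the Remark after Definition~\ref{BasicModules} (and Remark~\ref{RemarkOnlength}), when $\len(w_{n+1})=1$ the set $A^{p_{n+1}}=\emptyset$, so no one-point extensions are possible in that block and $\mathbb{R}_{w_{n+1}}/p_{n+1}$ collapses to the single Lévy-collapse factor $\Col(\lambda_{n+1}^{+4},\kappa_{w_{n+1}})/h$; since $\lambda_{n+1}=\kappa_{w_n}$ this is $\Col(\kappa_{w_n}^{+4},\kappa_{w_{n+1}})/h$, giving the stated three-factor product.

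I expect the main obstacle to be the bookkeeping in verifying that the concatenation map is genuinely order-preserving in both directions for the \emph{non-pure} ordering $\leq$ — i.e., that every addable $\vec v$ on the combined condition splits into an addable tuple on the left half and an addable tuple on the right half and vice versa. This hinges on the disjointness of the supports $\z{A^{p_k}}$ (so that a measure sequence $v$ addable to one block is never "meant for" the other) and on the fact that pruned conditions, which we may assume by the preceding lemma, make addability purely a local condition on a single block. Once this is isolated, the rest is routine verification that inclusions of the collapse parts $h$, the measure-one sets $A$, and the functions $H$ match up coordinatewise.
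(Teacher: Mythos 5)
Your proposal is correct and follows essentially the same route as the paper: the paper declares Clause~(1) ``trivial'' (the splitting you spell out in detail) and obtains Clause~(2) exactly as you do, by applying (1) again to peel off the block $p_{n+1}$ and then invoking Remark~\ref{RemarkOnlength} to identify $\mathbb{R}_{w_{n+1}}/p_{n+1}$ with $\Col(\kappa_{w_n}^{+4},\kappa_{w_{n+1}})/h$ when $\len(w_{n+1})=1$. Your extra care about disjointness of the measure-one sets and the routing of one-point extensions is just an expansion of what the paper leaves implicit.
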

\begin{proof}
Clause~(1) is trivial. For (2), using Remark~\ref{RemarkOnlength} we have
$$\mathbb{R}_u/p^{>n}\simeq \mathbb{R}_{w_{n+1}}/p_n\times \mathbb{R}_u/p^{>n+1}\simeq \Col(\kappa_{w_n}^{+4},\kappa_{w_{n+1}})/h\times \mathbb{R}_u/p^{>n+1}.$$
Now, by (1) the result follows. 
\end{proof}

\subsubsection{Properties of the main forcing}

In this section we prove that the modified Radin forcing  of  Definition~\ref{MainPoset} behaves regularly and obtain the corresponding cardinal structure in the generic extension. For the rest of section $u$ will denote a fixed measure sequence with $\len(u)\geq 2$.

\smallskip

The next lemma implies that $\mathbb{R}_u$ does not collapse cardinals $\geq \kappa_u^+$:

\begin{lemma}\label{ccness}
For each $u\in\mathcal{U}_\infty$, there is a map $c_u\colon \mathbb{R}_u\rightarrow V_{\kappa_u}$ such that 
$$c_u(p)=c_u(q)\Longrightarrow\;\text{$p$ and $q$ are $\leq^*$-compatible}.$$

In particular, the poset $\mathbb{R}_u$ is $\kappa^+_u$-linked.
\end{lemma}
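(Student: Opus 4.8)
The natural strategy is to colour a condition by its \emph{stem} --- everything except the measure-one sets and the guiding functions. Concretely, for $p=\langle (w_n,\lambda_n,h_n,A_n,H_n)\mid n\leq \ell(p)\rangle\in\mathbb{R}_u$ put
$$c_u(p):=\big\langle\, \ell(p),\ \langle w_n\mid n<\ell(p)\rangle,\ \langle \lambda_n\mid n\leq \ell(p)\rangle,\ \langle h_n\mid n\leq \ell(p)\rangle\,\big\rangle.$$
First one checks $c_u(p)\in V_{\kappa_u}$. As $\kappa_u$ carries the normal measure $\bar\mu_u$ it is inaccessible, so it suffices that every coordinate has rank ${<}\kappa_u$: for $n<\ell(p)$ one has $\kappa_{w_n}=\lambda_{n+1}<\kappa_u$, and since $w_n$ is a measure sequence on $\kappa_{w_n}$ of length ${\leq}\,\kappa_{w_n}^{++}$ whose entries are subsets of $V_{\kappa_{w_n}}$ we get $\mathrm{rank}(w_n)<\kappa_u$; every $\lambda_n<\kappa_u$; and every $h_n$ is a partial function of size ${<}\lambda_n^{+4}<\kappa_u$ from $\lambda_n^{+4}$ into $\kappa_{w_n}\leq\kappa_u$, hence of rank ${<}\kappa_u$ (the only non-obvious coordinate being the last one, $h_{\ell(p)}\in\Col(\lambda_{\ell(p)}^{+4},\kappa_u)$; note that $w_{\ell(p)}=u$ is constant and need not be recorded). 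Since $\kappa_u$ is inaccessible $|V_{\kappa_u}|=\kappa_u$, so once the implication ``$c_u(p)=c_u(q)\Rightarrow p,q$ are $\leq^*$-compatible'' is proved, $c_u$ exhibits $\mathbb{R}_u$ as a union of $\kappa_u$-many pairwise $\leq^*$-compatible (hence pairwise compatible) families, which is exactly $\kappa_u^+$-linkedness.

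So assume $c_u(p)=c_u(q)$, and write $p=\langle(w_n,\lambda_n,h_n,A^p_n,H^p_n)\mid n\leq\ell\rangle$, $q=\langle(w_n,\lambda_n,h_n,A^q_n,H^q_n)\mid n\leq\ell\rangle$; thus $p,q$ agree on every coordinate except possibly the pairs $(A_n,H_n)$. I claim that $r=\langle(w_n,\lambda_n,h_n,A^r_n,H^r_n)\mid n\leq\ell\rangle$, defined below, is a legal condition with $p\leq^* r$ and $q\leq^* r$. Fix $n\leq\ell$; if $\len(w_n)=1$ then $A^p_n=A^q_n=\emptyset$ and there is nothing to do, so assume $\len(w_n)\geq 2$ and fix a weak constructing pair $(j,F)$ for $w_n$ with associated ultrapower embedding $i$, so that $F^*_{w_n}=w_n(1)$ is the pull-back under $i$ of the $N$-generic filter $F$ for a L\'evy collapse (Definitions~\ref{WeakConstruct} and \ref{MeasureSequenceDerived}). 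Set
$$e_n:=\big\{\,\langle\kappa_v,\ H^p_n(\kappa_v)\cup H^q_n(\kappa_v)\rangle \;\big|\; v\in A^p_n\cap A^q_n\ \text{and}\ H^p_n(\kappa_v)\cup H^q_n(\kappa_v)\ \text{is a function}\,\big\},$$
$$A^r_n:=\{v\in A^p_n\cap A^q_n\mid \kappa_v\in\dom(e_n)\},\qquad H^r_n:=e_n\restriction \z{A^r_n}.$$
To see that $r_n\in\mathbb{R}^*_{w_n}$ (Definition~\ref{BasicModules}) I will use the following closure facts, all consequences of {\L}o\'s's theorem together with the fact that $F$ is a generic, hence directed, filter:
\begin{enumerate}
\item[(i)] $F^*_{w_n}$ is directed: if $f,g\in F^*_{w_n}$ then $i(f)(\kappa_{w_n}),i(g)(\kappa_{w_n})\in F$, so these collapse conditions are compatible and their union lies in $F$; by {\L}o\'s this means that $\{\alpha\in\dom(f)\cap\dom(g)\mid f(\alpha)\cup g(\alpha)\text{ is a function}\}$ is $\mu_{w_n}$-large and that the pointwise union of $f,g$ over that set again belongs to $F^*_{w_n}$;
\item[(ii)] $F^*_{w_n}$ is closed under restricting the domain to a $\mu_{w_n}$-large set;
\item[(iii)] if $D\in\mu_{w_n}$ then $\{v\in\mathcal{U}_\infty\mid\kappa_v\in D\}\in\mathscr{F}_{w_n}$, and conversely $\z A\in\mu_{w_n}$ whenever $A\in\mathscr{F}_{w_n}$ (the converse being the routine fact recorded after Definition~\ref{DiagonalIntersection}).
\end{enumerate}
Granting (i)--(iii): $A^p_n\cap A^q_n\in\mathscr{F}_{w_n}$ as $\mathscr{F}_{w_n}$ is a filter; $\dom(e_n)$ is $\mu_{w_n}$-large by (i), so $A^r_n\in\mathscr{F}_{w_n}$ and $\z{A^r_n}\in\mu_{w_n}$ by (iii); hence $e_n\in F^*_{w_n}$ by (i) and $H^r_n=e_n\restriction\z{A^r_n}\in F^*_{w_n}$ by (ii), with $\dom(H^r_n)=\z{A^r_n}$ as Definition~\ref{BasicModules}(4) requires. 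The remaining clauses of Definitions~\ref{BasicModules} and \ref{RadinWithCol} are inherited since $w_n,\lambda_n,h_n$ are unchanged and $A^r_n\s A^p_n$ (so $\min\z{A^r_n}\geq\min\z{A^p_n}>\max(\sup\mathrm{ran}(h_n),\lambda_n)$, and likewise for the other end-conditions). Finally $p\leq^* r$, $q\leq^* r$: indeed $A^r_n\s A^p_n,A^q_n$, and for $v\in A^r_n$ we have $H^r_n(\kappa_v)=H^p_n(\kappa_v)\cup H^q_n(\kappa_v)\supseteq H^p_n(\kappa_v),H^q_n(\kappa_v)$.

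The step requiring genuine care --- the only non-formal one --- is (iii): passing between the three filters living on $V_{\kappa_{w_n}}$ while keeping track of the guiding functions, namely the pull-back $F^*_{w_n}$ and its projection $\mu_{w_n}$ to $P(\kappa_{w_n})$, the derived normal measure $\bar\mu_{w_n}$, and the refined filter $\mathscr{F}_{w_n}=\bar\mu_{w_n}\cap\bigcap_{2\leq\alpha<\len(w_n)}w_n(\alpha)$. For $D\in\mu_{w_n}$, the set $\{v\mid\kappa_v\in D\}$ is $\bar\mu_{w_n}$-large because the push-forward of $\bar\mu_{w_n}$ along $v\mapsto\kappa_v$ is exactly $\mu_{w_n}$, and it lies in every $w_n(\alpha)$ ($2\leq\alpha<\len(w_n)$) because $\kappa_{w_n}=(w_n\restriction\alpha)(0)\in j(D)$ (as $\mu_{w_n}$ is contained in the normal measure of $(j,F)$), so that $w_n\restriction\alpha\in j(\{v\mid\kappa_v\in D\})$. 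Everything else follows immediately from the definitions of \S\ref{SubsectionRadinwithcollapses}.
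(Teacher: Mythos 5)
Your proof is correct, and the core amalgamation step is the same as the paper's: given two conditions with the same colour, unite the guiding functions $H^p,H^q$ pointwise on the measure-one set where their union is a function, and use that this set is $\mu$-large (via {\L}o\'s and the compatibility of $i(H^p)(\kappa),i(H^q)(\kappa)$ in the generic filter $F$) to see the merged pair $(A^r,H^r)$ is again legal. The difference is in the colouring. The paper takes $c_u(p):=\langle p_n\mid n<\ell(p)\rangle^\smallfrown\langle\lambda_{\ell(p)},h_{\ell(p)}\rangle$, i.e.\ it records the lower coordinates \emph{verbatim} (measure-one sets and guiding functions included) and only forgets $(A,H)$ at the top coordinate; consequently the merge is performed only at index $\ell(p)$, where the constructing pair for $u$ itself is available. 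You instead forget $(A,H)$ at \emph{every} coordinate and merge level by level, which requires invoking a weak constructing pair for each $w_n$ with $\len(w_n)\geq 2$ (available since $w_n\in\mathcal{U}_\infty\subseteq\mathcal{U}_0$) and the translation facts (i)--(iii) between $F^*_{w_n}$, $\mu_{w_n}$, $\bar\mu_{w_n}$ and $\mathscr{F}_{w_n}$, which you verify correctly. Your colouring is coarser, so it yields a formally stronger linking decomposition at the cost of a slightly longer verification; both give exactly the $\kappa_u^+$-linkedness needed for the cardinal-preservation argument. One cosmetic remark: in clause (iii) the set $\{v\mid\kappa_v\in D\}$ should of course be read as a subset of $V_{\kappa_{w_n}}$ (and, where needed, intersected with $\mathcal{U}_\infty$, which is $\mathscr{F}_{w_n}$-large by $\kappa_{w_n}$-completeness); this does not affect the argument.
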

\begin{proof}
Fix $u\in\mathcal{U}_\infty$ and define $c_u(p):=\langle p_n\mid n<\ell(p)\rangle^\smallfrown \langle \lambda_{\ell(p)}, h_{\ell(p)}\rangle.$ 

Assume $c_u(p)=c_u(q)$ and write $\ell$, $\lambda$ and $h$ for the corresponding common values.\footnote{Note that from $c_u(p)=c_u(q)$  it follows that $\ell(p)=\ell(q)$.} Since $H^{p_\ell}, H^{q_\ell}\in F^*_u$ then
$$A:=\{\alpha<\kappa_u\mid \text{$H^{p_\ell}(\alpha)\cup H^{q_\ell}(\alpha)$ is a function}\}\in \mu_u.$$
Now, let $B^*:=A\cap \z{A}^{p_\ell}\cap \z{A}^{q_\ell}$ and 
$B:=\{w\in A^{p_\ell}\cap A^{q_\ell}\mid \kappa_w\in B\}.$ 

Clearly, $B\in\mathscr{F}_u$ and $\z{B}\s B^*$. Define $r:=\langle r_m\mid m\leq \ell\rangle$ where
$$r_m:=\begin{cases}
p_n, & \text{if $m<\ell$;}\\
(u, \lambda, h,B, (H^{p_\ell}\cup H^{q_\ell})\restriction \z{B}), & \text{otherwise.}
\end{cases}
$$
Obviously, $p,q\leq^* r$, as desired.
\end{proof}
Hereafter $G\s \mathbb{R}_u$ will stand for a $V$-generic filter. Moreover, for technical reasons we shall assume  that if $p\in G$ then $\lambda_0:=\omega$.\label{ChoiceofG}  

\begin{definition}\label{RadinObject}
The \textit{Radin sequence induced by $G$} is $\vec{u}$, the increasing enumeration of the set $R:=\{v\mid \exists p\in G\,\exists n<\ell(p)\,(v=w^{p_n})\}$.

Denote $C:=\{\kappa_{v}\mid v\in R\}.$\footnote{Although $R$ and $C$ formally depend on $G$ we have suppressed the dependence on $G$ to enlighten the notation.} 
\end{definition}

\begin{lemma}\label{Prelimminaryforclub}
Let $p\in G$ and some $v$ with $\len(v)\geq 2$ appearing in $p$. Then, 
\begin{enumerate}
\item $A\in \mathscr{F}_{v}$ iff there is $\delta<\kappa_v$ such that  $\{w\in R\mid \kappa_w\in (\delta,\kappa_v)\}\s {A}$.
    \item $\sup(\z{A}\cap C)=\kappa_{v}$ iff there is some $\alpha\in[2,\len(v))$ such that $A\in v(\alpha)$.
\end{enumerate}
\end{lemma}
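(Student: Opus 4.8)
The plan is to prove both clauses through the genericity/density dichotomy that governs Radin-type forcings: a set that is large --- in $\mathscr{F}_v$ for clause~(1), in some $v(\alpha)$ for clause~(2) --- gets absorbed by a \emph{tail}, respectively by a \emph{cofinal} subset, of the part of the generic club lying below $\kappa_v$, because shrinking the measure-one sets attached to a condition is a $\leq^{*}$-dense operation; a set that is not large is in turn shown to fail by means of suitable one-point extensions. The common engine is the following structural observation. Since $v$ appears in $p\in G$, for every $q\in G$ one may --- after passing inside $G$ to a common strengthening of $q$ and $p$ --- assume that $v=w^{q_n}$ for some $n$; and whenever $q\leq r$ in $\mathbb{R}_u$ with $v=w^{q_n}$, inspecting Definition~\ref{Onepoint} shows that every measure sequence inserted into the $n$-th block in passing from $q$ to $r$ --- equivalently, every new $w^{r_m}$ with $\kappa_{w^{r_m}}\in(\lambda^{q_n},\kappa_v)$ --- lies in $A^{q_n}$, because inserting $x$ into a block with attached set $A$ produces two blocks whose attached sets are $A\cap V_{\kappa_x}\subseteq A$ and a tail of $A$, and this is preserved under iteration. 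In particular, if $r\in G$ with $v=w^{r_n}$ and $\delta:=\lambda^{r_n}$, then $\{w\in R\mid \kappa_w\in(\delta,\kappa_v)\}\subseteq A^{r_n}$.

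\emph{Clause~(1).} For the forward direction, let $A\in\mathscr{F}_v$. The set of conditions that are either incompatible with $p$ or have $v=w^{q_n}$ with $A^{q_n}\subseteq A$ is dense: from a common strengthening of $p$ with a given condition one shrinks the measure-one set of $v$'s block to its intersection with $A$ (still in the filter $\mathscr{F}_v$) and restricts the accompanying function to the new, smaller domain (still in $F^{*}_v$, which is closed under restriction to $\mu_v$-large subsets of its domain, $\mu_v$ being the normal measure derived from the pertinent embedding). Meeting this dense set with $G$ and invoking the observation above produces a suitable $\delta$. For the converse, suppose $\delta<\kappa_v$ works, and fix $r\in G$ with $v=w^{r_n}$ forcing that every $w\in R$ with $\kappa_w\in(\delta,\kappa_v)$ belongs to $A$. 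For each $w\in A^{r_n}$ with $\kappa_w>\delta$ the one-point extension $r\cat w$, which extends $r$, forces $w\in R$ and $\kappa_w\in(\delta,\kappa_v)$, hence forces $w\in A$; so $w\in A$. Thus $A^{r_n}\cap\{w\mid\kappa_w>\delta\}\subseteq A$, and this set is $\mathscr{F}_v$-large since $\{w\mid\kappa_w>\delta\}$ is --- each of $\bar\mu_v$ and the $v(\alpha)$'s being $\kappa_v$-complete and non-principal, while $\{w\mid\kappa_w\leq\delta\}$ has size ${<}\kappa_v$. By upward closure, $A\in\mathscr{F}_v$.

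\emph{Clause~(2).} If $A\in v(\alpha)$ for some $\alpha\in[2,\len(v))$, fix $\gamma<\kappa_v$; from any $q\leq p$ with $v=w^{q_n}$, the set $A\cap A^{q_n}\cap\{w\mid\kappa_w>\gamma\}\cap\mathcal{U}_\infty$ is $v(\alpha)$-large --- as $A^{q_n}\in\mathscr{F}_v\subseteq v(\alpha)$, as $\mathcal{U}_\infty\cap V_{\kappa_v}$ and the tail $\{w\mid\kappa_w>\gamma\}$ lie in $v(\alpha)$, and as $v(\alpha)$ is $\kappa_v$-complete --- hence contains an addable $w$, and $q\cat w$ forces $w\in R\cap A$ with $\gamma<\kappa_w<\kappa_v$; by genericity $\z A\cap C$ is then cofinal in $\kappa_v$. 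Conversely, assume $A\notin v(\alpha)$ for every $\alpha\in[2,\len(v))$; then $V_{\kappa_v}\setminus A\in v(\alpha)$ for each such $\alpha$, and picking $r\in G$ with $v=w^{r_n}$ and shrinking the measure-one set of $v$'s block to $A^{r_n}\setminus A$ --- which remains $\mathscr{F}_v$-large --- yields, beyond the resulting $\leq^{*}$-extension (which one again arranges to lie in $G$), a $\delta<\kappa_v$ past which no measure sequence inserted below $\kappa_v$ meets $A$; hence $\sup(\z A\cap C)\leq\delta<\kappa_v$.

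The step I expect to be the main obstacle is this last one: verifying that $A^{r_n}\setminus A$ stays large in the \emph{full} filter $\mathscr{F}_v$ --- i.e.\ not just relative to the $v(\alpha)$'s but also relative to the normal-measure component $\bar\mu_v$ --- and, relatedly, pinning down the precise reading of $\z A\cap C$ under which clauses~(1) and~(2) dovetail (the tail-versus-cofinal dichotomy for $\mathscr{F}_v$ as opposed to the individual $v(\alpha)$'s, using also that on $R$ the map $w\mapsto\kappa_w$ is injective and that $C$ is closed and unbounded below every $\kappa_v$ with $\len(v)\geq 2$). The density manipulations themselves are routine once the bookkeeping with prunedness and with restrictions of elements of $F^{*}_v$ has been arranged.
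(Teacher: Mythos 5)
The obstacle you flagged in the converse of clause~(2) is a genuine gap, and it cannot be closed by a cleverer argument: the hypothesis ``$A\notin v(\alpha)$ for all $\alpha\in[2,\len(v))$'' does not give $A\notin\bar{\mu}_v$, so the set $A^{r_n}\setminus A$ you pass to need not be $\bar{\mu}_v$-large, and the shrinking is then illegal. In fact the equivalence as literally stated fails: take $A$ to be the set of length-one measure sequences in $\mathcal{U}_\infty\cap V_{\kappa_v}$. Then $A\in\bar{\mu}_v$ (so $A^{r_n}\cap A$ is $\bar{\mu}_v$-large and, by your own clause~(2) forward-style density argument run with $\bar{\mu}_v$ in place of $v(\alpha)$, length-one sequences from $A$ are inserted cofinally below $\kappa_v$, whence $\sup(\z{A}\cap C)=\kappa_v$), yet $A\notin v(\alpha)$ for every $\alpha\geq 2$, since those measures concentrate on sequences of length $\geq 2$. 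The resolution is that the right-hand side of~(2) must include the normal-measure coordinate; the paper's own proof silently does exactly this, concluding with ``there is $\alpha<\len(v)$ with $A\in v(\alpha)$'' under the convention (used explicitly later, in the Prikry lemma) that $v(1)$ denotes $\bar{\mu}_v$. With that reading, clause~(2) requires no fresh density argument at all: $\sup(\z{A}\cap C)=\kappa_v$ iff no tail of $\{w\in R\mid\kappa_w<\kappa_v\}$ is contained in $V_{\kappa_v}\setminus A$, which by clause~(1) applied to $V_{\kappa_v}\setminus A$ is equivalent to $V_{\kappa_v}\setminus A\notin\mathscr{F}_v$, i.e.\ to $A$ lying in at least one of the coordinate ultrafilters $\bar{\mu}_v, v(2),\dots$. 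You should restructure your proof of~(2) as this two-line reduction to~(1) rather than rerunning the shrinking machinery.

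Clause~(1) of your proposal is correct, and its converse takes a mildly different route from the paper's. The forward direction is the paper's argument (density of shrinking the block attached to $v$, with the routine check that the collapse-function part restricts correctly). For the converse the paper argues contrapositively: if $A\notin\mathscr{F}_v$ then $V_{\kappa_v}\setminus A$ is large for some coordinate measure, so the sets $E_\delta$ of conditions inserting a point of the complement above $\delta$ are dense, and no tail of $R\cap V_{\kappa_v}$ lands in $A$. You instead argue directly from a condition $r\in G$ forcing the tail inclusion, using one-point extensions $r\cat{w}$ to conclude $A^{r_n}\cap\{w\mid\kappa_w>\delta\}\s A$ and then upward closure of $\mathscr{F}_v$. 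Both are sound; yours trades a density argument for the (easy) observation that $\{w\mid\kappa_w>\delta\}\in\mathscr{F}_v$, and your ``structural observation'' about insertions landing in $A^{q_n}$ is exactly the content of Definitions~\ref{Onepoint} and~\ref{MainPoset}.
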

\begin{proof}
(1): Suppose that $A\in\mathscr{F}_v$ and put $$D:=\{p\leq q\mid \exists i\leq \ell(q)\; (w_i=v\,\wedge\, A_i\s A)\}.$$
Clearly $D$ is dense below $p$, hence may pick $q\in G\cap D$. Put $\delta:=\kappa_{w_{i-1}}$ and note that $q\forces_{\mathbb{R}_u}\{w\in \lusim{R}\mid \kappa_w\in (\delta,\kappa_v)\}\s \check{A}_i$. Since $q\in G$ then $$\{w\in R\mid \kappa_w\in (\delta,\kappa_v)\}\s A_i\s A.$$

For the other direction, if $A\notin \mathscr{F}_{v}$, then either $V_{\kappa_v}\setminus A\in \bar{\mu}_v$ or  there is some $\alpha\geq 2$ such that $V_{\kappa_v}\setminus A\in v(\alpha)$. Without loss of generality assume that we fall in the second case. Then  $(V_{\kappa_v}\setminus A)\cap B\in v(\alpha)$, for every 
$B\in \mathscr{F}_{v}$. It thus follows that for each $\delta<\kappa_v$ the set
$$E_{\delta}=\{p\leq q\mid \exists i<\ell(q)\, (w_{i+1}=v\,\wedge\, w_i\notin A\,\wedge\, \kappa_{w_i}>\delta)\}$$
is dense. This yields the desired result.

(2): Note that $\sup(\z{A}\cap C)=\kappa_{v}$ iff  $\{w\in R\mid \kappa_w\in (\delta,\kappa_v)\}\cap A\neq \emptyset,$ for all $\delta<\kappa_v$. By (1) this latter is equivalent to $V_{\kappa_v}\setminus A\notin \mathscr{F}_v$, which it turns to be equivalent to the existence of some $\alpha<\len(v)$ such that $A\in v(\alpha).$
\end{proof}

\begin{corollary}\label{ClubIndeed}
$C$ is a club in $\kappa_u$.
\end{corollary}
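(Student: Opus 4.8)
The plan is to establish separately that $C\subseteq\kappa_u$, that $C$ is unbounded in $\kappa_u$, and that $C$ is closed. The inclusion $C\subseteq\kappa_u$ is immediate: any $v\in R$ occurs as $w^{p_n}$ for some $p\in G$ and $n<\ell(p)$, and by Definition~\ref{RadinWithCol} the critical points along $p$ strictly increase up to $\kappa_{w^{p_{\ell(p)}}}=\kappa_u$, so $\kappa_v<\kappa_u$. For unboundedness, fix $\gamma<\kappa_u$ and let $D_\gamma:=\{q\in\mathbb{R}_u\mid \exists n<\ell(q)\,(\gamma<\kappa_{w^{q_n}})\}$. I would show $D_\gamma$ is dense: given $r$ with top block $(u,\lambda,h,A,H)$, the set $S:=\{v\in V_{\kappa_u}\mid \kappa_v>\gamma\}$ belongs to $\mathscr{F}_u$ — it lies in $\bar{\mu}_u$ since $\{\alpha\mid\langle\alpha\rangle\in S\}=(\gamma,\kappa_u)\in\mu_u$, and in each $u(\alpha)$ for $\alpha\in[2,\len(u))$ since $u\restriction\alpha\in j(S)$, as $\kappa_{u\restriction\alpha}=\kappa_u>\gamma=j(\gamma)$. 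Hence $A\cap S\in\mathscr{F}_u$, and after the routine adjustment of $H$ to $\z{(A\cap S)}$ one obtains a pure extension $r^*\le^* r$ with top $A$-set $A\cap S$; then $r^*\cat v\in D_\gamma$ for any $v$ in it, and $r^*\cat v$ extends $r$. By genericity $G$ meets $D_\gamma$, yielding a point of $C$ strictly between $\gamma$ and $\kappa_u$; hence $\sup C=\kappa_u$.

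For closedness I would argue by induction on the ordinal $\kappa_u$, proving at once for every $u\in\mathcal{U}_\infty$ with $\len(u)\ge 2$ that the induced club is closed in $\kappa_u$ (the instances invoked below are of exactly this form). Fix $\gamma\in\Lim(C)$ with $\gamma<\kappa_u$. Using the density just shown, there is $r\in G$ possessing a block $w^{r_j}$ with $j<\ell(r)$ and $\gamma<\kappa_{w^{r_j}}<\kappa_u$; let $k\le j$ be least with $\kappa_{w^{r_k}}>\gamma$, write $w:=w^{r_k}\in R$ and $\lambda:=\lambda^{r_k}$ (that is, $\lambda=\kappa_{w^{r_{k-1}}}$, or $\lambda=\omega$ if $k=0$). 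If $\lambda=\gamma$ we are done, so assume $\lambda<\gamma<\kappa_w$. An analysis of the ordering shows $\len(w)\ge 2$: were $\len(w)=1$, then $A^{r_k}=\emptyset$, and since neither $^\curvearrowright$ nor $\le^*$ enlarges $A$-sets or deletes critical points, no condition below $r$ could place a block with critical point in $(\lambda,\kappa_w)$; thus $C\cap(\lambda,\kappa_w)=\emptyset$, contradicting $\gamma\in\Lim(C)$ and $\gamma>\lambda$. (This is the degenerate case of Lemma~\ref{Prelimminaryforclub}(1).)

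Next I would invoke the factoring isomorphism of Lemma~\ref{Factoring}(1), $\mathbb{R}_u/r\simeq\mathbb{R}_{w}/r^{\le k}\times\mathbb{R}_u/r^{>k}$, and factor $G=G_0\times G_1$ accordingly. Then $G_0$ is $\mathbb{R}_{w}/r^{\le k}$-generic over $V$, and since the elements of $R$ with critical point below $\kappa_w$ are precisely the measure sequences occurring in conditions of $G_0$, the set $C\cap\kappa_w$ is exactly the club induced by $G_0$ as an $\mathbb{R}_{w}$-generic. As $\kappa_w<\kappa_u$ and $\len(w)\ge 2$, the induction hypothesis gives that $C\cap\kappa_w$ is a club in $\kappa_w$, hence closed; and since $\gamma<\kappa_w$ we have $C\cap\gamma=(C\cap\kappa_w)\cap\gamma$, so $\gamma\in\Lim(C\cap\kappa_w)$, whence $\gamma\in C\cap\kappa_w\subseteq C$. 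The base case, the least critical point carried by a length-$\ge 2$ measure sequence, is direct: no length-$\ge 2$ measure sequences then exist below $\kappa_u$, so $\mathscr{F}_u=\bar{\mu}_u$ concentrates on the length-one sequences, $\mathbb{R}_u$ is forcing-equivalent to a Prikry forcing with interleaved collapses, and $C$ is a discrete cofinal $\omega$-sequence in $\kappa_u$, hence vacuously closed below $\kappa_u$.

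The step I expect to require the most care is identifying $C\cap\kappa_w$ with the club induced by the lower factor $G_0$: this amounts to verifying that the isomorphism of Lemma~\ref{Factoring}(1) respects the canonical names for the Radin sequence, so that no measure sequence with critical point $<\kappa_w$ is contributed by the $\mathbb{R}_u/r^{>k}$-side, and conversely. This is also why the argument must be recursive: because $\mathscr{F}_w$ is merely an intersection of ultrafilters and not an ultrafilter, one cannot, for a limit point $\gamma$ sitting in the gap of a block $w$, thin $A^{r_k}$ to $\{v\in A^{r_k}\mid\kappa_v<\gamma\}$ or to its complement inside $A^{r_k}$, so no self-contained one-level combinatorial argument is available and the descent to the smaller forcing $\mathbb{R}_w$ is essential.
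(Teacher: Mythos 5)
Your proof is correct, but for closedness it takes a genuinely different --- and considerably heavier --- route than the paper's. The paper argues at a single level: given $\delta\notin C$, fix $p'\in G$ (below a condition forcing $\delta\notin\lusim{C}$, so that $\delta$ is not a critical point mentioned in $p'$), let $n$ be the least block index with $\delta<\kappa_{w_n}$, and shrink $A_n$ to $A^{*}_n:=\{v\in A_n\mid \kappa_v>\delta\}$. After this pure extension every future point of $C$ contributed by block $n$ or higher exceeds $\delta$, while the earlier blocks only contribute points $\le\kappa_{w_{n-1}}<\delta$; hence $\sup(C\cap\delta)<\delta$, and a density argument finishes. Your alternative --- induction on $\kappa_u$, factoring $\mathbb{R}_u/r\simeq\mathbb{R}_w/r^{\le k}\times\mathbb{R}_u/r^{>k}$ at the first block above the limit point $\gamma$ and applying the inductive hypothesis to the lower-factor generic --- is valid: the identification of $C\cap\kappa_w$ with the club of $G_0$ does hold, since extensions never delete measure sequences and critical points strictly increase along a condition, and your treatment of the degenerate length-one case and of the base case is sound. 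What it buys is a reduction to a smaller forcing at the price of transporting genericity and the Radin name through the factorization; none of that machinery is needed.

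Your closing justification for rejecting a one-level argument is, however, mistaken, and it contradicts your own unboundedness computation. You claim one cannot thin $A^{r_k}$ to $\{v\in A^{r_k}\mid\kappa_v<\gamma\}$ or to its complement inside $A^{r_k}$ because $\mathscr{F}_w$ is not an ultrafilter. But the complement contains $\{v\in A^{r_k}\mid\kappa_v>\gamma\}$, and this set \emph{is} $\mathscr{F}_w$-large whenever $\gamma<\kappa_w$: it belongs to $\bar{\mu}_w$ since $(\gamma,\kappa_w)\in\mu_w$, and to each $w(\alpha)$ since $\kappa_{w\restriction\alpha}=\kappa_w>\gamma=j(\gamma)$ --- exactly the computation you carried out for the set $S$ in your unboundedness paragraph, with $w$ in place of $u$. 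This is precisely the shrinking the paper performs, so a self-contained one-level combinatorial proof of closedness is available.
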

\begin{proof}
To show that $C$ is unbounded, apply Lemma~\ref{Prelimminaryforclub}(2) to $p\in G$ and $A:=V_{\kappa_u}$, hence getting $\sup(C)=\kappa_u$.
To show that $C$ is closed, let $\delta\notin C$ and let us prove that $C\cap\delta$ is bounded. 
Indeed, let $p\in G$ with $p\Vdash_{\mathbb{R}_u} \delta\notin \lusim{C}$. Consider the set $$D=\{p\leq q\mid q\Vdash_{\mathbb{R}_u} \lusim{C}\cap\delta\text{ is bounded}\}$$ 
Let $p\leq p'$ be any condition. Since $p'\Vdash \delta\notin \lusim{C}$ it follows that $\delta\neq \kappa_v$, for all $v$ mentioned in $p'$. 
Let $n$ be the least index such that $\delta<\kappa_{n}$, and shrink $A_n$ to $A^{*}_n$ so that $\min(\z{A}^{*}_n)>\delta$. Then $p'\leq p^*$ and $p^*\Vdash_{\mathbb{R}_u} \sup(\lusim{C}\cap\delta)<\delta$, hence $p^*\in D$. Altogether, $D$ is dense above $p'$. By density $C\cap\delta$ is bounded.
\end{proof}

The next is the key lemma. We follow the proof from \cite[Lemma~5.8]{MR2768695} and more closely the argument from \cite{YairEskew}:

\begin{lemma}[Prikry lemma]\label{Prikrylemma} For each $v\in\mathcal{U}_\infty$ suppose that  $$\text{$2^{\kappa_v}\leq\kappa_{v}^{++}$ and  \,$2^{\kappa_v^{++}}=\kappa_v^{+3}$}.$$ 

Then, the triple $\langle \mathbb{R}_u, \leq, \leq^*\rangle$ satisfies the Prikry property: namely, for every sentence $\sigma$ in the language of forcing and for every $p\in\mathbb{R}_u$ there is $p\leq^*p^*$ such that $p^*\parallel\sigma$.
\end{lemma}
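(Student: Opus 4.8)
The plan is to prove the Prikry property by the standard ``split and diagonalize'' argument, adapted to the presence of interleaved collapses. Given $p = \langle p_n \mid n \le \ell(p)\rangle \in \mathbb{R}_u$ and a sentence $\sigma$, the goal is to produce a pure extension $p \le^* p^*$ deciding $\sigma$. The heuristic is that the ``Cohen-like'' (collapsing) coordinates are highly closed, so they can be handled by direct closure arguments, while the ``Radin-like'' coordinates must be handled by shrinking the measure-one sets $A_n$ and the guiding functions $H_n$ along a diagonal intersection, so that no one-point (or finite-tuple) extension $p \curvearrowright \vec v$ can force $\sigma$ without the pure part already doing so.

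Concretely, I would proceed as follows. First, reduce to the case $\ell(p) = 0$ (i.e.\ $p = p_0 = (u,\lambda,h,A,H)$): by Lemma~\ref{Factoring}(1), $\mathbb{R}_u/p \simeq \mathbb{R}_{w_{n}}/p^{\le n}\times \mathbb{R}_u/p^{>n}$, so working above the top coordinate suffices provided the argument is uniform in $u$; the lower coordinates are then absorbed by iterating. Second, fix the collapsing part: the poset $\Col(\lambda^{+4},\kappa_u)/h$ is $\lambda^{+4}$-closed, and in fact for the Prikry argument one wants to observe that the relevant ``directness'' is witnessed by $\le^*$ which, restricted to conditions of a fixed stem, is $\kappa_u$-closed on the measure-set side (by $\kappa_u$-completeness of $\mathscr{F}_u$ and of $\mu_u$, exactly as in Lemma~\ref{ccness}). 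Third, and this is the heart of it, run the recursion: for each $v \in A$ ask whether there is a pure extension of $(p\curvearrowright v)$ deciding $\sigma$; if so, fix one such, recording the decision (yes/no/no-decision) as a value of a function on $A$; since $\mathscr{F}_v$-many, and ultimately $\mathscr{F}_u$-many, choices are involved, use $\kappa_u$-completeness to homogenize — i.e.\ pass to an $\mathscr{F}_u$-large $A' \subseteq A$ on which the answer is constant, and simultaneously take the diagonal intersection (Definition~\ref{DiagonalIntersection}) of the chosen shrunken measure sets and the pointwise intersection of the chosen guiding functions (using that $F^*_u$ is, via the guiding generic $F$, closed enough — this is where the weak constructing pair and $F \in M$ are used). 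Iterate this through finitely many ``one-point'' layers, invoking $2^{\kappa_v} \le \kappa_v^{++}$ and $2^{\kappa_v^{++}} = \kappa_v^{+3}$ to control the number of possible stems / names that arise at each stage (this arithmetic hypothesis is precisely what makes the bookkeeping of ``possible decisions'' fit below $\kappa_u$ so that completeness applies). Finally, argue by induction on the length of the stem of an arbitrary extension $q \le p^*$ deciding $\sigma$ that in fact $p^*$ already decides it: if $q = p^* \curvearrowright \vec v$ followed by a pure extension, homogeneity of $p^*$ forces the pure part of $p^*$ to have already made the decision.

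The main obstacle — and the reason this is stated as a separate lemma rather than quoted from Cummings — is the interaction between the guiding generic and the diagonalization. In the classical Radin case one simply intersects fewer than $\kappa_u$ measure-one sets; here each layer also carries a collapsing function $h_n \in \Col(\lambda_n^{+4},\kappa_{w_n})$ and a guiding function $H_n \in F^*_u$, and when one shrinks $A$ to $A'$ and forms a diagonal intersection one must check that the correspondingly restricted/merged guiding functions still lie in $F^*_u$ — equivalently, that $F$ is sufficiently generic/closed over the relevant models. This is exactly the point where the hypotheses $F\in M$, ${}^\kappa M \subseteq M$, and $N^M_{i(\kappa)+1} = N_{i(\kappa)+1}$ from Definition~\ref{WeakConstruct} and the following remark enter, and where following \cite{YairEskew} rather than a naive adaptation of \cite[Lemma~5.8]{MR2768695} pays off: one works with $F^*_u$ as an ultrafilter-like object on functions and uses its completeness directly. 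The secondary obstacle is purely organizational — setting up the recursion so that the ``asked and answered'' decisions at stage $k$ number fewer than $\kappa_u$, which is why the cardinal arithmetic assumption $2^{\kappa_v^{++}} = \kappa_v^{+3}$ (bounding the number of nice names for $\sigma$-fragments that can show up) is invoked uniformly for all $v \in \mathcal{U}_\infty$ rather than just for $u$.
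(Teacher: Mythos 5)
Your overall architecture matches the paper's: reduce to a single top block, define for each lower part $s^{+}$ and each $v$ the decision sets inside $\Col(\kappa_v^{+4},\kappa_u)$, use genericity of the guiding filter over the normal ultrapower to pick an $H^*$ with $[H^*]_{u(1)}\in F_u$ meeting all of them, take diagonal intersections over lower parts and points, and close with a minimal-length argument. Your identification of where the weak constructing pair enters (merging the chosen guiding functions and keeping the result in $F^*_u$ via closure of $N_{v(1)}$ under $\kappa_v$-sequences and genericity of $F$) is also exactly right.

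There is, however, a genuine gap at the homogenization step, and it is precisely the step that distinguishes Radin forcing from Prikry forcing. You propose to ``pass to an $\mathscr{F}_u$-large $A'\subseteq A$ on which the answer is constant.'' But $\mathscr{F}_u=\bar{\mu}_u\cap\bigcap_{\alpha\geq 2}u(\alpha)$ is only a filter, not an ultrafilter: a partition of an $\mathscr{F}_u$-large set into the three pieces (decides $\sigma$ / decides $\neg\sigma$ / undecided) need not have an $\mathscr{F}_u$-large piece. All one gets is, for each $\alpha<\len(u)$, a $u(\alpha)$-large homogeneous piece $A(\alpha)$ with its own color $i_\alpha$, and the set one purifies to is $B^*=B\cap\bigcup_{\alpha}A(\alpha)$ --- a union of pieces carrying possibly different colors. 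Because of this, the closing induction cannot be dispatched by ``homogeneity of $p^*$'': given a minimal-length $r=s^{\smallfrown}(v,\ldots)^{\smallfrown}(u,\ldots)$ forcing $\sigma$, one must show that the shortened condition $q=s^{\smallfrown}(u,\ldots,B^{**},H^{**})$ already forces $\sigma$ by proving that every extension $q'$ of $q$ is compatible with some $q_v$ known to force $\sigma$; this requires reflecting the witnessing pairs along $j$ to build the three sets $B^{<\alpha}$, $B^{\alpha}$, $B^{>\alpha}$, and then a case analysis according to where the measure sequences mentioned in $q'$ sit relative to them. That construction and case analysis are the technical core of the lemma and are absent from your outline. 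A secondary inaccuracy: the hypotheses $2^{\kappa_v}\leq\kappa_v^{++}$ and $2^{\kappa_v^{++}}=\kappa_v^{+3}$ are not there to count nice names; they bound the number of measure sequences with critical point $\kappa_v$ by $\kappa_v^{+3}$, so that the intersection of all the decision-dense sets $D_{s^+,w}$ remains dense in the $\kappa_v^{+4}$-closed poset $\Col(\kappa_v^{+4},\kappa_u)$ --- which is what allows the guiding generic to meet it.
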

\begin{proof}
Let $p\in\mathbb{R}_u$  and $\sigma$ be a statement in the language of forcing. We shall argue that there is $p\leq^* p^*$ such that $p^*$ decides $\sigma$. To ease the notations, we shall assume that $p=(u, \omega, h_0, A_0,H_0)$ and will put $\kappa:=\kappa_u$.  
For expository purposes --in a slight abuse of notation--  we shall write $v(1)$ instead of $\bar{\mu}_v$, whenever $v\in \mathcal{U}_\infty$ (cf. Definition~\ref{MeasureSequenceDerived}).

\smallskip

For every $v\in A_0$, and every pair
$s^+=(s,h)\in [V_{\kappa_v}]^{<\omega}$ such that there are $\langle B,H\rangle, \langle A,F\rangle, g$ with $s\,^{\smallfrown}(v,\kappa_{w_{\ell(s)}},h, B,H)^{\smallfrown}(u, \kappa_{v}, g, A, F)\in \mathbb{R}_{u}$, define:
$$D^{0}_{s^+,v}:=\{g\geq H_0(\kappa_v)\mid \exists \langle B,H\rangle, \langle A,F\rangle \;(s^{\smallfrown}( v,\kappa_{w_{\ell(s)}},h,B,H)^{\smallfrown}( u,\kappa_v,g,A,F) \parallel\sigma)\},$$
$$D^{1}_{s^+,v}:=\{g\geq H_0(\kappa_{v})\mid \forall g'\geq g\,\forall \langle B,H\rangle, \langle A,F\rangle\;(s^{\smallfrown}( v,\kappa_{w_{\ell(s)}},h,B,H)^{\smallfrown}( u,\kappa_v,g',A,F\r \nparallel\sigma)\}.$$
Clearly, $D_{s^+,v}:=(D^{0}_{s^+,v}\cup D^{1}_{s^+,v})$ is dense open in $\Col(\kappa_v^{+4},\kappa)$. Since every measure sequence with critical point $\kappa_v$ is of length at most $\kappa_v^{++}$ and there are at most $2^{(2^{\kappa_v})}\leq \kappa_v^{+3}$ measures on $V_{\kappa_v}$, we conclude that there are at most $\kappa_v^{+3}$ measure sequences with critical point $\kappa_v$.\footnote{Here we use the extra hypothesis of the lemma.} Hence, $$D_{\kappa_v}:=\bigcap \{D_{s^+,w}\mid s^+\in [V_{\kappa_v}]^{<\omega}\,\wedge\,w\in\mathcal{U}_\infty\,\wedge\, \kappa_w=\kappa_v\}$$
is still dense open in $\Col(\kappa_v^{+4},\kappa)$. 

\smallskip

Consider $D^*=[\alpha\mapsto D_\alpha]_{u(1)}$. By elementarity, $D^*$ defines a dense open subset of $\Col(\kappa^{+4},i_{u(1)}(\kappa))^{N_{u(1)}}$. Since $[H_0]_{u(1)}\in F_u$ and $F_u$ is $N_{u(1)}$-generic for $\Col(\kappa^{+4},i_{u(1)}(\kappa))^{N_{u(1)}}$, there is $[H_0]_{u(1)}\leq[H^*]_{u(1)}\in F_u\cap D^*$. Shrink $A_0$ to $A^*:=\{v\in A_0\mid H_0(\kappa_v)\leq H^*(\kappa_v)\in D_{\kappa_v}\}$. Clearly, $A^*\in\mathscr{F}_u$. 

\smallskip

The condition $(u,\omega, h_0, A^*,H^*)$ enjoys the following \emph{diagonalization} pro\-perty: For every $v\in A^*$,  $s^+=(s,h)\in [V_{\kappa_v}]^{<\omega}$ and 
$g\geq H^*(\kappa_v)$, $$\exists \langle B,H\rangle, \langle A,F\rangle\;s^{\smallfrown}( v,\kappa_{w_{\ell({s})}},h,B,H)^{\smallfrown}( u,\kappa_v,g,A,F) \parallel\sigma$$ if and only if  $$(*)_{s^+,v} \ \ \exists\langle B,H\rangle, \langle A,F\rangle\;s^{\smallfrown}( v,\kappa_{w_{\ell(s)}},h,B,H)^{\smallfrown}( u,\kappa_v,H^*(\kappa_v),A,F) \parallel\sigma.$$
Indeed, the down-to-up implication is obvious. As for the opposite, since $$H^*(\kappa_v)\in D_{\kappa_v}\s D^0_{s^+, v}\cup D^1_{s^+,v},$$  $g\in D^0_{s^+,v}$ and $H^*(\kappa_v)\leq g$ it follows that $H^*(\kappa_v)\in D^0_{s^+,v}$.

Moreover, if  $\sigma$ is decided, this decision must be the same, for the above displayed conditions are compatible. 
Denote by $(*)^0_{s^+,v}$ (resp. by $(*)^1_{s^+,v}$) the fact that $\sigma$ (resp. $\neg\sigma$) is forced. 
For each $v\in A^*$ and $s^+\in [V_{\kappa_v}]^{<\omega}$ such that $(*)_{s^+,v}$ holds let us pick  witnessing pairs $\langle B_{s^+,v},H_{s^+,v}\rangle ,\langle A_{s^+,v},F_{s^+,v}\rangle$. We can take a lower bound for the functions $H_{s^+,v}$'s in $F_v$: Note that $$D:=\{[H_{s^+,v}]_{v(1)}\mid s^+\in [V_{\kappa_v}]^{<\omega}\}\s N_{v(1)}$$ is a directed subset of $\Col(\kappa^{+4},j_{v(1)}(\kappa))^{N_{v(1)}}$  of cardinality $\kappa_v$. Hence, by closure of $N_{v(1)}$ under $\kappa_v$-sequences, $D\in N_{v(1)}$. Thus, there is an upper bound for the set $D$ that belongs to $F_v$.  Denote this latter by $H_v$. 

\smallskip

By normality, we have that $B_v:=\diagonal_{s^+} B_{s^+,v}\in \mathscr{F}_v$, where 
 $$\diagonal_{s^+}B_{s^+,v}:=\{w\in \mathcal{U}_\infty\mid s^+\in [V_{\kappa_w}]^{<\omega}\,\Longrightarrow\, w\in B_{s^+,v}\}.$$
For each $v\in A_0$, put  $A_v:=\bigcap_{s^+} A_{s^+,v}$. By completeness, $A_v\in\mathscr{F}_u$. Next, set
$B:=A^*\cap(\diagonal_{v} A_{v})\in \mathscr{F}_u$ (cf. Definition~\ref{DiagonalIntersection}). Finally, arguing as in the previous paragraph we can let $H\in F^*_u$ be an upper bound for the $F_{s^+,v}$'s.

\smallskip


Let us now move to \emph{the Röwbottom part} of the proof. For each lower part $s^+=(s,h)\in [V_{\kappa_u}]^{<\omega}$ consider the sets
$$B_0(s^+):=\{v\in B\mid (*)^0_{s^+,v}\text{ holds}\},\  B_1(s^+):=\{v\in B\mid (*)^1_{s^+,v}\text{ holds}\}\footnote{By convention if $(*)_{s^+,v}$ holds then $s^+\in [V_{\kappa_v}]^{<\omega}$.}$$ and $B_2(s^+):=B\setminus (B_0(s^+)\cup B_1(s^+))$. For each $s^+$ the above partitions the set $B\in\mathscr{F}_u$ into three components. In particular, for each $s^+$ and every $\alpha\in [1,\len(u)$) there is an index $i_\alpha\in 3$  
such that $B_{i_\alpha}(s^+)\in u(\alpha)$.  Let $A(\alpha):=\diagonal_{s^+} B_{i_\alpha}(s^+)\in u(\alpha)$ and $B^*=B\cap (\cup_{\alpha<\len(u)}A(\alpha))\in\mathscr{F}_u$. 

\smallskip

Let $r$ be an extension of $p^*=(u, \omega, h_0, B^*,H^*)$ deciding $\sigma$ with minimal length. We will be done as long as we argue that $\ell(p^*)=\ell(r)$. Without loss of generality assume that $r\Vdash_{\mathbb{R}_u}\sigma$. Suppose towards a contradiction that the length of $r$ is $n+2$: namely, $r$ takes the form
$$r= s^{\smallfrown}( v,\kappa_{w_{\ell(s)}},h,B,H)^{\smallfrown}( u,\kappa_v,g, A,F), \ \text{with $|s|=n$.}$$

Denote  $s^+:=(s,h)$. Clearly, there exists $\alpha$ such that $v\in A(\alpha)$, and since $v\in B_{i_\alpha}(s^+)$, it follows that $i_\alpha=0$. Hence,  for every $v\in A(\alpha)$ with $s^+\in [V_{\kappa_v}]^{<\omega}$, $(*)^0_{s^+,v}$ holds: to wit, there are $B_{v}$ and $H_{v}$  such that
$$(\star) \ \ s^{\smallfrown}( v,\kappa_{w_{\ell(s)}},h,B_v,H_v)^{\smallfrown}( u,\kappa_v,H^*(\kappa_v),B^*,H^*)\Vdash_{\mathbb{R}_u}\sigma.$$ 
Since $A(\alpha)\in u(\alpha)$ it follows that $u\restriction\alpha\in j(A(\alpha))$. Therefore we can define $\l B^{<\alpha},H^{<\alpha}\r:=j(v\mapsto \l B_{v},H_{v}\r)(u\restriction\alpha)$.\footnote{Here $(j,F)$ is a weak constructing pair from which $u$ is inferred.} By elementarity, $B^{<\alpha}\in \mathscr{F}_{u\restriction\alpha}$ and $[H^{<\alpha}]_{u(1)}\in F_u$. Also define
$$B^{\alpha}:=\{v\in A(\alpha)\mid B^{<\alpha}\cap V_{\kappa_v}=B_{v}\wedge H^{<\alpha}\restriction \kappa_v=H_{v}\}\in u(\alpha).$$
Pick $H^{**}\in F^*_u$ such that $[F^{<\alpha}]_{u(1)}, [H^*]_{u(1)}\leq [H^{**}]_{u(1)}.$ Finally, let
$$B^{>\alpha}:=\{v\in B^*\mid \exists \xi<\len(v)\, (B^\alpha\cap V_{\kappa_v}\in v(\xi))\}.$$
For all $\beta\in (\alpha,\len(u))$, $u\restriction\beta \in j(B^{>\alpha})$ because $j(B^\alpha)\cap V_\kappa=B^\alpha\in u(\alpha)$. This implies that    $B^{>\alpha}\in u(\beta)$, for all $\beta\in (\alpha,\len(u))$.  

Put $B^{**}:=B^*\cap(B^{<\alpha}\cup B^{\alpha}\cup B^{>\alpha})$ and $q:=s^{\smallfrown}( u,\kappa_{w_{\ell(s)}},h,B^{**},H^{**})$. 

We claim that $q\Vdash_{\mathbb{R}_u}\sigma$, which will produce a contradiction with the minimality choice upon $n$.  It thus suffices to prove that for any $q\leq q'$ there is $v\in B_0(s^+)$ such that $q'$ is compatible with the condition
$$q_v:=s^{\smallfrown}( v,\kappa_{w_{\ell(s)}},h,B_v,H_v)^{\smallfrown}( u,\kappa_v,H^*(\kappa_v),B^*,H^*).$$ In effect, if this is the case $(\star)$ will in particular imply that $q'$ is compatible with a condition forcing $\sigma$. So, suppose that
$$q'=t^{\smallfrown}t'^{\smallfrown}( u,\lambda',h',A',H'), \ \text{with ${w_{\ell(t)}}={w_{\ell(s)}}$.}$$
Let us split the discussion into three cases:

\smallskip

\textbf{Case 1:} Suppose $|t'|=0$. Find some $v\in B^\alpha\cap A'$ with $\kappa_v>\kappa_{w_{\ell(s)}}$. Note that such $v$ exists, for 
  $B^\alpha\cap A'\in u(\alpha)$.
 Also, it follows that $v\in B_0(s^+)$. 
 
 Consider the condition $q^*:=q'{}\cat v$. Writing $q^*$ explicitly, we have that $$q^*:=t^{\smallfrown}( v,\kappa_{w_{\ell(t)}},h',A'_{v,-}, H'\restriction \z{A}'_{v,-})^{\smallfrown}( u,\kappa_v,H'(\kappa_v),A'_{v,+},H'\restriction \z{A}'_{v,+}).$$
 \begin{claim}
 $q_v\leq q^*$.
 \end{claim}
 \begin{proof}[Proof of claim]
 We split the verification in three parts.
 
 $\br$ Since $q\leq_{\mathbb{R}_u} q^*$ then  $s\leq_{\mathbb{R}_{w_{\ell(t)}}}t$.
 
 $\br$ First, $h\leq h'$ follows from $q\leq_{\mathbb{R}_u}q'$. Second, $A'\cap V_{\kappa_v}\s B^{<\alpha}\cap V_{\kappa_v}=B_v$, as $v\in B^\alpha$. By similar reasons, $H_v=H^{<\alpha}\restriction \kappa_v$. Using again that $q\leq_{\mathbb{R}_u} q^*$ we have that $H_v(\kappa_w)\leq H'(\kappa_w)$, for all $w\in A'_{v,-}$. 
 
 $\br$  For the upper part, $q\leq_{\mathbb{R}_u} q^*$ again implies, $H'(\kappa_v)\geq H^{**}(\kappa_v)\geq H^*(\kappa_v)$, $A'_{v,+}\subseteq B^*$ and $H'(\kappa_w)\geq H^{**}(\kappa_w)\geq H^*(\kappa_w)$, for all $w\in A'_{v,+}.$
 \end{proof}
 

\textbf{Case 2:} Suppose $|t'|>0$ and that  every $w_m$ in $t'$ is from $B^{<\alpha}$. Then, pick $v\in B^{\alpha}\cap A'$ such that $\kappa_{v}>\kappa_{w_{\ell(t')}}$. 
By the same argument as in Case~1, the condition
$q^*=q'{}\cat v$
will be above both $q'$ and $q_v$.

\smallskip

\textbf{Case 3:} Suppose we do not fall in none of the above cases. Let $i<|t'|$ be the minimal index such that $w_i\in B^{\alpha}\cup B^{>\alpha}$. If $w_i\in B^{\alpha}$ then $q'\geq q_{w_i}$. 
Otherwise, $w_i\in B^{>\alpha}$ and by definition there is $\xi<\len(w_i)$ such that $B^\alpha\cap V_{\kappa_{w_i}}\in w_i(\xi)$. Therefore we can find $v\in B^\alpha\cap V_{\kappa_{w_i}}$ such that $\kappa_v>\kappa_{w_{i-1}}$ 
and argue as before that  $q^*:=q'{}\cat v$ is stronger than  $q'$ and $q_v$.

\smallskip

The above completes the proof of the Prikry lemma.
\end{proof}

\begin{remark}\label{TailVeryclosed}
An outright consequence of the above lemma is that  for each $p\in \mathbb{R}_u$,   forcing with $\mathbb{R}_u/p$ does not introduce new subsets to $\lambda_0^{+4}$. Indeed, this is becuase $\mathbb{R}_u/p$ is $\lambda_0^{+4}$-closed with respect to the order $\leq^*$ and the triple $\langle \mathbb{R}_u,\leq, \leq^*\rangle$ has the Prikry property. For details, see  \cite[Lemma~1.9]{MR2768695}.
\end{remark}

Let $\vec{u}=\langle u_\xi\mid \xi<\kappa\rangle$ be the Radin sequence. For $\xi<\kappa$, put $\kappa_\xi:=\kappa_{u_\xi}$. 

\begin{proposition}[Cardinal structure]\label{CardinalStructure}
 The following holds in $V[G]$:
\begin{enumerate}
    \item Every $V$-cardinal $\geq \kappa_u^{+}$ is a cardinal;
    \item The only cardinals $\leq \kappa_u$  are
    $$\{\aleph_0,\aleph_1, \aleph_2, \aleph_3, \aleph_4\}\cup \{(\kappa_{\xi}^{+k})^V\mid 1\leq k\leq 4,\, \xi<\kappa\}\cup\Lim(C)\cup\{\kappa_u\};$$
\end{enumerate}
Also, if $u$ has a repeat point (i.e., there is $\gamma<\len(u)$ such that $\mathscr{F}_u=\mathscr{F}_{u\restriction\gamma}$) then  $\kappa_u$ remains measurable in $V[G]$.
\end{proposition}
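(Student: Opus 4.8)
\textbf{Proof plan for Proposition~\ref{CardinalStructure}.}

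The plan is to analyze the cardinal structure in three stages, using the Prikry property (Lemma~\ref{Prikrylemma}), the linkedness established in Lemma~\ref{ccness}, the factoring lemma (Lemma~\ref{Factoring}), and Remark~\ref{TailVeryclosed}.

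\emph{Stage 1: preservation above $\kappa_u$.} For clause (1), by Lemma~\ref{ccness} the poset $\mathbb{R}_u$ is $\kappa_u^+$-linked, hence $\kappa_u^{++}$-cc, so it preserves all cardinals $\geq\kappa_u^{++}$. To see that $\kappa_u^+$ is preserved I would combine the Prikry property with the fact that $\mathbb{R}_u$ splits as a \emph{small} forcing below $\kappa_u$ followed by a $\leq^*$-closed tail: by Lemma~\ref{Factoring}(1), for any $p\in G$ we may write $\mathbb{R}_u/p\simeq\mathbb{R}_{w_n}/p^{\leq n}\times\mathbb{R}_u/p^{>n}$ where $|\mathbb{R}_{w_n}/p^{\leq n}|<\kappa_u$ (it lives inside $V_{\kappa_u}$) and, by Remark~\ref{TailVeryclosed} applied to the tail with $\lambda_0^{+4}\geq\kappa_u^+$, the factor $\mathbb{R}_u/p^{>n}$ adds no new subsets of $\kappa_u$, in particular no new functions $\kappa_u^+\to\kappa_u$. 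Since the bottom factor has size $<\kappa_u$ it cannot collapse $\kappa_u^+$ either. Hence $\kappa_u^+$ survives, giving clause (1).

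\emph{Stage 2: identifying the cardinals $\leq\kappa_u$.} Let $\vec u=\langle u_\xi\mid\xi<\kappa\rangle$ be the Radin sequence and $\kappa_\xi:=\kappa_{u_\xi}$; recall from the conventions that $\lambda_0=\omega$ and $\len(w_0)=1$. The generic club $C$ (Corollary~\ref{ClubIndeed}) partitions $\kappa_u$ into intervals $[\kappa_\xi,\kappa_{\xi+1})$, and on each such interval the forcing interpolates a copy of $\Col(\kappa_\xi^{+4},\kappa_{\xi+1})$ via Lemma~\ref{Factoring}(2) (using that $\len(w_{\xi+1})=1$). This Lévy collapse kills every cardinal in the open interval $(\kappa_\xi^{+4},\kappa_{\xi+1})$ while preserving $\kappa_\xi^{+k}$ for $1\le k\le 4$ and making $\kappa_{\xi+1}$ into $(\kappa_\xi^{+4})^+$. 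Below $\kappa_0$ the forcing is just $\Col(\omega,\kappa_0)$ (Remark~\ref{RemarkOnlength}), which leaves $\aleph_0,\dots,\aleph_4$ as the only surviving cardinals and makes $\kappa_0=(\aleph_4)^{V[G]}$. At limit points $\xi$ of $C$ one argues that $\kappa_\xi=\sup_{\eta<\xi}\kappa_\eta$ remains a cardinal: by Lemma~\ref{Factoring}(1) the forcing below $\kappa_\xi$ factors as $\mathbb{R}_{u_\xi}/p^{\leq n}$ (of size $<\kappa_\xi$, hence preserving $\kappa_\xi$ by a chain-condition/linkedness argument as in Stage~1 applied at $\kappa_\xi$) times a tail that adds no bounded subsets of $\kappa_\xi$ (Remark~\ref{TailVeryclosed}). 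One must also check that no cardinal is accidentally \emph{preserved} that should not be, and that limit points of $C$ that are not limit points of $\Lim(C)$ — i.e.\ points $\kappa_\xi$ with $\xi$ a successor — are handled by the collapse case above; a short induction on $\xi<\kappa$ organizes this. Finally $\kappa_u$ itself is a cardinal by Stage~1. This yields the displayed list in clause (2).

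\emph{Stage 3: measurability under a repeat point.} If $\mathscr{F}_u=\mathscr{F}_{u\restriction\gamma}$ for some $\gamma<\len(u)$, the standard Radin argument (as in \cite[\S5]{MR2768695}) shows $\kappa_u$ stays measurable: one checks that the tail forcing $\mathbb{R}_u/p^{>n}$ above a condition is, by the Prikry property and $\leq^*$-closure (Remark~\ref{TailVeryclosed}), mild enough that the ultrafilter $\mathscr F_u\restriction\{\kappa_v\mid v\in\cdot\}$ pushed forward to $\kappa_u$ via $v\mapsto\kappa_v$ generates a $\kappa_u$-complete ultrafilter in $V[G]$; the repeat point hypothesis is exactly what guarantees this pushed-forward filter is unaffected by the "lost" coordinates $u(\xi)$ for $\gamma\le\xi<\len(u)$, so it still measures all sets in $V[G]$. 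I would spell this out by lifting an embedding: take $(j,F)$ witnessing $u\in\mathcal U_\infty$, note $j$ restricted to the $\gamma$-truncation gives an ultrapower by $\mathscr F_{u\restriction\gamma}=\mathscr F_u$, build a generic for $j(\mathbb R_u)$ over $M$ extending $G$ using the Prikry-property master-condition argument, and conclude $j$ lifts to $V[G]$.

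\emph{Main obstacle.} The delicate point is Stage~2 at limit stages: verifying that $\kappa_\xi$ is a cardinal for $\xi\in\Lim(C)$ requires knowing that the tail forcing adds no bounded subsets of $\kappa_\xi$ \emph{and} that the small bottom part does not collapse $\kappa_\xi$, which in turn needs a clean statement (analogous to the suppressed Lemma~\ref{VeryImportant}) that $\mathcal P(\lambda)^{V[G]}$ is computed by an initial segment of the generic data for every $\lambda<\kappa_u$. Assembling this factorization correctly — in particular handling the interaction between the Radin part and the interleaved collapses at limit points, where the collapse structure "accumulates" — is where the real work lies; everything else reduces to bookkeeping with Lemmas~\ref{ccness}, \ref{Factoring} and Remark~\ref{TailVeryclosed}.
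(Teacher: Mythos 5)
Your overall architecture coincides with the paper's: clause (1) from the linkedness of Lemma~\ref{ccness}, clause (2) from the factorization of Lemma~\ref{Factoring}(2) at each $\xi$ combined with Remark~\ref{TailVeryclosed} for the tail, and the repeat-point clause by the standard Radin lifting argument (the paper simply cites \cite[Theorem~5.15]{MR2768695}). However, three local steps are wrong as written. First, in Stage~1 you invoke Remark~\ref{TailVeryclosed} for the tail ``with $\lambda_0^{+4}\geq\kappa_u^+$'' to conclude that $\mathbb{R}_u/p^{>n}$ adds no new subsets of $\kappa_u$: this is false --- the relevant $\lambda_0$ of the tail is $\kappa_{w_n}<\kappa_u$, so the $\leq^*$-closure only reaches $\kappa_{w_n}^{+4}$, and the tail certainly does add new subsets of $\kappa_u$ (the Radin club above $\kappa_{w_n}$). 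The detour is also unnecessary: the map $c_u$ of Lemma~\ref{ccness} takes values in $V_{\kappa_u}$, a set of size $\kappa_u$, with linked fibers, so $\mathbb{R}_u$ is $\kappa_u^+$-cc and preserves every cardinal $\geq\kappa_u^+$ outright; this is all the paper uses for (1). Second, the bottom block is $\Col(\omega^{+4},\kappa_0)=\Col(\aleph_4,\kappa_0)$, not $\Col(\omega,\kappa_0)$; as written, your identification of the forcing contradicts your (correct) claim that $\aleph_1,\dots,\aleph_4$ survive. Third, $\Col(\kappa_\xi^{+4},\kappa_{\xi+1})$ does not ``make $\kappa_{\xi+1}$ into $(\kappa_\xi^{+4})^+$'': it adds a surjection from $\kappa_\xi^{+4}$ onto $\kappa_{\xi+1}$, so $\kappa_{\xi+1}$ is \emph{not} a cardinal of $V[G]$ (which is exactly why successor points of $C$ are absent from the displayed list); it is $(\kappa_{\xi+1}^+)^V$ that becomes the new successor of $\kappa_\xi^{+4}$. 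With these corrections the argument goes through as in the paper, and the limit-stage worry you flag is resolved just as you suggest: a putative collapsing function $f\colon\lambda\to\kappa_\xi$ with $\lambda<\kappa_\eta$, $\eta+1<\xi$, must by the closure of the tail be added by $\mathbb{R}_{u_\eta}/p^{\leq m}\times\Col(\kappa_\eta^{+4},\kappa_{\eta+1})$, a poset of size at most $\kappa_{\eta+1}<\kappa_\xi$, which cannot collapse $\kappa_\xi$.
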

\begin{proof}
(1) follows from Lemma~\ref{ccness}. For (2) we  argue as follows:

$\br$ Let us first show that the first $4$ infinite cardinals of $V$ are preserved. Let $p\in G$ mentioning $u_0$, the first member of $\vec{u}$. By Lemma~\ref{Factoring}(1) we have that
$\mathbb{R}_u/p\simeq \mathbb{R}_{u_0}/p^{\leq 0}\times \mathbb{R}_u/p^{>0}.$ On one hand,  note that the second of this forcings does not affect the combinatorics of the first $4$ alephs (see Remark~\ref{TailVeryclosed}).\footnote{Also, recall that we pick $G$ in such a way that if $p\in G$ then $\lambda_0:=\omega$ (see page~\pageref{ChoiceofG}).} On the other hand, $\len(u_0)=1$ and $p^{\leq 0}\in \mathbb{R}_{u_0}^*$ hence Remark~\ref{RemarkOnlength} yields $\mathbb{R}_{u_0}/p^{\leq 0}\simeq \Col(\aleph_4,\kappa_{0})$. This yields the desired result. 

$\br$ The argument is similar to the above. Let $\xi<\kappa$ and $p\in G$ mentioning both $u_{\xi}$ and $u_{\xi+1}$. Invoking Lemma~\ref{Factoring} we have $$(\star)\;\mathbb{R}_u/p\simeq \mathbb{R}_{u_\xi}/p^{\leq n}\times \Col(\kappa_{\xi}^{+4},\kappa_{\xi+1})\times\mathbb{R}_u/p^{>n+1}.$$
Yet again, the top-most forcing does not affect the combinatorics up to $\kappa^{+4}_\xi$. Regarding the bottom-most one, this is $\kappa_\xi^{+}$-cc and so it preserves all cardinals $\geq \kappa_\xi^{+}$. Finally,  the Lévy collapse does not disturb this pattern up to (and including) $\kappa_{\xi}^{+4}$. Since this holds for arbitrary $\xi<\kappa$, it follows that every member of $\Lim(C)$ is also a cardinal in $V[G]$. 

\smallskip

Altogether, the above yields one inclusion. As for the converse, if $\theta$ is a $V$-cardinal not in the above described set then there would be some $\xi<\kappa$ such that $\kappa_{\xi}^{+4}<\theta\leq \kappa_{\xi+1}$. Note that  $(\star)$ would imply
that $\theta$ is collapsed.

\smallskip

$\br$ The preservation of $\kappa_u$ follows from Lemma~\ref{ClubIndeed} and the above. Also, a similar argument combining $(\star)$ with Remark~\ref{TailVeryclosed} yield that $\kappa_u$ is strong limit in $V[G]$. For the moreover part see \cite[Theorem~5.15]{MR2768695}.
\end{proof}

\section{The consistency of the local and global failure}\label{SectionLocalAndGlobal}
In \cite{MR830084},  Abraham and Shelah proved the consistency of \textsf{ZFC} with the negation of Galvin's property at the successor of a regular cardinal $\kappa$. 
For this they started with a model of \textsf{GCH} and used the  poset $\mathbb{S}(\kappa,\lambda)$ of Definition~\ref{AbSh forcing} to obtain the desired configuration. 
Unfortunately, as in many other similar forcing constructions, the  arguments in \cite{MR830084} cannot be adapted to handle the case where $\kappa$ is a singular cardinal. In effect, if $\kappa$ is singular then  forcing with $\mathbb{S}(\kappa)$ will collapse $\kappa$, hence voiding the whole argument. This raises the natural question on whether the failure of Galvin's property can be obtained at successors of  singular cardinals. Our mission in this section is to answer this in the affirmative, provided the set-theoretic universe contains a supercompact cardinal. More specifically, we will begin showing that  Galvin's property can consistently fail at the successor of a strong limit singular cardinal of any prescribed cofinality. Later,  we will improve this result by showing that Galvin's property can consistently fail at the first successor of every singular cardinal, simultaneously.   

\subsection{Local failure of Galvin's property}

\smallskip

The following observation will be our  \emph{master lemma} for the entire Section~\ref{SectionLocalAndGlobal}:

\begin{lemma}[Preserving the failure of Galvin's property] 
\label{lemchain} Let $\kappa,\partial, \lambda$ be  infinite cardinals such that 
 $\kappa^+\leq\partial\leq\lambda$.
Assume further that Galvin's property ${\rm Gal}(\mathscr{D}_{\kappa^+},\partial,\lambda)$ fails in $V$.
\begin{enumerate}
\item[$(\aleph)$] If $\mathbb{P}$ is a $\kappa^+$-cc forcing notion and  $G\subseteq\mathbb{P}$ is $V$-generic, then $$V[G]\models \neg {\rm Gal}(\mathscr{D}_{\kappa^+},\partial,\lambda).$$ 
\item[$(\beth)$] If $\mathbb{P}$ is a $\lambda$-closed forcing notion and $G\subseteq\mathbb{P}$ is $V$-generic, then  $$V[G]\models\neg {\rm Gal}(\mathscr{D}_{\kappa^+},\partial,\lambda).$$ 
\end{enumerate}
\end{lemma}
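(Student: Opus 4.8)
The plan is to prove each part separately by a standard ``preservation of witnessing families'' argument, relying on the fact that a failure of $\mathrm{Gal}(\mathscr{D}_{\kappa^+},\partial,\lambda)$ in $V$ is witnessed by a family $\mathcal{C}=\langle C_\alpha\mid \alpha<\lambda\rangle$ of clubs at $\kappa^+$ such that no subfamily of size $\partial$ has club intersection. The key point in both cases is that clubs at $\kappa^+$ from $V$ remain clubs at $\kappa^+$ in $V[G]$, because $\kappa^+$ is not collapsed (it has uncountable cofinality and stays regular in both scenarios) and no new bounded subsets of the relevant clubs are added that would witness a new club hiding inside a ground-model subset; more precisely what we need is that if $\mathcal{D}\s\mathcal{C}$ has $|\mathcal{D}|=\partial$ then $\bigcap\mathcal{D}$ (a set of size $<\kappa^+$ by assumption, or rather a non-club) does not become a club in $V[G]$. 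So the crux is: a ground-model subset $A\s\kappa^+$ which is non-stationary (or merely not containing a club) in $V$ must continue to not contain a club in $V[G]$.

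For part $(\aleph)$, the forcing $\mathbb{P}$ is $\kappa^+$-cc. First I would observe $\kappa^+$ is preserved as a regular cardinal, so ``club at $\kappa^+$'' is absolute for unbounded closed subsets of ordinals of $V$. Then, given $\mathcal{D}\s\mathcal{C}$ with $|\mathcal{D}|=\partial$, in $V$ the set $E:=\bigcap\mathcal{D}$ contains no club. The standard fact is that a $\kappa^+$-cc forcing cannot add a club to a ground-model set that did not already contain one: if $\dot{D}$ is forced to be a club subset of $E$, then by $\kappa^+$-cc (and regularity of $\kappa^+$) the set of ordinals forced into $\dot{D}$ by a condition contains a ground-model club, which would be $\s E$ up to a club, contradiction. (Concretely: for each $\beta<\kappa^+$ the antichain of conditions deciding the least element of $\dot D$ above $\beta$ has size $\le\kappa$, so one builds in $V$ a closed unbounded set of closure points, contained in $E$.) Hence $E$ contains no club in $V[G]$, so $\mathcal{C}$ still witnesses $\neg\mathrm{Gal}(\mathscr{D}_{\kappa^+},\partial,\lambda)$ in $V[G]$, and in particular $|\mathcal{C}|=\lambda$ is preserved since $\lambda\ge\partial\ge\kappa^+$ is preserved.

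For part $(\beth)$, the forcing $\mathbb{P}$ is $\lambda$-closed, hence certainly $\kappa^+$-closed, so it adds no new subsets of $\kappa^+$ at all (a $\kappa^{++}$-closed forcing adds no new subsets of $\kappa^+$; here $\lambda\ge\kappa^+$, but I should be a little careful — I would use that $\lambda$-closed with $\lambda\ge\kappa^+$ means $\le\kappa$-sequences of ordinals are added only if $\lambda>\kappa$, which holds since $\lambda\ge\partial\ge\kappa^+>\kappa$; in fact $\lambda$-closed implies $({<}\lambda)$-distributive, so no new $\kappa$-sequences and no new subsets of $\kappa^+$... wait, distributivity gives no new functions $\kappa\to V$, hence no new subsets of $\kappa$, but a new subset of $\kappa^+$ could still appear). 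Let me instead argue directly: $\mathbb{P}$ is $\lambda$-closed with $\lambda\ge\kappa^{+}$, and what I really want is that $\mathbb{P}$ adds no new club to any ground-model subset $E$ of $\kappa^+$ that is not already a superset of a club. This follows because $\mathbb{P}$ is $({<}\kappa^+)$-distributive (being $\kappa^+$-closed): given $\dot D$ forced to be a club in $E$ and $p\in\mathbb{P}$, build a descending $\omega_1$-or-longer sequence of conditions deciding longer and longer initial segments of $\dot D$; at limits of cofinality $\le\kappa$ below $\kappa^+$ one can meet below, and by distributivity one extracts a ground-model cofinal branch, producing a club $\subseteq E$ in $V$, contradiction. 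Equivalently, since $\lambda\geq\kappa^+$ and $\mathbb P$ is $\lambda$-closed, $\mathbb P$ does not change $\mathcal P(\kappa^+)$ when $\lambda>\kappa^+$; and if $\lambda=\kappa^+$ the distributivity argument above applies. Either way no $\mathcal{D}\s\mathcal{C}$ of size $\partial$ acquires a club intersection, and $\lambda,\partial,\kappa^+$ are preserved, so $\neg\mathrm{Gal}(\mathscr{D}_{\kappa^+},\partial,\lambda)$ holds in $V[G]$.

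I expect the main obstacle to be the bookkeeping in case $(\beth)$ around exactly how much closure is needed to kill a potential new club of length $\kappa^+$ inside a ground-model set — the cleanest route is probably to note that $\lambda$-closure for $\lambda\ge\kappa^+$ gives $({<}\kappa^+)$-distributivity (no new $\le\kappa$-indexed sequences of ordinals), run the ``build a descending sequence and read off a ground-model club'' argument, and only separately remark that when $\lambda>\kappa^+$ one gets the stronger conclusion that $\mathcal P(\kappa^+)$ is unchanged, making the preservation immediate. In both parts, the non-collapse of $\kappa^+$ (for $(\aleph)$ by the chain condition, for $(\beth)$ by closure) and the preservation of $\lambda=|\mathcal C|$ should be stated explicitly but are routine.
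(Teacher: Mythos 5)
Your overall strategy is the same as the paper's: fix a ground-model family $\mathcal C=\langle C_\alpha\mid\alpha<\lambda\rangle$ witnessing the failure and show it still works, the engine for $(\aleph)$ being that under a $\kappa^+$-cc forcing every club of $\kappa^+$ in $V[G]$ contains a club from $V$ (your ``closure points of the bounds coming from the size-$\le\kappa$ antichains'' argument is exactly the standard proof of this), and for $(\beth)$ that sufficient closure freezes the relevant objects. There is, however, one quantifier you do not handle in $(\aleph)$: you reduce the problem to showing that a \emph{ground-model} set $E=\bigcap\mathcal D$ which contains no club in $V$ acquires none in $V[G]$, but the subfamilies $\mathcal D\in[\mathcal C]^\partial$ range over $V[G]$, and a $\kappa^+$-cc forcing in general adds new $\partial$-sized subsets of $\lambda$, so $\mathcal D$ and $E$ need not lie in $V$. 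The paper sidesteps this by reversing the quantifier: for any club $C$ of $V[G]$, pick a $V$-club $C'\subseteq C$ and note that $\{\alpha<\lambda\mid C\subseteq C_\alpha\}\subseteq\{\alpha<\lambda\mid C'\subseteq C_\alpha\}$, and the latter is a set of $V$ of size $<\partial$ by the failure in $V$; hence no $\partial$-sized subfamily, old or new, can have a club inside its intersection. Your key lemma gives this in one extra line, but as written the new subfamilies are not covered.

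For $(\beth)$ the paper's proof is precisely the one-liner you converge on: the forcing adds neither new clubs of $\kappa^+$ nor new $\partial$-sequences in $\lambda$, so both the witness and its subfamilies are absolute. The boundary case that worries you, $\lambda=\kappa^+$ (where ${<}\lambda$-closure does not freeze $\mathcal P(\kappa^+)$), is indeed not addressed by the paper either; but your proposed repair --- building a descending chain and ``extracting a ground-model club'' --- is not a routine distributivity argument. What is really being asked is whether a ${<}\kappa^+$-closed forcing can shoot a club through a ground-model set whose complement is stationary; for complements concentrating on points of uncountable cofinality this is a delicate matter tied to approachability, and your sketch (meeting the chain at limits of cofinality $\le\kappa$ while steering its supremum into the stationary complement) does not go through without further hypotheses. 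In every application in the paper the closure comfortably exceeds $\kappa^{++}$, so $\mathcal P(\kappa^+)$ and $[\lambda]^{\le\partial}$ are literally unchanged and the issue is moot; it is cleaner to run $(\beth)$ under that reading than to attempt the chain argument.
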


\begin{proof}

$(\aleph)$: Let $\{C_\alpha\mid \alpha\in\lambda\}\subseteq\mathscr{D}_{\kappa^+}$  witness  the failure of ${\rm Gal}(\mathscr{D}_{\kappa^+},\partial,\lambda)$ \linebreak in the ground model, $V$.
Let $C$ be a club subset of $\kappa^+$ in $V[G]$.
By the $\kappa^{+}$-cc of the poset $\mathbb{P}$ we can find a club $C'\s C$ of $\kappa^{+}$ lying in $V$.
Note that $$|\{\alpha<\lambda\mid C\subseteq C_\alpha\}|\leq|\{\alpha<\lambda\mid C'\subseteq C_\alpha\}|<\partial,$$ where the last inequality is justified by the failure of ${\rm Gal}(\mathscr{D}_{\kappa^+},\partial,\lambda)$ in $V$.
\smallskip

$(\beth)$: Since $\mathbb{P}$ adds neither new clubs of $\kappa^+$ nor $\partial$-sequences 
in $\lambda$, the sequence
$\{C_\alpha\mid \alpha\in\lambda\}$ exemplifies the failure of ${\rm Gal}(\mathscr{D}_{\kappa^+},\partial,\lambda)$ in $V[G]$.
\end{proof}

The following result provides a positive answer for the consistency of the failure of Galvin's property at the level of successors of singulars:

\begin{theorem}
\label{thmmt} Let $\kappa$ be a supercompact cardinal and $\lambda>\kappa^+$ be any cardinal. 
Then the following two statements  are consistent with \textsf{ZFC}:
\begin{enumerate}
\item[$(\aleph)$] $\kappa$ is supercompact and $\mathrm{Gal}(\mathscr{D}_{\kappa^+},\kappa^+,\lambda)$ fails;
\item [$(\beth)$] $\kappa$ is a strong limit singular cardinal and $\mathrm{Gal}(\mathscr{D}_{\kappa^+},\kappa^+,\lambda)$ fails.
\end{enumerate}
\end{theorem}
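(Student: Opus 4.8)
The plan is to prove both parts by combining the Abraham--Shelah forcing $\mathbb{S}(\kappa)$ with Laver-style indestructibility and a well-behaved singularizing Prikry-type poset, using Lemma~\ref{lemchain} as the engine that carries the failure of Galvin's property through each stage.

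\textbf{Part $(\aleph)$.} First I would invoke Laver's theorem \cite{MR0472529} to pass to a model in which $\kappa$ is supercompact and this supercompactness is indestructible under $\kappa$-directed-closed forcing; moreover one can arrange $2^\kappa=\kappa^+$ by an initial preparation (or simply observe that Laver's iteration gives $\mathsf{GCH}$ above $\kappa$ if one starts from $\mathsf{GCH}$), which is exactly the arithmetic hypothesis needed for $\mathbb{S}(\kappa,\lambda)$ to behave. Then force with $\mathbb{S}(\kappa,\lambda)$. This poset is $\kappa$-directed-closed, so $\kappa$ stays supercompact; it is $\kappa^{++}$-cc and preserves $\kappa^+$ assuming $2^\kappa=\kappa^+$; and by the Abraham--Shelah analysis sketched around Definition~\ref{AbSh forcing} (page~\pageref{ExplanationonGalvin}) the clubs $\langle C_\alpha\mid \alpha<\lambda\rangle$ introduced by the $\mathbb{R}_\alpha$'s witness $\neg\mathrm{Gal}(\mathscr{D}_{\kappa^+},\kappa^+,\lambda)$ in the extension. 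This gives a model of $(\aleph)$ directly; if $\lambda \geq \mathrm{cf}(\lambda) > \kappa^+$ one additionally gets $2^\kappa = \lambda$, which will matter for the cofinality bookkeeping but is not needed for the bare statement.

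\textbf{Part $(\beth)$.} Starting from the model of $(\aleph)$, I would singularize $\kappa$ by a Prikry-type forcing that (i) is $\kappa^+$-cc and (ii) keeps $\kappa$ strong limit in the extension. The cleanest choice is ordinary Prikry forcing $\mathbb{P}(\mathscr{U})$ with a normal measure $\mathscr{U}$ on $\kappa$: since $\kappa$ is still measurable in the model of $(\aleph)$, such $\mathscr{U}$ exists; $\mathbb{P}(\mathscr{U})$ is $\kappa^+$-cc (indeed has the $\kappa^+$-Knaster property via stems), makes $\mathrm{cf}(\kappa)=\omega$ without collapsing $\kappa$ or any cardinal, and does not add bounded subsets of $\kappa$ by the Prikry property, so $\kappa$ remains strong limit. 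Now apply clause $(\aleph)$ of Lemma~\ref{lemchain} with $\partial=\lambda=\kappa^+$ in the notation there (i.e.\ to $\mathrm{Gal}(\mathscr{D}_{\kappa^+},\kappa^+,\lambda)$): the $\kappa^+$-cc of $\mathbb{P}(\mathscr{U})$ is exactly what is needed to conclude that the witnessing family from the model of $(\aleph)$ still witnesses $\neg\mathrm{Gal}(\mathscr{D}_{\kappa^+},\kappa^+,\lambda)$ in $V[G_{\mathbb{S}(\kappa)}][G_{\mathbb{P}(\mathscr{U})}]$. Note $\kappa^+$ is unchanged through both forcings (Prikry forcing and the $\kappa^{++}$-cc, $\kappa^+$-preserving $\mathbb{S}(\kappa)$ each preserve it), so $\mathscr{D}_{\kappa^+}$ is computed correctly; and clubs of $\kappa^+$ added by $\mathbb{P}(\mathscr{U})$ reflect down to ground-model clubs by $\kappa^+$-cc, which is precisely the content of the proof of Lemma~\ref{lemchain}$(\aleph)$.

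\textbf{Main obstacle.} The genuine point of care is the interface between the two forcings: I must be sure that $\kappa$ is still measurable after forcing with $\mathbb{S}(\kappa,\lambda)$ (this follows because $\mathbb{S}(\kappa,\lambda)$ is $\kappa$-directed-closed and $\kappa$ was supercompact, so a supercompactness embedding lifts and in particular a normal measure survives), and that the $\kappa^+$-cc of $\mathbb{P}(\mathscr{U})$ computed in the intermediate model is genuine. One should also double-check that $\mathbb{P}(\mathscr{U})$ adds no new subsets of $\kappa^+$ of size $<\kappa^+$ that could manufacture an unexpected large intersection --- but here the $\kappa^+$-cc argument in Lemma~\ref{lemchain}$(\aleph)$ already handles everything, since it only uses that every club of $\kappa^+$ in the extension contains a ground-model club. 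The one thing to state carefully is the arithmetic: to apply $\mathbb{S}(\kappa,\lambda)$ we need $2^\kappa=\kappa^+$ in the Laver model, which is standard, and then $2^\kappa$ jumps to $\lambda$ in the extension, so in the final model $\kappa$ is a strong limit singular cardinal with $2^\kappa=\lambda>\kappa^+$ --- consistent with the $\mathsf{SCH}$-violation that, as remarked in the introduction, any such model is forced to exhibit.
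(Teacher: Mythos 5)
Your proposal is correct and follows essentially the same route as the paper: Laver indestructibility plus $\mathbb{S}(\kappa,\lambda)$ for clause $(\aleph)$, then a $\kappa^+$-cc Prikry-type singularization whose preservation of $\neg\mathrm{Gal}(\mathscr{D}_{\kappa^+},\kappa^+,\lambda)$ is exactly Lemma~\ref{lemchain}$(\aleph)$. The only difference is that you spell out the bookkeeping (measurability surviving $\mathbb{S}(\kappa,\lambda)$, $\kappa$ remaining strong limit) that the paper leaves implicit.
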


\begin{proof}
$(\aleph)$: By \cite{MR0472529} we may assume that $\kappa$ is indestructible under $\kappa$-directed-closed forcing notions.
We shall also assume that \textsf{GCH} holds above $\kappa$ and let $\mathbb{S}({\kappa, \lambda})$  be the corresponding Abraham-Shelah forcing (see Definition~\ref{AbSh forcing}).
The forcing $\mathbb{S}({\kappa},\lambda)$ is $\kappa$-directed-closed, hence it preserves supercompactness of $\kappa$ and yields a model of $\neg \mathrm{Gal}(\mathscr{D}_{\kappa^+},\kappa^+,\lambda)$.

$(\beth)$: For the second part, force over a model witnessing $(\aleph)$ with either Prikry or Radin forcing, depending on the desired cofinality on $\kappa$.
\end{proof}
In the light of the previous theorem it is natural to ask whether one can get the failure of Galvin's property at the successor of a more \emph{down to earth} cardinal, such as $\aleph_{\omega}$. While it is true that the standard \emph{Prikry forcing with interleaved collapses} is $\kappa^{+}$-cc it is nevertheless not clear how to define suita\-ble guiding generics in the model of Clause~$(\aleph)$. In effect, note that in this model $2^\kappa=\kappa^{++}$ and hence the usual arguments to build guiding generics over the normal ultrapower break down (see e.g. \cite[Example~8.6]{Cummings-handbook}). In the next subsection we shall address this issue,  proving that $ \Gal{\kappa^+}{\kappa^{++}}$ can actually fail for all singular cardinals $\kappa$.
\subsection{Global failure of Galvin's property}\label{SectionGlobal}
The present section is devoted to the proof of the following result:
\begin{theorem}
\label{thmglobal} 
Assume that \textsf{ZFC} is consistent with the existence of a supercompact cardinal.
Then   \textsf{ZFC} is also  consistent with  
$$\text{$``{\rm Gal}(\mathscr{D}_{\kappa^+},\kappa^+,\kappa^{++})$ fails at every limit cardinal $\kappa$''.}$$
Moreover, \textsf{ZFC} is consistent with 
$$\text{$``\Gal{\aleph_{4\cdot\xi+1}}{\aleph_{4\cdot\xi+2}}$ fails for every $\xi\in \mathrm{Ord}$''.}$$
\end{theorem}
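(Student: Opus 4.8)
\textbf{Proof plan for Theorem~\ref{thmglobal}.} The strategy is to iterate the Abraham--Shelah construction \emph{below} a supercompact cardinal and then singularize that cardinal with the Radin forcing with interleaved collapses of Subsection~\ref{SubsectionRadinwithcollapses}, choosing the measure sequence so that the generic club has order type $\kappa$ and every gap in the generic club is collapsed in the ``$\aleph_{4\xi}$-pattern'' displayed in Proposition~\ref{CardinalStructure}. First I would fix a supercompact $\kappa$ and, using Laver indestructibility \cite{MR0472529}, arrange that $\kappa$ stays supercompact under $\kappa$-directed-closed forcing and that \textsf{GCH} holds above $\kappa$. Then I would force with a (reverse Easton or full-support, whichever preserves the relevant large-cardinal and chain-condition properties) iteration $\langle \mathbb{S}(\mu,\mu^{++})\mid \mu<\kappa\text{ a regular cardinal}\rangle$, so that in the resulting model $V_1$ the failure $\neg{\rm Gal}(\mathscr{D}_{\mu^+},\mu^+,\mu^{++})$ holds simultaneously at every successor of a regular cardinal $\mu<\kappa$, while $\kappa$ remains supercompact and $2^\kappa=\kappa^{++}$. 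The point of the ``$\mu^{++}$'' parameter (rather than an arbitrary $\lambda$) is that the witnessing families have size $\mu^{++}$, which is exactly the size needed for the final statement.

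Next I would build, in $V_1$, a weak constructing pair $(j,F)$ as in Definition~\ref{WeakConstruct}, and thus a measure sequence $u\in\mathcal{U}_\infty$ with $\kappa_u=\kappa$, $\len(u)$ large enough for Radin forcing to singularize $\kappa$ while preserving $\kappa^+$ (e.g.\ $\len(u)$ so that $u$ has a repeat point, or at least $\len(u)\geq 2$ as in the running assumption of the excerpt). The construction of the guiding generic $F$ is where the arithmetic $2^\kappa=\kappa^{++}>\kappa^+$ bites: the usual one-step argument of \cite[Lemma~8.5]{Cummings-handbook} requires $2^\kappa=\kappa^+$, so instead I would adapt the Gitik--Shelah technique \cite{GitSha} to produce an $N$-generic filter for $\Col(\kappa^{+4},i(\kappa))^N$ that moreover lies in $M$; the extra subtlety flagged in the introduction is that this must be done in a model where Galvin's property \emph{also} fails at $\kappa^+$, so one must check that the $\mathbb{S}(\kappa,\kappa^{++})$-generic and the witnessing clubs do not interfere with the enumeration of dense sets used to meet genericity. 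I would then force with $\mathbb{R}_u$; by Lemma~\ref{Prikrylemma} (whose hypotheses $2^{\kappa_v}\le\kappa_v^{++}$, $2^{\kappa_v^{++}}=\kappa_v^{+3}$ I would arrange to hold cofinally, which is why $\mathbb{S}$ should be set up to keep $2^\mu=\mu^{++}$ at the relevant points) the forcing has the Prikry property, it is $\kappa^+$-linked by Lemma~\ref{ccness}, and by Proposition~\ref{CardinalStructure} the surviving cardinals below $\kappa$ are exactly $\aleph_0,\ldots,\aleph_4$, the $(\kappa_\xi^{+k})^V$ for $1\le k\le4$, the limit points of the Radin club, and $\kappa=\kappa_u$ itself; one checks the repeat-point clause to keep $\kappa$ measurable, hence a limit cardinal, and $\kappa^+$ is preserved. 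In particular $\kappa$ becomes $\aleph_{\omega_1}$ or, with the right bookkeeping on $\len(u)$ and the factoring of Lemma~\ref{Factoring}, one gets the enumeration of cardinals matching ``$\aleph_{4\cdot\xi+1}$ is the successor of $\kappa_\xi$''.

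It remains to verify that $\neg{\rm Gal}(\mathscr{D}_{\mu^+},\mu^+,\mu^{++})$ survives the Radin forcing, and this is precisely what Lemma~\ref{lemchain} is for, applied along the factorizations of Lemma~\ref{Factoring}. Fix $\xi$ and let $\mu=\kappa_\xi$ (so $\mu^+=(\kappa_\xi^+)^V$ becomes, in the extension, $\aleph_{4\xi+1}$ in the appropriate indexing, and $\mu^{++}=\aleph_{4\xi+2}$). By Lemma~\ref{Factoring}, $\mathbb{R}_u/p$ factors as $\mathbb{R}_{u_\xi}/p^{\le n}\times \Col(\kappa_\xi^{+4},\kappa_{\xi+1})\times \mathbb{R}_u/p^{>n+1}$ (using that $\len(w_{\xi+1})=1$): the left factor $\mathbb{R}_{u_\xi}$ lives inside $V_{\kappa_\xi}$-ish and is small relative to $\mu^+$, the middle Lévy collapse is $\kappa_\xi^{+5}$-closed from the point of view of $\mu^+=\kappa_\xi^+$ hence adds no new clubs of $\kappa_\xi^+$ nor new $\mu^+$-sequences (invoke clause $(\beth)$ of Lemma~\ref{lemchain}, or rather its closure hypothesis — more carefully, $\Col(\kappa_\xi^{+4},\kappa_{\xi+1})$ is $\kappa_\xi^{+4}$-closed, which is more than $\mu^{++}$-closed, so clause $(\beth)$ applies directly), and the tail $\mathbb{R}_u/p^{>n+1}$ does not add subsets of $\kappa_\xi^{+4}$ by Remark~\ref{TailVeryclosed}, hence in particular adds no clubs of $\mu^+$ and no $\mu^{++}$-sequences into the index set. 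One also needs the bottom piece $\mathbb{R}_{u_\xi}/p^{\le n}$ — below its bottom collapse it is $\kappa_\xi^+$-cc by Lemma~\ref{ccness} relativized, so clause $(\aleph)$ of Lemma~\ref{lemchain} handles it. Combining, the witnessing family $\langle C_\alpha\mid\alpha<\mu^{++}\rangle$ produced by $\mathbb{S}(\mu,\mu^{++})$ in $V_1$ still witnesses $\neg{\rm Gal}(\mathscr{D}_{\mu^+},\mu^+,\mu^{++})$ in $V_1[G]$. Since $\xi$ was arbitrary, the ``moreover'' clause follows; taking $\kappa$ sent to an arbitrary limit cardinal (a limit point of $C$, or $\kappa$ itself) and recalling that $\mathbb{S}$ was iterated over \emph{all} regular $\mu<\kappa$ gives the first clause, because every limit cardinal of $V[G]$ is a limit point of $C$ and its predecessor-successor pair arises as some $\kappa_\xi,\kappa_\xi^+$. \textbf{The main obstacle} I anticipate is the construction of the guiding generic $F$ in the presence of both $2^\kappa=\kappa^{++}$ and the failure of Galvin's property at $\kappa^+$: one must enumerate $\le\kappa^{++}$ dense subsets of $\Col(\kappa^{+4},i(\kappa))^N$ inside $M$ and diagonalize, and the delicate point is that the $\mathbb{S}(\kappa,\kappa^{++})$-side of $V_1$ inflates $2^\kappa$ in a way that must be reconciled with $F\in M$ and with $N$ computing $\Col(\kappa^{+4},i(\kappa))$ correctly; this is exactly the place where the ideas of \cite{GitSha} have to be pushed, and it is the technical heart of Section~\ref{SectionGlobal}.
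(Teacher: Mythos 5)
Your preservation argument (factor $\mathbb{R}_u/p$ via Lemma~\ref{Factoring} and push the failure through the three factors with Lemma~\ref{lemchain}, Remark~\ref{TailVeryclosed} and Easton's lemma) is essentially the paper's Claim~\ref{preservingPhi}, and that part is fine. The genuine gap is upstream: you propose to obtain $\neg\Gal{\mu^+}{\mu^{++}}$ at the Radin points by iterating $\mathbb{S}(\mu,\mu^{++})$ over all regular $\mu<\kappa$ \emph{before} singularizing. This creates two problems you do not resolve. First, such an iteration is not $\kappa$-directed-closed, so Laver indestructibility does not preserve the supercompactness of $\kappa$; a separate lifting argument through the club-shooting components $\lusim{\mathbb{R}}$ of $\mathbb{S}$ would be needed and is far from routine. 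Second, and more seriously, the guiding-generic construction of Lemma~\ref{LemmaConstructingPair} hinges on Subclaim~\ref{SubclaimGitikSharon}: after the preparatory iteration one needs every ordinal up to $\ell(\kappa)$ to be represented as $\ell(c)(\kappa)$ for $c\colon\kappa\to\kappa$, so that the factor map $k'$ into the normal ultrapower has critical point above $i(\kappa)$ and the collapse generic transfers down to $N$. That surgery argument (from \cite{GitSha}) uses specific features of $\mathrm{Add}(\alpha,\alpha^{++})$; the paper explicitly records (in the discussion preceding Lemma~\ref{LemmaConstructingPair}) that replacing the Cohen posets below $\kappa$ by $\mathbb{S}(\alpha)$ is exactly what they could \emph{not} make work. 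Your plan therefore reinstates the obstruction the paper's construction is designed to avoid, and you flag it only as "the technical heart" without a way around it.

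The missing idea is reflection. The paper forces $\mathbb{S}(\kappa,\kappa^{++})$ \emph{only at $\kappa$} (Theorem~\ref{thmmt}), keeps plain $\mathrm{Add}(\alpha,\alpha^{++})$ below $\kappa$ for the guiding generic, and then observes that since $j\colon V\to M$ witnesses $\kappa^{++}$-supercompactness, ${}^{\kappa^{++}}M\subseteq M$ forces $M\models\neg\Gal{\kappa^+}{\kappa^{++}}$; hence
$$A:=\{w\in\mathcal{U}_\infty\mid \neg\Gal{\kappa_w^+}{\kappa_w^{++}}\}\in\mathscr{F}_u,$$
and one simply restricts the Radin condition to $A$. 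This yields the failure at every $\kappa_\xi^+$ for free, with no iteration of $\mathbb{S}$ below $\kappa$ at all. Two smaller omissions: the paper finishes with $\Col(\aleph_0,\aleph_4)$ to align the indexing $\kappa_\xi^+=\aleph_{4\cdot\xi+1}$, and it truncates to $W:=V[G]_\kappa$ so that the statement holds at \emph{every} limit cardinal --- without the truncation, limit cardinals above $\kappa$ would be counterexamples, and your proposal says nothing about them.
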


The strategy is to start forcing with 
$\mathbb{S}(\kappa)$ and obtain the negation of Galvin's property at the first successor of a supercompact cardinal $\kappa$ (see Theorem~\ref{thmmt}). Afterwards, we force with Radin forcing aiming to introduce a generic club $C\s \kappa$ where Galvin's property fails at the successor of every member of $C$. Since we wish to get a model where Galvin's property fails at every successor of a singular cardinal, we shall additionally
collapse the intervals between the successive Radin points. In particular this will guarantee that the limit cardinals of the resulting universe are the accumulation points of the Radin club 
$C$.   The main issue is how to perform the collapses and yet preserve the failure of  Galvin's property. As Lemma~\ref{lemchain} advanced, a good chain condition is sufficient for this. A na\"{i}ve attempt would be to force with the usual Radin forcing and later force with an Easton-support iteration of suitable Lévy collapses. However, this approach is problematic. In effect,  after forcing with $\mathbb{S}(\kappa)$  the power set of  $\kappa$ 
becomes large (i.e., $\kappa^{++}$), and this property is reflected down to all points in the eventual Radin club. In particular, when reaching $\kappa_\omega$ --the first limit point in $C$--  our  iteration of Lévy collapses  will have the $2^{\kappa_{\omega}}$-cc and $2^{\kappa_{\omega}}>\kappa_{\omega}^+$. To work around this  
there is a need for constructing the club and perform the collapses \emph{simultaneously}; to wit, we shall interleave the Lévy collapses within the Radin forcing using  the mechanism of \emph{guiding generics}. Although the above-described forcing construction has already appeared in previous works \cite{CumGCH, YairEskew, Golshani2016WeakDC} 
the fact that 
we are working in the absence of \textsf{GCH} makes the arguments more complicated. This is particularly\- evident in  the construction of the guiding generics (Lemma~\ref{LemmaConstructingPair}).  In \cite{CumGCH}, Cummings constructs such guiding generics by performing a two-step preparation that introduces $\alpha^{++}$-many Cohen subsets at every inaccessible cardinal $\alpha\leq\kappa$, where $\kappa$ is the relevant cardinal. In particular, Cummings arguments yield a model where \textsf{GCH} fails at $\kappa$ and there is a guiding generic for a suitable Lévy collapse.  If we would like to mimic this argument we should carry out an analogous forcing iteration  replacing $\mathrm{Add}(\alpha,\alpha^{++})$ by the poset $\mathbb{S}(\alpha)$.\label{discussion} 
Unfortunately, Cummings' approach heavily relies on some nice features of Cohen forcing that are hardly extrapolated to our  context; instead, we shall use Woodin's \emph{forcing-theoretic surgery}. Specifically, our arguments are inspired by Gitik-Sharon proof of the consistency of $\neg \textsf{SCH}_{\aleph_{\omega^2}}+\neg \textsf{AP}_{\aleph_{\omega^2}}$ \cite{GitSha}.

\smallskip

The structure of the section is as follows. We shall begin proving --under the assumption of a supercompact cardinal-- the existence of a weak constructing pair $(j,F)$ (a.k.a. \emph{guiding generics}). Later, we shall use the pair $(j,F)$ to define the corresponding Radin forcing with collapses (see \S\ref{SubsectionRadinwithcollapses}). By forcing with this poset  we will produce a model exemplifying the configuration displayed in  Theorem~\ref{thmglobal}.
\begin{lemma}[Constructing pairs]\label{LemmaConstructingPair}\label{Revise it with the change of collapses}
Let $\kappa$ be a supercompact cardinal. Then there is a model of \textsf{ZFC} where the following hold:
\begin{enumerate}
    \item $\Gal{\kappa^+}{\kappa^{++}}$ fails and \textsf{GCH} holds for all cardinals $\geq \kappa^{+}$;
    \item for every regular cardinal $\lambda\geq\kappa^{++}$, there is  a weak constructing pair $(j_\lambda,F_\lambda)$ such that $j_\lambda$ witnesses that $\kappa$ is $\lambda$-supercompact.
\end{enumerate}
\end{lemma}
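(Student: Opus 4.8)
The plan is to follow a two-stage construction: first a Laver-style preparation and Abraham--Shelah forcing to get Clause (1), then the construction of guiding generics over the resulting model via Woodin-style surgery, adapting the Gitik--Sharon argument. First I would begin with $\kappa$ supercompact and apply Laver's preparation so that $\kappa$ becomes indestructible under $\kappa$-directed-closed forcing; passing to an intermediate model I may also assume \textsf{GCH} holds above $\kappa$. Next, I would force with the Abraham--Shelah poset $\mathbb{S}(\kappa)=\mathbb{S}(\kappa,\kappa^{++})$ of Definition~\ref{AbSh forcing}. Since $\mathbb{S}(\kappa)$ is $\kappa$-directed-closed, indestructibility guarantees $\kappa$ remains supercompact; since it is $\kappa^{++}$-cc and $\kappa^+$-preserving (and has size $\kappa^{++}$ under \textsf{GCH}), it gives $2^\kappa=\kappa^{++}$ while keeping \textsf{GCH} at every cardinal ${\geq}\kappa^+$. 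As explained on page~\pageref{ExplanationonGalvin}, the clubs $\langle C_\alpha\mid\alpha<\kappa^{++}\rangle$ introduced by this forcing witness $\neg\Gal{\kappa^+}{\kappa^{++}}$. This settles Clause (1). Let $V$ now denote this model.

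For Clause (2), fix a regular $\lambda\geq\kappa^{++}$ and let $j\colon V\to M$ witness $\lambda$-supercompactness with $M^\lambda\subseteq M$; I would arrange (by the usual normal-fine-measure choice, possibly composing with a further ultrapower to shrink $M$) that $|j(\kappa)|^V=\lambda^+$ or at least that $M$ computes cardinals correctly up to the relevant point. Let $i\colon V\to N\simeq \mathrm{Ult}(V,\mathscr U)$ be the ultrapower by the induced normal measure $\mathscr U$ on $\kappa$. The task is to produce an $N$-generic filter $F$ for $\Col(\kappa^{+4},i(\kappa))^N$ with $F\in M$. The standard obstacle is that $2^\kappa=\kappa^{++}>\kappa^+$, so $N$ has $\kappa^{++}$-many maximal antichains of $\Col(\kappa^{+4},i(\kappa))^N$ while we can only diagonalize through $\kappa^+$-many inside $M$ directly; this is exactly the difficulty flagged in the discussion on page~\pageref{discussion}. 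The remedy, following \cite{GitSha}, is Woodin's surgery: I would first build, inside $V$, a filter $F_0$ that is generic over a suitable inner model $N_0$ (for instance the Hull of $N$ of size $\kappa^{++}$, or $N$ relativized to a $\kappa^+$-closed master condition approach), using the $\kappa^{+3}$-closure of the collapse and a bookkeeping of length $\kappa^{++}$ together with the fact that $\Col(\kappa^{+4},i(\kappa))^N$ is $\kappa^{+4}$-directed-closed in $N$ and that $N$ is closed under $\kappa$-sequences. Then I would perform surgery on $F_0$: given the ${<}\kappa^{+4}$-support structure, one modifies $F_0$ on a set of coordinates of size ${\leq}\kappa^{+3}$ to meet each of the remaining $\kappa^{++}$-many dense sets coming from $N$, checking that each modification disturbs only few coordinates and that the modified object remains a filter meeting everything met before; the closure of the collapse is what makes the surgery go through coordinate-by-coordinate. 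Finally $F\in M$ follows because $M\supseteq{}^\lambda M$ and $\lambda\geq\kappa^{++}$, so $M$ contains all the $\kappa^{++}$-sequences used in the construction, hence contains $F$ itself.

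The main obstacle, and the part requiring genuine care rather than routine bookkeeping, is the surgery step: one must verify that after negating Galvin's property — i.e., after the non-\textsf{GCH}-preserving forcing $\mathbb{S}(\kappa)$ — the combinatorial hypotheses needed for Woodin's surgery (the right closure of the collapse in $N$, the right size of the set of coordinates touched, and the genericity of the outcome) still hold in this altered universe. Concretely I expect to need the computations $2^\kappa=\kappa^{++}$, $2^{\kappa^+}=\kappa^{++}$, $2^{\kappa^{++}}=\kappa^{+3}$ (so $\Col(\kappa^{+4},i(\kappa))^N$ has only $\kappa^{+3}$-many maximal antichains that are ``low'', matching the closure), together with $i(\kappa)^{<\kappa^{+4}}=i(\kappa)$ in $N$; these are exactly the arithmetic facts secured by Clause (1). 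With $F$ in hand, the pair $(j_\lambda,F_\lambda):=(j,F)$ satisfies Definition~\ref{WeakConstruct}: (1) holds since $j$ witnesses $\lambda$-supercompactness, (2) is the genericity just established, and (3) is $F\in M$. This completes the plan.
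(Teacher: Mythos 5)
Your Clause (1) matches the paper's argument. The gap is in Clause (2), where the construction of the guiding generic does not work as described. First, you omit the preparation that the paper performs \emph{after} the Abraham--Shelah forcing: an Easton-support iteration adding $\alpha^{++}$ Cohen subsets at every inaccessible $\alpha\leq\kappa$. This is not an optional refinement --- it is what makes the transfer of genericity possible. The paper does not build $F$ over the normal ultrapower $N$ at all; it builds $F$ over the lifted $\kappa^+$-supercompact ultrapower $\bar{M}[\ell(G)]$, where the relevant dense sets number only $\kappa^{++}$ (each has the form $\ell(f)(\ell``\kappa^+)$ and $2^{\kappa^+}=\kappa^{++}$) and the target model is closed under $\kappa^+$-sequences, so a plain descending-chain diagonalization succeeds. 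Genericity over $N$ then comes for free because the factor map $k'\colon N\rightarrow\bar{M}[\ell(G)]$ has critical point above $i(\kappa)$ --- and \emph{that} is exactly what the Cohen forcing at $\kappa$ buys: after the Gitik--Sharon lift of $\ell$ through $\mathrm{Add}(\kappa,\kappa^{++})$ (the only place where Woodin-style surgery is actually used), every ordinal $\leq\ell(\kappa)$ is of the form $\ell(c)(\kappa)$ for some $c\colon\kappa\rightarrow\kappa$, so $\ell(\kappa)+1\subseteq\mathrm{ran}(k')$ and the two ultrapowers agree on the collapse. Without this preparation there is no reason for such agreement, and you are stuck with the obstacle you yourself flag: $\kappa^{++}$-many antichains in $N$ against only $\kappa^+$-closure.

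Second, the step you propose in place of this --- ``performing surgery'' on a filter $F_0$ by modifying it on ${\leq}\kappa^{+3}$ coordinates so as to meet the remaining $\kappa^{++}$-many dense sets coming from $N$ --- is not a valid argument. Woodin's surgery alters a Cohen generic on a small set of coordinates so that a fixed embedding lifts; it does not upgrade a filter to be generic over a larger model. Meeting a dense subset of a Lévy collapse is not a local modification, successive modifications can eject you from dense sets already met, and there is no reason the outcome of $\kappa^{++}$-many such alterations is even a filter. To repair the proof you would need to reinstate the Cohen preparation and run the lifting scheme for $\ell$, $k$ and $j$ as in the paper (or supply a genuinely different mechanism for transferring genericity to $N$).
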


\begin{proof}
Let $\kappa$ be a supercompact cardinal. Without loss of generality assume that there is no inaccessible cardinal ${>}\kappa$.\footnote{If $\lambda$ is the first inaccessible above $\kappa$ then $V_\lambda$ is a model of \textsf{ZFC} where $\kappa$ is supercompact and there is no inaccessible above $\kappa$. In that case $W:=V_\lambda$ will be our initial ground model.} By forcing with the standard Jensen's iteration we may assume that the \textsf{GCH} holds (see \cite[\S8]{BP}). 
Appealing to Theorem~\ref{thmmt} we get a model of \textsf{ZFC} where $\kappa$ is supercompact, $2^\kappa=\kappa^{++}$,  the \textsf{GCH} holds for cardinals $\geq \kappa^{+}$ and $\Gal{\kappa^+}{\kappa^{++}}$ fails.

Denote the above resulting model by  $V$. Working in $V$, let $\mathbb{P}$ be the Easton-support iteration $\langle \mathbb{P}_\alpha; \lusim{\mathbb{Q}}_\beta\mid \beta\leq \alpha\leq \kappa\rangle$ defined as follows: if $\alpha\leq \kappa$ is inaccessible then $\one_{\mathbb{P}_\alpha}\forces_{\mathbb{P}_\alpha}\lusim{\mathbb{Q}}_\alpha=\lusim{\mathrm{Add}}(\alpha,\alpha^{++})$ while $\one_{\mathbb{P}_\alpha}\forces_{\mathbb{P}_\alpha}\text{$``\lusim{\mathbb{Q}}_\alpha$ is trivial''}$, otherwise. Let $G$ a $V$-generic filter for $\mathbb{P}$. By factoring $\mathbb{P}$ as $\mathbb{P}_\kappa\ast \lusim{\mathbb{Q}}_\kappa$  we have that $G=G_\kappa\ast g$ for suitable generic filters for $\mathbb{P}_\kappa$ and $(\lusim{\mathbb{Q}}_\kappa)_{G_\kappa}$, respectively.

Since $\mathbb{P}$ is $\kappa^{+}$-cc then Lemma~\ref{lemchain}($\aleph$) ensures that $\Gal{\kappa^+}{\kappa^{++}}$ fails in $V[G]$.  Additionally, a counting-nice-names argument shows that the \textsf{GCH} pattern for cardinals $\geq \kappa^{+}$ is preserved after forcing with $\mathbb{P}$. This yields (1).

\begin{claim}
Clause~(2) of the lemma holds in $V[G]$.
\end{claim}
\begin{proof}[Proof of claim]
Let $\lambda\geq \kappa^{++}$ be a regular cardinal in $V[G]$. Working in the model $V$, let $j\colon V\rightarrow{M}$ be a 
$\lambda$-supercompact embedding derived from a 
$\lambda$-supercompact 
measure on $\mathcal{P}_\kappa(\lambda)$. 
Let  $\ell \colon V\rightarrow{\bar{M}}$ be the $\kappa^+$-supercompact embedding induced by the projection of this measure onto $\mathcal{P}_\kappa(\kappa^+)$. As usual, there is a canonical map that factors $j$ through $\ell$: namely, the map 
$$k(\ell(f)(\ell``\kappa^+)):=j(f)(j``\kappa^+).$$
Note that  $\kappa^{++}_{\bar{M}}=\kappa^{++}=\kappa^{++}_M$ and $\kappa^{+3}_{\bar{M}}<\ell(\kappa)<\kappa^{+3}$, 
hence $\crit(k)=\kappa^{+3}_{\bar M}$.

\smallskip

We now show how to lift the embeddings $j, \ell$ and $k$. Let us commence by lifting these embeddings after forcing with $\mathbb{P}_\kappa$. Note that
\begin{eqnarray*}
\ell(\mathbb{P_\kappa})=\mathbb{P}_\kappa\ast \lusim{\mathrm{Add}}(\kappa,\kappa^{++})\ast \ell(\mathbb{P})_{(\kappa,\ell(\kappa))},\\
j(\mathbb{P}_\kappa)=\mathbb{P}_\kappa\ast \lusim{\mathrm{Add}}(\kappa,\kappa^{++})\ast j(\mathbb{P})_{(\kappa,j(\kappa))}.
\end{eqnarray*}
It is clear that $\ell$ can be lifted to an embedding $\ell\colon V[G_\kappa]\rightarrow{\bar{M}[G\ast H]}$, where $H$ is $\bar{M}[G]$-generic for $\ell(\mathbb{P})_{(\kappa,\ell(\kappa))}$.
\begin{sclaim}
There is some of such $H$ in $V[G]$. 

In particular, $\ell$ lifts to a $V[G]$-definable  embedding $\ell\colon V[G_\kappa]\rightarrow{\bar{M}[\ell(G_\kappa)]}$ whose target model is closed under $\kappa^+$-sequences in $V[G]$.
\end{sclaim}
\begin{proof}[Proof of subclaim]
This is a quite standard argument so we just sketch it. First, note that $\bar{M}[G]$ is closed under $\kappa^+$-sequences in $V[G]$, hence $\ell(\mathbb{P})_{(\kappa,\ell(\kappa))}$ is $\kappa^{++}$-directed-closed in $V[G]$.\footnote{The closure under $\kappa^+$-sequences follows from the $\kappa^+$-ccness of $\mathbb{P}$.} Also every dense set in $M[G]$ for this forcing poset takes the form $\ell(f)(\ell``\kappa^+)$, where $f\in V[G]$ and $f\colon \mathcal{P}_\kappa(\kappa^+)\rightarrow{V_{\kappa+1}}$. Thus,  there are at most $2^{\kappa^+}=\kappa^{++}$-many such sets. Combining the closure of the model with the closure  of $\ell(\mathbb{P})_{(\kappa,\ell(\kappa))}$ in  $V[G]$ one can constructs the desired generic filter $H$. Finally, the second claim follows by noticing that $\ell(\mathbb{P})_{(\kappa,\ell(\kappa))}$ does not introduce new $\kappa^+$-sequences to $\bar{M}[G]$.
\end{proof}

Let us now lift the embedding $k$. 

\begin{sclaim}\label{liftingk}
The map $k$ lifts to a $V[G]$-definable elementary embedding $k\colon \bar{M}[\ell(G_\kappa)]\rightarrow M[j(G_\kappa)]$ whose target is closed under $\lambda$-sequences in $V[G]$.
\end{sclaim}
\begin{proof}[Proof of subclaim]
Since $\crit(k)>\kappa^{++}$ then $k``\mathbb{P}=\mathbb{P}$ and so $k$ lifts to $k\colon \bar{M}[G]\rightarrow M[G]$. Certainly, $M[G]$ is closed under $\lambda$-sequences in $V[G]$. 

Let $H$ be the generic filter constructed in the previous subclaim. Clearly, $k``H$ is a directed subset of $j(\mathbb{P})_{(\kappa,j(\kappa))}$, $k``H\in V[G]$ and $|k``H|=\kappa^{++}$. Thus,  the closure of the model $M[G]$ in $V[G]$ implies  that $k``H\in M[G]$. So, we can pick $p\in j(\mathbb{P})_{(\kappa,j(\kappa))}$ a upper bound for $k``H$.  Arguing as in the previous subclaim  construct $H'\in V[G]$ a $M[G]$-generic filter for $j(\mathbb{P})_{(\kappa,j(\kappa))}$ such that $p\in H'$. This is indeed feasible because the following hold:
\begin{enumerate}
    \item[$(\alpha)$] $M[G]$ is closed under $\lambda$-sequences in $V[G]$;
    \item[$(\beta)$] $j(\mathbb{P})_{(\kappa,j(\kappa))}$ is $\lambda^+$-directed-closed in $V[G]$;\footnote{Here we use our original assumption that there were not inaccessible cardinals ${>}\kappa$. Clearly, $\mathbb{P}$ does not create new such inaccessibles and so there are no $M[G]$-inaccessible cardinals in $(\kappa,\lambda]$.}
    \item[$(\gamma)$] There are at most $\lambda^+$-many dense sets $D\in M[G]$ for $j(\mathbb{P})_{(\kappa,j(\kappa))}$. Since now $\crit(k')>i(\kappa)$ this yields an $N$-generic filter for 
\end{enumerate}
Thereby, $k$ lifts to  $k\colon \bar{M}[G\ast H]\rightarrow{M}[G\ast H']$.  Clearly, $k$ is $V[G]$-definable  and  its target  is closed under $\lambda$-sequences in $V[G]$.
\end{proof}

Incidentally the above also shows that $j$ lifts to $j\colon V[G_\kappa]\rightarrow{M}[j(G_\kappa)].$

\smallskip

Let us now address the lifting under $\mathrm{Add}(\kappa,\kappa^{++})_{V[G_\kappa]}$ beginning with $\ell$:

\begin{sclaim}\label{SubclaimGitikSharon}
 $\ell$ lifts to a $V[G]$-definable  embedding $\ell\colon V[G]\rightarrow{\bar{M}[\ell(G)]}$ whose target model is closed under $\kappa^+$-sequences in $V[G]$. Moreover, $\ell(\kappa)$ can be enumerated as $\langle \ell(c_\alpha)(\kappa)\mid \alpha<\kappa^{++}\rangle$, for some functions $c_\alpha\colon \kappa\rightarrow \kappa$.
\end{sclaim}
\begin{proof}[Proof of sublcaim]
This can be proved as in \cite[Lemma 2.26]{GitSha}.
\end{proof}

The next subclaim shows that $k$ also lifts:

\begin{sclaim}
 The map $k$ lifts to a $V[G]$-definable elementary embedding $k\colon \bar{M}[\ell(G)]\rightarrow M[j(G)]$ whose target is closed under $\lambda$-sequences in $V[G]$.
\end{sclaim}
\begin{proof}[Proof of subclaim]
Let $h$ be the $\ell(\mathbb{Q}_\kappa)$-generic filter mentioned in $\ell(G)$. Note that $k``h$ is a subset  of $j(\mathbb{Q}_\kappa)$ 
of size $|\ell(\kappa^{++})|=\kappa^{++}$, hence $k``h\in M[j(G_\kappa)]$. Since $k``h$ is directed we can pick $p\in j(\mathbb{Q}_\kappa)$ an upper bound for $k``h$. Again, Clauses~$(\alpha)$--$(\gamma)$ of Subclaim~\ref{liftingk} hold when replacing $M[G]$ by $M[j(G_\kappa)]$ and $j(\mathbb{P})_{(\kappa,j(\kappa))}$ by $j(\mathbb{Q}_\kappa)$, and so we can construct $h'\in V[G]$ with $p\in h'$.
Altogether, the map $k$ lifts to $k\colon \bar{M}[\ell(G)]\rightarrow{ M[j(G)]}.$
\end{proof}
Finally, $j$ lifts to $j\colon V[G]\rightarrow{M[j(G)]}$, because $j``g\s h'.$

\smallskip

We have thus formed elementary embeddings $j\colon V[G]\rightarrow{M[j(G)]}$ and $\ell\colon V[G]\rightarrow{\bar{M}[\ell(G)]}$ that witness  $\lambda$ and $\kappa^+$-supercom\-pactness of $\kappa$ in $V[G]$. Also, we have obtained $k\colon \bar{M}[\ell(G)]\rightarrow{M[j(G)]}$ with $\crit(k)>\kappa$. 

\begin{sclaim}
 There is a weak constructing pair $(j_\lambda, F_\lambda)$ such that the elementary embedding  $j_\lambda$ witnesses that $\kappa$ is $\lambda$-supercompact.
\end{sclaim}
\begin{proof}[Proof of subclaim]
Let $\mathscr{U}$ be the normal measure on $\kappa$ derived from $\ell$ and $i\colon V[G]\rightarrow{N}$ be the corresponding ultrapower embedding. Let $k'\colon N\rightarrow \bar{M}[\ell(G)]$ be factor map between $\ell$ and $i$: namely, $k'(i(f)(\kappa))=\ell(f)(\kappa)$. By Subclaim~\ref{SubclaimGitikSharon}, every ordinal  $\alpha\leq \ell(\kappa)$ can be represented using a function $c\colon \kappa\rightarrow\kappa$, hence $\ell(\kappa)+1\s \mathrm{ran}(k')$, and thus so $\crit(k')>i(\kappa)$. A standard counting argument allows us to form a $\bar{M}[\ell(G)]$-generic filter $F$ for $\Col(\kappa^{+4},\ell(\kappa))^{\bar{M}[\ell(G)]}$ in $V[G]$. Since $\crit(k')>i(\kappa)$ then it is clear that $F$ is also $N$-generic for $\Col(\kappa^{+4},i(\kappa))^N$. 

On another front $\mathscr{U}$ is the normal measure generated by $j$, for  $\crit(k)>\kappa$. Also, since $i(\kappa)<\kappa^{+3}$ then $F\in H({\kappa^{+3}})^{V[G]}\s M[j(G)]$. Therefore $(j,F)$ is the sought weak constructing pair.
\end{proof}
This completes the proof of the claim.
\end{proof}
This is the end of the lemma.
\end{proof}
\begin{remark}
The above lemma can be slightly modified to get a constructing pair in the sense of \cite[\S3]{CumGCH}. For this,  derive a large enough extender from the final $\lambda$-supercompact embedding and form the corresponding  ultrapower. Once again, the normal measure derived from both the supercompact and extender  embedding are the same. The difference now is that the factor map between the normal ultrapower and the extender embedding has a rather small width, hence the generic $F$ is transferable. 
\end{remark}
It is plausible that Lemma~\ref{LemmaConstructingPair} can be obtained from more modest large-cardinal assumptions. For instance, Cummings obtained a similar result from the weaker assumption of $\kappa$ being $\mathcal{P}_3\kappa$-hypermeasurable \cite[\S3]{CumGCH}. Regrettably, it is not clear to the authors how Cummings' construction can be adapted to get $\neg\Gal{\kappa^+}{\kappa^{++}}$. See the discussion at page~\pageref{discussion}.

\smallskip

We are now in conditions to prove the promised theorem:

\begin{proof}[Proof of Theorem~\ref{thmglobal}]
Let $\kappa$ be a supercompact cardinal. Appealing to Lemma~\ref{LemmaConstructingPair} we get a model of \textsf{ZFC} where $\Gal{\kappa^+}{\kappa^{++}}$ fails, $\kappa$ is supercompact and there is a weak constructing pair $(j,F)$ witnessing that $\kappa$ is $\kappa^{++}$-supercompact. Denote this model by $V$ and let $u_*$ be the sequence inferred from $(j,F)$. Arguing  as in \cite[Lemma~1]{CumGCH}, for each 
$\alpha<\kappa^{+++}$ the sequence $u_*\restriction\alpha$ exists and belongs to $\mathcal{U}_\infty$.  
In particular, there is $\alpha<\kappa^{+3}$ such that the sequence $u:=u_*\restriction\alpha\in \mathcal{U}_\infty$ has a repeat point. 

\smallskip

Since $j\colon V\rightarrow M$ witnesses   $\kappa^{++}$-supercompactness,  $\Gal{\kappa^+}{\kappa^{++}}$ fails in the model $M$. Therefore, 
$$A:=\{w\in \mathcal{U}_\infty\mid \text{$``\Gal{\kappa_w^+}{\kappa_{w}^{++}}$ fails''}\}\in\mathscr{F}_{u}.$$

 Now, force with  $\mathbb{R}:=\mathbb{R}_u/p$, where $p:=\langle (u, \omega, \emptyset, A, H)\rangle$ and $H$ is some member of $F^*_u$. 
Let $G$ be a $\mathbb{R}$-generic filter over $V$ and  $C$ be the generic club induced by $G$.  By Proposition~\ref{CardinalStructure}, $\kappa$ remains inaccessible in $V[G]$. Let $\langle \kappa_\xi\mid \xi<\kappa\rangle$ be the increasing enumeration of $C$ and write, for each  $\xi<\kappa$, 
$$\Phi(\xi)\equiv \text{``$\Gal{\kappa_{\xi}^+}{\kappa^{++}_{\xi}}$ fails''}$$

\begin{claim}\label{preservingPhi}
For all $\xi<\kappa$, $\Phi(\xi)$ holds in $V[G]$. 
\end{claim}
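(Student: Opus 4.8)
The plan is to fix $\xi<\kappa$ and reduce $\Phi(\xi)$ in $V[G]$ to the corresponding failure in the ground model $V$, via the factoring Lemma~\ref{Factoring} together with the preservation Lemma~\ref{lemchain}.

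The ground-model input is immediate from the choice of the starting condition. We forced below $p=\langle(u,\omega,\emptyset,A,H)\rangle$, and inspection of the one-point extension of Definition~\ref{Onepoint} shows that every measure sequence occurring at an index strictly below the top of a condition $q\leq p$ lies in $A$ (or in a shrinking of $A$, hence in $A$), while pure extensions do not alter which sequences appear. Consequently $u_\xi\in A$, so by the very definition of $A$ the failure $\neg\Gal{\kappa_\xi^+}{\kappa_\xi^{++}}$ holds in $V$; fix there a witness $\langle C^\xi_\alpha\mid\alpha<\kappa_\xi^{++}\rangle$ of clubs of $\kappa_\xi^+$, equivalently a family such that every club $C\subseteq\kappa_\xi^+$ is contained in at most $\kappa_\xi$ of the $C^\xi_\alpha$. (Note $(\kappa_\xi^+)^V$ and $(\kappa_\xi^{++})^V$ are again cardinals in $V[G]$ by Proposition~\ref{CardinalStructure}, so $\Phi(\xi)$ is meaningful there.)

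Next I would split the forcing. Pick $q\in G$ with $w^q_n=u_\xi$ for some $n<\ell(q)$, which is possible because $u_\xi\in R$. By Lemma~\ref{Factoring}(1),
$$\mathbb{R}_u/q\ \simeq\ \mathbb{R}_{u_\xi}/q^{\leq n}\ \times\ \mathbb{R}_u/q^{>n},$$
a coarsening of the factorisation $(\star)$ used in Proposition~\ref{CardinalStructure}; accordingly $G$ yields a product-generic $G^0\times G^1$ with $V[G]=V[G^1][G^0]$. The tail $\mathbb{R}_u/q^{>n}$ has as its first coordinate a basic module with parameter $\lambda_{n+1}=\kappa_{w^q_n}=\kappa_\xi$, so by Remark~\ref{TailVeryclosed} it adds no new subsets of $\kappa_\xi^{+4}$; in particular it adds no new clubs of $\kappa_\xi^+$ and no new $\kappa_\xi^+$-sequences of ordinals $<\kappa_\xi^{++}$. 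Running the argument of Lemma~\ref{lemchain}($\beth$), which uses only this, the family $\langle C^\xi_\alpha\rangle$ keeps witnessing $\neg\Gal{\kappa_\xi^+}{\kappa_\xi^{++}}$ in $V[G^1]$, and $\kappa_\xi^+,\kappa_\xi^{++}$ retain their values there.

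Finally I would add $G^0$. The poset $\mathbb{R}_{u_\xi}/q^{\leq n}$ lies in $H(\kappa_\xi^{+4})^V$, which $\mathbb{R}_u/q^{>n}$ leaves unchanged, so it is the same poset in $V[G^1]$ and is still $\kappa_\xi^+$-cc by Lemma~\ref{ccness}. Applying Lemma~\ref{lemchain}($\aleph$) with parameters $\kappa_\xi,\kappa_\xi^+,\kappa_\xi^{++}$ in place of $\kappa,\partial,\lambda$, the failure persists to $V[G^1][G^0]=V[G]$, which is $\Phi(\xi)$. I expect the only delicate points to be the local cardinal bookkeeping around $\kappa_\xi$ and the observation that the chain condition of the bottom factor survives the (very closed) top factor — which it does precisely because the factorisation is a genuine product, so no new conditions, hence no new antichains, are introduced. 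There is no real conceptual obstacle beyond correctly localising the situation at $\kappa_\xi$.
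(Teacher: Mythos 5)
Your argument is correct and runs on the same engine as the paper's (factor $\mathbb{R}_u/q$ around the coordinate carrying $u_\xi$, then push $\neg\Gal{\kappa_\xi^+}{\kappa_\xi^{++}}$ from $V$ through each factor via Lemma~\ref{lemchain}), but you use a coarser decomposition. The paper invokes Lemma~\ref{Factoring}(2) to split off three factors, $\mathbb{R}_{u_\xi}/q^{\leq m}\times\Col(\kappa_\xi^{+4},\kappa_{\xi+1})\times\mathbb{R}_u/q^{>m+1}$, handles the tail first, then the $\kappa_\xi^+$-cc initial segment, and finally needs Easton's Lemma to see that the interleaved L\'evy collapse is still $\kappa_\xi^{+4}$-distributive over the intermediate model. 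You instead use Lemma~\ref{Factoring}(1) and absorb the collapse into the tail $\mathbb{R}_u/q^{>n}$, whose bottom module has parameter $\lambda_{n+1}=\kappa_{u_\xi}=\kappa_\xi$, so Remark~\ref{TailVeryclosed} already gives that the \emph{entire} tail adds no subsets of $\kappa_\xi^{+4}$; this buys you a two-step argument with no appeal to Easton's Lemma. One small caveat: your closing remark that a product factor adds ``no new conditions, hence no new antichains'' is not a valid general principle (a distributive forcing can add new antichains consisting of old conditions); here it is harmless either because the tail adds no new subsets of the small poset $\mathbb{R}_{u_\xi}/q^{\leq n}$ at all, or, more robustly, because Lemma~\ref{ccness} gives $\kappa_\xi^+$-\emph{linkedness} witnessed by a ground-model colouring into $V_{\kappa_\xi}$, and compatibility is absolute — I would phrase the preservation of the chain condition that way.
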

\begin{proof}
Fix $\xi<\kappa$ and pick $p\in G$ mentioning both $u_\xi$ and $u_{\xi+1}$, say at coordinate $m$ and $m+1$, respectively. 
Note that $\Phi(\xi)$ holds in $V$ by virtue of our choice of $A$. Invoking Lemma~\ref{Factoring}(2) we have
$$\mathbb{R}_u/q\simeq \mathbb{R}_{w_{\xi}}/q^{\leq m}\times \Col(\kappa_{\xi}^{+4},\kappa_{{\xi+1}})\times\mathbb{R}_u/q^{>m+1}.$$
The top-most of these forcings does not introduce new subsets to $\kappa_\xi^{++}$, hence Lemma~\ref{lemchain} implies that $\Phi(\xi)$ is preserved in the corresponding generic extension, $V_1$. Also, in $V_1$, the remaining is a product of a $\kappa_{\xi}^+$-cc times  $\kappa^{+4}_{\xi}$-directed-closed forcings. Yet again, by Lemma~\ref{lemchain},  forcing with the first factor keeps $\Phi(\xi)$ true in the second generic extension, $V_2$. Also, Easton's Lemma implies that the remaining poset is $\kappa^{+4}_\xi$-directed-closed in $V_2$, hence $\Phi(\xi)$ holds in the final generic extension. 
Altogether, $\Phi(\xi)$ holds in $V[G]$. 
\end{proof}
Looking at Proposition~\ref{CardinalStructure}(2) it is easy to check that 
$$\kappa^+_\xi:=\begin{cases}
\aleph_{4\cdot(\xi+1)+1}, & \text{if $\xi<\omega$;}\\
\aleph_{4\cdot \xi+1}, & \text{if $\xi\geq \omega$.}
\end{cases}$$
By further forcing with $\Col(\aleph_0,\aleph_4)$ we get that, for every ordinal $\xi<\kappa$,  $\kappa_\xi^+=\aleph_{4\cdot\xi+1}$ holds in the resulting generic extension. Moreover, this forcing is $\aleph_5$-cc and, as a result, preserves $\Phi(\xi)$ for every $\xi<\kappa$ (see Lemma~\ref{lemchain}$(\aleph)$).\label{calculation}

\smallskip

Combining this with Claim~\ref{preservingPhi} we have that 
$$\text{$\Gal{\aleph_{4\cdot\xi+1}}{\aleph_{4\cdot\xi+2}}$ fails for all $\xi<\kappa$.}$$
Finally,  $W:=V[G]_\kappa$ yields the desired model of \textsf{ZFC}.
\end{proof}

\subsection{The impossibility of the ultimate global failure}\label{SectionImpossibility}
In this section we would like to demonstrate that Theorem~\ref{thmglobal} cannot be improved to encompass all regular cardinals. More precisely, we would like to argue that this configu\-ration is not achievable if one bears on Abraham-Shelah methods to produce the failure of Galvin's property.

\smallskip

The first observation that we make is that   $\mathrm{Gal}(\mathscr{D}_\kappa,\partial,\lambda)$  always holds, provided $\partial<\kappa$ or $\lambda>(2^\kappa)$. Thus, the only interesting cases are those where $\kappa\leq \partial\leq\lambda\leq 2^\kappa$. The second observation is that $\mathrm{Gal}(\mathscr{D}_\kappa,\partial,\lambda)$ yields $\mathrm{Gal}(\mathscr{D}_\kappa,\kappa,2^\kappa)$, and hence by negating the latter one obtains the failure of the former. Thereby, $\neg \mathrm{Gal}(\mathscr{D}_\kappa,\kappa,2^\kappa)$ yields a sort of \emph{ultimate failure} for Galvin's property at $\kappa$. 
The final observation in this vein that we would like to make is that forcing with $\mathbb{S}(\kappa)$  produces such a ultimate failure at $\kappa^+$.\footnote{See our explanations in page~\pageref{ExplanationonGalvin}.} In particular, in the model of Theorem~\ref{thmglobal} this ultimate failure holds for all successors of limit cardinals. All of these have inspired the next concept:

\begin{definition}[The Ultimate failure]
The \emph{Ultimate failure of Galvin's property} is the assertion that $\mathrm{Gal}(\mathscr{D}_\kappa,\kappa,2^\kappa)$ fails for all regular cardinal $\kappa$.
\end{definition}

\begin{proposition}
For every cardinal $\kappa$ there is a regular cardinal $\lambda>\kappa$ such that  $\mathrm{Gal}(\mathscr{D}_\lambda,\lambda,2^\lambda)$ holds. In particular, the Ultimate failure of Galvin's property is impossible.
\end{proposition}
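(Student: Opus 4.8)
The plan is to argue by contradiction. Suppose $\mathrm{Gal}(\mathscr{D}_\lambda,\lambda,2^\lambda)$ fails for every regular $\lambda>\kappa$. First I would harvest the easy cases. If some $\lambda>\kappa$ were (strongly) inaccessible then $\lambda^{<\lambda}=\lambda$, so Galvin's theorem yields $\mathrm{Gal}(\mathscr{D}_\lambda,\lambda,\lambda^+)$, and hence $\mathrm{Gal}(\mathscr{D}_\lambda,\lambda,2^\lambda)$ by the second observation above (passing to a subfamily makes $\mathrm{Gal}$ monotone in its last argument) — a contradiction. Likewise, for a cardinal $\mu\ge\kappa$: if $2^\mu=\mu^+$ then $(\mu^+)^{<\mu^+}=(\mu^+)^{\mu}=2^\mu=\mu^+$, so the same reasoning applies at $\lambda=\mu^+$; and if $2^\mu<2^{\mu^+}$ then \cite{MR3604115} gives $\mathrm{Gal}(\mathscr{D}_{\mu^+},\mu^+,2^{\mu^+})$ outright. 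Hence we may assume from now on that there is no inaccessible above $\kappa$ and that $2^\mu>\mu^+$ and $2^\mu=2^{\mu^+}$ hold for every cardinal $\mu\ge\kappa$.

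Under these hypotheses I would show by transfinite induction on the cardinals $\mu\in[\kappa,2^\kappa)$ that $2^\mu=2^\kappa$. The successor step is exactly $2^{\rho^+}=2^\rho$. At a singular limit cardinal $\mu$ one has $2^\mu=(2^{<\mu})^{\cf(\mu)}=(2^\kappa)^{\cf(\mu)}=2^{\max(\kappa,\cf(\mu))}$, and since $\cf(\mu)<\mu$ the inductive hypothesis gives $2^{\cf(\mu)}\le 2^\kappa$, so $2^\mu=2^\kappa$. The only genuinely delicate step is a regular limit cardinal $\mu<2^\kappa$, i.e.\ a weakly inaccessible one: there $2^{<\mu}=2^\kappa\ge\mu$, so if $2^\mu\ne 2^\kappa$ then $2^{<\mu}<2^\mu$, and the (regular‑cardinal form of the) result of \cite{MR3604115} would give $\mathrm{Gal}(\mathscr{D}_\mu,\mu,2^\mu)$, against our standing assumption; hence $2^\mu=2^\kappa$ here too. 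Once the induction is complete we have $2^{<2^\kappa}=2^\kappa$, and a look at $\mu=2^\kappa$ yields the contradiction in every case. If $2^\kappa=\rho^+$ then $2^{2^\kappa}=2^\rho=2^\kappa$, absurd. If $2^\kappa$ is singular then $2^{2^\kappa}=(2^\kappa)^{\cf(2^\kappa)}$ is simultaneously $>2^\kappa$ by K\"onig's theorem and $=2^{\cf(2^\kappa)}=2^\kappa$ by the induction. And if $2^\kappa$ is weakly inaccessible then, just as above, $(2^\kappa)^{<2^\kappa}=2^{<2^\kappa}=2^\kappa$, so Galvin's theorem and the second observation give $\mathrm{Gal}(\mathscr{D}_{2^\kappa},2^\kappa,2^{2^\kappa})$ — contradicting that $2^\kappa$ is a regular cardinal $>\kappa$ at which Galvin's property fails. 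This proves the Proposition; the final sentence follows by applying the first part with $\kappa=\aleph_0$, since the resulting regular $\lambda$ refutes the Ultimate failure of Galvin's property.

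The main obstacle is precisely the weakly inaccessible step in the induction. Unlike for successor cardinals, Easton's theorem leaves $2^\mu$ unconstrained at a regular limit cardinal $\mu$, so one cannot read off $2^\mu$ from $2^{<\mu}$, while Galvin's theorem itself applies only when $2^{<\mu}=\mu$, i.e.\ when $\mu$ is strongly inaccessible, which we have excluded. The way around this is to note that the dichotomy underlying the result of \cite{MR3604115} — either $2^{<\mu}=2^\mu$, or Galvin's property holds at $\mu$ for all families of $2^\mu$ clubs — is available at every regular $\mu>\omega$, not merely at successors; granting that, the induction propagates through weakly inaccessibles exactly as it does through singular limits, and the argument closes.
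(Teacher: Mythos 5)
Your proof is correct and follows essentially the same route as the paper's: both argue by contradiction, establish that the continuum function is constant on $[\kappa,2^\kappa)$ by invoking the dichotomy of \cite{MR3604115} at regular cardinals where $2^{<\lambda}<2^\lambda$, and then derive a contradiction at $\lambda=2^\kappa$. The only difference is cosmetic and sits in the endgame: where you dispose of the successor and singular cases by cardinal arithmetic (Cantor/K\"onig) and of the weakly inaccessible case via Galvin's theorem, the paper instead extracts the weak diamond $\Phi_{2^\kappa}$ from the eventually constant continuum function and applies $\Phi_\lambda\Rightarrow\mathrm{Gal}(\mathscr{D}_\lambda,\lambda,2^\lambda)$.
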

\begin{proof}
Assume towards a contradiction that there is some cardinal $\kappa$ such that $\mathrm{Gal}(\mathscr{D}_\lambda,\lambda,2^\lambda)$ fails for all $\lambda\geq \kappa$. The proof strategy is to show that for some cardinal $\lambda>\kappa$ the weak diamond principle $\Phi_\lambda$ holds.\footnote{ The principle $\Phi$ (i.e. $\Phi_{\aleph_1}$) was introduced in \cite{MR0469756}.
For the definition of the more general principle $\Phi_\lambda$  see, e.g., \cite[Definition~1.1]{MR3604115}} Indeed, if this is the case then Theorems~2.1 and 2.9 of \cite{MR3604115} would yield $\mathrm{Gal}(\mathscr{D}_\lambda,\lambda,2^\lambda)$, hence forming  the desired contradiction. Before proving the next claim observe that $2^\kappa>\kappa^+$, for otherwise $\mathrm{Gal}(\mathscr{D}_{\kappa^+},\kappa^+,2^{\kappa^+})$ would hold. 
\begin{claim}
$2^\theta=2^\kappa$, for all $\theta\in [\kappa,2^\kappa)$.
\end{claim}
\begin{proof}[Proof of claim]
It suffices to check that $2^\theta\leq 2^\kappa$. Suppose otherwise, and let $\theta\in (\kappa,2^\kappa)$ be the least regular cardinal such that $2^\theta>2^\kappa$. Note that such $\theta$ exists as $2^\kappa>\kappa^+$. If $\theta=\delta^+$ then of minimality $\theta$ yields $2^\delta<2^\theta$ and so $\Phi_\theta$ holds \cite[\S1]{MR3604115}. As explained in the previous paragraph this is impossible. 
For the other case, minimality yields $2^{<\theta}=2^\kappa<2^\theta$ and so \cite[Theorem~2.9]{MR3604115} yields yet again 
o
$\mathrm{Gal}(\mathscr{D}_\theta,\theta,2^\theta)$.
\end{proof}
We are now in conditions to prove the proposition. Put $\lambda:=2^\kappa$. If $\lambda$ takes the form $\lambda=\theta^+$ then the above claim yields $2^\theta=\lambda=\theta^+<2^\lambda$. Hence, $\Phi_{\lambda}$ holds. Otherwise, $\lambda$ is a limit  cardinal such that $\langle 2^\theta\mid \theta<\lambda\rangle$ is eventually constant and thus results from \cite[\S7.1]{MR0469756} entail $\Phi_\lambda$.\footnote{For a explicit proof see \cite[\S2]{BenGarHay}.}  In any of the previous cases we achieve the desired contradiction.
\end{proof}
The previous proposition shows that class many gaps are inevitable if one wishes to force the ultimate failure of Galvin's property. In particular, any global theorem involving the method of Abraham and Shelah is doomed to miss class many instances of $\neg\mathrm{Gal}(\mathscr{D}_\kappa,\kappa,2^\kappa)$. Nevertheless, the property $\neg \mathrm{Gal}(\mathscr{D}_\kappa,\kappa,2^\kappa)$ is forceable at finitely-many consecutive cardinals and, as a result, the size of the gaps in Theorem~\ref{thmglobal} seem to be improvable. We intend to develop this issue in a subsequent work.

\subsection{Galvin's number at successor of singular cardinals}\label{SectionGalvinsNumber}
After esta\-blishing the consistency of many instances for the failure of Galvin's proper\-ty, the time is ripe to discuss 
the cardinal 
characteristic called the \emph{Galvin number}. 
This notion goes back to \cite{MR3787522} and reads as follows: 

\begin{definition}
\label{defgalnumber} For an infinite cardinal $\kappa$ the \emph{Galvin number at $\kappa$}, $\mathfrak{gp}_\kappa$, is 
$$\mathfrak{gp}_\kappa:=\min\{\lambda\mid \text{$\mathrm{Gal}(\mathscr{D}_{\kappa^+}, \kappa^+, \lambda^+)$ holds}\}.$$
\end{definition}

An outright consequence of Theorem \ref{thmmt} 
is the consistency (with \textsf{ZFC}) of
$$\text{$``\kappa$ is supercompact and $\mathfrak{gp}_\kappa=\lambda$'',}\footnote{Actually, in the model of Theorem~\ref{thmmt}$(\aleph)$, $\mathfrak{gp}_\kappa=2^{\kappa^+}=2^\kappa$.}$$
for
$\lambda\geq\cf(\lambda)>\kappa^+$. In particular, $\mathfrak{gp}_\kappa$ might be singular of cofinality greater than $\kappa^+$. Likewise, this situation can be arranged for a strong limit singular cardinal $\kappa$ of any prescribed cofinality (Theorem~\ref{thmmt}$(\beth)$). 

\smallskip

It has been shown in \cite{ghhm} that if $\kappa$ is  regular then  the cofinality of $\mathfrak{gp}_\kappa$ cannot be \emph{too small}.\footnote{For instance, if $\kappa=\aleph_3$ then \cite[Theorem~2.1]{ghhm} shows that $\cf(\mathfrak{gp}_\kappa)\geq \omega_1$.}
However, we do not know, e.g., if $\cf(\mathfrak{gp})=\omega$ is consistent. In case $\kappa>\aleph_0$  it is also open whether $\cf(\mathfrak{gp}_\kappa)<\kappa$ is consistent. Despite of this, one can still prove that the difference between $\cf(\mathfrak{gp}_\kappa)$ and $\kappa$ is relatively small  provided, of course, that $\cf(\mathfrak{gp}_\kappa)<\kappa$ is possible at all. 

\smallskip

In the strong limit singular scenario we can do better:

\begin{theorem}
\label{propcofgp} Let $\kappa$ be a strong limit singular cardinal. Then,
\begin{enumerate}
\item [$(\aleph)$] $\cf(\mathfrak{gp}_\kappa)>\kappa$.
\item [$(\beth)$] It is consistent, with a supercompact cardinal, that $\cf(\mathfrak{gp}_\kappa)=\kappa^+$. 
\end{enumerate}
\end{theorem}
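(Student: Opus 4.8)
The plan is to prove the two clauses separately, using the general machinery already set up in the excerpt.

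For clause $(\aleph)$, the idea is to argue by contradiction. Suppose $\lambda:=\mathfrak{gp}_\kappa$ has $\cf(\lambda)\leq\kappa$. By Definition~\ref{defgalnumber}, $\lambda$ is least such that $\mathrm{Gal}(\mathscr{D}_{\kappa^+},\kappa^+,\lambda^+)$ holds; in particular $\mathrm{Gal}(\mathscr{D}_{\kappa^+},\kappa^+,\mu^+)$ fails for every $\mu<\lambda$, witnessed by some family $\mathcal{C}_\mu\s\mathscr{D}_{\kappa^+}$ of size $\mu^+$ all of whose $\kappa^+$-sized subfamilies have intersection outside $\mathscr{D}_{\kappa^+}$. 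Write $\lambda=\sup_{i<\cf(\lambda)}\mu_i$ with each $\mu_i<\lambda$, and consider $\mathcal{C}:=\bigcup_{i<\cf(\lambda)}\mathcal{C}_{\mu_i}$. Since $\kappa$ is a strong limit singular cardinal we have $\kappa^{<\kappa}=\kappa$, so $\lambda^{\cf(\lambda)}=\lambda$ (here one uses $\cf(\lambda)\leq\kappa<\lambda$ together with $2^\kappa=\kappa^+<\lambda$, which is forced by Definition~\ref{defgalnumber} since $\mathrm{Gal}(\mathscr{D}_{\kappa^+},\kappa^+,(\kappa^+)^+)$ would otherwise hold); hence $|\mathcal{C}|=\lambda$. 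Now apply $\mathrm{Gal}(\mathscr{D}_{\kappa^+},\kappa^+,\lambda^+)$? No — $\mathcal{C}$ has size only $\lambda$, so instead I would argue directly: the point is that a $\kappa^+$-sized subfamily $\mathcal{D}\s\mathcal{C}$ with club intersection would have to meet some $\mathcal{C}_{\mu_i}$ in a $\kappa^+$-sized subfamily (since $\cf(\lambda)\leq\kappa<\kappa^+$, by regularity of $\kappa^+$ one block contributes $\kappa^+$ members), contradicting the choice of $\mathcal{C}_{\mu_i}$. Thus $\mathcal{C}$ witnesses $\neg\mathrm{Gal}(\mathscr{D}_{\kappa^+},\kappa^+,\lambda)$, i.e.\ $\neg\mathrm{Gal}(\mathscr{D}_{\kappa^+},\kappa^+,\mu^+)$ for $\mu<\lambda$ with $\mu^+=\lambda$ — but $\lambda$ need not be a successor. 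The cleaner route: if $\cf(\lambda)\leq\kappa$ then $\lambda$ is not a successor of a cardinal of cofinality $>\kappa$, and one shows $\mathrm{Gal}(\mathscr{D}_{\kappa^+},\kappa^+,\lambda)\Leftrightarrow\mathrm{Gal}(\mathscr{D}_{\kappa^+},\kappa^+,\lambda^+)$ fails at $\lambda$ as well, contradicting minimality of $\lambda=\mathfrak{gp}_\kappa$ (since then already $\lambda$, not $\lambda^{+}$ indexing issue aside, some smaller witness works). The honest obstacle here is pinning down exactly this minimality bookkeeping with the $\lambda^+$ versus $\lambda$ convention in Definition~\ref{defgalnumber}; I expect the statement to reduce to: a union of ${\leq}\cf(\lambda)$-many witnessing families, each of size ${<}\lambda$, still witnesses failure at $\lambda$, because any $\kappa^+$-sized subfamily concentrates on one block — and $\cf(\lambda)\leq\kappa<\kappa^+=\cf(\kappa^+)$ is exactly what makes the concentration argument go through.

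For clause $(\beth)$, the plan is to build on Theorem~\ref{thmmt}$(\beth)$. Start with a supercompact $\kappa$, and choose the cardinal arithmetic in the ground model so that $2^{\kappa^+}=\lambda$ where $\lambda$ is singular of cofinality $\kappa^{+}$ — e.g.\ by first forcing with a suitable Cohen/Add forcing above $\kappa$ to arrange $2^{\kappa^{+}}$ to be a prescribed singular cardinal of cofinality $\kappa^{+}$, keeping $\kappa$ supercompact (the relevant poset is $\kappa^{+}$-directed-closed so indestructibility is preserved, cf.\ the use of $\mathbb{S}(\kappa,\lambda)$ in the proof of Theorem~\ref{thmmt}). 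Then force with the Abraham--Shelah poset $\mathbb{S}(\kappa,\lambda)$ to get $\neg\mathrm{Gal}(\mathscr{D}_{\kappa^+},\kappa^+,\lambda)$ while $\kappa$ stays supercompact and $2^{\kappa}=2^{\kappa^+}=\lambda$ (this is exactly the "moreover" clause of the Abraham--Shelah theorem quoted at the start, which applies precisely when $\lambda\geq\cf(\lambda)>\kappa^+$ — so here I must instead arrange $\cf(\lambda)=\kappa^+$, which sits on the boundary; one may need the version of $\mathbb{S}(\kappa,\lambda)$ for $\lambda$ of cofinality $\kappa^+$ directly, or approximate $\lambda$ from below). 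Next singularize $\kappa$ by Prikry or Radin forcing of the appropriate cofinality; this is $\kappa^+$-cc, so by Lemma~\ref{lemchain}$(\aleph)$ the failure of $\mathrm{Gal}(\mathscr{D}_{\kappa^+},\kappa^+,\lambda)$ is preserved, and $\kappa$ becomes a strong limit singular cardinal. It remains to compute $\mathfrak{gp}_\kappa$ in the final model: by $(\aleph)$ we know $\cf(\mathfrak{gp}_\kappa)>\kappa$, and by $\neg\mathrm{Gal}(\mathscr{D}_{\kappa^+},\kappa^+,\lambda)$ we get $\mathfrak{gp}_\kappa>\lambda$, hence $\mathfrak{gp}_\kappa\geq\lambda^+$; on the other hand $\mathrm{Gal}(\mathscr{D}_{\kappa^+},\kappa^+,(2^{\kappa^+})^+)$ holds trivially (families of size $>2^{\kappa^+}$ cannot consist of distinct clubs) so $\mathfrak{gp}_\kappa\leq 2^{\kappa^+}$. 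The delicate point is to arrange that $2^{\kappa^+}=\lambda^+$ in the final model — equivalently, that $\mathfrak{gp}_\kappa$ is forced to equal $\lambda^+$ exactly — so that $\cf(\mathfrak{gp}_\kappa)=\cf(\lambda^+)=\lambda^+$? No: we want $\cf(\mathfrak{gp}_\kappa)=\kappa^+$, so we want $\mathfrak{gp}_\kappa$ itself to be a cardinal of cofinality $\kappa^+$, i.e.\ $\mathfrak{gp}_\kappa=\lambda$ with $\cf(\lambda)=\kappa^+$. So the correct target is $\mathfrak{gp}_\kappa=\lambda$, which by Definition~\ref{defgalnumber} means $\mathrm{Gal}(\mathscr{D}_{\kappa^+},\kappa^+,\lambda^+)$ holds but $\mathrm{Gal}(\mathscr{D}_{\kappa^+},\kappa^+,\mu^+)$ fails for all $\mu<\lambda$.

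The main obstacle, and the heart of clause $(\beth)$, is therefore the following: we must produce a model where $\mathrm{Gal}(\mathscr{D}_{\kappa^+},\kappa^+,\mu^+)$ \emph{fails} for every $\mu<\lambda$ (all the way up to the singular cardinal $\lambda$ of cofinality $\kappa^+$) but $\mathrm{Gal}(\mathscr{D}_{\kappa^+},\kappa^+,\lambda^+)$ \emph{holds}. The failure at all $\mu<\lambda$ can be gotten from a single Abraham--Shelah family of size $\lambda$ (one family of $\lambda$-many clubs automatically witnesses failure at every $\mu^+<\lambda$ via restriction, since any $\kappa^+$-sized subfamily of a $\mu^+$-sized piece is still a $\kappa^+$-sized subfamily of the whole). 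For the positive instance $\mathrm{Gal}(\mathscr{D}_{\kappa^+},\kappa^+,\lambda^+)$, one invokes the result of \cite{MR3604115}: if $2^{\kappa}<2^{\kappa^+}$ then Galvin's property holds for all $2^{\kappa^+}$-sized families at $\kappa^+$; so arrange $2^{\kappa}=\lambda$ but $2^{\kappa^+}=\lambda^+$, and use that $\lambda^+=2^{\kappa^+}$. But then $\mathfrak{gp}_\kappa\leq\lambda^+$, and combined with $\mathfrak{gp}_\kappa>\mu^+$ for all $\mu<\lambda$ — which gives $\mathfrak{gp}_\kappa\geq\lambda$ — and the fact that $\mathfrak{gp}_\kappa\neq\lambda^+$ would require failure at $\lambda^{++}$-indexing... the bookkeeping yields $\mathfrak{gp}_\kappa\in\{\lambda,\lambda^+\}$, and one rules out $\lambda^+$ by checking that $\mathrm{Gal}(\mathscr{D}_{\kappa^+},\kappa^+,\lambda^+)$ actually holds (via $2^\kappa<2^{\kappa^+}=\lambda^+$), forcing $\mathfrak{gp}_\kappa=\lambda$, whence $\cf(\mathfrak{gp}_\kappa)=\cf(\lambda)=\kappa^+$. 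Throughout, the non-routine checks are: (i) that the arithmetic $2^\kappa=\lambda$, $2^{\kappa^+}=\lambda^+$ with $\cf(\lambda)=\kappa^+$ can be forced over a supercompact while keeping supercompactness, via a $\kappa^+$-directed-closed poset (so indestructibility plus Lemma~\ref{lemchain}$(\aleph)$ handle the subsequent Prikry/Radin singularization); and (ii) that the Prikry/Radin forcing preserves all the relevant cardinal arithmetic up to $\lambda^+$ — which follows from its $\kappa^+$-cc and $\kappa^+$-closure of the pure order, as in Remark~\ref{TailVeryclosed}-style arguments.
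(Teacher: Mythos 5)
Your clause $(\beth)$ is essentially the paper's argument: over a Laver-indestructible supercompact with \textsf{GCH} above $\kappa$, force with $\mathbb{S}(\kappa,\lambda)$ for $\lambda>\cf(\lambda)=\kappa^+$, compute $2^\kappa=\lambda<\lambda^+=2^{\kappa^+}$ directly (nice names for the upper bound, K\"onig for $2^{\kappa^+}>\lambda$ — so there is no need to agonize over the ``moreover'' clause of Abraham--Shelah at the boundary case $\cf(\lambda)=\kappa^+$), sandwich $\mathfrak{gp}_\kappa=\lambda$ exactly as you do, and then singularize by Prikry/Radin, with preservation via Lemma~\ref{lemchain}.

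Clause $(\aleph)$, however, has a genuine gap at precisely the point you flag yourself. Your block-union construction produces, at best, a family of size $\lambda=\mathfrak{gp}_\kappa$ witnessing $\neg\mathrm{Gal}(\mathscr{D}_{\kappa^+},\kappa^+,\lambda)$. But $\mathfrak{gp}_\kappa=\lambda$ only asserts that $\mathrm{Gal}(\mathscr{D}_{\kappa^+},\kappa^+,\lambda^+)$ holds, and for a limit cardinal $\lambda$ this is perfectly compatible with the failure of $\mathrm{Gal}(\mathscr{D}_{\kappa^+},\kappa^+,\lambda)$: the implication runs $\mathrm{Gal}(\cdot,\cdot,\lambda)\Rightarrow\mathrm{Gal}(\cdot,\cdot,\lambda^+)$, not conversely. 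So no contradiction with minimality is reached; one must manufacture a witnessing family of size $\lambda^+$. This is exactly where the paper's proof brings in PCF, which your proposal never touches. Writing $\sigma:=\cf(\lambda)<\kappa$, one fixes regular cardinals $\lambda_i\nearrow\lambda$ with ${\rm tcf}(\prod_{i<\sigma}\lambda_i,J^{\rm bd}_\sigma)=\lambda^+$ and a scale $\langle f_\gamma\mid\gamma<\lambda^+\rangle$ all of whose points of cofinality $\kappa^+$ are good — available because $2^\sigma<\kappa$, which is where strong limitness enters. Taking a single $\lambda$-sized witnessing family $\langle D_\alpha\mid\alpha<\lambda\rangle$ one sets $C_\gamma:=\bigcap_{i<\sigma}D_{f_\gamma(i)}$, a club since $\sigma<\kappa^+$. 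Goodness guarantees that for every $A\in[\lambda^+]^{\kappa^+}$ the trace $T_A:=\{f_\gamma(i)\mid\gamma\in A,\ i<\sigma\}$ has size $\kappa^+$, so $\bigcap_{\gamma\in A}C_\gamma=\bigcap_{\beta\in T_A}D_\beta$ is bounded, contradicting $\mathrm{Gal}(\mathscr{D}_{\kappa^+},\kappa^+,\lambda^+)$ applied to the $\lambda^+$-sized family $\langle C_\gamma\mid\gamma<\lambda^+\rangle$. The case $\cf(\lambda)=\kappa$ is then excluded for free, since cofinalities are regular and $\kappa$ is singular. Two smaller slips in your write-up: $\kappa^{<\kappa}=\kappa$ is \emph{false} for singular $\kappa$ (strong limitness gives $2^{<\kappa}=\kappa$ and $2^\sigma<\kappa$, which is what is actually used); and the reduction should read that if $2^\kappa=\kappa^+$ then $\mathfrak{gp}_\kappa=\kappa^+$ and clause $(\aleph)$ holds trivially, so one may assume $2^\kappa>\kappa^+$.
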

\begin{proof}
Let us commence with $(\aleph)$.
Put $\lambda:=\mathfrak{gp}_\kappa$ and  $\sigma:=\cf(\lambda)$, and
assume that $\sigma<\kappa$. Let $\mathfrak{a}=\langle \lambda_i\mid i<\sigma\rangle$ be a sequence of regular cardinals with:
\begin{itemize}
\item $\sup \mathfrak{a}=\lambda$;
\item ${\rm tcf}(\prod\mathfrak{a},J)=\lambda^+$, where $J:=J^{\rm bd}_\sigma$.
\end{itemize}
From \cite{MR1318912}  one can choose a scale $\bar{f}=\langle f_\gamma\mid \gamma<\lambda^+\rangle$ in $(\prod\mathfrak{a},J)$
such that every $\beta\in S^{\lambda^+}_{\kappa^+}$ is a good point of $\bar{f}$.\footnote{For a concrete reference, see \cite[Lemma~2.12, Theorem~2.13]{AbrMag}.} This can be arranged since $2^\sigma<\kappa$.
Consider an arbitrary set $A\subseteq\lambda^+$ so that $|A|=\kappa^+$ and put $$T_A:=\{f_\gamma(i)\mid \gamma\in A,\,i<\sigma\}.$$
\begin{claim}
$|T_A|=\kappa^+$.
\end{claim}
\begin{proof}[Proof of claim]
By thinning-out $A$ to some $A'\in [A]^{\kappa^+}$  we may assume that for all sufficiently large $i<\sigma$, if $\gamma,\delta\in A'$ and $\gamma\neq\delta$ then $f_\gamma(i)\neq f_\delta(i)$. In effect, if $\beta:=\sup(A)$ then, without loss of generality, $\beta\in S^{\lambda^+}_{\kappa^+}$. If $B\s \beta$ witnesses the fact that $\beta$ is a good point for $\bar{f}$ then one can thin-out $A$ in such a way that the elements of $A$ and $B$ are interleaved. Now, if $i<\sigma$ is sufficiently large then for distinct $\gamma,\delta\in A$ we will have $f_\gamma(i)\neq f_\delta(i)$. For more details on this vein see \cite[Theorem~1.4]{ghhm}.
\end{proof}

Since  $\mathfrak{gp}_\kappa=\lambda$ we can fix a family $\mathcal{D}=\langle D_\alpha\mid \alpha<\lambda\rangle$ of clubs at $\kappa^+$ with
the property that  every $\mathcal{E}\s \mathcal{D}$ with $|\mathcal{E}|=\kappa^+$ has intersection of size $\leq \kappa$.
For each $\gamma<\lambda^+$, let $C_\gamma:=\bigcap\{D_{f_\gamma(i)}\mid i<\sigma\}$.
Note that $C_\gamma$ is a club of $\kappa^+$.

\smallskip

Let $\mathcal{C}:=\langle C_\gamma\mid \gamma\in\lambda^+\rangle$.
Choose $A\in [\lambda^+]^{\kappa^+}$ with $C:=\bigcap_{\gamma\in A}C_\gamma$ being a club at $\kappa^+$.
This choice is possible since $\mathfrak{gp}_\kappa=\lambda$.
Since $|T_A|=\kappa^+$ we see that $D=\bigcap_{\beta\in T_A}D_{\beta}$ is bounded in $\kappa^+$.
However, $D=C$, which is an absurd.
One concludes, therefore, that $\cf(\mathfrak{gp}_\kappa)\neq\sigma$ for $\sigma<\kappa$.
But then $\cf(\mathfrak{gp}_\kappa)>\kappa$, because $\kappa$ is singular. 
This completes the verification of $(\aleph)$. 

\smallskip

We are thus left with the verification of Clause~$(\beth)$. Let $\kappa$ be a cardinal and assume the \textsf{GCH} holds above $\kappa$ (see~\cite[\S8]{BP}). By preparing our cardinal $\kappa$ \emph{á-la-Laver} we can assume it is indestructible under $\kappa$-directed-closed forcing.  Fix $\lambda>\cf(\lambda)=\kappa^+$. Let $G\subseteq\mathbb{S}(\kappa,\lambda)$ be a $V$-generic filter. We note that $2^{\kappa^+}=\lambda^+$ holds in $V[G]$:  In effect, on one hand, by counting nice names we have 
$2^{\kappa^+}\leq\lambda^+$. On the other hand, by K\"{o}nig's theorem  $2^{\kappa^+}>\lambda$, as  $\cf(\lambda)=\kappa^+$. Similarly, it is clear that $2^{\kappa}\geq\lambda$. Finally, invoking \cite[Lemma 1.7]{MR830084}, we know that every subset of $\kappa$ is introduced by a $\kappa^+$-cc subforcing of $\mathbb{S}(\kappa,\lambda),$ hence counting-nice-names arguments yield $2^\kappa=\lambda$.

\smallskip

Let $\mathcal{C}=\langle C_\alpha\mid \alpha<\lambda\rangle$ be the clubs at $\kappa^+$ added by $\mathbb{S}(\kappa,\lambda)$.
It follows from \cite{MR830084} that $\neg_{\mathrm{st}}\mathrm{Gal}(\mathscr{D}_{\kappa^+},\kappa^+,\lambda)$ holds 
and hence $\mathfrak{gp}_\kappa\geq\lambda$.
On the other hand, $2^\kappa=\lambda<\lambda^+=2^{\kappa^+}$, so $\mathfrak{gp}_\kappa\leq\lambda$ as proved in \cite{MR3604115}.
Putting these two facts together we see that $\mathfrak{gp}_\kappa=\lambda$, which yields $\cf(\mathfrak{gp}_\kappa)=\kappa^+$.
To accomplish the construction force with either Prikry or Magidor/Radin forcing towards singularizing $\kappa$. Note that to ensure $\cf(\mathfrak{gp}_\kappa)=\kappa^+$ prevails in the resulting generic extension one needs to appeal to Lemma~\ref{lemchain}. 
\end{proof}

The last part of the theorem shows that $\cf(\mathfrak{gp}_\kappa)$ can be fully characterized at strong limit singular cardinals $\kappa$:

\begin{corollary}
\label{corcofgp} 
Let $\kappa<\lambda$ be cardinals, being $\kappa$ a supercompact. 

Then, $\cf(\lambda)>\kappa$ if and only if  in  some cardinal preserving and $\kappa^{++}$-cc forcing extension $W$ the following holds:
$$W\models \text{``$\mathfrak{gp}_\kappa=\lambda$ $\wedge$ $\kappa$ is strong limit singular''}.$$
\end{corollary}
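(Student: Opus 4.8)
The plan is to derive the corollary directly from Theorem~\ref{propcofgp}, treating the two implications separately. For the backward implication, suppose $W$ is a cardinal-preserving, $\kappa^{++}$-cc forcing extension of $V$ with $W\models$ ``$\mathfrak{gp}_\kappa=\lambda$ and $\kappa$ is strong limit singular''. Since $W$ is a model of $\mathsf{ZFC}$ in which $\kappa$ is strong limit singular, Theorem~\ref{propcofgp}$(\aleph)$ applied \emph{inside $W$} yields $\cf^W(\lambda)=\cf^W(\mathfrak{gp}_\kappa)>\kappa$. It then suffices to note that the cofinality of an ordinal can only drop in an outer model: a cofinal map $f\colon\mu\to\lambda$ with $\mu\le\kappa$ witnessing $\cf^V(\lambda)\le\kappa$ would still be cofinal in $W$, forcing $\cf^W(\lambda)\le\kappa$, a contradiction. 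Hence $\cf^V(\lambda)>\kappa$. I stress that this direction uses neither the chain condition nor cardinal-preservation, only that $W$ is an outer model of $V$ in which $\kappa$ remains a cardinal.

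For the forward implication, assuming $\cf(\lambda)>\kappa$ I would rerun the construction in the proof of Theorem~\ref{propcofgp}$(\beth)$ and check that the forcing it produces meets the two structural demands of the corollary. Concretely: after a preliminary forcing making $\mathsf{GCH}$ hold above $\kappa$ and $\kappa$ Laver-indestructible, let $G\subseteq\mathbb{S}(\kappa,\lambda)$ be generic; as computed there, $V[G]\models$ ``$2^\kappa=\lambda<\lambda^+=2^{\kappa^+}$, $\kappa$ is supercompact, and $\mathfrak{gp}_\kappa=\lambda$''. Then, in $V[G]$, I would pick a normal ultrafilter $\mathscr{U}$ on $\kappa$, take $H\subseteq\mathbb{P}(\mathscr{U})$ generic (using Magidor/Radin forcing instead if a prescribed uncountable cofinality is desired), and set $W:=V[G][H]$. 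The checks are: (i) $\mathbb{S}(\kappa,\lambda)$ is $\kappa$-directed-closed, preserves $\kappa^+$, and --- since $2^\kappa=\kappa^+$ in the prepared model --- is $\kappa^{++}$-cc, hence cardinal-preserving; (ii) $\mathbb{P}(\mathscr{U})$ is $\kappa^+$-linked (partition the conditions by their stems), a fortiori $\kappa^+$-cc, and adds no bounded subsets of $\kappa$, hence cardinal-preserving; (iii) a two-step iteration of $\kappa^{++}$-cc forcings is $\kappa^{++}$-cc, so the composite is cardinal-preserving and $\kappa^{++}$-cc; (iv) $\kappa$ is inaccessible in $V[G]$ and $\mathbb{P}(\mathscr{U})$ adds no bounded subsets, so $\kappa$ is strong limit singular in $W$.

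It then remains to verify $W\models\mathfrak{gp}_\kappa=\lambda$. For $\mathfrak{gp}_\kappa\ge\lambda$: the family of clubs added by $\mathbb{S}(\kappa,\lambda)$ witnesses $\neg\mathrm{Gal}(\mathscr{D}_{\kappa^+},\kappa^+,\lambda)$ in $V[G]$, and since $\mathbb{P}(\mathscr{U})$ is $\kappa^+$-cc this persists in $W$ by Lemma~\ref{lemchain}$(\aleph)$; passing to subfamilies yields $\neg\mathrm{Gal}(\mathscr{D}_{\kappa^+},\kappa^+,\mu^+)$ for all $\mu<\lambda$, i.e. $\mathfrak{gp}_\kappa\ge\lambda$. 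For $\mathfrak{gp}_\kappa\le\lambda$: since $\mathbb{P}(\mathscr{U})$ adds no bounded subsets of $\kappa$, $2^\kappa=\lambda$ in $W$, and a nice-names count (using $\kappa^+$-ccness together with $\lambda^{\kappa^+}=\lambda^+$, which holds since $\lambda=2^\kappa$ has cofinality $\kappa^+$) gives $2^{\kappa^+}=\lambda^+$ in $W$; thus $2^\kappa<2^{\kappa^+}=\lambda^+$ there, and \cite{MR3604115} yields $\mathrm{Gal}(\mathscr{D}_{\kappa^+},\kappa^+,\lambda^+)$, i.e. $\mathfrak{gp}_\kappa\le\lambda$. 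I expect the main obstacle to be the bookkeeping around the preliminary forcing: making it simultaneously cardinal-preserving and $\kappa^{++}$-cc is unproblematic when $\mathsf{GCH}$ already holds in $V$ (only the size-$\kappa$ Laver preparation is then needed, and that is $\kappa^+$-cc), but if $2^\kappa>\kappa^+$ in $V$ one cannot restore $2^\kappa=\kappa^+$ by a cardinal-preserving forcing, so the cleanest formulation runs the whole argument over a ground model satisfying $\mathsf{GCH}$ (equivalently, absorbs the $\mathsf{GCH}$-forcing silently). The second delicate point is the re-verification of the cardinal arithmetic of $W$ after the singularizing step, which is precisely what legitimizes the appeal to \cite{MR3604115}.
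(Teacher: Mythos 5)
Your argument is exactly the one the paper intends for this (unproved) corollary: the backward implication is Theorem~\ref{propcofgp}$(\aleph)$ applied inside $W$ together with downward absoluteness of ``$\cf(\lambda)\le\kappa$'', and the forward implication is a rerun of the construction in Theorem~\ref{propcofgp}$(\beth)$, with the chain-condition and cardinal-preservation bookkeeping you list. Your closing remarks about absorbing the preparatory forcing into the ground model are also the right way to read the statement.

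There is, however, one step that does not survive the passage from the special case $\cf(\lambda)=\kappa^+$ (which is all that Theorem~\ref{propcofgp}$(\beth)$ treats) to the general hypothesis $\cf(\lambda)>\kappa$. Your verification of $\mathfrak{gp}_\kappa\le\lambda$ in $W$ rests on the equality $\lambda^{\kappa^+}=\lambda^+$, justified ``since $\lambda=2^\kappa$ has cofinality $\kappa^+$''. When $\cf(\lambda)>\kappa^+$ this is false: as recorded in the statement of the Abraham--Shelah theorem in the introduction, forcing with $\mathbb{S}(\kappa,\lambda)$ then yields $2^{\kappa^+}=2^\kappa=\lambda$ rather than $\lambda^+$, the same computation persists into $W$, the hypothesis $2^\kappa<2^{\kappa^+}$ of the cited result from \cite{MR3604115} fails, and that route to the upper bound is closed. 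The conclusion nevertheless holds in this case for a trivial reason: since $|\mathscr{D}_{\kappa^+}|\le 2^{\kappa^+}$, the principle $\mathrm{Gal}(\mathscr{D}_{\kappa^+},\kappa^+,\lambda^+)$ holds vacuously once $\lambda^+>2^{\kappa^+}$, whence $\mathfrak{gp}_\kappa\le 2^{\kappa^+}=\lambda$. So the forward direction needs the two-case split ($\cf(\lambda)=\kappa^+$, where your K\"onig/nice-names argument applies, versus $\cf(\lambda)>\kappa^+$, where one uses $\mathfrak{gp}_\kappa\le 2^{\kappa^+}$); with that patch the proof is complete.
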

\section{The strong failure and Prikry-type generic extensions}\label{SectionTheStrongFailure}
The main result of \cite{MR830084} says that forcing with $\mathbb{S}(\kappa,\lambda)$ produces a model of \textsf{ZFC} exhibiting a quite strong form of  failure of Galvin's property. Namely, the said model contains a family $\mathcal{C}$ of clubs at $\kappa^+$, $|\mathcal{C}|=\lambda$, that witnesses the following: For every  subfamily $\mathcal{D}\s \mathcal{C}$ with $|\mathcal{D}|=\kappa^+$ then $|\bigcap \mathcal{D}|<\kappa$. Note that when this is relativized, e.g., to $\kappa=\aleph_0$ one obtains a $\aleph_2$-sized family $\mathcal{C}$ of clubs at $\aleph_1$ such that every uncountable subfamily of it has finite intersection. Instead, if $\mathcal{C}$ was just a mere witness for $\neg \mathrm{Gal}(\mathscr{D}_{\aleph_1},\aleph_1,\aleph_2)$ this latter might contain uncountable subfamilies whose intersection is infinite.

\begin{definition}[Strong failure of Galvin's property]
Let $\kappa\geq \omega$ be a cardinal  and $ \mu\leq \lambda\leq 2^\kappa$.  Denote by $\neg_{\mathrm{st}}{\rm Gal}(\mathscr{D}_{\kappa^+},\mu,\lambda)$ the statement:
$$\text{``There is $\mathcal{C}\s  \mathscr{D}_{\kappa^+}$ with $|\mathcal{C}|=\lambda$ such that for all $\mathcal{D}\in[\mathcal{C}]^\mu$ then  $|\cap\mathcal{D}|<\kappa$.''}$$
 \emph{Galvin's property strongly fails at $\kappa^+$} if $\neg_{\mathrm{st}}\mathrm{Gal}(\mathscr{D}_{\kappa^+},\kappa^{+},\kappa^{++})$ holds.
\end{definition}
Using the above terminology Abraham-Shelah theorem can be rephrased in a more compact way as follows: For every regular cardinal $\kappa$ and $\lambda\geq \cf(\lambda)>\kappa^+$,  $\neg_\mathrm{st}\mathrm{Gal}(\mathscr{D}_{\kappa^+},\kappa^+,\lambda)$ is consistent with $\textsf{ZFC}$. In this respect, the forthcoming proposition shows that the model of \cite[\S1]{MR830084} provides an optimal upper bound for the size of the intersections of the  family of clubs:
\begin{proposition}
\label{clmverystrong} Let $\kappa$ be regular and  $\mu$ be singular.
\begin{enumerate}
\item [$(\aleph)$] Let $\lambda\geq\cf(\lambda)>\kappa^{++}$. For every family $\mathcal{C}$ of clubs at $\kappa^{++}$, $|\mathcal{C}|=\lambda$, there is $\mathcal{D}\s \mathcal{C}$ with $|\mathcal{D}|=\lambda$ such that $|\bigcap \mathcal{D}|\geq \kappa$;

\item [$(\beth)$] Assume that $\lambda\geq\cf(\lambda)>\mu^+$ and that $\mathcal{C}$ witnesses  $\neg \mathrm{Gal}(\mathscr{D}_{\mu^+},\mu^+, \lambda)$. For all $\theta<\mu$ there is $\mathcal{D}\s \mathcal{C}$, $|\mathcal{D}|=\lambda$ such that $|\bigcap\mathcal{D}|\geq \theta;$
\item [$(\gimel)$] Assume that $\kappa$ is a weakly inaccessible cardinal. Then, the conclusion of Clause~$(\beth)$ holds when replacing $\mu$ by $\kappa$.
\end{enumerate}
\end{proposition}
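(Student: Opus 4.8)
I would prove all three clauses by the same scheme: build the desired large subfamily together with the witnessing intersection by a transfinite recursion, adding one new point to the common intersection at a time, and using the large cofinality of $\lambda$ as the pigeonhole principle that keeps the subfamily of size $\lambda$. No extra arithmetic is used; the only structural inputs are that each $C_\alpha$ is closed and that fewer than $\cf(\lambda)$-many cardinals below $\lambda$ have supremum below $\lambda$. I describe the plan for $(\aleph)$ and then the (routine) modifications for $(\beth)$ and $(\gimel)$.

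\textbf{The recursion for $(\aleph)$.} Write $\mathcal{C}=\langle C_\alpha\mid\alpha<\lambda\rangle$. The plan is to construct an increasing continuous sequence $\langle\gamma_i\mid i<\kappa\rangle$ of ordinals below $\kappa^{++}$, together with a $\subseteq$-decreasing sequence $\langle A_i\mid i\le\kappa\rangle$ of subsets of $\lambda$, each of size $\lambda$, so that $\gamma_i\in C_\alpha$ for every $\alpha\in A_{i+1}$; then $X:=\{\gamma_i\mid i<\kappa\}$ has order type $\kappa$ and $X\subseteq\bigcap_{\alpha\in A_\kappa}C_\alpha$, which is exactly what $(\aleph)$ asks for. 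At a successor stage, given $A_i$ and $\bar\gamma:=\sup_{j<i}\gamma_j<\kappa^{++}$ (the sup is below $\kappa^{++}$ since $\cf(\kappa^{++})=\kappa^{++}>\kappa$), the map $\alpha\mapsto\min(C_\alpha\setminus(\bar\gamma+1))$ carries $A_i$ into $\kappa^{++}$; since $\cf(\lambda)>\kappa^{++}$ this partition into at most $\kappa^{++}$ fibres has a fibre of size $\lambda$, which we take to be $A_{i+1}$, and we let $\gamma_i$ be its common value (so $\gamma_i>\bar\gamma$ and $\gamma_i\in C_\alpha$ for all $\alpha\in A_{i+1}$). At a limit stage $\xi$ we put $A_\xi:=\bigcap_{j<\xi}A_j$; the closure of the $C_\alpha$ automatically gives $\gamma_\xi:=\sup_{j<\xi}\gamma_j\in C_\alpha$ for every $\alpha\in A_\xi$.

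\textbf{The crux.} The one point that is not routine is keeping $|A_\xi|=\lambda$ at limit stages $\xi<\kappa$. Since $|A_0\setminus A_\xi|\le\sum_{j<\xi}|A_j\setminus A_{j+1}|$ is a sum of fewer than $\cf(\lambda)$-many cardinals, it would suffice to arrange at each successor step that the discarded part $A_i\setminus A_{i+1}$ has size $<\lambda$; this in turn reduces to choosing the fibre at stage $i$ so that it is the \emph{unique} fibre of size $\lambda$, and this is where the real work goes. Here one exploits that fewer than $\kappa^{+}$-many of the $C_\alpha$ have club intersection in $\kappa^{++}$, so one can first replace $\mathcal{C}$ by a ``thickened'' family of pieces of size $\kappa^{+}$ and push the homogenization far enough that the relevant partitions become essentially single-valued on a $\lambda$-sized part; I expect this bookkeeping to be the main obstacle and the most delicate part of the write-up.

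\textbf{The remaining clauses.} For $(\beth)$ I would run the same argument with $\mu^{+}$ in place of $\kappa^{++}$ and with the first $\theta$ elements of each $C_\alpha$ in place of the first $\kappa$: since $\cf(\mu^{+})=\mu^{+}>\theta$ the common supremum $\gamma^{*}<\mu^{+}$ can be homogenized using $\cf(\lambda)>\mu^{+}$, and the recursion now runs for only $\theta<\mu<\cf(\lambda)$ steps, so its limit stages are controlled exactly as above and one obtains $\mathcal{D}\in[\mathcal{C}]^{\lambda}$ with $\bigcap\mathcal{D}$ containing a set of order type $\theta$. Clause $(\gimel)$ is the same computation with $\kappa$ replacing $\mu$; weak inaccessibility is used precisely to know that $\kappa=\cf(\kappa)$ is a limit cardinal, so that each $\theta<\kappa$ satisfies $\theta<\kappa$, the suprema of the first $<\theta$ club-elements stay below $\kappa^{+}$, and the recursion of length $\theta<\kappa<\cf(\lambda)$ goes through. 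The assumption that $\mathcal{C}$ witnesses $\neg\mathrm{Gal}$ in $(\beth)$ and $(\gimel)$ plays no role in the argument itself; it is recorded because these clauses are intended as statements about witnessing families.
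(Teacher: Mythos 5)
Your successor steps and the use of $\cf(\lambda)>\kappa^{++}$ as a pigeonhole are fine, but the argument has a genuine gap exactly where you flag it: the limit stages. As you note, surviving a limit stage $\xi<\kappa$ requires each discarded set $A_i\setminus A_{i+1}$ to have size $<\lambda$, which (given $\cf(\lambda)>\kappa^{++}$) is equivalent to the chosen $\lambda$-sized fibre of $\alpha\mapsto\min(C_\alpha\setminus(\bar\gamma+1))$ being the \emph{unique} one. That uniqueness cannot be arranged: nothing prevents two (or $\kappa^{++}$-many) values $\delta$ from each being the next point of $\lambda$-many clubs in the family, and the proposed remedy is not a proof --- the assertion that ``fewer than $\kappa^+$-many of the $C_\alpha$ have club intersection in $\kappa^{++}$'' is not a meaningful invariant of the family, and ``thickening'' the family does not change which fibres are large. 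Without a device that selects, for each $\alpha$, an entire $\kappa$-sized subset of $C_\alpha$ from a pool of fewer than $\cf(\lambda)$ candidates \emph{in one step}, the point-by-point recursion does not close.

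That device is Shelah's club-guessing, and it is the whole content of the paper's proof. For $(\aleph)$ one fixes a club-guessing sequence $\langle t_\gamma\mid\gamma\in S^{\kappa^{++}}_{\kappa}\rangle$ (available in $\mathsf{ZFC}$ since $\kappa^{+}<\kappa^{++}$), so every club $C_\alpha$ of $\kappa^{++}$ contains some $t_{\gamma(\alpha)}$; since there are only $\kappa^{++}<\cf(\lambda)$ possible indices, a single pigeonhole yields $B\in[\lambda]^{\lambda}$ and a fixed $\gamma$ with $t_\gamma\subseteq\bigcap_{\alpha\in B}C_\alpha$, and $|t_\gamma|=\kappa$. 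This replaces your length-$\kappa$ recursion by one application of pigeonhole, so there are no limit stages to control. Clauses $(\beth)$ and $(\gimel)$ are the same argument on $S^{\mu^+}_{\theta}$ and on the corresponding stationary set for $\kappa$; the hypotheses ``$\mu$ singular'' and ``$\kappa$ weakly inaccessible'' are there precisely to guarantee the gap $\theta^{+}<\mu^{+}$ (resp. below $\kappa$) that makes the club-guessing sequence exist for every $\theta<\mu$ (resp. $\theta<\kappa$) --- a role your sketch does not account for. You are right, incidentally, that the $\neg\mathrm{Gal}$ hypothesis in $(\beth)$ and $(\gimel)$ is not used; but as written your proof of all three clauses is incomplete at its central step.
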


\begin{proof} 
$(\aleph)$: Let $\mathcal{C}=\langle C_\alpha\mid \alpha<\lambda\rangle$. Put $S:=S^{\kappa^{++}}_\kappa$ and let $\langle t_\gamma\mid \gamma\in S\rangle$ be a club guessing sequence (see \cite[Theorem~2.17]{AbrMag}).
For every $\alpha<\lambda$ choose $\gamma(\alpha)\in S$ for which $t_{\gamma(\alpha)}\subseteq C_\alpha$.
By the pigeon-hole principle there are $B\in[\lambda]^\lambda$ and $\gamma\in S$ such that if $\beta\in B$ then $\gamma(\beta)=\gamma$.
Finally, setting $\mathcal{D}:=\langle C_\alpha\mid \alpha\in B\rangle$ we have that $t_\gamma\s \bigcap \mathcal{D}$, and we are done.

$(\beth)$: Fix $\theta<\mu$ and let $\langle t_\gamma\mid \gamma\in S^{\mu^+}_\theta\rangle$ be a club-guessing sequence.
For each $\alpha<\lambda$ let $\gamma(\alpha)\in S^{\mu^+}_\theta$ be such that $t_{\gamma(\alpha)}\subseteq C_\alpha$.
Consider the mapping $\alpha\mapsto\gamma(\alpha)$.
Since $\cf(\lambda)>\mu^+$ there must be a set $B\in[\lambda]^\lambda$ and a fixed ordinal $\gamma\in\mu^+$ such that $\beta\in B$ implies $ \gamma(\beta)=\gamma$. Setting  $\mathcal{D}:=\langle C_\alpha\mid \alpha\in B\rangle$ we have that $t_\gamma\s \bigcap\mathcal{D}$, and we are done.

$(\gimel)$: Use a club-guessing sequence sitting on $S^\kappa_\theta$, for  $\theta=\cf(\theta)<\kappa$. 
\end{proof}

An outright consequence of Clause~$(\aleph)$ above is that the failure of Galvin's property at $\aleph_2$ cannot yield finite intersections of the subfamilies $\mathcal{D}$.

\smallskip

Throughout this section we discuss the consistency (with $\textsc{ZFC}$) of the principle $\neg_{\mathrm{st}}\mathrm{Gal}(\mathscr{D}_{\kappa^+},\kappa^{+},\kappa^{++})$ when $\kappa$ is a singular cardinal. As we demons\-trate, the situation  is quite much more challenging --and intriguing-- than it used to be in the regular context. 
For instance, while our first result shows that $\neg_{\mathrm{st}}\mathrm{Gal}(\mathscr{D}_{\kappa^+},\kappa^{+},\kappa^{++})$ is strictly stronger than $\neg\mathrm{Gal}(\mathscr{D}_{\kappa^+},\kappa^{+},\kappa^{++})$, it is not known to the authors whether $\neg_{\mathrm{st}}\mathrm{Gal}(\mathscr{D}_{\kappa^+},\kappa^{+},\kappa^{++})$ fails for all strong limit singular cardinals $\kappa$. This sort of expectations are --in principle-- plausible, in that singular cardinals are known to behave quite differently than its regular relatives.  In what follows, $\kappa$ will denote a measurable cardinal with sufficiently high Mitchell order (see e.g., \cite[Definition~2.4]{MitchHandbook}).

\smallskip

We commence showing that after forcing with Magidor/Radin poset with respect to a long enough coherent/measure sequence one gets a model where $\neg_{\mathrm{st}}\mathrm{Gal}(\mathscr{D}_\kappa,\kappa^{++},\kappa^{++})$ cannot hold. Although Magidor/Radin forcings have not been presented in detail in this paper both  can be seen as a  \emph{toy-version} of the main forcing of Section~\ref{SubsectionRadinwithcollapses} (for details, see \cite[\S5]{Gitik-handbook}). In particu\-lar, when referring to these forcings we shall stick to the notations and terminologies settled in Subsection~\ref{SubsectionRadinwithcollapses}.

\begin{proposition}
\label{propmagidor} Let $\mathbb{R}_u$ be  Magidor/Radin forcing with respect to a \linebreak coherent/measure sequence $u$. Let $G\s \mathbb{R}_u$ a generic filter over $V$.

Suppose that $\mathcal{C}\in V[G]$  is a collection of clubs at $\kappa^+$ with $|\mathcal{C}|=\kappa^{++}$.
Then there is a $1$-$1$ function $\varphi\colon \kappa\rightarrow\kappa^+$, $\varphi\in{V}$, $\mathcal{D}\in [\mathcal{C}]^{\kappa^{++}}$ and $\xi<\kappa$ such that  $$\varphi``(C-\xi)\subseteq\bigcap \mathcal{D}.\footnote{Here $C$ denotes the Magidor/Radin club induced by $G$ (cf. Definition~\ref{RadinObject}).}$$
In particular, $\neg_{\mathrm{st}}\mathrm{Gal}(\mathscr{D}_{\kappa^+},\kappa^{++},\kappa^{++})$ does not hold, provided $\len(u)\geq \kappa$.
\end{proposition}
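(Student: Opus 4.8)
The plan is to exploit the product structure of Magidor/Radin forcing together with the fact that the tail of the forcing below a fixed condition is highly closed, so that clubs at $\kappa^+$ are essentially decided by initial segments of the generic. Concretely, fix in $V[G]$ a family $\mathcal{C}=\langle C_\alpha\mid \alpha<\kappa^{++}\rangle$ of clubs at $\kappa^+$. Using the analogue of Lemma~\ref{Factoring}, for any condition $p\in G$ mentioning a measure sequence $w_n$ with $\kappa_{w_n}=:\mu$ close to $\kappa$, the forcing factors as $\mathbb{R}_{w_n}/p^{\leq n}\times \mathbb{R}_u/p^{>n}$, where the second factor is $\leq^*$-closed enough (by the Prikry property, as in Remark~\ref{TailVeryclosed}, it adds no new subsets to $\mu^{+}$-ish sets, and in particular no new clubs at $\kappa^+$ once $\mu$ is large) so that each $C_\alpha$ is added by a small initial part of the forcing. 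First I would argue that there is a single measurable-length stage along the Radin club, or rather that each $C_\alpha$ already belongs to $V[G\restriction \beta]$ for some $\beta<\kappa^{++}$-bounded piece of the iteration; by a counting/pressing-down argument (there are only $\kappa^{++}$ clubs and the pieces are cofinal) we may assume all of $\mathcal{C}$ is captured by a single intermediate extension that is still $\kappa^{+}$-cc over $V$.

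Next I would set up the club-guessing skeleton in $V$. Since $\len(u)\geq\kappa$, the Radin club $C$ has order type $\kappa$ and its accumulation points form a club in $\kappa$; choose in $V$ a fixed injection $\varphi\colon\kappa\to\kappa^+$ whose range is a club (or at least a cofinal set) in $\kappa^+$ in $V$, and which remains cofinal in $\kappa^+$ in $V[G]$ since $\kappa^+$ is preserved. The key point is that $\varphi``C$ will be a subset of $\kappa^+$ of order type $\kappa$, and we want to absorb it into an intersection of $\kappa^{++}$-many $C_\alpha$'s. To do this I would use a club-guessing sequence $\langle t_\gamma\mid \gamma\in S^{\kappa^+}_{\cf(\kappa)}\rangle$ living in $V$ (this exists in $V$ by Shelah's theorem, since $\kappa$ is inaccessible in $V$ so $\cf(\kappa)<\kappa<\kappa^+$ there, and it is still club-guessing in $V[G]$ because $\mathbb{R}_u$ preserves $\kappa^+$ and adds no club at $\kappa^+$ missing a ground-model club — or rather we just need that old clubs are guessed). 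For each $\alpha<\kappa^{++}$ pick $\gamma(\alpha)\in S^{\kappa^+}_{\cf(\kappa)}$ with $t_{\gamma(\alpha)}\subseteq C_\alpha$; by the pigeonhole principle on $\kappa^{++}$ there are a fixed $\gamma^*$ and a set $\mathcal{D}=\langle C_\alpha\mid \alpha\in B\rangle$ with $|B|=\kappa^{++}$ and $t_{\gamma^*}\subseteq C_\alpha$ for all $\alpha\in B$, hence $t_{\gamma^*}\subseteq\bigcap\mathcal{D}$.

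Finally I would reconcile the guessed club $t_{\gamma^*}$ with the set $\varphi``(C-\xi)$. This is the delicate point: the club-guessing argument as stated only yields a \emph{ground-model} club $t_{\gamma^*}$ inside $\bigcap\mathcal{D}$, which already gives $|\bigcap\mathcal{D}|\geq\kappa$ and hence $\neg_{\mathrm{st}}\mathrm{Gal}$; but the Proposition asserts the stronger statement that $\varphi``(C-\xi)\subseteq\bigcap\mathcal{D}$ for the specific Radin club $C$. To get this I would instead fix $\varphi$ in $V$ and, \emph{in $V$}, apply club-guessing to the clubs $\varphi^{-1}``C_\alpha$ (pulled back through $\varphi$, these are subsets of $\kappa^+$; one must be slightly careful, working instead with $\{\beta<\kappa^+\mid \varphi(\beta)\in C_\alpha\}$ which contains a club of $\kappa^+$ because $\varphi``\kappa$ is a cofinal subset and $C_\alpha$ is club) to produce guesses $t_{\gamma(\alpha)}$ with $\varphi``t_{\gamma(\alpha)}\subseteq C_\alpha$; then the normality/diagonalization built into the Radin generic guarantees that for densely many conditions the generic club $C$ is eventually contained in any fixed ground-model club of $\kappa$, in particular in $t_{\gamma^*}$ up to a tail, so $\varphi``(C-\xi)\subseteq \varphi``t_{\gamma^*}\subseteq C_\alpha$ for all $\alpha\in B$. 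I expect the main obstacle to be exactly this last reconciliation: ensuring that the club-guessing is performed in $V$ against the \emph{right} family of clubs (the $\varphi$-pullbacks, which are only guaranteed to contain clubs rather than be clubs) and that the genericity of $C$ over $V$ forces $C$ into the single guessed club on a tail — this uses that $C$ diagonalizes the club filter $\mathscr{D}_\kappa^V$, i.e. that for every club $E\in V$ there is $p\in G$ and $\xi$ with $C-\xi\subseteq E$, which in turn follows from a density argument analogous to Corollary~\ref{ClubIndeed} together with $\len(u)\geq\kappa$. The preservation facts ($\kappa^+$ and $\kappa^{++}$ survive, old club-guessing sequences still guess) follow from the $\kappa^{+}$-linkedness of Lemma~\ref{ccness} and the Prikry property.
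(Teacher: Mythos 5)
There is a genuine gap, and it sits exactly where you flagged "the delicate point." Your transfer function $\varphi\colon\kappa\to\kappa^+$ is required to have range cofinal (or club) in $\kappa^+$, but this is impossible: $\kappa^+$ is a regular cardinal of $V$ and is preserved by $\mathbb{R}_u$, so no $\kappa$-sequence is cofinal in it. The idea you are missing is a \emph{localization} step: one does not try to cover a tail of $\kappa^+$, but rather finds, by the pigeonhole principle on $\kappa^{++}$, a single ordinal $\beta\in (S^{\kappa^+}_{\kappa})^{V}$ which is an accumulation point of $\kappa^{++}$-many of the $C_\alpha$'s simultaneously, and then takes $\varphi$ to be an increasing \emph{continuous} enumeration (chosen in $V$) of a club of $\beta$ of order type $\kappa$. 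All the action then happens below $\beta$: for each such $\alpha$, a ground-model club $D_\alpha\subseteq C_\alpha$ (which exists simply because $\mathbb{R}_u$ is $\kappa^+$-cc --- no factoring into intermediate extensions is needed, and your claim that the whole $\kappa^{++}$-sized family is captured by one small piece of the forcing is neither true nor necessary) restricts to a $V$-club of $\beta$, whose preimage $E_\alpha:=\varphi^{-1}(D_\alpha\cap\beta)$ is a $V$-club of $\kappa$ and hence lies in $\mathscr{F}_u$ by normality. Note that preimages under $\varphi$ are subsets of $\kappa$, not of $\kappa^+$ as written in your third paragraph.

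With that set-up, the ingredient you do correctly identify --- that the Radin club $C$ is eventually contained in every $\mathscr{F}_u$-set, by the density argument of Lemma~\ref{Prelimminaryforclub}(1) --- finishes the proof: for each $\alpha$ there is $\xi_\alpha<\kappa$ with $C-\xi_\alpha\subseteq E_\alpha$, and a second pigeonhole on the $\kappa^{++}$-many values $\xi_\alpha<\kappa$ yields a single $\xi$ and a set $J$ of size $\kappa^{++}$ with $\varphi``(C-\xi)\subseteq\bigcap_{\alpha\in J}C_\alpha$. The club-guessing detour should be dropped entirely: besides being applied ``in $V$'' to clubs that exist only in $V[G]$, a guessing sequence on $S^{\kappa^+}_{\cf(\kappa)}$ produces sets of order type $\cf(\kappa)$, which neither have size $\kappa$ nor bear any relation to the specific set $\varphi``(C-\xi)$ that the Proposition demands.
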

\begin{proof}
Suppose $\mathcal{C}=\langle C_\alpha\mid \alpha<\kappa^{++}\rangle.$
For each $\alpha<\kappa^{++}$ let $D_\alpha\in V$ be a club at $\kappa^+$ such that $D_\alpha\s C_\alpha$.\footnote{Note that this choice is possible because $\mathbb{R}_u$ is $\kappa^+$-cc.} Notice that $\langle D_\alpha\mid \alpha<\kappa^{++}\rangle$ may not belong to the ground model $V$, even though each $D_\alpha$ do. Put $S:=(S^{\kappa^+}_\kappa)^V.$ For each $\alpha<\kappa^{++}$, let $\beta_\alpha\in \mathrm{acc}(C_\alpha)\cap S$. By the pigeon-hole principle there are $I\in [\kappa^{++}]^{\kappa^{++}}$ and $\beta\in S$ such that $\beta_\alpha=\beta$, for all $\alpha\in I$. Choose an increasing continuous function $\varphi\colon \kappa\rightarrow\beta$ in $V$.
We claim that by further shrinking $I$ the map $\varphi$ witnesses the statement of the proposition. In effect, for each $\alpha\in I$ the set $D_\alpha\cap \beta$ defines a club in $\beta$ that belongs to $V$. In particular, $E_\alpha:=\varphi^{-1}(D_\alpha\cap \beta)\in V$ is a club in $\kappa$, for each $\alpha\in I$. Hence, $\langle E_\alpha\mid \alpha\in I\rangle\s \mathscr{F}_u$. Combining Proposition~\ref{Prelimminaryforclub}(1) with the pigeon-hole principle we can find $J\in[I]^{\kappa^{++}}$ and $\xi<\kappa$ such that $C-\xi\s E_\alpha$, for all $\alpha\in I$. Letting $\mathcal{D}:=\langle C_\alpha\mid \alpha\in J\rangle$ it follows that $\varphi``(C-\xi)\s \bigcap\mathcal{D}$.

For the in particular clause:  If $\len(u)\geq \kappa$ then $|C|=\kappa$ (see \cite[\S5]{Gitik-handbook}). Hence,  $\varphi$ being $1$-$1$ implies that $|\varphi``(C-\xi)|=\kappa$, as desired.
\end{proof}

The following corollary is immediate at the light of the above proposition:

\begin{corollary}\label{CorStrongNegationSuper}
Assume that $\kappa$ is supercompact. Then the following hold:
\begin{enumerate}
\item [$(\aleph)$] 
For every $\mu=\cf(\mu)$ there is a generic extension where $\kappa>\cf(\kappa)=\mu$ is strong limit cardinal and both
$$\text{${\rm Gal}(\mathscr{D}_{\kappa^+},\kappa^{+},\kappa^{++})$ and $\neg_{\mathrm{st}}{\rm Gal}(\mathscr{D}_{\kappa^+},\kappa^{+},\kappa^{++})$ fail.}$$
\item [$(\beth)$] There is a generic extension where $\kappa$ is measurable  and both
$$\text{${\rm Gal}(\mathscr{D}_{\kappa^+},\kappa^{+},\kappa^{++})$ and $\neg_{\mathrm{st}}{\rm Gal}(\mathscr{D}_{\kappa^+},\kappa^{+},\kappa^{++})$ fail.}$$
\end{enumerate}
Hence, the failure of Galvin's property does not entail the strong failure.
\end{corollary}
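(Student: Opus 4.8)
The plan is to derive both clauses from a two-step construction: first make Galvin's property merely fail at $\kappa^{+}$ while keeping $\kappa$ large, and then singularize --- or keep measurable --- $\kappa$ with a Magidor/Radin forcing long enough for Proposition~\ref{propmagidor} to apply. To begin, let $\kappa$ be supercompact and, exactly as in Theorem~\ref{thmmt}$(\aleph)$ with $\lambda=\kappa^{++}$, pass to a model $V$ in which $\kappa$ remains supercompact and $\mathrm{Gal}(\mathscr{D}_{\kappa^{+}},\kappa^{+},\kappa^{++})$ fails; as that proof shows we may also assume $2^{\kappa}=\kappa^{++}$ and that $\textsf{GCH}$ holds for cardinals $\geq\kappa^{+}$, which we use only to keep the lengths of the measure sequences available in $V$ under control.

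Working in $V$, fix a coherent/measure sequence $u$ with $\mathrm{len}(u)\geq\kappa$ as dictated by the relevant clause. For Clause~$(\aleph)$, given a regular cardinal $\mu<\kappa$, take $u$ with $\mathrm{len}(u)$ an ordinal $\geq\kappa$ of cofinality $\mu$ (e.g.\ $\mathrm{len}(u)=\kappa+\mu$), so that $\mathbb{R}_u$ turns $\kappa$ into a strong limit singular cardinal with $\cf(\kappa)=\mu$; the high Mitchell order provided by supercompactness makes such $u$ available. For Clause~$(\beth)$, observe that the $\subseteq$-decreasing sequence $\langle\mathscr{F}_{u\restriction\beta}\rangle_{\beta}$ necessarily stabilizes below $(2^{2^{\kappa}})^{+}$, so --- just as in the proof of Theorem~\ref{thmglobal} --- one may choose $u$ with $\mathrm{len}(u)\geq\kappa$ having a repeat point, so that $\mathbb{R}_u$ preserves the measurability of $\kappa$ (see~\cite[Theorem~5.15]{MR2768695}). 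In either case let $G\subseteq\mathbb{R}_u$ be $V$-generic and let $C$ be the Magidor/Radin club induced by $G$.

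Now the two ingredients combine. Since $\mathbb{R}_u$ is $\kappa^{+}$-cc, Lemma~\ref{lemchain}$(\aleph)$ gives $V[G]\models\neg\mathrm{Gal}(\mathscr{D}_{\kappa^{+}},\kappa^{+},\kappa^{++})$, i.e.\ $\mathrm{Gal}(\mathscr{D}_{\kappa^{+}},\kappa^{+},\kappa^{++})$ fails in $V[G]$. On the other hand $\mathrm{len}(u)\geq\kappa$ gives $|C|=\kappa$, so Proposition~\ref{propmagidor} applies in $V[G]$: for every family $\mathcal{C}$ of clubs at $\kappa^{+}$ with $|\mathcal{C}|=\kappa^{++}$ there are $\mathcal{D}\in[\mathcal{C}]^{\kappa^{++}}$, an ordinal $\xi<\kappa$ and a $1$-$1$ function $\varphi\in V$ with $\varphi``(C-\xi)\subseteq\bigcap\mathcal{D}$. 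Shrinking $\mathcal{D}$ to any $\mathcal{D}'\in[\mathcal{D}]^{\kappa^{+}}$ we still have $\bigcap\mathcal{D}'\supseteq\bigcap\mathcal{D}\supseteq\varphi``(C-\xi)$, and the right-hand set has cardinality $\kappa$ because $\varphi$ is injective and $|C|=\kappa$. Hence no $\mathcal{C}$ can witness $\neg_{\mathrm{st}}\mathrm{Gal}(\mathscr{D}_{\kappa^{+}},\kappa^{+},\kappa^{++})$; that is, the strong failure of Galvin's property fails in $V[G]$. Together with the cardinal-arithmetic features secured by the choice of $u$ this yields Clauses~$(\aleph)$ and $(\beth)$, and since $V[G]$ is a model where Galvin's property fails but does not strongly fail, the concluding assertion of the corollary follows.

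The only genuinely delicate point is the simultaneous bookkeeping on $u$: it has to be long enough ($\mathrm{len}(u)\geq\kappa$, so that $|C|=\kappa$ and Proposition~\ref{propmagidor} has teeth), shaped so that in $V[G]$ the cardinal $\kappa$ has cofinality $\mu$ in Clause~$(\aleph)$ or stays measurable in Clause~$(\beth)$, and such that $\mathbb{R}_u$ still enjoys the $\kappa^{+}$-cc required by Lemma~\ref{lemchain}. All of this is routine within the standard theory of Magidor/Radin forcing over $V$ (cf.~\cite[\S5]{Gitik-handbook}, \cite[\S5]{MR2768695}), supercompactness of $\kappa$ supplying measure sequences of any required length with or without repeat points; once $u$ is fixed, the remainder is the purely formal argument sketched above.
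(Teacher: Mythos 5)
Your proposal is correct and follows essentially the same route as the paper: force $\neg\mathrm{Gal}(\mathscr{D}_{\kappa^+},\kappa^+,\kappa^{++})$ via Theorem~\ref{thmmt} while keeping $\kappa$ supercompact, then apply Magidor/Radin forcing (with $\mathrm{len}(u)=\kappa+\mu$ for $(\aleph)$ and a repeat point for $(\beth)$), using the $\kappa^+$-cc together with Lemma~\ref{lemchain} to preserve the failure and Proposition~\ref{propmagidor} to rule out the strong failure. Your extra bookkeeping (the shrinking to a $\kappa^+$-sized subfamily and the explicit insistence that $\mathrm{len}(u)\geq\kappa$ in Clause~$(\beth)$) only spells out details the paper leaves implicit.
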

\begin{proof}
By Theorem~\ref{thmmt} one can force $\neg \mathrm{Gal}(\mathscr{D}_{\kappa^+},\kappa^+,\kappa^{++})$ while preserving the supercompactness of $\kappa$. Denote the resulting model by $V$. Let $G\s \mathbb{R}_u$ a generic filter over $V$ and force with the Magidor/Radin forcing $\mathbb{R}_u$. Since $\mathbb{R}_u$ is   $\kappa^+$-cc the principle $\mathrm{Gal}(\mathscr{D}_{\kappa^+},\kappa^+,\kappa^{++})$ fails in $V[G]$ (see Lemma~\ref{lemchain}).

To obtain a model for $(\aleph)$, one uses a sequence $u$ such that $\len(u)=\kappa+\mu$. Likewise, to obtain  a model for $(\beth)$, one picks $u$ having a repeat point.\footnote{For an extensive discussion on the cofinality and large-cardinal properties of $\kappa$ in Magidor/Radin-like generic extensions, cf. \cite[\S5]{MR2768695})}
\end{proof}
\begin{corollary}
There are
$\kappa^+$-cc forcings which destroy  $\neg_{\mathrm{st}}\mathrm{Gal}(\mathscr{D}_{\kappa^+},\kappa^+,\lambda)$.
\end{corollary}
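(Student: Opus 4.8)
The plan is to reuse the construction behind Corollary~\ref{CorStrongNegationSuper}. Let $V$ be the model appearing in its proof: namely, $V$ is obtained by forcing the Abraham--Shelah poset $\mathbb{S}(\kappa)$ of Definition~\ref{AbSh forcing} over a model of ``\textsf{GCH} above $\kappa$'' in which $\kappa$ is supercompact and Laver-indestructible (see \cite{MR0472529,BP}). Since $\mathbb{S}(\kappa)$ is $\kappa$-directed-closed, $\kappa$ remains supercompact in $V$; and, invoking the \emph{full} strength of the Abraham--Shelah theorem \cite{MR830084} --- this is the only place where the strong, rather than the plain, failure is needed --- the clubs $\langle C_\alpha\mid \alpha<\kappa^{++}\rangle$ added by $\mathbb{S}(\kappa)$ witness $\neg_{\mathrm{st}}\mathrm{Gal}(\mathscr{D}_{\kappa^+},\kappa^+,\kappa^{++})$ in $V$. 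This $V$ is the ground model over which the desired $\kappa^+$-cc forcing will act.

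Next I would fix, in $V$, a measure (or coherent) sequence $u$ with $\len(u)\geq \kappa$ --- available because $\kappa$ is supercompact --- and let $\mathbb{R}_u$ be the associated Radin/Magidor forcing. Two facts about $\mathbb{R}_u$ are relevant. First, it is $\kappa^+$-cc: conditions with a common stem are compatible, by the argument of Lemma~\ref{ccness} (or see \cite[\S5]{Gitik-handbook}); consequently $\kappa^+$ and $\kappa^{++}$ are preserved and every club of $\kappa^+$ of $V$ stays club in the extension, so ``family of $\kappa^{++}$-many clubs at $\kappa^+$'' means the same thing in $V$ and in a generic extension. Second, by Proposition~\ref{propmagidor} (applicable since $\len(u)\geq\kappa$), if $G\subseteq\mathbb{R}_u$ is $V$-generic then \emph{every} $\mathcal{C}\in V[G]$ consisting of $\kappa^{++}$-many clubs at $\kappa^+$ admits $\mathcal{D}\in[\mathcal{C}]^{\kappa^{++}}$, an injection $\varphi\colon\kappa\rightarrow\kappa^+$ with $\varphi\in V$, and $\xi<\kappa$ such that $\varphi``(C-\xi)\subseteq\bigcap\mathcal{D}$, where $C$ is the generic club.

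For the final step I would observe that this already refutes the strong failure at $\kappa^+$ in $V[G]$. Given $\mathcal{C}$, $\mathcal{D}$, $\varphi$, $\xi$ as above, thin $\mathcal{D}$ to any $\mathcal{D}'\in[\mathcal{D}]^{\kappa^+}$; then $\mathcal{D}'\in[\mathcal{C}]^{\kappa^+}$ and $\bigcap\mathcal{D}'\supseteq\bigcap\mathcal{D}\supseteq\varphi``(C-\xi)$, the latter set having size $\kappa$ because $\varphi$ is injective and $|C|=\kappa$ (here $\len(u)\geq\kappa$ is used). Hence $|\bigcap\mathcal{D}'|\geq\kappa$, so $\mathcal{C}$ does not witness $\neg_{\mathrm{st}}\mathrm{Gal}(\mathscr{D}_{\kappa^+},\kappa^+,\kappa^{++})$; as $\mathcal{C}$ was arbitrary, $V[G]\models \neg\,\neg_{\mathrm{st}}\mathrm{Gal}(\mathscr{D}_{\kappa^+},\kappa^+,\kappa^{++})$. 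Thus $\mathbb{R}_u$ is a $\kappa^+$-cc forcing destroying the strong failure of Galvin's property at $\kappa^+$, which is the assertion of the corollary. I do not anticipate a real obstacle: Proposition~\ref{propmagidor} does all the work, and the only things to keep an eye on are that $V$ genuinely satisfies $\neg_{\mathrm{st}}$ (hence the appeal to the Abraham--Shelah theorem itself, not merely to Theorem~\ref{thmmt}) and that $\kappa^+$ is preserved by $\mathbb{R}_u$ so that the meaning of ``club at $\kappa^+$'' is unchanged.
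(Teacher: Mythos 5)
Your proposal is correct and is essentially the argument the paper intends: the corollary is meant to follow from Proposition~\ref{propmagidor} together with the model of Corollary~\ref{CorStrongNegationSuper}, i.e., one forces $\mathbb{S}(\kappa)$ over a Laver-prepared supercompact (where the Abraham--Shelah clubs witness $\neg_{\mathrm{st}}\mathrm{Gal}(\mathscr{D}_{\kappa^+},\kappa^+,\kappa^{++})$) and then observes that the $\kappa^+$-cc Magidor/Radin forcing with $\len(u)\geq\kappa$ leaves no witness standing. Your extra care in invoking the full Abraham--Shelah conclusion (rather than just Theorem~\ref{thmmt}) so that the ground model genuinely satisfies the \emph{strong} failure, and the thinning of $\mathcal{D}$ to size $\kappa^+$, are exactly the right points to check.
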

\begin{remark}
Recall that the model of Theorem~\ref{thmglobal} is a generic extension by a Radin-like forcing with respect to a measure sequence $u$ with $\len(u)\geq \kappa^+$. Every singular cardinal in this model is a limit point $\kappa_v$ of the Radin club $C$ and  many of them correspond to measure sequences $v$ with $\len(v)\geq \kappa_v$. In particular, for all these $v\in \mathcal{U}_\infty$ the principle $\neg_{\mathrm{st}}\Gal{\kappa_v^+}{\kappa^{++}_v}$ fails. 
\end{remark}

At this point it would be interesting to understand whether a similar property holds in Prikry extensions.
It turns out that the answer is negative.
In order to understand the difference between Prikry and Magidor/Radin generic extensions, note that Proposition~\ref{propmagidor}  produces a set of size $\kappa$ (i.e., $\varphi``(C-\xi)$) contained in $\kappa^{++}$-many clubs.
Moreover, these latter come from an arbitrary family of clubs at $\kappa^+$.
The following theorem shows that a parallel phenomenon is impossible in the context of Prikry forcing:
\begin{theorem}
\label{thmprikryweak} Let $\mathscr{U}$ be a normal ultrafilter over $\kappa$ and let $\mathbb{P}(\mathscr{U})$ be the associated Prikry forcing.
Let $G\subseteq\mathbb{P}(\mathscr{U})$ be $V$-generic and suppose that $A\in{V[G]}$ is almost contained in every ground model club of $\kappa$.\footnote{I.e, for every $C\s \kappa$ club at $\kappa$ and $C\in V$  there is some $\xi<\kappa$ such that $A-\xi\s C$. }  

Then, there exists $\xi<\kappa$ such that $A-\xi\subseteq\langle \rho_n\mid n<\omega\rangle$, where $\langle \rho_n\mid n<\omega\rangle$ denotes the Prikry sequence derived from $G$.
In particular, $|A-\xi|\leq\aleph_0$.
\end{theorem}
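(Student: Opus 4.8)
The plan is to exploit the Prikry property of $\mathbb{P}(\mathscr{U})$ together with the fact that a single condition, after suitable shrinking of its measure-one set, "controls" the possible positions of the late elements of $A$. Suppose $\name{A}$ is a $\mathbb{P}(\mathscr{U})$-name forced by some condition $p_0=\langle\alpha_1,\dots,\alpha_k,A_0\rangle$ to be almost contained in every ground-model club of $\kappa$, and suppose towards a contradiction that no tail of $A$ is contained in the Prikry sequence; equivalently, $p_0$ does not force this. First I would pass to a stronger condition $p\le p_0$ forcing "$\name{A}-\xi\not\subseteq\{\rho_n\mid n<\omega\}$ for every $\xi<\kappa$", which is the same as forcing that $\name{A}$ contains cofinally many ordinals that are \emph{not} Prikry points. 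The goal is to manufacture from this a ground-model club $C$ of $\kappa$ that misses cofinally much of $A$, contradicting the almost-containment hypothesis.

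The technical heart is a Prikry-style diagonalization. For each $\beta<\kappa$, ask whether there is some stem $s$ extending the stem of $p$ with $\max(s)<\beta$ together with a measure-one set, such that this condition forces some ordinal in the interval $(\max(s),\beta)$ — more precisely, some ordinal $\gamma$ with $\max(s)<\gamma<\beta$ that is not a member of the generic sequence — into $\name{A}$. Using the Prikry lemma (here the standard one for $\mathbb{P}(\mathscr{U})$, as recalled in the excerpt) and the fact that below any stem there are only set-many stems, for each candidate "gap ordinal" question I can find a single measure-one set deciding it; intersecting over the $<\kappa$-many relevant stems keeps the result in $\mathscr{U}$. The key point is that the set of $\gamma<\kappa$ that are \emph{forced into $\name{A}$ while not being Prikry points} must, after this shrinking, be "predicted" by the stem: concretely, I would define $$C:=\{\beta<\kappa\mid \text{no stem }s\supseteq\text{stem}(p),\ \max(s)<\beta,\ \text{and no }A'\in\mathscr{U}\text{ force some non-Prikry }\gamma\in(\max(s),\beta)\text{ into }\name{A}\}$$ after having shrunk the measure-one set of $p$ appropriately, and argue $C$ contains a club. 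Closedness is routine; unboundedness follows because each "witness" uses a bounded stem, and there are only boundedly many stems below any given $\beta$, so the reflection/diagonalization argument propagates the "no witness below $\beta$" property up a club.

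Then I would argue the contradiction. On the one hand $C\in V$ and, by the almost-containment hypothesis applied in $V[G]$, there is $\eta<\kappa$ with $A-\eta\subseteq C$. On the other hand, strengthen $p$ inside $G$ (using genericity) to a condition whose stem already has length past $\eta$ and past any initial disagreement, and which forces a \emph{specific} non-Prikry ordinal $\delta$, with $\delta>\eta$, into $\name{A}$ — such a condition exists precisely because $p$ forces cofinally many such $\delta$'s into $\name{A}$. But then, choosing $\beta\in C$ with $\beta>\delta$ and $\beta$ above the current stem's max, the stem witnesses a non-Prikry element of $\name{A}$ in an interval below $\beta$, contradicting $\beta\in C$. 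Hence the assumption fails and some tail of $A$ lies inside the Prikry sequence; the cardinality statement $|A-\xi|\le\aleph_0$ is then immediate since the Prikry sequence has order type $\omega$.

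The main obstacle I expect is getting the diagonalization to produce a genuine \emph{club} in $V$ rather than merely a stationary or unbounded set: one must be careful that the "witnessing" conditions for different gap-ordinals can be amalgamated via a single measure-one set (this is where completeness of $\mathscr{U}$ and the bound on the number of stems below $\beta$ are used), and that the closure points of the resulting function land back in the good set. A secondary subtlety is bookkeeping the interaction between "not a Prikry point" and the decision made by a stem: a stem of length $n$ determines the first $n$ Prikry points but leaves the rest open, so one has to phrase the forced statement as "$\gamma\in\name{A}$ and $\gamma\notin\{$ first $n$ stem values $\}$ and the measure-one set will be chosen below $\gamma$ so that $\gamma$ can never become a later Prikry point", which is exactly the kind of statement the Prikry lemma can decide. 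Once that is set up cleanly, the rest is a routine density argument.
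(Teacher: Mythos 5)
Your high-level strategy (assume a tail of $A$ escapes the Prikry sequence, then build a ground-model club that the almost-containment hypothesis forces $A$ to enter but that the construction forbids) is the same as the paper's, but the concrete construction of the club does not work. As you define it, $\beta\in C$ iff no stem $s$ with $\max(s)<\beta$ together with a measure-one set forces some non-Prikry $\gamma\in(\max(s),\beta)$ into $\name{A}$. Observe that any witness $(s,A',\gamma)$ for $\beta\notin C$ is automatically a witness for $\beta'\notin C$ for every $\beta'>\beta$, since $\gamma<\beta<\beta'$. Hence $\kappa\setminus C$ is an end-segment and $C$ is an initial segment of $\kappa$; it is unbounded only in the degenerate case $C=\kappa$, i.e.\ when no witness exists at all. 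But the last step of your argument consists precisely in producing a witness. So either $C$ is bounded (and the appeal to almost-containment is vacuous) or your final contradiction is unreachable; the argument cannot close. A secondary gap: even granting a club, your contradiction needs a condition in $G$ forcing a specific non-Prikry $\delta>\eta$ into $\name{A}$ whose stem has maximum \emph{below} $\delta$, and nothing in genericity or the Prikry lemma guarantees that the deciding condition has such a short stem.

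What is missing is the combinatorial core of the paper's proof. One names the non-Prikry elements $\lusim{a}_n$ of $A$ and applies the \emph{strong} Prikry property so that each $a_n$ is decided by $m_n$-step extensions of a fixed stem, giving functions $f_n\colon[A_n]^{m_n}\to\kappa$. Since $a_n$ is forced to avoid the Prikry points, $f_n(\bar\alpha)$ lands in one of the open intervals determined by $\bar\alpha$; Rowbottom's theorem fixes which interval, and normality (a regressivity argument on the next coordinate) shows that $f_n$ depends only on the first $\ell_n$ coordinates and satisfies $g_n(\bar\alpha)>\max(\bar\alpha)$. The correct ground-model club is then the set of closure points of $g(\alpha):=\sup_n\{g_n(\alpha_1,\dots,\alpha_{\ell_n-1},\alpha)\mid \alpha_i\in B\cap\alpha\}+1$: for large $n$ the hypothesis puts $a_n$ in this club, yet $a_n=g_n(\rho_{N+1},\dots,\rho_{N+\ell_n})$ lies strictly between $\rho_{N+\ell_n}$ and $g(\rho_{N+\ell_n})$, contradicting closure. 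Your closing remark about "closure points of the resulting function" gestures at this, but without the Rowbottom/normality reduction there is no ground-model function whose closure points one could take, so the club cannot be built by the route you describe.
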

\begin{proof}
Assume towards contradiction that $A$ is almost contained in every $V$-club at $\kappa$ but $A\setminus \langle \rho_n\mid n<\omega\rangle$ is unbounded in $\kappa$.
Let $p\in{G}$ and a collection of $\mathbb{P}(\mathscr{U})$-names $\{\lusim{a}_n\mid n<\omega\}$ such that, for each $n<\omega$, $$p\Vdash_{\mathbb{P}(\mathscr{U})}\lusim{a}_n\in \lusim{A}-\langle \lusim{\rho}_m\mid m<\omega\rangle .$$
For each $n<\omega$, the set $D_n:=\{q\in\mathbb{P}(\mathscr{U})\mid q\parallel\lusim{a}_n\}$ is dense open. 
We shall apply the so-called \emph{Strong Prikry property} to the collection of sets $\{D_n\mid n<\omega\}$ (see \cite[Lemma~1.13]{Gitik-handbook}). 
Let $p\leq q$ be any condition, say, with stem $t$.
For each $n<\omega$, the Strong Prikry property yields a further condition $q\leq^*p_n=(t,A_n)$ and $m_n<\omega$ such that $p_n^
\frown\bar{\alpha}\in{D_n}$,  for every $\bar{\alpha}\in[A_n]^{m_n}$.
By the very definition of $D_n$, the condition $p_n^\frown\bar{\alpha}$ decides the value of $\lusim{a}_n$.
Therefore, one can define a function $f_n:[A_n]^{m_n}\rightarrow\kappa$ such that $$p_n^\frown\bar{\alpha}\Vdash_{\mathbb{P}(\mathscr{U})} f_n(\bar{\alpha})=\lusim{a}_n.$$

For every  $\bar{\alpha}=(\alpha_1,\ldots,\alpha_{m_n})\in[A_n]^{m_n}$ and every $1\leq{j}\leq{m_n}$ we know that $p_n^\frown \bar{\alpha}\Vdash_{\mathbb{P}(\mathscr{U})}\alpha_j\neq\lusim{a}_n$, since $p\leq p_n$ and $p\Vdash_{\mathbb{P}(\mathscr{U})}\lusim{a}_n\notin\langle\rho_m\mid m<\omega\rangle$.
Hence $f_n(\bar{\alpha})$ falls in one of the open intervals $I_\ell$, where
$$
I_{\ell}:=\begin{cases}
[0,\alpha_1), & \text{if $\ell=0$;}\\
(\alpha_\ell,\alpha_{\ell+1}), & \text{if $\ell<m_n$;}\\
(\alpha_{m_n},\kappa), & \text{otherwise.}
\end{cases}
$$
Let us write $\ell_{\bar{\alpha}}:=\ell$ whenever $f_n(\bar{\alpha})\in{I_\ell}$.
Apply Rowbottom's theorem (\cite[Theorem~7.17]{Kan}) to obtain, for each $n<\omega$, a fixed natural number $\ell_n$ and a set $A'_n\subseteq{A_n}, A'_n\in\mathscr{U}$ such that $\ell_n=\ell_{\bar{\alpha}}$ for every $\bar{\alpha}\in[A_n]^{m_n}$.

\smallskip

Now we shrink $A'_n$ further, as follows.
Suppose that $\ell_n<m_n$.
For every $\ell_n$-tuple $(\alpha_1,\ldots,\alpha_{\ell_n})$ from $A'_n$ we already know that any extension of $(\alpha_1,\ldots,\alpha_{\ell_n})$ by another sequence of ordinals $(\alpha_{\ell_{n+1}},\ldots,\alpha_{m_n})$ from $A'_n$ would give $\alpha_{\ell_n}<f_n((\alpha_1,\ldots,\alpha_{\ell_n})^\frown (\alpha_{\ell_{n+1}},\ldots,\alpha_{m_n}))<\alpha_{\ell_{n+1}}$  since this ordinal falls in the interval $I_{\ell_n}$. Fix $(\alpha_1,\dots, \alpha_{\ell_n})\in [A'_n]^{\ell_n}$.  Now note that $f_n((\alpha_1,\ldots,\alpha_{\ell_n})^\frown (\alpha_{\ell_n+1},\ldots,\alpha_{m_n}))$ is a regressive function and $A'_n\in\mathscr{U}$, hence there is  $A_{(\alpha_1,\ldots,\alpha_{\ell_n})}\subseteq A'_n$,  $A_{(\alpha_1,\ldots,\alpha_{\ell_n})}\in\mathscr{U}$ such that $f_n(\alpha_1,\ldots,\alpha_{m_n})$  is constant on $A_{(\alpha_1,\dots,\alpha_{\ell_n})}$. 
Now, let us \emph{diagonalize} all these sets by putting $$A''_n:=\diagonal\{A_{(\alpha_1,\ldots,\alpha_\ell)}\mid (\alpha_1,\ldots,\alpha_{\ell_n})\in[A'_n]^{\ell_n}\}.$$ The main feature of $A''_n$ is the  following: for every $(\alpha_1,\dots,\alpha_{m_n})\in [A''_n]^{m_n}$ the value of $f_n((\alpha_1,\ldots,\alpha_{m_n}))$ just depends on $(\alpha_1,\dots,\alpha_{\ell_n})$. More verbosely,  $f_n((\alpha_1,\dots, \alpha_{m_n}))=g_n(\alpha_1,\dots,\alpha_{\ell_n})$, for some $g_n\colon [A''_n]^{\ell_n}\rightarrow \kappa.$\footnote{Note that, for each $\bar{\alpha}\in [A''_n]^{\ell_n}$,  $g_n(\bar{\alpha})>\max(\bar{\alpha}).$}

\smallskip

The above process is rendered for every $n<\omega$ and, as an outcome, we  define $B:=\bigcap\{A''_n\mid n<\omega\}$. Clearly, $B\in\mathscr{U}$.
Recall that $t$ was the common stem of each $p_n$, and let $q:=(t,B)$.
Notice that $p_n\leq^*q$ for every $n<\omega$.
The crucial property of $q$ is that if $\bar{\alpha}\in[B]^{\ell_n}$ then $q^\frown\bar{\alpha}\Vdash_{\mathbb{P}(\mathscr
{U})}\lusim{a}_n>\max(\bar{\alpha})$; this being true for every $n<\omega$.
Hence for every $n<\omega$ we have defined functions $g_n\colon[B]^{\ell_n}\rightarrow\kappa$ such that $q^\frown\bar{\alpha}\Vdash_{\mathbb{P}(\mathscr{U})}\lusim{a}_n=g_n(\bar{\alpha})>\max(\bar{\alpha})$.
Note that we may certainly assume that $q\in{G}$, and therefore the shape of $q$ is $(\rho_0,\ldots,\rho_N,B)$ for some $N<\omega$.
Working now in $V$, define $g\colon \kappa\rightarrow\kappa$ by:
\[g(\alpha):=\sup_{n<\omega}\{g_n(\alpha_1,\ldots,\alpha_{\ell_{n-1}},\alpha)\mid \alpha_1,\ldots,\alpha_{\ell_{n-1}}\in B\cap\alpha\}+1.\]
Notice that $g(\alpha)<\kappa$ for every $\alpha<\kappa$, for $\kappa$ is a  regular cardinal  in $V$.
Next, let $C_g$ be the closure points of $g$, that is $C_g:=\{\delta<\kappa\mid g``\delta\s \delta\}$.
Clearly, $C_g\in{V}$ and it is a club at $\kappa$.
By the assumptions of the theorem there exists an integer $m\geq{N}$ such that if $n\geq{m}$ then $a_n\in{C_g}$.

Fix $n\geq{m}$ and consider the condition $q^\frown(\rho_{N+1},\ldots,\rho_{N+\ell_n})\in{G}$.
On the one hand, $q^\frown(\rho_{N+1},\ldots,\rho_{N+\ell_n})\Vdash_{\mathbb{P}(\mathscr{U})} \lusim{a}_n=g_n(\rho_{N+1},\ldots,\rho_{N+\ell_n})>\rho_{N+\ell_n}$, hence $a_n=g_n(\rho_{N+1},\ldots,\rho_{N+\ell_n})>\rho_{N+\ell_n}$. On the other hand, $a_n\in{C_g}$ and since $\rho_{N+\ell_n}<a_n$ it follows that $g(\rho_{N+\ell_n})<a_n$. So by the definition of $g$ we see that $g_n(\rho_{N+1},\ldots,\rho_{N+\ell_n})\leq g(\rho_{N+\ell_n})<a_n$. This produces the desired contradiction.
\end{proof}

An interesting upshot of the above theorem is the next strengthening of the Mathias criterion proved in \cite{Mathias}:
Suppose that $\mathscr{U}$ is a normal ultrafilter over $\kappa$, $\mathbb{P}(\mathscr{U})$ is Prikry forcing with $\mathscr{U}$ and $G\subseteq\mathbb{P}(\mathscr{U})$ is $V$-generic.
Mathias proved that 
 $\langle \rho_n\mid n\in\omega\rangle$ is a Prikry sequence for $\mathbb{P}(\mathscr{U})$ over $V$ if and only if it is almost contained in every element of $\mathscr{U}$.
\begin{corollary}
\label{cormathiascriterion} Let $\mathscr{U},\mathbb{P},G$ and $\langle \rho_n\mid n\in\omega\rangle\in V[G]$ be as above. Then, $\langle \rho_n\mid n\in\omega\rangle$ is a Prikry sequence iff for every club $C\subseteq\kappa$, $C\in{V}$ there exists $m<\omega$ such that $\rho_n\in{C}$ whenever $n\geq{m}$.
\end{corollary}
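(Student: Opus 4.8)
The plan is to derive both implications by combining Mathias's criterion recalled above with Theorem~\ref{thmprikryweak}. The left-to-right implication is immediate: if $\langle \rho_n\mid n\in\omega\rangle$ is a Prikry sequence over $V$ then, by Mathias's criterion, it is almost contained in every member of $\mathscr{U}$; since $\mathscr{U}$ is a normal ultrafilter on $\kappa$ it extends the club filter $\mathscr{D}_\kappa$, so in particular $\langle \rho_n\mid n\in\omega\rangle$ is almost contained in every $C\s\kappa$ that is a club at $\kappa$ with $C\in V$. Unravelling the definition of ``almost contained'' this says precisely that there is $m<\omega$ with $\rho_n\in C$ for all $n\geq m$.

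For the converse, I would assume that for every club $C\s\kappa$ with $C\in V$ there is $m<\omega$ with $\rho_n\in C$ whenever $n\geq m$. Then the set $A:=\{\rho_n\mid n<\omega\}\in V[G]$ is almost contained in every ground-model club of $\kappa$, so Theorem~\ref{thmprikryweak} applies and yields some $\xi<\kappa$ with $A-\xi\s\{\rho^G_m\mid m<\omega\}$, where $\langle \rho^G_m\mid m<\omega\rangle$ denotes the Prikry sequence derived from $G$. This is the crucial step, and the only one that is not pure bookkeeping: it upgrades the hypothesis from a statement about clubs — which form only a filter, not an ultrafilter, so that being almost contained in every $V$-club does not formally imply being almost contained in every $X\in\mathscr{U}$ — to the statement that a tail of $\langle \rho_n\mid n<\omega\rangle$ sits inside a genuine Prikry sequence, which does enjoy the full Mathias behaviour. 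Indeed, by the easy half of Mathias's criterion $\langle \rho^G_m\mid m<\omega\rangle$ is almost contained in every $X\in\mathscr{U}$, and hence so is its subset $A-\xi$.

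It then remains to pass from $A-\xi$ back to the sequence $\langle \rho_n\mid n<\omega\rangle$ itself. First I would note that the hypothesis already forces $\langle \rho_n\mid n<\omega\rangle$ to be cofinal in $\kappa$: a bounded increasing $\omega$-sequence cannot be almost contained in the club of ordinals lying strictly above its supremum. Consequently $\langle \rho_n\mid n<\omega\rangle$ is increasing and cofinal in $\kappa$ and $\{n<\omega\mid \rho_n<\xi\}$ is finite. Now, given $X\in\mathscr{U}$, pick $\eta<\kappa$ such that $\rho^G_m\in X$ for every $m$ with $\rho^G_m>\eta$; then $A\cap(\max(\xi,\eta),\kappa)\s X$, and since $\langle \rho_n\mid n<\omega\rangle$ is increasing and cofinal there is $m<\omega$ with $\rho_n\in X$ for all $n\geq m$. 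Thus $\langle \rho_n\mid n<\omega\rangle$ is almost contained in every member of $\mathscr{U}$, and Mathias's criterion delivers that it is a Prikry sequence over $V$, which completes the argument.
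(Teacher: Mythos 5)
Your proposal is correct and follows essentially the same route as the paper: the forward direction via normality of $\mathscr{U}$ (so ground-model clubs are $\mathscr{U}$-large) together with Mathias's criterion, and the converse by applying Theorem~\ref{thmprikryweak} to identify a tail of $\langle \rho_n\mid n<\omega\rangle$ with a subsequence of the genuine Prikry sequence of $G$ and then invoking Mathias's criterion again. Your write-up merely fills in bookkeeping (cofinality of the sequence, passing from the set $A-\xi$ back to a tail of the sequence) that the paper leaves implicit.
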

\begin{proof}
For the forward direction use the fact that if $C$ is a club at $\kappa$ then $C\in\mathscr{U}$, as $\mathscr{U}$ is normal.
Hence if $\langle \rho_n\mid n\in\omega\rangle$ is a Prikry sequence then it is almost contained in $C$.
For the opposite direction notice that if $\langle \rho_n\mid n\in\omega\rangle$ is almost contained in every ground model club of $\kappa$ then for some $m<\omega$ one has that $\langle \rho_n\mid n\geq m\rangle$ is an infinite subsequence of the Prikry sequence associated with $G$ (Theorem~\ref{thmprikryweak}).
From the Mathias criterion \cite{Mathias} it follows now that $\langle \rho_n\mid n\in\omega\rangle$ is almost contained in every element of $\mathscr{U}$, hence it is a Prikry sequence over $V$ and we are done.
\end{proof}

Although Theorem~\ref{thmprikryweak} unveils notable differences between Prikry and Magidor/Radin extensions it does not rule out the possibility that Prikry forcing  yields the consistency of $\neg_{\mathrm{st}}\mathrm{Gal}(\mathscr{D}_{\kappa^+},\kappa^+,\kappa^{++}).$ A natural strategy to produce such a situation will be to begin with  $\neg_{\mathrm{st}}\mathrm{Gal}(\mathscr{D}_{\kappa^+},\kappa^+,\kappa^{++})$ at a measurable cardinal $\kappa$ and afterwards argue that Prikry forcing does preserve the corresponding witnessing family $\mathcal{C}$. Unfortunately, as the next theorem reveals, this strategy is not going to work in general and a specific construction of a witness is needed, for there are such witnessing families that are always destroyed.

\begin{theorem}
\label{thmprikrydestroys} Let $\mathcal{C}$ be a witness for $\neg_{\mathrm{st}}\mathrm{Gal}(\mathscr{D}_{\kappa^+},\kappa^+,\kappa^{++})$.
Then there exists another family of clubs $\mathcal{D}$ at $\kappa^+$, such that:
\begin{enumerate}
\item [$(\aleph)$] $\mathcal{D}$ is also a witness for $\neg_{\mathrm{st}}\mathrm{Gal}(\mathscr{D}_{\kappa^+},\kappa^+,\kappa^{++})$;
\item [$(\beth)$] For every  normal ultrafilter $\mathscr{U}$ over $\kappa$, forcing with $\mathbb{P}(\mathscr{U})$ yields a generic extension where $\mathcal{D}$ is not a witness for $\neg_{\mathrm{st}}\mathrm{Gal}(\mathscr{D}_{\kappa^+},\kappa^+,\kappa^{++})$.
\end{enumerate}
Moreover, in Clause~$(\beth)$ one can get the stronger statement saying that $\mathcal{D}$ ceases to be a witness for $\neg_{\mathrm{st}}\mathrm{Gal}(\mathscr{D}_{\kappa^+},\kappa^{++},\kappa^{++})$.
\end{theorem}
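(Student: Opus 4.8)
The plan is to obtain $\mathcal{D}$ as a \emph{ladder fattening} of a suitable thinning of $\mathcal{C}$, engineered so that in $V$ the new intersections remain of size ${<}\kappa$, while in $V[G]$ the Prikry sequence --- which by the Mathias criterion \cite{Mathias} is almost contained in every ground-model club of $\kappa$ (every club of $\kappa$ being $\mathscr{U}$-large by normality) --- forces $\kappa^{++}$-many fattened clubs to swallow one common set of size $\kappa$. \emph{Step 1 (reduction to clubs on $\kappa$).} Write $\mathcal{C}=\langle C_\alpha\mid\alpha<\kappa^{++}\rangle$. For each $\alpha$ pick $\eta_\alpha\in\mathrm{acc}(C_\alpha)\cap S^{\kappa^+}_{\kappa}$ (the latter set is stationary); passing to a subfamily of size $\kappa^{++}$, re-indexed as $\mathcal{C}$ again --- still a witness ---, we may assume $\eta_\alpha=\eta^*$ is fixed. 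Fix in $V$ an increasing continuous cofinal map $c\colon\kappa\to\eta^*$, with successor steps chosen large (for the size computation below). Since $\mathrm{ran}(c)\cap C_\alpha$ is a club of $\eta^*$, the set $\Gamma_\alpha:=\{\xi<\kappa\mid c(\xi)\in C_\alpha\}$ is a club of $\kappa$ with $\bigcap_{\alpha\in I}\Gamma_\alpha=\{\xi\mid c(\xi)\in\bigcap_{\alpha\in I}C_\alpha\}$; hence $\langle\Gamma_\alpha\mid\alpha<\kappa^{++}\rangle$ witnesses $\neg_{\mathrm{st}}\mathrm{Gal}(\mathscr{D}_\kappa,\kappa^+,\kappa^{++})$ in $V$.

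\emph{Step 2 (the fattening and clause $(\aleph)$).} Define, in $V$,
$$E_\alpha:=\bigcup\{[c(\xi),c(\xi+1)]\mid\xi\in\Gamma_\alpha\},\qquad D_\alpha:=E_\alpha\cup\{\eta^*\}\cup(C_\alpha\setminus\eta^*).$$
A routine check (using that $\Gamma_\alpha$ is closed) shows $E_\alpha$ is a club of $\eta^*$, so $D_\alpha$ is a club of $\kappa^+$. Moreover $\alpha\mapsto D_\alpha$ is $\leq\kappa$-to-one: if $\kappa^+$-many $D_\alpha$ agreed above $\eta^*$, the corresponding $C_\alpha$ would have intersection of size $\kappa^+$, against $\mathcal{C}$ being a witness; hence $\mathcal{D}$ has size $\kappa^{++}$. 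For $(\aleph)$, fix $I\in[\kappa^{++}]^{\kappa^+}$; then $\bigcap_{\alpha\in I}D_\alpha$ is contained in $\bigcap_{\alpha\in I}E_\alpha$ below $\eta^*$ and in $(\bigcap_{\alpha\in I}C_\alpha)\setminus\eta^*$ above it. The latter has size ${<}\kappa$; the former is bounded in $\eta^*$ because every limit point of $\bigcap_{\alpha\in I}E_\alpha$ arises from a limit point of $\bigcap_{\alpha\in I}\Gamma_\alpha$, and $\bigcap_{\alpha\in I}\Gamma_\alpha$, of size ${<}\kappa$, is bounded in the regular cardinal $\kappa$. Thus $|\bigcap_{\alpha\in I}D_\alpha|<\kappa$ and $\mathcal{D}$ again witnesses $\neg_{\mathrm{st}}\mathrm{Gal}(\mathscr{D}_{\kappa^+},\kappa^+,\kappa^{++})$.

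\emph{Step 3 (destruction in $V[G]$ and the moreover clause).} Let $\langle\rho_n\mid n<\omega\rangle$ be the Prikry sequence. Each $\Gamma_\alpha$ is a club of $\kappa$ in $V$, so there is $N_\alpha<\omega$ with $\rho_n\in\Gamma_\alpha$ for $n\geq N_\alpha$, whence $\bigcup_{n\geq N_\alpha}[c(\rho_n),c(\rho_n+1)]\subseteq E_\alpha\subseteq D_\alpha$. Since there are $\kappa^{++}$ indices $\alpha$ but only countably many possible $N_\alpha$, fix $N<\omega$ and $J\in[\kappa^{++}]^{\kappa^{++}}$ with $N_\alpha\leq N$ for all $\alpha\in J$, and put $A:=\bigcup_{n\geq N}[c(\rho_n),c(\rho_n+1)]$. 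Then $A\subseteq\bigcap_{\alpha\in J}D_\alpha$, and $|A|=\kappa$: $A$ is cofinal in $\eta^*$ (as $c$ is cofinal and $\rho_n\to\kappa$), $|\eta^*|=\kappa$, and if $|A|<\kappa$ each $c(\rho_n+1)$ would have cardinality ${<}\mu$ for a fixed $\mu<\kappa$, hence lie below $\mu^+<\kappa$ ($\kappa$ being inaccessible), contradicting $\sup_n c(\rho_n+1)=\eta^*$. Discarding repetitions among $\{D_\alpha\mid\alpha\in J\}$ still leaves $\kappa^{++}$-many distinct clubs whose intersection contains $A$; since $\mathbb{P}(\mathscr{U})$ preserves cardinals, this shows $\mathcal{D}$ is no longer a witness for $\neg_{\mathrm{st}}\mathrm{Gal}(\mathscr{D}_{\kappa^+},\kappa^{++},\kappa^{++})$ in $V[G]$, which a fortiori yields $(\beth)$.

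\emph{The main obstacle.} The delicate point is to reconcile $(\aleph)$ with $(\beth)$: in $V$ distinct $C_\alpha$'s must keep separating, so the fattening has to be indexed by the genuinely different clubs $\Gamma_\alpha$ (whose $\kappa^+$-wise intersections are bounded), yet in $V[G]$ a single set $A$ must fit into $\kappa^{++}$-many $D_\alpha$'s. The resolution is exactly that the Prikry sequence lies in a \emph{tail} of each $\Gamma_\alpha$, so fattening along the one fixed ladder $\langle[c(\xi),c(\xi+1)]\rangle_{\xi<\kappa}$ converts these uniform tails into a common set once the trivial $\omega$-pigeonhole removes the dependence of the starting index on $\alpha$. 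The rest --- verifying closedness of $E_\alpha$, the boundedness of $\bigcap_{\alpha\in I}E_\alpha$, the preservation of cardinals by $\mathbb{P}(\mathscr{U})$, and that repetitions are harmless --- is routine.
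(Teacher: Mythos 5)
Your strategy is essentially the one the paper uses: pass to a common accumulation point $\eta^*\in S^{\kappa^+}_\kappa$, transfer the clubs to $\kappa$ along a fixed ladder, fatten each point into an interval, and then combine the Mathias criterion with a countable pigeonhole to place one $\kappa$-sized set inside $\kappa^{++}$-many of the fattened clubs. The only structural difference is where the fattening happens: the paper first intersects $E_\alpha$ with the club of limit cardinals and replaces each surviving point $\gamma$ by the cardinal interval $[\gamma,\gamma^+]\subseteq\kappa$ before mapping back to $\eta^*$, whereas you fatten directly inside $\eta^*$ using the ladder's own successor blocks $[c(\xi),c(\xi+1)]$. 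That difference is not cosmetic, because the two cardinality computations --- which are the entire content of the theorem --- depend on the blocks having specific sizes, and both of your verifications have genuine gaps.

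For clause $(\aleph)$ you deduce $|\bigcap_{\alpha\in I}D_\alpha|<\kappa$ from the claim that $\bigcap_{\alpha\in I}E_\alpha$ is \emph{bounded} in $\eta^*$. Boundedness is not enough: a bounded subset of $\eta^*$ can have cardinality $\kappa$. What is needed is the containment $\bigcap_{\alpha\in I}E_\alpha\subseteq\bigcup\{[c(\xi),c(\xi+1))\mid\xi\in\bigcap_{\alpha\in I}\Gamma_\alpha\}$ (with half-open blocks, so that they are pairwise disjoint; with closed intervals a point $c(\xi+1)$ lies in two blocks and the containment fails as stated), plus the facts that each block has size ${<}\kappa$, the index set has size ${<}\kappa$, and $\kappa$ is regular. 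For clause $(\beth)$ your argument that $|A|=\kappa$ is invalid: ``each $c(\rho_n+1)$ would have cardinality ${<}\mu$, hence lie below $\mu^+{<}\kappa$'' confuses the ordinal $c(\rho_n+1)$ --- which sits below $\eta^*<\kappa^+$ and for large $n$ has cardinality $\kappa$ --- with the order type of the block $[c(\rho_n),c(\rho_n+1))$; and $\sup_n c(\rho_n+1)=\eta^*$ is perfectly consistent with every block being a singleton, in which case $|A|=\aleph_0$. So for a generic continuous cofinal $c$ the conclusion is simply false, and ``successor steps chosen large'' must be made precise. The repair is to demand that the blocks be pairwise disjoint, each of order type ${<}\kappa$, with order types unbounded in $\kappa$ --- say $\mathrm{otp}[c(\xi),c(\xi+1))\geq|\xi|^+$ --- after which $(\aleph)$ follows by regularity and $(\beth)$ gives $|A|\geq\sup_n|\rho_n|^+=\kappa$. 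This is exactly what the paper's choice of $[\gamma,\gamma^+]$ over limit cardinals $\gamma<\kappa$ delivers automatically, which is why the detour through $\kappa$ is worth keeping.
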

\begin{proof}
Put $\mathcal{C}=\langle C_\alpha\mid \alpha<\kappa^{++}\rangle$ and $S:=S^{\kappa^{++}}_\kappa$. 
Without loss of generality assume that there is some ordinal $\gamma$ such that $\gamma\in \mathrm{acc}(C_\alpha)\cap S$, for all $\alpha<\kappa^{++}.$  
Let $B:=\{\beta_\eta\mid \eta<\kappa\}$ be a club at $\gamma$ of order-type $\kappa$.
We translate each $C_\alpha\cap{B}$ to a club $E_\alpha$ at $\kappa$ by letting $E_\alpha:=\{\eta\in\kappa\mid \beta_\eta\in{C_\alpha}\}$. 

Let $C'$ be the club of limit cardinals ${<}\kappa$.
It will be useful to think of $C'$ as the set of closure points of the function $f\colon \kappa\rightarrow\kappa$ defined by $f(\alpha):=|\alpha|^+$.
Now we shrink each $E_\alpha$ by setting $E'_\alpha:=E_\alpha\cap{C'}$.
Notice that the sequence $\langle E'_\alpha\mid \alpha<\kappa^{++}\rangle$ witnesses $\neg_{\mathrm{st}}\mathrm{Gal}(\mathscr{D}_{\kappa^+},\kappa^+,\kappa^{++})$, as 
$$|\bigcap_{\alpha\in{I}}E'_\alpha|\leq|\bigcap_{\alpha\in{I}}E_\alpha|\leq |\bigcap_{\alpha\in{I}}(C_\alpha\cap{B})| \leq|\bigcap_{\alpha\in{I}}C_\alpha|<\kappa,$$
for all $I\in [\kappa^{++}]^{\kappa^+}$. For every $\alpha<\kappa^{++}$ let $F_\alpha:=\bigcup\{[\alpha,\alpha^+]\mid \alpha\in{E'_\alpha}\}$.
It is convenient to think of $F_\alpha$ as a thickening of $E'_\alpha$; that is, each point in $E'_\alpha$ gives rise to an interval in $F_\alpha$.
Notice that each $F_\alpha$ is a club at $\kappa$ and, as a result, $\{\beta_\eta\mid \eta\in{F_\alpha}\}$ is a club at $\gamma$.
We shall add the part of $C_\alpha$ above $\gamma$ to this set. Namely, for every $\alpha<\kappa^{++}$ we define:
\[D_\alpha:=\{\beta_\eta\mid \eta\in{F_\alpha}\}\cup(C_\alpha-\gamma).\]
Note that $D_\alpha$ is a club of $\kappa^+$. The next two claims will complete the proof:
\begin{claim}
$\langle D_\alpha\mid \alpha<\kappa^{++}\rangle$ witnesses Clause~$(\aleph)$.
\end{claim}
\begin{proof}[Proof of claim]
If $\alpha_0,\alpha_1\in{C'}$ and $\alpha_0<\alpha_1$ then $\alpha_0^+<\alpha_1$, for $\alpha_1$ is a limit cardinal. In particular,  $[\alpha_0,\alpha_0^+]\cap[\alpha_1,\alpha_1^+]=\varnothing$.
Now, fix $I\in[\kappa^{++}]^{\kappa^+}$.

Recall that $|\bigcap_{\alpha\in I}C_\alpha|<\kappa$ so, in particular, $|\bigcap_{\alpha\in I}(C_\alpha-\gamma)|<\kappa$.
However, $C_\alpha-\gamma=D_\alpha-\gamma$  and hence $|\bigcap_{\alpha\in I}(D_\alpha-\gamma)|<\kappa$.
Thereby, by proving that $|\bigcap_{\alpha\in I}(D_\alpha\cap\gamma)|<\kappa$ we will be done with the verification of Clause~$(\aleph)$.

\smallskip

By definition, $\bigcap_{\alpha\in I}(D_\alpha\cap\gamma)= \{\beta_\eta\mid \eta\in\bigcap_{\alpha\in{I}}F_\alpha\},$ so it suffices to prove that $|\bigcap_{\alpha\in I}F_\alpha|<\kappa$.
Fix an ordinal $\beta\in\bigcap_{\alpha\in I}F_\alpha$.
For every $\alpha\in{I}$ choose $\gamma_\alpha\in{E'_\alpha}$ such that $\beta\in[\gamma_\alpha,\gamma_\alpha^+]$.
Suppose that $\alpha_0,\alpha_1\in{I}$.
Since $\beta\in[\gamma_{\alpha_0},\gamma_{\alpha_0}^+]\cap[\gamma_{\alpha_1},\gamma_{\alpha_1}^+]$ we conclude 
that $\gamma_{\alpha_0}=\gamma_{\alpha_1}$.
Namely, there is a fixed cardinal $\gamma$ such that $\gamma_\alpha=\gamma$ for every $\alpha\in{I}$. It follows that $$\bigcap_{\alpha\in I}F_\alpha\subseteq \{[\gamma,\gamma^+]\mid \gamma\in\bigcap_{\alpha\in{I}}E'_\alpha\}.$$
Since the sequence $\langle E'_\alpha\mid \alpha\in\kappa^{++}\rangle$ witnesses the strong failure of the Galvin property we have that 
$|\bigcap_{\alpha\in I}F_\alpha|<\kappa$, as wanted.
\end{proof}


\begin{claim}
$\langle D_\alpha\mid \alpha<\kappa^{++}\rangle$ witnesses Clause~$(\beth)$.
\end{claim}
\begin{proof}[Proof of claim]
Let $\mathscr{U}$ be a normal ultrafilter over $\kappa$.
By normality, $E'_\alpha\in\mathscr{U}$ for every $\alpha<\kappa^{++}$.
Invoking the Mathias criterion for genericity, for each $\alpha<\kappa^{++}$ we choose $n_\alpha<\omega$ such that $n\geq{n_\alpha}$ implies $\rho_n\in{E'_\alpha}$, where $\langle \rho_n\mid n\in\omega\rangle$ is the Prikry sequence for $\mathbb{P}(\mathscr{U})$.
Choose $J\in [\kappa^{++}]^{\kappa^{++}}$ and $m<\omega$ so that  $n_\alpha=m$, for all $\alpha\in J$.
We claim that in $V[G]$
$$\{\beta_\eta\mid \eta\in\bigcup\{[\rho_n,\rho_n^+]\mid m\leq n<\omega\}\} \subseteq \bigcap_{\alpha\in J}(D_\alpha\cap\gamma).$$ 
This will show that $\langle D_\alpha\mid \alpha<\kappa^{++}\rangle$ witnesses Clause~$(\beth)$, for the left-hand-side set is of cardinality $\kappa$.

So fix $\alpha\in{J}$ and $n\geq{m}$.
Since $n_\alpha=m$ one has $\rho_n\in{E'_\alpha}$ and then $[\rho_n,\rho_n^+]\subseteq{F_\alpha}$.
Hence $\bigcup_{m\leq{n}<\omega}[\rho_n,\rho_n^+]\subseteq{F_\alpha}$ for every $\alpha\in{J}$. Therefore, $\{\beta_\eta\mid \eta\in\bigcup_{m\leq{n}<\omega}[\rho_n,\rho_n^+]\} \subseteq D_\alpha\cap \gamma$, for every $\alpha\in{J}$.
\end{proof}
Actually, note that in the previous claim we have established the validity of the moreover part of the theorem, as $J$ was of size $\kappa^{++}$. The proof is thus accomplished. 
\end{proof}

In closing this section we would like to introduce a principle which is tantamount to the fact that Prikry forcing $\mathbb{P}(\mathscr{U})$ destroys the strong failure of Galvin's property.
Considering the witness constructed in Theorem \ref{thmprikrydestroys}, we had a function $f(\alpha)=[\alpha,\alpha^+]$ which was a ``guessing" function for subsets of $D_\alpha$. We will prove that in  sense (see Definition \ref{defudiamond} below) these are the only kind of witnesses which are killed by the Prikry forcing.

\smallskip

Recall that if $\mathbb{P}$ is a forcing notion, $G\subseteq\mathbb{P}$ is $V$-generic and $A\in{V[G]}$ then $A$ is called \emph{a fresh set} if $A\notin{V}$ but $A\cap\xi\in{V}$ whenever $\xi<\sup(A)$.

\begin{lemma}
\label{lemfresh} Let $\mathscr{U}$ be a normal ultrafilter over $\kappa$, let $G\subseteq\mathbb{P}(\mathscr{U})$ be $V$-generic and let $C\in{V[G]}$ be a set of ordinals of size at least $\kappa$.
\begin{enumerate}
\item [$(\aleph)$] Either there is a set $B\in{V}$, $ |B|=\kappa$ such that $B\subseteq{C}$, or there is a fresh set $A\in{V[G]}$,  $|A|=\kappa$ such that $A\subseteq{C}$.
\item [$(\beth)$] If $A$ is a fresh set in $V[G]$ and $\gamma_A=\sup(A)$ then $\cf^{V[G]}(\gamma_A)=\omega$.
\end{enumerate}
\end{lemma}
\begin{proof}

We concentrate on $(\aleph)$ and refer the reader to \cite[Theorem 6.1]{TomMotiII} for the proof of Clause~$(\beth)$.
Let $\langle \rho_n\mid n\in\omega\rangle$ be the Prikry sequence derived from $G$.
Working in $V$, we fix a $\mathbb{P}(\mathscr{U})$-name $\lusim{C}$ for the set $C$ and a sequence of $\mathbb{P}(\mathscr{U})$-names $\langle \lusim{c}_i\mid i<\kappa\rangle$ for (some of) the elements of $C$.
Let $p\in{G}$ be such that $$p\Vdash_{\mathbb{P}(\mathscr{U})}\text{$``\langle \lusim{c}_i\mid i<\kappa\rangle\subseteq\lusim{C}$ and $i<j<\kappa\Rightarrow \lusim{c}_i<\lusim{c}_j$.''}$$

For every $\alpha<\kappa$ the set $D_\alpha:=\{p\in\mathbb{P}(\mathscr{U})\mid  p\parallel\lusim{c}_\alpha\}$ is dense and open.

\smallskip

By virtue of the strong Prikry property for $\mathbb{P}(\mathscr{U})$ one can choose $n_\alpha<\omega$ and $p_\alpha$ with $p\leq^*p_\alpha$, such that if $\bar{\beta}$ is of size $n_\alpha$ then $p_\alpha^\frown\bar{\beta}\in{D_\alpha}$.
After making our choices for every $\alpha<\kappa$   we  let $I\in[\kappa]^\kappa$ and $n<\omega$ such that $n_\alpha=n$, for all $\alpha\in I$. 
Finally, let $I:=\{i_\alpha\mid \alpha<\kappa\}$.

By the closure of the pure extension ordering $\leq^*$ we can choose, for each $\delta<\kappa$, some $p^*_\delta$ such that $p_{i_\alpha}\leq^*p^*_\delta$, for all $\alpha<\delta$.
It follows that if $\bar{\beta}$ is of size $n$ then ${p^*_\delta}^\frown\bar{\beta}$ decides $\lusim{c}_{i_\alpha}$ for every $\alpha<\delta$.
We may assume that $p^*_\delta\in{G}$.

Focus on the condition $p^*_{\rho_0}\in{G}$.
By the above considerations, if $|\bar{\beta}|=n$ then ${p^*_{\rho_0}}^\frown\bar{\beta}\Vdash\text{$`` \forall\alpha\in\rho_0\, (\lusim{c}_{i_\alpha}=c_{i_\alpha})$''}$. 
Thus, letting $A_0:=\{c_{i_\alpha}\mid \alpha<\rho_0\}$ we see that $A_0\subseteq{C}, A_0\in{V}$ and $|A_0|=\rho_0$.
Observe that $A_0$ is bounded by $c_{i_{\rho_0}}$  and hence, in particular, $C-A_0$ is of size at least $\kappa$.
Applying the above argument to $C-A_0$ and $p^*_{\rho_1}$ we obtain $A_1\subseteq{C}, A_1\in{V}$ such that $|A_1|=\rho_1$.
Performing this process $\omega$-many times we produce a sequence $\langle A_n\mid n<\omega\rangle$ in $V[G]$ which enjoys the following properties for every $n<\omega$:
\begin{enumerate}
\item [$(\alpha)$] $A_n\in{V}, A_n\subseteq{C}$.
\item [$(\beta)$] $|A_n|=\rho_n$.
\item [$(\gamma)$] $\sup(A_n)<\min(A_{n+1})$.
\end{enumerate}
Notice that $|\bigcup_{n\in\omega}A_n|=\kappa$ by virtue of $(\beta)$.

If $\bigcup_{n<\omega}A_n\in{V}$ then call it $B$ and call it a day.
If not, then $A=\bigcup_{n<\omega}A_n$ must be a fresh set.
Indeed, if $\xi<\sup(A)$ then there exists $n<\omega$ for which $\sup(A_n)<\xi\leq\sup(A_{n+1})$, and thence $A\cap\xi=A_0\cup\cdots\cup{A_n}\cup(A_{n+1}\cap\xi)$.
But the finite sequence $\langle A_0,\ldots,A_{n+1}\rangle$ belongs to $V$, so $A\cap\xi\in{V}$.
\end{proof}

The criterion that we are looking for is based on a sort of a local guessing sequence, as described by the following:

\begin{definition}
\label{defudiamond} Let $\mathscr{U}$ be an ultrafilter over $\kappa$,  $n<\omega$ and $\langle C_\alpha\mid \alpha\in{I}\rangle$ be a sequence of sets. A sequence $\langle S(\bar{\alpha})\mid \bar{\alpha}\in[\kappa]^n\rangle$ is called a \emph{$(\mathscr{U},n)$-diamond sequence for $\langle C_\alpha\mid \alpha\in{I}\rangle$} iff there are sets $\langle A_\alpha\mid \alpha\in I\rangle \s\mathscr{U}$  such that:
\begin{itemize}
    \item $|S(\bar{\beta})|\geq\min(\bar{\beta})$ for every $\bar{\beta}\in[\kappa]^n$;
    \item $S(\bar{\beta})\subseteq{C_\alpha}$ whenever $\bar{\beta}\in[A_\alpha]^n$.
\end{itemize}
\end{definition}

Our criterion for destroying witnesses of $\neg_{\mathrm{st}}\mathrm{Gal}(\mathscr{D}_{\kappa^+},\kappa^+,\kappa^{++})$ in Prikry generic extensions reads as follows:

\begin{theorem}[Destroying witnesses]
\label{thmudiamond} Let $\mathscr{U}$ be a normal ultrafilter over $\kappa$ and let $G\subseteq\mathbb{P}(\mathscr{U})$ be $V$-generic.
Let $\mathcal{C}=\langle C_\alpha\mid \alpha<\kappa^{++}\rangle$ 
be a witness for $\neg_{\mathrm{st}}\mathrm{Gal}(\mathscr{D}_{\kappa^+},\kappa^+,\kappa^{++})$ in the ground model, $V$. 
\smallskip

Then, the following are equivalent:
\begin{enumerate}
\item [$(\aleph)$] $\mathcal{C}$ is no longer a witness for $\neg_{\mathrm{st}}\mathrm{Gal}(\mathscr{D}_{\kappa^+},\kappa^+,\kappa^{++})$ in $V[G]$;
\item [$(\beth)$] In $V$, one can find $n<\omega$, and index set $I\in [\kappa^{++}]^{\kappa^+}$ 
and a $(\mathscr{U},n)$-diamond sequence $\langle S(\bar{\alpha})\mid \bar{\alpha}\in[\kappa]^n\rangle$ for $\langle C_\alpha\mid \alpha\in I\rangle$.
\end{enumerate}
\end{theorem}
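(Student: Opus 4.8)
The plan is to prove the two implications of Theorem~\ref{thmudiamond} separately, the direction $(\beth)\Rightarrow(\aleph)$ being the easier one. Assume $(\beth)$: we are given $n<\omega$, an index set $I\in[\kappa^{++}]^{\kappa^+}$, measure-one sets $\langle A_\alpha\mid\alpha\in I\rangle\subseteq\mathscr{U}$ and a sequence $\langle S(\bar\alpha)\mid\bar\alpha\in[\kappa]^n\rangle$ witnessing that the latter is a $(\mathscr{U},n)$-diamond sequence for $\langle C_\alpha\mid\alpha\in I\rangle$. Let $\langle\rho_k\mid k<\omega\rangle$ be the Prikry sequence derived from $G$. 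By the Mathias criterion \cite{Mathias}, for each $\alpha\in I$ the sequence $\langle\rho_k\rangle$ is almost contained in $A_\alpha$, so we may fix $m_\alpha<\omega$ with $\rho_k\in A_\alpha$ for all $k\geq m_\alpha$. As $|I|=\kappa^+$, a pigeonhole argument produces $J\in[I]^{\kappa^+}$ and a single $m^*<\omega$ with $m_\alpha=m^*$ for all $\alpha\in J$. For every $k\geq m^*$ the tuple $\bar\beta_k:=(\rho_k,\dots,\rho_{k+n-1})$ lies in $[A_\alpha]^n$ for every $\alpha\in J$, hence $S(\bar\beta_k)\subseteq C_\alpha$ for all $\alpha\in J$; consequently $\bigcup_{k\geq m^*}S(\bar\beta_k)\subseteq\bigcap_{\alpha\in J}C_\alpha$. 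Since $|S(\bar\beta_k)|\geq\min(\bar\beta_k)=\rho_k$ and $\sup_k\rho_k=\kappa$, the left-hand side has cardinality at least $\kappa$. As each $C_\alpha$ remains a club of $\kappa^+$ in $V[G]$ and $\kappa^+$ is preserved, the subfamily $\langle C_\alpha\mid\alpha\in J\rangle$ witnesses that $\mathcal{C}$ is no longer a witness for $\neg_{\mathrm{st}}\mathrm{Gal}(\mathscr{D}_{\kappa^+},\kappa^+,\kappa^{++})$ in $V[G]$, which is $(\aleph)$.

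For $(\aleph)\Rightarrow(\beth)$, assume $\mathcal{C}$ is no longer a witness in $V[G]$, so there is $\mathcal{D}=\langle C_\alpha\mid\alpha\in I\rangle$ with $I\in[\kappa^{++}]^{\kappa^+}\cap V[G]$ and $C:=\bigcap\mathcal{D}$ of size $\geq\kappa$ in $V[G]$. First I would rule out, via Lemma~\ref{lemfresh}$(\aleph)$, the alternative in which $C$ contains a ground-model set $B$ with $|B|=\kappa$: setting $I^{*}:=\{\alpha<\kappa^{++}\mid B\subseteq C_\alpha\}\in V$ we would have $I\subseteq I^{*}$, hence $|I^{*}|\geq\kappa^+$ in $V[G]$ and therefore in $V$ (cardinals are preserved by $\mathbb{P}(\mathscr{U})$), so in $V$ the family $\langle C_\alpha\mid\alpha\in I^{*}\rangle$ would be a $\kappa^+$-sized subfamily of $\mathcal{C}$ with intersection of size $\geq\kappa$, contradicting that $\mathcal{C}$ witnesses $\neg_{\mathrm{st}}\mathrm{Gal}(\mathscr{D}_{\kappa^+},\kappa^+,\kappa^{++})$ in $V$. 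Hence Lemma~\ref{lemfresh}$(\aleph)$ provides a \emph{fresh} set $A\in V[G]$ with $|A|=\kappa$ and $A\subseteq C$; by Lemma~\ref{lemfresh}$(\beth)$, $\sup(A)$ has cofinality $\omega$ in $V[G]$, so $A$ splits as an increasing union of $\omega$-many ground-model blocks, each of size $<\kappa$ (again, a block of size $\kappa$ would be a ground-model set contained in every $C_\alpha$, $\alpha\in I$, which is impossible as above) with sizes cofinal in $\kappa$.

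The heart of the argument is to read off a $(\mathscr{U},n)$-diamond sequence from a name for $A$. Fix $p\in G$ and a name $\name{A}$ with $p\Vdash$ ``$\name{A}$ is fresh, $|\name{A}|=\kappa$ and $\name{A}\subseteq\bigcap_{\alpha\in\name{I}}\check{C}_\alpha$''. Enumerating $A=\langle a_i\mid i<\theta\rangle$ increasingly with names $\name{a}_i$, I would apply the Strong Prikry Property (\cite[Lemma~1.13]{Gitik-handbook}) to the dense open sets $\{q\mid q\parallel\name{a}_i\}$: below $p$ this yields, for every $i<\theta$, a set $B_i\in\mathscr{U}$, an integer $n_i<\omega$ and a function $f_i\colon[B_i]^{n_i}\to\kappa^+$ such that $\langle t,B_i\rangle^\frown\bar\beta$ forces $\name{a}_i=\check{f}_i(\bar\beta)$ for all $\bar\beta\in[B_i]^{n_i}$ (where $t$ is the stem of $p$). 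Since $\theta\geq\kappa$, a pigeonhole argument fixes $n<\omega$ and $I_0\subseteq\theta$ with $|I_0|=\kappa$ and $n_i=n$ for $i\in I_0$; using normality of $\mathscr{U}$ one diagonalizes the $B_i$ ($i\in I_0$) into a single $B\in\mathscr{U}$, so that $\bar\beta\in[B]^n$ with $\min(\bar\beta)$ large enough entails $\bar\beta\in[B_i]^n$. Because the $a_i$ increase, for $i<j$ in $I_0$ the displayed conditions force $f_i(\bar\beta)<f_j(\bar\beta)$ whenever both are defined, which lets me set $S(\bar\beta):=\{f_i(\bar\beta)\mid i\in I_0\cap\min(\bar\beta)\}$ on the appropriate tuples $\bar\beta$ and $S(\bar\beta):=\varnothing$ otherwise; note $S\in V$ and $n\in V$. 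The lower bound $|S(\bar\beta)|\geq\min(\bar\beta)$ then has to be engineered by thinning $I_0$ and, if necessary, padding $S(\bar\beta)$ with further predicted values — this bookkeeping uses that $\kappa$ is regular in $V$ and must be reconciled with the $\omega$-cofinality of $\sup(A)$ furnished by Lemma~\ref{lemfresh}$(\beth)$. Finally, to obtain a ground-model index set I would put $I^{*}:=\{\alpha<\kappa^{++}\mid\exists A_\alpha\in\mathscr{U}\ \forall\bar\beta\in[A_\alpha]^n\ (S(\bar\beta)\subseteq C_\alpha)\}$, which lies in $V$ since $S,n,\mathscr{U}$ and $\langle C_\alpha\rangle$ do; one then checks that every $\alpha$ in the original $I$ belongs to $I^{*}$ by refining, for each such $\alpha$, the sets $B_i$ ($i\in I_0$) to measure-one sets $B_i^{\alpha}$ on which $\langle t,B_i^{\alpha}\rangle^\frown\bar\beta$ already forces $\name{a}_i\in\check{C}_\alpha$ — legitimate because $p$ forces the implication $\check\alpha\in\name{I}\to\name{A}\subseteq\check{C}_\alpha$, the real generic realizes $\alpha\in I$, and one can homogenize the yes/no decision with Rowbottom's theorem (\cite[Theorem~7.17]{Kan}) — and diagonalizing the $B_i^{\alpha}$ into $A_\alpha$. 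Since $I\subseteq I^{*}$ and cardinals are preserved, $|I^{*}|\geq\kappa^+$, and $\langle S(\bar\beta)\mid\bar\beta\in[\kappa]^n\rangle$ together with the chosen $A_\alpha$ ($\alpha\in I^{*}$) is the required $(\mathscr{U},n)$-diamond sequence.

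The main obstacle, as the previous paragraph already signals, is precisely this forcing-theoretic extraction: one must simultaneously control $(i)$ the arity $n$, i.e.\ its uniformity across the $\kappa$-many elements of $A$; $(ii)$ the size requirement $|S(\bar\beta)|\geq\min(\bar\beta)$, which interacts delicately with the fact that the fresh set has only countable cofinality, so that the ``blocks'' of $A$ of near-$\kappa$ size must be predicted from short tuples of Prikry points; and $(iii)$ the uniform decision, on a $\mathscr{U}$-measure-one set, of whether the predicted values $f_i(\bar\beta)$ land in a given $C_\alpha$ — all of this while keeping the objects produced ($S$, $n$, and ultimately the index set $I^{*}$) definable in the ground model so that the reflection step closing the argument goes through. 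Everything else is a matter of the standard Prikry-forcing toolkit (the Strong Prikry Property, normality and diagonal intersections, Rowbottom's theorem, and preservation of cardinals).
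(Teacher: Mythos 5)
Your proof of $(\beth)\Rightarrow(\aleph)$ is correct and is essentially the paper's argument. The direction $(\aleph)\Rightarrow(\beth)$ starts the same way as the paper (the dichotomy of Lemma~\ref{lemfresh}, ruling out a ground-model subset of size $\kappa$, reducing to a fresh $A\subseteq\bigcap_{\alpha\in I}C_\alpha$), but the extraction of the diamond sequence has a genuine gap at its crucial step. You decide the individual elements $\name{a}_i$ by tuples over the stem $t$ of $p$, obtaining $f_i\colon[B_i]^n\to\kappa^+$, and then assert that for each $\alpha\in I$ one can shrink to $B_i^\alpha\in\mathscr{U}$ on which $f_i(\bar\beta)\in C_\alpha$, ``homogenizing the yes/no decision with Rowbottom's theorem'' and appealing to the fact that the generic realizes $\alpha\in I$. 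Rowbottom only gives a measure-one set on which the truth value of ``$f_i(\bar\beta)\in C_\alpha$'' (a ground-model statement) is constant; nothing forces that constant to be ``yes''. The condition in $G$ witnessing $A\subseteq C_\alpha$ has a stem $s_\alpha$ which in general properly extends $t$, and the pure extension $(t,B_i^\alpha)$ of $p$ need not belong to $G$, so a homogeneous ``no'' answer is perfectly compatible with $a_i\in C_\alpha$ holding in $V[G]$: the generic only tells you about the single tuple of Prikry points immediately following $t$, not about measure-one many tuples. If you instead try to shift by the common stem $s$ (available for $\kappa^+$-many $\alpha$, since there are only $\kappa$ stems), you hit the second problem you half-acknowledge: the entries of $s-t$ lie only in the diagonal intersection of the $B_{i_\xi}$, hence in $B_{i_\xi}$ only for $\xi<\min(s-t)$, which caps $|S(\bar\beta)|$ at the fixed ordinal $\min(s-t)$ and destroys the requirement $|S(\bar\beta)|\geq\min(\bar\beta)$.

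The paper avoids both problems by exploiting freshness more strongly than you do: since $A$ is fresh, each initial segment $A\cap\gamma_\ell$ is a single ground-model set, so the Strong Prikry Property lets one condition $q^\frown\bar\alpha$ decide $\name{A}\cap\name{\gamma}_\ell=\check{D}(\bar\alpha)$ outright (with the size bound $|D(\bar\alpha)|\geq\rho_\ell$ arranged in advance via a ground-model function $g$ with $|A\cap\gamma_{g(n)}|\geq\rho_n$). After homogenizing the stem to $s$ over $\kappa^+$-many $\alpha$, the \emph{same} condition $q_\alpha^\frown\bar\alpha$ forces both $\name{A}\cap\name{\gamma}_\ell=\check{D}(\bar\alpha)$ and $\name{A}\subseteq\check{C}_\alpha$; since $D(\bar\alpha)$ and $C_\alpha$ are ground-model sets, $D(\bar\alpha)\subseteq C_\alpha$ holds in $V$, with no appeal to Rowbottom or to membership of auxiliary conditions in $G$. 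Your element-by-element scheme can be repaired, but only by reordering the steps: first fix the common stem $s$ for $\kappa^+$-many $\alpha\in I$, and only then apply the Strong Prikry Property below a condition with stem $s$, so that the value of $\name{a}_i$ and the inclusion $\name{A}\subseteq\check{C}_\alpha$ are forced by compatible conditions with the same stem. As written, the claim $I\subseteq I^{*}$ is not established.
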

\begin{proof}

\underline{$(\beth)\Rightarrow (\aleph)$:}
In $V[G]$, fix any $I\in [\kappa^{++}]^{\kappa^+}$.
Our goal is to show that $|\bigcap_{\alpha\in I}C_\alpha|\geq \kappa.$ 
For each $\alpha\in{I}$ choose $A_\alpha\in\mathscr{U}$ such that $S(\bar{\beta})\subseteq{C_\alpha}$ for every $\bar{\beta}\in[A_\alpha]^n$.
Let $\langle\rho_n\mid n\in\omega\rangle$ be the Prikry sequence derived from $G$.
Since $A_\alpha\in\mathscr{U}$ one can find, for every $\alpha\in{I}$, some $n_\alpha<\omega$ so that $\langle \rho_m\mid m\geq{n_\alpha}\rangle\subseteq{A_\alpha}$.
Shrink $I$ to $J\in[I]^{\kappa^+}$, if needed, so that $n_\alpha=m$, for all $\alpha\in{J}$. 
For every $m\leq\ell<\omega$ let $\bar{\alpha}_\ell:=\langle \rho_\ell,\ldots,\rho_{\ell+n-1}\rangle$.
Notice that $\bar{\alpha}_\ell\in[A_\alpha]^n$ for every $\alpha\in{J}$ and every $m\leq\ell<\omega$.
Hence, $S(\bar{\alpha}_{\ell})\subseteq{C_\alpha}$ for $\alpha\in{J}, m\leq\ell<\omega$.
Thus, letting $B:=\bigcup\{S(\bar{\alpha}_\ell)\mid m\leq\ell\in\omega\}$, we have $B\subseteq\bigcap_{\alpha\in J}C_\alpha$.
Since $|S(\bar{\alpha}_\ell)|\geq\rho_\ell$ and $\sup_{\ell<\omega}\rho_\ell=\kappa$ then $|B|=\kappa$. Altogether, $\mathcal{C}$ ceases to be a witness for $\neg_{\mathrm{st}}\mathrm{Gal}(\mathscr{D}_{\kappa^+},\kappa^+,\kappa^{++})$.

\smallskip

\underline{$(\aleph)\Rightarrow (\beth)$:} Assume that the sequence $\mathcal{C}=\langle C_\alpha\mid \alpha<\kappa^{++}\rangle$ does not witness $\neg_{\mathrm{st}}\mathrm{Gal}(\mathscr{D}_{\kappa^+},\kappa^+,\kappa^{++})$ in $V[G]$.
Fix $I\in [\kappa^{++}]^{\kappa^+}$ such that $|C|\geq\kappa$ where $C:=\bigcap_{\alpha\in I} C_\alpha$.
Apply Lemma~\ref{lemfresh} to $C$.
We may assume, that there exists a fresh set $A\subseteq{C}$, for otherwise $\mathcal{C}$ would not be a witness for $\neg_{\mathrm{st}}\mathrm{Gal}(\mathscr{D}_{\kappa^+},\kappa^+,\kappa^{++})$ in the ground model.
Let $\gamma:=\gamma_A=\sup(A)$, so $\cf^{V[G]}(\gamma)=\omega$ (see Lemma~\ref{lemfresh}$(\beth)$). 
Choose a cofinal sequence in $\gamma$ of the form $\langle \gamma_n\mid n<\omega\rangle$ and a sequence of names $\langle \lusim{\gamma}_n\mid n<\omega\rangle$ for these ordinals.

Observe that (in $V[G]$) $A$ is expressible as $\bigcup_{n<\omega}(A\cap\gamma_n)$. Since $|A|=\kappa$ one can choose, for each $n<\omega$, some $m_n<\omega$ so that $|A\cap\gamma_{m_n}|\geq\rho_n$.
Define $g(n)=m_n$ for every $n<\omega$, and notice that $g\in{V}$.\footnote{This being true as Prirky forcing does not introduce new members of $^{\omega}{}\omega$.} 
Fix a condition $p=(t,A^p)\in{G}$ which forces all the relevant information.
That is, $p$ forces that $\lusim{A}$ is fresh, $\langle \lusim{\gamma}_n\mid n<\omega\rangle$ is cofinal in $\sup(\lusim{A})$,  $\lusim{A}\subseteq{C_\alpha}$ for every $\alpha\in{I}$ and $|\lusim{A}\cap\lusim{\gamma}_{g(n)}|\geq\check{\rho}_n$ for every $n<\omega$.

For every $\ell<\omega$ we invoke the \emph{Strong Prikry Property} of $\mathbb{P}(\mathscr{U})$ aiming to pick $n_\ell<\omega$ and  $p\leq^*p_\ell$ so that, if $p_\ell=(t,A_\ell)$, then for every $\bar{\alpha}\in[A_\ell]^{n_\ell}$ the condition $p_\ell^\frown\bar{\alpha}$ decides the values of $\lusim{\gamma}_\ell$ and $\lusim{A}\cap\lusim{\gamma}_\ell$.
Let $B:=\bigcap_{\ell<\omega}A_\ell$. Clearly, $B\in\mathscr{U}$, so that $q:=(t,B)\in\mathbb{P}(\mathscr{U}).$ Finally, note that $p\leq^*q$.

If $\bar{\alpha}\in[B]^{n_\ell}$ then $q^\frown\bar{\alpha}\Vdash_{\mathbb{P}(\mathscr{U})} \lusim{A}\cap\lusim{\gamma}_\ell=D(\bar{\alpha})$, for some $D(\bar{\alpha})$.
Define, $$J:=\{\alpha<\kappa^{++}\mid \exists{r}\geq{q}\, (r\Vdash_{\mathbb{P}(\mathscr{U})}\lusim{A}\subseteq{C_\alpha})\}.$$ Note that $J\supseteq{I}$ and $J\in{V}$.
If $\alpha\in{J}$ then there is a condition $q_\alpha=(s_\alpha,B_\alpha)$ such that $q\leq{q_\alpha}$ and $q_\alpha\Vdash_{\mathbb{P}(\mathscr{U})}\lusim{A}\subseteq{C_\alpha}$.
Since $|J|\geq\kappa^+$ we may freely assume that $s_\alpha=s$ for every $\alpha\in{J}$. Let $m=|s|$ and define 
$$
E(\bar{\alpha}):=\begin{cases}
D((s-t)^\frown\bar{\alpha}), & \text{if $(s-t)^\frown\bar{\alpha}\in[B]^{n_m+1}$;}\\
\kappa, & \text{otherwise.}
\end{cases}
$$
\begin{claim}
$\langle E(\bar{\alpha})\mid \bar{\alpha}\in[\kappa]^{n_m+1}\rangle $ yields a $(\mathscr{U},n_m+1)$-diamond sequence for $\langle C_\alpha\mid \alpha\in{J}\rangle$, as witnessed by $\langle B_\alpha\mid \alpha\in J\rangle$.
\end{claim}
\begin{proof}[Proof of claim]
Fix $\bar{\alpha}\in[B_\alpha]^{n_m+1}$ and notice that $$q_\alpha^\frown\bar{\alpha}\Vdash_{\mathbb{P}(\mathscr{U})} E(\bar{\alpha})=\lusim{A}\cap\lusim{\gamma}_{g(n_m+1)}\subseteq{C_\alpha}$$ and also $q_\alpha^\frown\bar{\alpha}\Vdash_{\mathbb{P}(\mathscr{U})}|E(\bar{\alpha})|\geq\lusim{\rho}_{n_m+1}$, so we are done.
\end{proof}
This completes the verification of $(\aleph)\Rightarrow (\beth)$.
\end{proof}

The above theorem gives a useful criterion for destroying a ground model witness to $\neg_{\mathrm{st}}\mathrm{Gal}(\mathscr{D}_{\kappa^+},\kappa^+,\kappa^{++})$.
But actually this criterion is sufficient for any witness, new and old alike, by the following:
\begin{corollary}\label{Culmination}
\label{corcriterion} Assume $\neg_{\mathrm{st}}\mathrm{Gal}(\mathscr{D}_{\kappa^+},\kappa^+,\kappa^{++})$ holds in $V$. Let $\mathscr{U}$ be a normal ultrafilter over $\kappa$ and $G\subseteq\mathbb{P}(\mathscr{U})$ be $V$-generic. Then, the  following assertions are equivalent:
\begin{enumerate}
    \item $\neg_{\mathrm{st}}\mathrm{Gal}(\mathscr{D}_{\kappa^+},\kappa^+,\kappa^{++})$ fails in $V[G]$;
    \item Every ground model witness for $\neg_{\mathrm{st}}\mathrm{Gal}(\mathscr{D}_{\kappa^+},\kappa^+,\kappa^{++})$ is destroyed.
\end{enumerate}
\end{corollary}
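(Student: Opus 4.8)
The plan is to treat the two implications separately; the content is entirely in $(2)\Rightarrow(1)$. For $(1)\Rightarrow(2)$, if $\neg_{\mathrm{st}}\mathrm{Gal}(\mathscr{D}_{\kappa^+},\kappa^+,\kappa^{++})$ fails in $V[G]$ then \emph{no} family of clubs at $\kappa^+$ witnesses it in $V[G]$, and in particular no ground model witness does; hence every ground model witness is destroyed. For $(2)\Rightarrow(1)$ I would argue by contraposition: assume $\neg_{\mathrm{st}}\mathrm{Gal}(\mathscr{D}_{\kappa^+},\kappa^+,\kappa^{++})$ holds in $V[G]$, witnessed by some $\mathcal{C}'=\langle C'_\alpha\mid\alpha<\kappa^{++}\rangle\in V[G]$, and aim to produce a ground model witness which is \emph{not} destroyed, contradicting $(2)$.

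The heart of the argument is a uniform pull-down of $\mathcal{C}'$ into $V$. Fix in $V$ a sequence of $\mathbb{P}(\mathscr{U})$-names $\langle\dot{C}'_\alpha\mid\alpha<\kappa^{++}\rangle$ and a condition $p\in G$ forcing that this sequence witnesses $\neg_{\mathrm{st}}\mathrm{Gal}(\mathscr{D}_{\kappa^+},\kappa^+,\kappa^{++})$. Working in $V$: for each $\alpha<\kappa^{++}$ and $\xi<\kappa^+$, conditions $q\leq p$ deciding $\min(\dot{C}'_\alpha\setminus\xi)$ with distinct values are pairwise incompatible, so by the $\kappa^+$-cc of $\mathbb{P}(\mathscr{U})$ only ${\leq}\kappa$ values can occur; let $f_\alpha(\xi)<\kappa^+$ be their supremum and set
$$C_\alpha:=\{\delta<\kappa^+\mid \delta\text{ is a limit ordinal and }f_\alpha[\delta]\subseteq\delta\},$$
a club at $\kappa^+$ lying in $V$. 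A standard density argument shows $p\Vdash_{\mathbb{P}(\mathscr{U})} C_\alpha\subseteq\dot{C}'_\alpha$: for $\delta\in C_\alpha$ and any $\xi<\delta$, the value $\min(\dot{C}'_\alpha\setminus\xi)$ decided by some $q\in G$ with $q\leq p$ is ${\leq}f_\alpha(\xi)<\delta$, so $\sup(\dot{C}'_\alpha\cap\delta)=\delta$, and $\delta\in\dot{C}'_\alpha$ by closure. Crucially, the whole sequence $\mathcal{C}:=\langle C_\alpha\mid\alpha<\kappa^{++}\rangle$ is definable in $V$ from $p$ and $\langle\dot{C}'_\alpha\mid\alpha<\kappa^{++}\rangle$, hence $\mathcal{C}\in V$.

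Finally I would verify that $\mathcal{C}$ does the job. Since $p\in G$ we get $C_\alpha\subseteq C'_\alpha$ in $V[G]$ for all $\alpha$, so for every $I\in[\kappa^{++}]^{\kappa^+}$ one has $\bigcap_{\alpha\in I}C_\alpha\subseteq\bigcap_{\alpha\in I}C'_\alpha$, whence $|\bigcap_{\alpha\in I}C_\alpha|<\kappa$ in $V[G]$; that is, $\mathcal{C}$ still witnesses $\neg_{\mathrm{st}}\mathrm{Gal}(\mathscr{D}_{\kappa^+},\kappa^+,\kappa^{++})$ in $V[G]$, so it is not destroyed. As $\mathbb{P}(\mathscr{U})$ preserves all cardinals (it is $\kappa^+$-cc and, having the Prikry property, adds no bounded subsets of $\kappa$), the cardinality of each ground model set $\bigcap_{\alpha\in I}C_\alpha$ with $I\in V$ is the same in $V$ and in $V[G]$, and $[\kappa^{++}]^{\kappa^+}\cap V\subseteq[\kappa^{++}]^{\kappa^+}$; hence $\mathcal{C}$ already witnesses $\neg_{\mathrm{st}}\mathrm{Gal}(\mathscr{D}_{\kappa^+},\kappa^+,\kappa^{++})$ in $V$, i.e.\ it is a ground model witness, contradicting $(2)$. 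The main obstacle is precisely the \emph{uniformity} of the pull-down: each $C'_\alpha$ individually contains a ground model club, but the sequence of such clubs need not lie in $V$ (exactly the phenomenon exploited in the proof of Theorem~\ref{thmprikrydestroys}), and the closure-point construction above is what forces the entire sequence into $V$ in one stroke; everything else is routine bookkeeping with the chain condition and cardinal preservation of Prikry forcing. (The hypothesis that $\neg_{\mathrm{st}}\mathrm{Gal}(\mathscr{D}_{\kappa^+},\kappa^+,\kappa^{++})$ holds in $V$ only serves to make the notion of a ground model witness non-vacuous.)
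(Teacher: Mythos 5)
Your proof is correct and follows essentially the same route as the paper: the forward implication is dismissed as trivial, and the backward one is obtained by contraposition from a uniform, ground-model-definable refinement $\langle C_\alpha\mid \alpha<\kappa^{++}\rangle\in V$ of the witnessing family of $V[G]$, extracted via the $\kappa^+$-cc, together with cardinal preservation to see that this refinement is itself an undestroyed ground model witness. The only (immaterial) difference is in how the refinement is built: the paper intersects, for each $\alpha$, the ${\leq}\kappa$ possible ground model clubs decided along a maximal antichain, whereas you take closure points of the function recording the suprema of the decided values of $\min(\dot{C}'_\alpha\setminus\xi)$; the two constructions are interchangeable.
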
 
\begin{proof}

The forward direction is trivial.
For the backward direction it suffices to establish the following easy claim concerning $\kappa^+$-cc forcing notions:
\begin{claim}\label{groundmodelwitness}
Assume $\mathbb{P}$ is a $\kappa^+$-cc forcing notion, and let $\lambda$ be an ordinal.

For every $\langle C_\alpha\mid \alpha<\lambda\rangle\subseteq\mathscr{D}_{\kappa^+}$ in a generic extension by $\mathbb{P}$ there is a sequence $\langle D_\alpha\mid \alpha<\lambda\rangle\subseteq\mathscr{D}_{\kappa^+}$ in $V$ such that $D_\alpha\subseteq{C_\alpha}$ for every $\alpha<\lambda$.
\end{claim}
\begin{proof}
For each $\alpha<\lambda$, choose $E_\alpha\in\mathscr{D}_{\kappa^+}$ such that $E_\alpha\subseteq{C_\alpha}$ and $E_\alpha\in{V}$.
This is possible since $\mathbb{P}$ is $\kappa^+$-cc.
Note, however, that $\langle E_\alpha\mid \alpha<\lambda\rangle$ need not be member of $V$, although it can be modified to be so by the following procedure.
For each $\alpha<\lambda$, let $\mathcal{A}_\alpha=\{q^\alpha_\eta\mid \eta<\delta_\alpha\}$ be 
a maximal antichain of conditions which decide $E_\alpha$; namely, for each $\alpha<\delta_\alpha$, there is $E^\alpha_\eta\in V$ such that $q^\alpha_\eta\Vdash_\mathbb{P}\lusim{E}_i=E^\alpha_\eta$.
Define a function $f\colon\lambda\rightarrow{V}$, $f\in{V}$ by $f(\alpha):=\langle E^\alpha_\eta\mid \eta<\delta_\alpha\rangle$.
For each $\alpha<\lambda$, let $D_\alpha:=\bigcap_{\eta<\delta_\alpha}E^\alpha_\eta$. By $\kappa^+$-ccness of $\mathbb{P}$,  $D_\alpha\in\mathscr{D}_{\kappa^+}$ and it is forced by $\mathcal{A}_\alpha$ (hence by $\one_\mathbb{P}$) that $``\check{D}_\alpha\subseteq{\check{E}_\alpha}\subseteq{\lusim{C}_\alpha}$''.
Moreover, $\langle D_\alpha\mid \alpha<\lambda\rangle\in{V}$, so we are done. 
\end{proof}
At this stage the proof has been accomplished.
\end{proof}

\section{Stronger forms of Galvin's property on normal filters}\label{SectionStronger} 

Galvin's theorem applies to arbitrary normal filters on $\kappa$ and not just to the club filter $\mathscr{D}_\kappa$ (\cite[\S3.2]{MR0369081}).
Specifically, if $\kappa^{<\kappa}=\kappa$ then Galvin's theorem says that $\mathrm{Gal}(\mathscr{F},\kappa,\kappa^+)$ holds for every normal filter $\mathscr{F}$ over $\kappa$. Here  $\mathrm{Gal}(\mathscr{F},\mu,\lambda)$ denotes the natural extension of the principle $\mathrm{Gal}(\mathscr{D}_\kappa,\mu,\lambda)$.



The central idea in Galvin's proof is to find a subfamily of clubs whose intersection equals its diagonal intersection up to some negligible set. Once this is the accomplished normality yields the desired property.
Unfortunately,  this idea is limited to subfamilies of size $\kappa$.
Therefore one has to work in a different direction to get the consistency $\mathrm{Gal}(\mathscr{F},\mu,\lambda)$, whenever $\mu\geq\kappa^+$. 
The next remark present some easy facts on the principles $\mathrm{Gal}(\mathscr{F},\mu,\lambda)$: 
\begin{remark}
Let $\mathscr{F}$ be a filter over $\kappa$. Then:
\begin{enumerate}
    \item [($\aleph$)] 
    For every $\mu'\leq\mu\leq\lambda\leq\lambda'$, ${\rm Gal}(\mathscr{F},\mu,\lambda)\Rightarrow {\rm Gal}(\mathscr{F},\mu',\lambda')$.
\item [($\beth$)]  $\mathscr{F}$ is $\mu$-complete iff for every $\mu'<\mu$, ${\rm Gal}(\mathscr{F},\mu',\mu')$.
\item [$(\gimel)$]
If $\cf(\mu)=\kappa$ then ${\rm Gal}(\mathscr{F},\mu,\mu)$ fails.
\end{enumerate}
\end{remark}

The following theorem shows that stronger failures of Galvin's property can be force at the level of measurable cardinals.
This will be followed by a proposition showing that the opposite direction is consistent as well.
\begin{theorem}
\label{thmnegmeasurable} It is consistent with \textsf{ZFC} that $\kappa$ is a measurable cardinal and ${\rm Gal}(\mathscr{U},\kappa^+,\lambda)$ fails for every $\lambda>\kappa$ and every normal ultrafilter $\mathscr{U}$ over $\kappa$.
\end{theorem}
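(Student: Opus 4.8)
The plan is to violate $\mathrm{Gal}(\mathscr{U},\kappa^+,\lambda)$ \emph{simultaneously} for all normal ultrafilters $\mathscr{U}$ on $\kappa$ by arranging that in the extension there is a single $\kappa^+$-indexed family of sets which is $\mathscr{U}$-large for \emph{every} normal $\mathscr{U}$ and whose intersections behave badly. The natural candidate: start from a measurable $\kappa$ with $2^\kappa=\kappa^+$ and force to blow up $2^\kappa$ to some large $\lambda$ while keeping $\kappa$ measurable; in the process arrange a family $\langle A_\alpha\mid\alpha<\lambda\rangle$ of subsets of $\kappa$, each in $\bigcap\{\mathscr{U}\mid \mathscr{U}\text{ normal on }\kappa\}$ (equivalently, each containing a club, i.e.\ each $A_\alpha\in\mathscr{D}_\kappa$), such that no $\kappa^+$-sized subfamily has intersection in $\mathscr{D}_\kappa$ — indeed no $\kappa^+$-sized subfamily has large intersection with respect to any normal ultrafilter. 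Since every normal ultrafilter on $\kappa$ extends $\mathscr{D}_\kappa$, a witnessing family for $\neg\mathrm{Gal}(\mathscr{D}_\kappa,\kappa^+,\lambda)$ consisting of \emph{clubs} automatically witnesses $\neg\mathrm{Gal}(\mathscr{U},\kappa^+,\lambda)$ for all such $\mathscr{U}$: if $\mathcal{D}\in[\mathcal{C}]^{\kappa^+}$ then $\bigcap\mathcal{D}$ is not club, but being a club is the only way a set that is a ground-for-the-argument intersection of clubs could be $\mathscr{U}$-large — more carefully, one wants $|\bigcap\mathcal{D}|<\kappa$, which gives $\bigcap\mathcal{D}\notin\mathscr{U}$ since every normal ultrafilter is $\kappa$-complete and uniform. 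So it suffices to produce, at a \emph{measurable} $\kappa$, a strong Abraham--Shelah-type failure at $\kappa$ itself (not $\kappa^+$).

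The obstruction is precisely that the Abraham--Shelah poset $\mathbb{S}(\kappa,\lambda)$ produces the failure at $\kappa^+$, not at $\kappa$, and its natural analogue for producing clubs on $\kappa$ adds \emph{bounded} closed sets with the no-old-subset property, which would require manipulating subsets of $\kappa$ of size $<\kappa$ — a genuinely different beast. The cleaner route, which I expect the authors take, is: force with (the measurable-cardinal analogue of) $\mathbb{S}(\kappa,\lambda)$ \emph{one level down}, i.e.\ work to get $\neg_{\mathrm{st}}\mathrm{Gal}(\mathscr{D}_\kappa,\kappa,\lambda)$ directly. Concretely, one iterates or takes a mixed-support product of the two-step forcing $\mathrm{Add}(\mu,1)\ast\lusim{\mathbb{R}}_\mu$ over all $\mu<\kappa$ in a suitable measure-one set, so that by reflection/elementarity the generic club on $\kappa$ added at the top coordinate is not swallowed by $\mathscr{U}$-many of the other clubs; then argue the whole iteration is $\kappa$-cc (or at least $\kappa^+$-cc and preserves measurability via a lifting argument of the embedding $j\colon V\to M$), so $\kappa$ stays measurable and, by Lemma~\ref{lemchain}$(\aleph)$, the failure persists through any further mild forcing used to tidy up the cardinal arithmetic.

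The key steps, in order: (1) Fix a measurable $\kappa$ and, if desired, a model where $\mathrm{GCH}$ holds and $\kappa$ carries many normal ultrafilters. (2) Define the forcing $\mathbb{P}$ producing a $\lambda$-family $\langle C_\alpha\mid\alpha<\lambda\rangle$ of clubs on $\kappa$ such that every subfamily of size $\kappa$ has intersection of size $<\kappa$ (the $\kappa$-analogue of the Abraham--Shelah construction), taking care that $\mathbb{P}$ is $\kappa$-cc so that every club of $\kappa$ in the extension contains a ground-model club. (3) Verify that $\mathbb{P}$ preserves the measurability of $\kappa$: lift $j\colon V\to M$ to $j\colon V[G]\to M[j(G)]$, using that $\mathbb{P}$ is $\kappa$-cc and that the tail $j(\mathbb{P})/G$ is sufficiently closed in $V[G]$ and admits a master condition below $j``G$ — this is where the bulk of the work lies. (4) Conclude: since each $C_\alpha$ is a club, it lies in every normal ultrafilter $\mathscr{U}$; and for any $\mathcal{D}\in[\{C_\alpha\}]^{\kappa^+}$ (hence a fortiori of size $\kappa$), $|\bigcap\mathcal{D}|<\kappa$, so $\bigcap\mathcal{D}\notin\mathscr{U}$ by $\kappa$-completeness and uniformity of $\mathscr{U}$; thus $\mathrm{Gal}(\mathscr{U},\kappa^+,\lambda)$ fails for every $\lambda>\kappa$ and every normal ultrafilter $\mathscr{U}$, since by Remark~$(\aleph)$ it is enough to fail it for the largest relevant parameters. \textbf{The main obstacle} is step~(3): engineering the $\kappa$-analogue of $\mathbb{S}(\kappa,\lambda)$ so that it is simultaneously $\kappa$-cc (to invoke Lemma~\ref{lemchain} and the club-capturing argument) \emph{and} allows a lifting of the measurable embedding — the original Abraham--Shelah forcing is only $\kappa^{++}$-cc after forcing at $\kappa^+$, so porting the chain-condition analysis down a level, together with the closure computations needed for the master-condition argument, is the delicate heart of the proof.
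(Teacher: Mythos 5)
Your reduction in step (4) is sound, but the object it feeds on — steps (2)--(3) — is precisely where the proposal breaks down, and not merely because the construction is ``delicate.'' First, the plan is internally inconsistent as stated: you require $\mathbb{P}$ to be $\kappa$-cc so that every club of $\kappa$ in $V[G]$ contains a ground-model club. But then each generic $C_\alpha$ contains some club $D_\alpha$ lying in the model over which $\mathbb{P}$ is defined, and that model has only $2^\kappa$-many clubs of $\kappa$; for any $\lambda>2^\kappa$ (of that model) the pigeonhole principle yields $\kappa^+$-many indices $\alpha$ with a common $D\subseteq C_\alpha$, $|D|=\kappa$, which destroys the very witnessing property you are forcing. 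Dodging this forces you to first make $2^\kappa\geq\lambda$ while keeping $\kappa$ measurable and \emph{then} add the clubs, at which point you are squarely asking for (a strong form of) $\neg\mathrm{Gal}(\mathscr{D}_\kappa,\kappa^+,\lambda)$ at a measurable $\kappa$ --- and the paper explicitly records exactly this as an open problem (Question~\ref{qinaccm}; cf.\ Question~\ref{qweaklyinac}). So the heart of your argument is not a technical lifting computation to be filled in; it is a statement not known to be consistent, and the authors flag that their proof deliberately avoids it.

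What makes the theorem provable is the \emph{ultrafilterhood} of $\mathscr{U}$, which your club-based route never uses. The paper first shows (Claim~\ref{Claimpolarized}) that $\mathrm{Gal}(\mathscr{U},\kappa^+,\lambda)$ implies the polarized relation $\binom{\lambda}{\kappa}\rightarrow\binom{\kappa^+}{\kappa}$: for each row of a $2$-coloring one colour class is $\mathscr{U}$-large (this is where being an ultrafilter is essential), and Galvin's property glues $\kappa^+$-many such rows onto a common $\mathscr{U}$-large, hence $\kappa$-sized, column set. It then forces with $\mathrm{Add}(\kappa,\lambda)$ over a model where the measurability of $\kappa$ is indestructible under that poset, and a short genericity/density argument shows the coloring $c(\alpha,\beta)=\eta_\alpha(\beta)$ given by the Cohen generics witnesses $\binom{\lambda}{\kappa}\nrightarrow\binom{\kappa^+}{\kappa}$. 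This refutes $\mathrm{Gal}(\mathscr{U},\kappa^+,\lambda)$ for all normal ultrafilters $\mathscr{U}$ simultaneously, with no bespoke club-adding forcing and no lifting argument beyond the black-boxed indestructibility hypothesis. If you want to salvage your approach, you would have to either answer Question~\ref{qinaccm} in the negative or find a witnessing family that is $\mathscr{U}$-large for every normal $\mathscr{U}$ without consisting of clubs --- which is, in effect, what the Cohen-coloring argument does.
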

\begin{proof}
Fix a normal ultrafilter $\mathscr{U}$ over $\kappa$.
Recall that  $\binom{\lambda}{\kappa}\rightarrow\binom{\kappa^+}{\kappa}$ stands for the polarized relation asserting that for every coloring $c\colon \lambda\times\kappa\rightarrow 2$ there are sets $A\in[\lambda]^{\kappa^+}$ and $B\in[\kappa]^\kappa$ for which $c\upharpoonright(A\times B)$ is constant.

\begin{claim}\label{Claimpolarized}
${\rm Gal}(\mathscr{U},\kappa^+,\lambda)$ implies  $\binom{\lambda}{\kappa}\rightarrow\binom{\kappa^+}{\kappa}$.
\end{claim}
\begin{proof}[Proof of claim]
Assume that ${\rm Gal}(\mathscr{U},\kappa^+,\lambda)$ holds and let $c:\lambda\times\kappa\rightarrow 2$ be a coloring.
For each $\alpha<\lambda$ and $i\in\{0,1\}$, let $S^\alpha_i:=\{\beta<\kappa\mid c(\alpha,\beta)=i\}$. Since $\mathscr{U}$ is a ultrafilter, 
for each $\alpha<\lambda$ there is $i(\alpha)\in \{0,1\}$ with $S^\alpha_{i(\alpha)}\in\mathscr{U}$.
We may assume that $i(\alpha)=i$ for some fixed  $i\in\{0,1\}$ and every $\alpha<\lambda$.

Apply ${\rm Gal}(\mathscr{U},\kappa^+,\lambda)$ to the family $\mathcal{S}:=\langle S^\alpha_i\mid \alpha<\lambda\rangle$ and  obtain in return a subfamily $\mathcal{T}=\langle S^{\alpha_\gamma}_i\mid \gamma<\kappa^+\rangle$ so that  
 $B:=\bigcap\mathcal{T}\in\mathscr{U}$. In particular,  $|T|=\kappa$.\footnote{This follows from the fact that every normal ultrafilter is uniform (cf. Definition~\ref{CombinatorialProperties}).}
Let $A:=\{\alpha_\gamma\mid \gamma\in\kappa^+\}$.
Observe that $c\upharpoonright(A\times B)$ is constantly equal to $i$. With this we conclude that $\binom{\lambda}{\kappa}\rightarrow\binom{\kappa^+}{\kappa}$ holds.\footnote{Actually, the stronger principle  $\binom{\lambda}{\kappa}\rightarrow\binom{\kappa^+}{\mathscr{U}}$ holds.}
\end{proof}
The finall step of the proof will be forcing the negative polarized relation $\binom{\lambda}{\kappa}\nrightarrow\binom{\kappa^+}{\kappa}$, thus proving $\neg{\rm Gal}(\mathscr{U},\kappa^+,\lambda)$ for every normal ultrafilter $\mathscr{U}$ over $\kappa$.
We begin with a measurable cardinal $\kappa$, indestructible under adding $\lambda$-many Cohen subsets to it.
A Laver-indestructible supercompact cardinal will certainly suffice, but actually much less is needed (see \cite{MR3928383}).

Let $\mathbb{P}:=\mathrm{Add}(\kappa,\lambda)$ and $G\subseteq\mathbb{P}$ be a $V$-generic filter.
Let $\langle \eta_\alpha\mid \alpha<\lambda\rangle $ be the (characteristic functions of the) Cohen subsets added to $\kappa$. So, $\eta_\alpha\in{}^\kappa 2$ for each $\alpha<\lambda$.
By our assumptions on $\kappa$ this remains measurable in $V[G]$.

\smallskip

Define $c\colon \lambda\times\kappa\rightarrow 2$ by letting $c(\alpha,\beta):=\eta_\alpha(\beta)$ for every $\alpha<\lambda$ and $\beta<\kappa$.
Let $\lusim{c}$ be a $\mathbb{P}$-name for $c$ and assume, toward contradiction, that $\lusim{A}\in[\lambda]^{\kappa^+}$, $\lusim{B}\in[\kappa]^\kappa$ and $p\Vdash_{\mathbb{P}}\lusim{c}``(\lusim{A}\times\lusim{B})=\{\check{i}\}$.
Our goal is to find $p\leq r$ and a pair of ordinals $\alpha<\lambda$, $\beta<\kappa$ such that: 
$$\text{$r\Vdash_{\mathbb{P}}\text{$``\check{\alpha}\in\lusim{A}$''}$, $r\Vdash_{\mathbb{P}}\text{$``\check{\beta}\in\lusim{B}$''}$ and $r\Vdash_{\mathbb{P}}\text{$``\lusim{c}(\check{\alpha},\check{\beta})=1-i$''}$.}$$
Since $p$ is an arbitrary condition this will show that ${\rm Gal}(\mathscr{U},\kappa^+,\lambda)$ fails in $V[G]$.
For every $\beta<\kappa$ let $\varphi_\beta$ be the statement in the forcing language saying $``\check{\beta}\in\lusim{B}$''.
Let $\mathcal{A}_\beta$ be a maximal antichain which decides $\varphi_\beta$, and set:
\[S:=\bigcup\{{\rm dom}(q)\mid q\in\mathcal{A}_\beta,\beta<\kappa\}.\]
Notice that $|S|\leq\kappa$ and hence $\one_{\mathbb{P}}\Vdash_{\mathbb{P}}(\lusim{A}\nsubseteq \dom(S))$.
Pick $\alpha\in\lambda-\dom(S)$ and a condition $p_0\geq p$ such that $p_0\Vdash_{\mathbb{P}}\check{\alpha}\in\lusim{A}$.
We may assume that $\alpha\in{\rm dom}_1(p_0)$; that is, there is $\beta<\lambda$ such that $\langle \alpha,\beta\rangle \in \dom(p_0)$.
Let $p_1:=p_0\upharpoonright S$.
Since $\one_{\mathbb{P}}\Vdash_{\mathbb{P}}|\lusim{B}|=\kappa$ and $|\dom(p(\alpha))|<\kappa$, there is $\beta\notin\dom(p(\alpha))$ and $p_0'\in\mathcal{A}_\beta$ such that $p'_0\Vdash_{\mathbb{P}} \beta\in \lusim{B}$. 
Let $q_1:=p_0\upharpoonright({\rm dom}(p_0)-{\rm dom}(p'_0))\cup p'_0$.
Finally, let $r:=q_1\cup\{(\alpha,\beta,i-1)\}$.
Now $r\Vdash_{\mathbb{P}}\check{\alpha}\in\lusim{A}$ since $p_0\leq r$, and $r\Vdash_{\mathbb{P}}\check{\beta}\in\lusim{B}$ since $p'_0\leq r$.
By definition, $r\Vdash_{\mathbb{P}}\lusim{c}(\check{\alpha},\check{\beta})=1-i$.
\end{proof}

To round out the picture let us show that ${\rm Gal}(\mathscr{U},\kappa^+,\lambda)$ is consistent and,  in fact, that even ${\rm Gal}(\mathscr{U},{<}\lambda,\lambda)$ can be forced.
To this effect we draw a co\-nnection between \emph{strong generating sequences} for normal filters and Galvin's property.
The first to notice this connection was Gitik \cite{GitDensity}.
More information and relevant open problems  are spelled out in \cite{TomMotiII}. 

\begin{definition}\label{generatingseq}
A family of sets $\mathcal{B}=\langle B_\alpha \mid  \alpha<\lambda\rangle$ is called a \emph{strong generating sequence for a filter $\mathscr{F}$ over $\kappa$} if the following are true:
\begin{enumerate}
    \item $\mathcal{B}$ is $\subseteq^*$-decreasing: namely, if $\alpha\leq\beta$ then 
    $B_{\alpha}\s^* B_\beta$;
    \footnote{As usual, $A\s^*B$ is a shorthand for  $B\setminus A$ is bounded (in $\kappa$).}
    \item For every $X\in\mathscr{F}$ there is $\alpha<\lambda$ such that $B_\alpha\subseteq^*X$.
\end{enumerate}
We will say $\mathcal{B}$ is a generating sequence for $\mathscr{F}$ if just Clause~(2) above holds.
\end{definition}
\begin{proposition}
\label{clmposmeasurable} Let $\mathscr{F}$ be a normal filter over $\kappa$ and suppose that $\mathcal{B}=\langle B_\alpha\mid  \alpha< \lambda\rangle$ is a generating sequence for $\mathscr{F}$. Then, 
\begin{enumerate}
    \item [($\aleph$)] If $\lambda<\cf(\partial)\leq\partial\leq 2^\kappa$, then ${\rm Gal}(\mathscr{F},\partial,\partial)$.
    \item [($\beth$)] In addition, if $\mathcal{B}$ is a strong generating sequence then ${\rm Gal}(\mathscr{F},\partial,\partial)$ holds, for every  $\kappa<\cf(\partial)\leq\partial<\cf(\lambda)\leq\lambda$. 
\end{enumerate}
Moreover, in Clause~$(\beth)$, if $\mathscr{F}$ is not generated by a set then $\neg{\rm Gal}(\mathscr{F},\lambda,\lambda)$.
\end{proposition}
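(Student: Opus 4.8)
The plan is to use the generating sequence to funnel an arbitrary large family of $\mathscr{F}$-sets through a single member $B_\alpha$ and then absorb the bounded errors with a second pigeon-hole; the \emph{moreover} clause goes in the opposite direction, manufacturing a bad family from the fact that $\mathcal{B}$ cannot stabilise. Throughout I will use that the $B_\alpha$ lie in $\mathscr{F}$ and that $\mathscr{F}$ is $\kappa$-complete (being normal). For $(\aleph)$: given $\mathcal{C}=\langle C_i\mid i<\partial\rangle\subseteq\mathscr{F}$, I would first pick, for each $i$, indices $\alpha_i<\lambda$ and $\gamma_i<\kappa$ with $B_{\alpha_i}\setminus\gamma_i\subseteq C_i$ (possible since $\mathcal{B}$ generates $\mathscr{F}$, cf.\ Definition~\ref{generatingseq}). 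As $\cf(\partial)>\lambda$, the map $i\mapsto\alpha_i$ is constant, say $=\alpha$, on some $I_0\in[\partial]^\partial$ (otherwise $\partial$ would be a union of $\lambda$ many sets of size ${<}\partial$, against $\cf(\partial)>\lambda$). We may assume $\kappa\le\lambda$ -- if $\lambda<\kappa$ the situation degenerates, since then $\bigcap_{\beta<\lambda}B_\beta\in\mathscr{F}$ already generates $\mathscr{F}$ and the conclusion is immediate -- so $\cf(\partial)>\kappa$ and the same pigeon-hole makes $i\mapsto\gamma_i$ constant, $=\gamma$, on some $I_1\in[I_0]^\partial$. Now $B_\alpha\setminus\gamma\subseteq\bigcap_{i\in I_1}C_i$ and $B_\alpha\setminus\gamma\in\mathscr{F}$, so $\mathcal{D}:=\langle C_i\mid i\in I_1\rangle$, of size $\partial$, witnesses ${\rm Gal}(\mathscr{F},\partial,\partial)$.

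For $(\beth)$: I would run the same opening, obtaining $\alpha_i<\lambda$, $\gamma_i<\kappa$ with $B_{\alpha_i}\setminus\gamma_i\subseteq C_i$. Since now $\partial<\cf(\lambda)$, the ordinal $\alpha^*:=\sup_{i<\partial}\alpha_i$ satisfies $\alpha^*<\lambda$; and because $\mathcal{B}$ is $\subseteq^*$-decreasing, $B_{\alpha^*}\setminus B_{\alpha_i}$ is bounded for each $i$, whence $B_{\alpha^*}\setminus\eta_i\subseteq C_i$ for a suitable $\eta_i<\kappa$. Using $\cf(\partial)>\kappa$ once more, $i\mapsto\eta_i$ is constant, $=\eta$, on some $I\in[\partial]^\partial$, and then $B_{\alpha^*}\setminus\eta\in\mathscr{F}$ is contained in $\bigcap_{i\in I}C_i$; hence $\mathcal{D}:=\langle C_i\mid i\in I\rangle$ works. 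Note the two places where the hypotheses enter: $\partial<\cf(\lambda)$ bounds the $\alpha_i$, while $\kappa<\cf(\partial)$ bounds the errors.

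For the \emph{moreover} clause I would argue contrapositively. Since $\mathcal{B}$ is $\subseteq^*$-decreasing, if it were eventually constant its eventual value would generate $\mathscr{F}$, contrary to hypothesis; hence for each $\alpha<\lambda$ the filter $\mathscr{F}$ is not generated by $B_\alpha$, so one can fix $Y_\alpha\in\mathscr{F}$ with $B_\alpha\not\subseteq^* Y_\alpha$, i.e.\ with $B_\alpha\setminus Y_\alpha$ unbounded. The key observation is that then $B_\gamma\not\subseteq^* Y_\alpha$ for \emph{all} $\gamma\le\alpha$, since $B_\alpha\setminus Y_\alpha\subseteq(B_\alpha\setminus B_\gamma)\cup(B_\gamma\setminus Y_\alpha)$ and $B_\alpha\setminus B_\gamma$ is bounded. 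Then $\mathcal{C}:=\langle Y_\alpha\mid\alpha<\lambda\rangle\subseteq\mathscr{F}$ is the desired witness: any $I\in[\lambda]^\lambda$ is unbounded in $\lambda$ (as $|I\cap\beta|<\lambda$ for $\beta<\lambda$), so if $\bigcap_{\alpha\in I}Y_\alpha\in\mathscr{F}$ then, by generativity, some $B_\beta$ is almost contained in it and hence in $Y_\alpha$ for every $\alpha\in I$; choosing $\alpha\in I$ with $\alpha\ge\beta$ contradicts the key observation. This gives $\neg{\rm Gal}(\mathscr{F},\lambda,\lambda)$.

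The hard part will be bookkeeping rather than any single idea: keeping straight the convention for $\subseteq^*$ and the precise use of $\kappa$-completeness, verifying that the boundary regimes ($\lambda<\kappa$, or $\cf(\partial)=\kappa$, or $\partial=\kappa$) are either genuinely degenerate or already ruled out by the basic facts in the Remark preceding the Proposition, and confirming in $(\beth)$ that the two separate bounded errors really combine into one bound ${<}\kappa$.
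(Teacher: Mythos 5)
Your proofs of $(\aleph)$ and $(\beth)$ are exactly the paper's argument: first pigeon-hole the indices $\alpha_i$ (using $\cf(\partial)>\lambda$), respectively bound them by $\alpha^*=\sup_i\alpha_i<\lambda$ (using $\partial<\cf(\lambda)$ plus the $\subseteq^*$-decrease), and then pigeon-hole the bounded errors $\gamma_i<\kappa$ (using $\cf(\partial)>\kappa$). For the \emph{moreover} clause you take a mildly different but equally valid route: the paper applies the assumed $\mathrm{Gal}(\mathscr{F},\lambda,\lambda)$ to $\mathcal{B}$ itself and observes that the intersection of a $\lambda$-sized subfamily would generate $\mathscr{F}$ modulo bounded sets (contradicting the hypothesis), whereas you build an explicit witnessing family $\langle Y_\alpha\mid\alpha<\lambda\rangle$ with $B_\alpha$ not almost contained in $Y_\alpha$ and use that any $I\in[\lambda]^\lambda$ is cofinal together with your ``key observation''; the two arguments are interchangeable, yours just makes the bad family explicit (and your detour through ``eventually constant'' is unnecessary --- ``not generated by a set'' applied to each $B_\alpha$ already yields $Y_\alpha$). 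Two small caveats, neither a real gap: in $(\aleph)$ the second pigeon-hole needs $\cf(\partial)>\kappa$, which both you and the paper obtain from the tacit assumption $\kappa\leq\lambda$; your claim that the residual case $\lambda<\kappa$ is ``immediate'' is not quite right as stated (the $\gamma_i$'s still have to be absorbed, which again needs $\cf(\partial)>\kappa$ or $\partial<\kappa$), but that regime lies outside the intended scope and is glossed over in the paper as well.
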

\begin{proof}
For the scope of the proof fix $\mathcal{C}=\langle C_\alpha\mid \alpha<\partial\rangle \subseteq\mathscr{F}$. 

\smallskip

$(\aleph)$:  For every $\alpha<\partial$ choose an ordinal $\beta(\alpha)<\lambda$ for which $B_{\beta(\alpha)}\subseteq^*C_\alpha$.
Since we are assuming that $\cf(\partial)>\lambda$, there are $A\in[\partial]^{\partial}$ and $\beta<\lambda$ such that $\beta(\alpha)=\beta$, for all $\alpha\in A$. 
In particular,  for every $\alpha\in{A}$ there is an ordinal $\gamma(\alpha)<\kappa$ such that $B_\beta-\gamma(\alpha)\subseteq{C_\alpha}$.
Once again, shrink $A$ to some $\tilde{A}\in[A]^{\partial}$ in such a way that for some  $\gamma<\kappa$,  $B_\beta-\gamma\s C_\alpha$, for all $\alpha\in \tilde{A}$. 
Finally, put $B:=B_\beta-\gamma$ and note that $B\in\mathscr{F}$. It is immediate that $B\s \bigcap_{\alpha\in \tilde{A}}C_\alpha$ and so this latter set belongs to $\mathscr{F}$.

\smallskip

$(\beth)$:  
For every ordinal $\alpha<\partial$ choose $\beta(\alpha)<\lambda$ such that $B_{\beta(\alpha)}\subseteq^*C_\alpha$.
Let $\beta:=\sup_{\alpha<\partial}\beta(\alpha)$. Clearly $\beta<\lambda$, for $\cf(\lambda)> \partial$. Also, by virtue of Definition~\ref{generatingseq}(2),  $B_\gamma\s^* B_{\beta(\alpha)}$.
Since $\cf(\partial)>\kappa$ there is a subfamily $\langle C_{\alpha_\delta}\mid \delta\in\partial\rangle$ of $\mathcal{C}$ and an ordinal $\xi<\kappa$ so that $B_\beta-\xi\subseteq C_{\alpha_\delta}$ for every $\delta\in\partial$. This yields the desired result.

\smallskip

For the moreover part, assume towards contradiction that $\mathrm{Gal}(\mathscr{F},\lambda,\lambda)$ holds. Then, in particular, there is $\tilde{\mathcal{B}}\s \mathcal{B}$ with $|\tilde{\mathcal{B}}|=\lambda$ such that $\bigcap \tilde{\mathcal{B}}\in\mathscr{F}$. Denote this latter set by $B$. Combining Clauses~(1) and (2) of Definition~\ref{generatingseq} it is easy to prove that every $A\in\mathscr{F}$ includes $B$, up to a negligible set. This yields the desired contradiction and accomplishes the proof.
\end{proof}
A (strong) generating sequence of arbitrary length $\lambda=\cf(\lambda)>\kappa$ can be forced provided $\kappa$ is a huge cardinal \cite{MR1632081}. More recently, the same has been proved under the weaker assumption of supercompactness \cite{MR3201820, MR3564375}.  Combining this latter result with Proposition~\ref{clmposmeasurable}  one arrives to following immediate corollary:
\begin{corollary}
Suppose that $\kappa$ is a supercompact cardinal and $\lambda>\kappa$ is regular. Then it is consistent with \textsf{ZFC} that 
$\kappa$ is measurable and there is a normal ultrafilter $\mathscr{U}$ over $\kappa$ such that ${\rm Gal}(\mathscr{U},\partial,\partial)$ holds, for every $\lambda<\cf(\partial)\leq\partial\leq 2^\kappa$ or $\kappa<\cf(\partial)\leq\partial<\lambda$.   
\end{corollary}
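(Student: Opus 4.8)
The plan is to outsource the construction to the known results on long strong generating sequences for normal ultrafilters and then apply Proposition~\ref{clmposmeasurable} essentially verbatim.

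First I would fix a model. Starting from the supercompact cardinal $\kappa$, force --- as in \cite{MR3201820, MR3564375}, which realize under supercompactness what Gitik originally obtained from a huge cardinal \cite{MR1632081} --- a model $W$ of \textsf{ZFC} in which: $\kappa$ remains measurable; $\lambda$ remains a regular cardinal with $\kappa<\lambda$; $2^\kappa\geq\lambda$; and there is a normal ultrafilter $\mathscr{U}$ over $\kappa$ carrying a \emph{strong} generating sequence $\mathcal{B}=\langle B_\alpha\mid \alpha<\lambda\rangle$ in the sense of Definition~\ref{generatingseq}. Arranging $2^\kappa\geq\lambda$ is not a real constraint: the forcings producing such sequences blow $2^\kappa$ well past $\lambda$, and in any case if $2^\kappa<\lambda$ the first parameter interval in the statement is empty, so only the second range needs attention.

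Next, working in $W$, I would apply Proposition~\ref{clmposmeasurable} with $\mathscr{F}:=\mathscr{U}$ and the sequence $\mathcal{B}$. Since $\mathcal{B}$ is in particular a generating sequence for $\mathscr{U}$, clause $(\aleph)$ of that proposition gives ${\rm Gal}(\mathscr{U},\partial,\partial)$ for every $\partial$ with $\lambda<\cf(\partial)\leq\partial\leq 2^\kappa$. Since $\mathcal{B}$ is moreover a strong generating sequence and $\cf(\lambda)=\lambda$ (because $\lambda$ is regular), clause $(\beth)$ gives ${\rm Gal}(\mathscr{U},\partial,\partial)$ for every $\partial$ with $\kappa<\cf(\partial)\leq\partial<\lambda$. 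Putting the two ranges of $\partial$ together is precisely the conclusion of the corollary.

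The only points requiring care belong to the cited construction rather than to the argument above: one must ensure that the poset adjoining $\mathcal{B}$ (i) preserves the measurability of $\kappa$, so that $\mathscr{U}$ is a genuine normal ultrafilter and not merely a normal filter; (ii) preserves the regularity of $\lambda$; and (iii) leaves $2^\kappa$ at least $\lambda$. All three are part of the statements in \cite{MR3201820, MR3564375}, so I expect no genuine obstacle here --- once $W$ is in hand, the proof is essentially a substitution into Proposition~\ref{clmposmeasurable}, the heavy lifting being entirely the existence of long strong generating sequences under supercompactness.
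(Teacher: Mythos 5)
Your proposal is correct and is exactly the paper's argument: the corollary is stated there as an immediate consequence of combining the consistency (from a supercompact) of a strong generating sequence of length $\lambda$ for a normal ultrafilter \cite{MR3201820, MR3564375} with the two clauses of Proposition~\ref{clmposmeasurable}. Your additional remarks about preservation of measurability, regularity of $\lambda$, and $2^\kappa\geq\lambda$ are sensible bookkeeping that the paper leaves implicit.
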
 

If one focuses on the club filter $\mathscr{D}_\kappa$ the concept of \emph{dominating families of ${}^\kappa\kappa$} can be used  to control the length of a generating sequence for $\mathscr{D}_\kappa$. Recall that a family of functions $\mathcal{D}\subseteq{}^\kappa\kappa$ is a \emph{dominating family} if for every $f\in{}^\kappa\kappa$ there exists $g\in\mathcal{D}$ such that $f\leq^* g$; i.e., $|\{\alpha<\kappa\mid f(\alpha)>g(\alpha)\}|<\kappa.$

The \emph{dominating number at $\kappa$,} $\mathfrak{d}_\kappa$, is defined as follows:
$$\mathfrak{d}_\kappa:=\min\{|\mathcal{D}|\mid\text{$\mathcal{D}\s {}^\kappa\kappa$ is a dominating family}\}.$$

The minimal size for a generating sequence for the club filter $\mathscr{D}_\kappa$ is known to be the dominating number $\mathfrak{d}_\kappa$ (see Claim~\ref{Minimalsize}). 
Hence,  in case $\mathfrak{d}_\kappa<\cf(2^\kappa)$,  Proposition~\ref{clmposmeasurable}$(\aleph)$ has the following  corollary:
\begin{corollary}
\label{propfullgalvin} Let $\kappa$ be an uncountable cardinal with $\kappa^{<\kappa}=\kappa$. 

If $\mathfrak{d}_\kappa<\cf(2^\kappa)$ then ${\rm Gal}(\mathscr{D}_\kappa,2^\kappa,2^\kappa)$ holds.  
\end{corollary}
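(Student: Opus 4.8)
The plan is to deduce the statement directly from Proposition~\ref{clmposmeasurable}$(\aleph)$, once we have a sufficiently short generating sequence for the club filter. First I would record the trivial but necessary observation that $\kappa^{<\kappa}=\kappa$ forces $\kappa$ to be regular (and, by hypothesis, uncountable), so that $\mathscr{D}_\kappa$ is a genuine uniform, normal, $\kappa$-complete filter on $\kappa$ — exactly the sort of filter to which Proposition~\ref{clmposmeasurable} applies.

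Next I would invoke Claim~\ref{Minimalsize}, which asserts that the minimal length of a generating sequence for $\mathscr{D}_\kappa$ equals $\mathfrak{d}_\kappa$; in particular there is a generating sequence $\mathcal{B}=\langle B_\alpha\mid \alpha<\mathfrak{d}_\kappa\rangle$ for $\mathscr{D}_\kappa$. (For the record, the relevant direction is obtained by fixing a dominating family $\{g_\alpha\mid \alpha<\mathfrak{d}_\kappa\}\subseteq {}^\kappa\kappa$ and letting $B_\alpha$ be the set of closure points of $g_\alpha$: every club of $\kappa$ contains the closure points of its monotone enumeration function, and the latter is dominated by some $g_\alpha$, whence $B_\alpha\subseteq^* $ that club. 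We may take this as given via Claim~\ref{Minimalsize}.)

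Finally I would apply Proposition~\ref{clmposmeasurable}$(\aleph)$ with $\mathscr{F}:=\mathscr{D}_\kappa$, with the generating sequence $\mathcal{B}$ of length $\lambda:=\mathfrak{d}_\kappa$, and with $\partial:=2^\kappa$. The hypothesis of that clause is $\lambda<\cf(\partial)\leq\partial\leq 2^\kappa$, i.e. $\mathfrak{d}_\kappa<\cf(2^\kappa)\leq 2^\kappa\leq 2^\kappa$; the first inequality is precisely the assumption of the corollary, and the remaining two are immediate. The conclusion returned by the proposition is $\mathrm{Gal}(\mathscr{D}_\kappa,2^\kappa,2^\kappa)$, which is what we wanted. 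There is essentially no obstacle here beyond lining up the quantifiers correctly: the only point requiring a moment's care is that we must use clause $(\aleph)$ of Proposition~\ref{clmposmeasurable} — which needs only an ordinary generating sequence, not a \emph{strong} one — and that the arithmetic hypothesis $\mathfrak{d}_\kappa<\cf(2^\kappa)$ is genuinely the instance ``$\lambda<\cf(\partial)$'' with $\partial=2^\kappa$; all the substantive work is already packaged inside Claim~\ref{Minimalsize} and Proposition~\ref{clmposmeasurable}$(\aleph)$.
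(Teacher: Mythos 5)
Your proposal is correct and follows exactly the paper's route: the paper's proof of this corollary consists precisely of establishing Claim~\ref{Minimalsize} (a generating sequence for $\mathscr{D}_\kappa$ of length $\mathfrak{d}_\kappa$ exists, built from a dominating family via closure points) and then feeding it into Proposition~\ref{clmposmeasurable}$(\aleph)$ with $\lambda=\mathfrak{d}_\kappa$ and $\partial=2^\kappa$. Your additional remark that $\kappa^{<\kappa}=\kappa$ guarantees regularity (hence normality of $\mathscr{D}_\kappa$) is a fine, if implicit in the paper, bookkeeping point.
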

\begin{proof}
It suffices to prove the following:
\begin{claim}\label{Minimalsize}
Suppose that $\kappa=\cf(\kappa)>\aleph_0$. Then, $\mathfrak{d}_\kappa$ is the minimal size for a generating sequence of $\mathscr{D}_\kappa$. More precisely, $(\aleph)$ and $(\beth)$ below hold:
\begin{itemize}
    \item[$(\aleph)$] For $\theta<\mathfrak{d}_\kappa$ and $\langle C_\alpha\mid \alpha<\theta\rangle\s \mathscr{D}_\kappa$ there is $C\in\mathscr{D}_\kappa$ with $\neg(C_\alpha\s^*C).$
     \item[$(\beth)$] There exists a family $\langle C_\alpha\mid \alpha<\mathfrak{d}_\kappa\rangle$ such that for every $C\in\mathscr{D}_\kappa$ there is $\alpha<\mathfrak{d}_\kappa$ with $C_\alpha\s^*C$.
\end{itemize}
\end{claim}
\begin{proof}[Proof of claim]
$(\aleph)$: For each $\alpha<\theta$ define maps $f_\alpha\colon \kappa\rightarrow\kappa$ as follows:
$$f_\alpha(\delta):=\min(C_\alpha-(\delta+1)).$$
Put $\mathcal{F}:=\{f_\alpha\mid \alpha<\theta\}$. Since $|\mathcal{F}|\leq \theta<\mathfrak{d}_\kappa$ we see that $\mathcal{F}$ is not dominating and hence we can pick $h\in{}^\kappa\kappa$ such that  $\neg (h\leq^* f_\alpha)$ for all $\alpha<\theta$. Next, define $C:=\{\delta<\kappa\mid h``\delta\s \delta\}$. Note that $C$ is a club at $\kappa$. Moreover, $\neg(C_\alpha\s^*C)$ for all $\alpha<\theta$, for otherwise $h\leq^*f_{\alpha^*}$ for some $\alpha^*$.

$(\beth)$:  Fix a dominating family $\mathcal{D}=\{f_\alpha\mid \alpha<\mathfrak{d}_\kappa\}$. For each $\alpha<\mathfrak{d}_\kappa$ let $C_\alpha:=\{\delta\in \mathrm{acc}(\kappa)\mid f_\alpha``\delta\s \delta\}$. Note that $C_\alpha$ is a club at $\kappa$. Now let $C\in\mathscr{D}_\kappa$ and $h\in{}^\kappa\kappa$ be its increasing enumeration. Choose $\alpha<\mathfrak{d}_\kappa$ such that $h\leq^*f_\alpha$. We now check that $C_\alpha\s^* C$. Let $\delta_0<\kappa$ be such that $h(\delta)\leq f_\alpha(\delta)$ for all $\delta\geq \delta_0$. Also, let $\delta_1:=\min(C_\alpha\setminus \delta_0+1)$. We claim that $C_\alpha\setminus \delta_1\s C$. Let $\eta\in C_\alpha\setminus \delta_1$ and for all $\theta\in (\delta_0,\eta)$ note that $\theta\leq h(\theta)<f_\alpha(\theta)<\eta$. Hence, $\eta=\sup_{\theta\in (\delta_0,\eta)}f_\alpha(\theta)=\sup_{\theta\in (\delta_0,\eta)} h(\theta)\in C$, as desired.
\end{proof}
The proof of the corollary has been accomplished.
\end{proof}

In Proposition~\ref{generatingseq} we showed that strong generating sequences of normal filters $\mathscr{F}$ 
yield several instances of $\mathrm{Gal}(\mathscr{F},\mu,\lambda)$. As the  next proposition illustrates, in some sense, this implication can be reversed:

\begin{proposition}
\label{clmgalimpliesgen} Let $\mathscr{U}$ be a normal ultrafilter over $\kappa$. Assuming that $2^\kappa=\lambda$ and ${\rm Gal}(\mathscr{U},\partial,\partial)$ holds for every $\kappa^+\leq\partial<\lambda$ it follows that there is  $\mathcal{B}=\langle B_\alpha\mid \alpha<\lambda\rangle$ a strong generating sequence for $\mathscr{U}$. 
\end{proposition}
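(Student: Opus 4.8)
The plan is to construct the desired sequence $\mathcal{B}=\langle B_\alpha\mid\alpha<\lambda\rangle$ directly by transfinite recursion of length $\lambda$, keeping each $B_\alpha$ inside $\mathscr{U}$, keeping the sequence $\subseteq^*$-decreasing, and using a bookkeeping enumeration of $\mathscr{U}$ to secure the generating property. First I would observe that since $\mathscr{U}$ is an ultrafilter on $\kappa$ and $2^\kappa=\lambda$, one has $|\mathscr{U}|=\lambda$: fixing $Y\in\mathscr{U}$ with $|Y|=\kappa$, the map sending $Z\subseteq Y$ to whichever of $Z$, $Y\setminus Z$ lies in $\mathscr{U}$ takes $\mathcal{P}(Y)$ into $\mathscr{U}$ with fibres of size $\le 2$, so $\lambda=2^\kappa\le|\mathscr{U}|\le 2^\kappa=\lambda$. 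Hence I can fix an enumeration $\mathscr{U}=\langle X_\alpha\mid\alpha<\lambda\rangle$. The recursion starts with $B_0:=\kappa$; at a successor step I set $B_{\alpha+1}:=B_\alpha\cap X_\alpha$, which again lies in $\mathscr{U}$, is contained in $B_\alpha$, and — crucially — is contained in $X_\alpha$. This last point is what will deliver Clause~(2) of Definition~\ref{generatingseq}, since every member of $\mathscr{U}$ is some $X_\alpha$.

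The heart of the argument is the limit case: at a limit $\delta<\lambda$ I must produce $B_\delta\in\mathscr{U}$ with $B_\delta\subseteq^* B_\alpha$ for every $\alpha<\delta$. Here I would fix an increasing cofinal sequence $\langle\alpha_i\mid i<\theta\rangle$ in $\delta$ with $\theta:=\cf(\delta)$, and split according to the size of $\theta$ relative to $\kappa$. When $\theta<\kappa$, the plain intersection $\bigcap_{i<\theta}B_{\alpha_i}$ works, by $\kappa$-completeness of $\mathscr{U}$. When $\theta=\kappa$, the diagonal intersection $\diagonal_{i<\kappa}B_{\alpha_i}$ works: it lies in $\mathscr{U}$ by normality, and the standard computation $\diagonal_{i<\kappa}B_{\alpha_i}\setminus B_{\alpha_i}\subseteq i+1$ shows it is $\subseteq^*$-below every $B_{\alpha_i}$. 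When $\kappa^+\le\theta$ — which, since $\theta\le\delta<\lambda$, automatically places $\theta$ in the interval $[\kappa^+,\lambda)$ where the hypothesis ${\rm Gal}(\mathscr{U},\theta,\theta)$ is available — I apply Galvin's property to the family $\{B_{\alpha_i}\mid i<\theta\}\subseteq\mathscr{U}$: unless that family has size $<\theta$ (in which case regularity of $\theta$ gives a single value attained for a cofinal set of indices, which then is itself a pseudo-intersection lying in $\mathscr{U}$), Galvin yields $\mathcal{D}$ of size $\theta$ with $\bigcap\mathcal{D}\in\mathscr{U}$; since a size-$\theta$ subfamily of a $\theta$-indexed family has cofinal index set, $\bigcap\mathcal{D}$ is $\subseteq^*$-below every $B_{\alpha_i}$. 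In every case, transitivity of $\subseteq^*$ upgrades ``$\subseteq^*$-below each $B_{\alpha_i}$'' to ``$\subseteq^*$-below each $B_\alpha$ for $\alpha<\delta$'', so the invariants persist.

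Finally I would assemble the verification: the maintained invariant that the sequence is $\subseteq^*$-decreasing gives Clause~(1) of Definition~\ref{generatingseq}; Clause~(2) follows because any $X\in\mathscr{U}$ equals some $X_\alpha$ and then $B_{\alpha+1}\subseteq X_\alpha=X$; and the length is $\lambda$ by construction. Hence $\mathcal{B}$ is a strong generating sequence for $\mathscr{U}$.

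The only genuinely non-routine point — and the place where the hypothesis is actually consumed — is the limit case of cofinality strictly between $\kappa$ and $\lambda$: $\kappa$-completeness and normality of $\mathscr{U}$ dispose of all smaller cofinalities for free, but a $\subseteq^*$-decreasing $\theta$-sequence in $\mathscr{U}$ with $\theta>\kappa$ need not admit any pseudo-intersection inside $\mathscr{U}$ without Galvin's property, so this is exactly where ${\rm Gal}(\mathscr{U},\theta,\theta)$ is indispensable — consistently with the earlier remark that ${\rm Gal}(\mathscr{U},\kappa,\kappa)$ always fails, which is why the cofinality-$\kappa$ stages must (and can) be handled by normality instead. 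I anticipate the only fiddly details will be confirming that the Galvin-extracted subfamily $\mathcal{D}$ has cofinal index set and dealing with the degenerate case in which the $B_{\alpha_i}$ stabilize modulo bounded sets; neither poses a real difficulty.
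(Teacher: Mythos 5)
Your proof is correct and follows essentially the same route as the paper's: enumerate $\mathscr{U}$ in order type $\lambda=2^\kappa$, build a $\subseteq^*$-decreasing refinement by recursion, and at limits split by cofinality into the three cases ${<}\kappa$ ($\kappa$-completeness), $=\kappa$ (normality via diagonal intersection), and $\geq\kappa^+$ (the hypothesis ${\rm Gal}(\mathscr{U},\partial,\partial)$). The only differences are cosmetic — you intersect with the enumerated set at successor stages rather than at every stage, and you spell out the degenerate case of a stabilizing family, which the paper elides.
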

\begin{proof}
Let $\langle A_\alpha\mid \alpha<\lambda\rangle$  be an injective enumeration of the elements of $\mathscr{U}$.
We shall construct  $\mathcal{B}=\langle B_\alpha\mid \alpha<\lambda\rangle$ by induction on $\lambda$ as follows. Suppose that $\beta<\lambda$ and $\langle B_\alpha\mid \alpha<\beta\rangle$ is $\subseteq^*$-descending and, for each $\alpha<\beta$, $B_\alpha\subseteq{A_\alpha}$. 
Let $\partial=\cf(\beta)$ and choose a cofinal sequence $\langle \alpha_\gamma\mid \gamma<\partial\rangle$ in $\beta$.
If one can find $B_\beta\in\mathscr{U}$ such that $B_\beta\subseteq{A_\beta}$ and $B_\beta\subseteq^*{B_{i_\alpha}}$ for every $\alpha<\partial$ then 
we will be done, for $\langle B_\alpha\mid \alpha<\beta\rangle$ is $\subseteq^*$-descending. Three cases are distinguished:

$\br$ \underline{Assume $\partial<\kappa$:} Put $B'_\beta:=\bigcap_{\gamma<\partial}B_{\alpha_\gamma}$. Since  $\mathscr{U}$ is $\kappa$-complete,  $B'_\delta\in\mathscr{U}$, hence $B_\beta:=B'_\beta\cap{A_\beta}$ is as required.

$\br$ \underline{Assume $\partial=\kappa$:} Put $B'_\beta:=\diagonal_{\gamma<\partial}B_{\alpha_\gamma}$. By normality of $\mathscr{U}$, $B'_\beta\in\mathscr{U}$. Also, $B'_\beta\subseteq^*B_{\alpha_\gamma}$ for every $\gamma<\partial$. Thus,  $B_\beta:=B'_\delta\cap{A_\delta}$ is as required.

$\br$ \underline{Assume $\partial\geq \kappa^+$:} In this case apply ${\rm Gal}(\mathscr{U},\partial,\partial)$ to the collection of sets $\langle B_{\alpha_\gamma}\mid \gamma<\partial\rangle$ and obtain in return an index set $I\in[\partial]^{\partial}$ such that $B'_\beta=\bigcap_{\gamma\in I}B_{\alpha_\gamma}\in\mathscr{U}$.
Since $\langle \alpha_\gamma\mid \gamma\in{I}\rangle$ was cofinal in $\partial$, $B'_\beta\subseteq^*{B_\alpha}$ for every $\alpha<\beta$.
Altogether,  $B_\beta:=B'_\beta\cap{A_\beta}$ is as desired.

\smallskip

The above argument yields a $\subseteq^*$-decreasing sequence of members of $\mathscr{U}$, and it remains to show that it satisfies Clause~(2) of Definition~\ref{generatingseq}. This is quite easy: 
If $A\in\mathscr{U}$ then $A=A_\alpha$ for some $\alpha<\lambda$ and then $B_\alpha\subseteq{A_\alpha}$.
\end{proof}


We close the section by spelling out a strinking connection between Galvin's property and density of old sets in generic extensions by Prikry forcing (see Clause~$(\beth)$ below).
An initial statement  on this vein appeared in \cite{GitDensity}. Here we provide a complete characterization for ${\rm Gal}(\mathscr{U},\partial,\lambda)$ to hold in terms of  density of old sets.

\begin{theorem}
\label{propdensity} Let $\mathscr{U}$ be a normal ultrafilter over $\kappa$ and  $G\subseteq\mathbb{P}(\mathscr{U})$ be a $V$-generic filter.  Additionally, suppose that $\kappa\leq\partial\leq\lambda$ and $\cf(\lambda)\geq\kappa^+$.
Then, the following are equivalent:
\begin{enumerate}
\item [$(\aleph)$] ${\rm Gal}(\mathscr{U},\partial,\lambda)$.
\item [$(\beth)$] Every $x\in V[G]$ with $|x|^{V[G]}=\lambda$ contains a set $y\in{V}$ with $|y|^V=\partial$.
\end{enumerate}
\end{theorem}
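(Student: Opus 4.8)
The equivalence will be established by proving each implication separately, exploiting the fact that in a Prikry generic extension every set of ordinals of size $\lambda\geq\kappa$ contains a ground model set of size $\kappa$ (Lemma~\ref{lemfresh}$(\aleph)$) together with the structure of names under the Strong Prikry Property. For the direction $(\beth)\Rightarrow(\aleph)$, the plan is to take a family $\mathcal{C}=\langle C_\alpha\mid\alpha<\lambda\rangle\s\mathscr{U}$ in $V$ and, working in $V[G]$, consider the Prikry sequence $\langle\rho_n\mid n<\omega\rangle$. The key point is that for each $\alpha<\lambda$, since $C_\alpha\in\mathscr{U}$, the Mathias criterion gives $n_\alpha<\omega$ with $\rho_m\in C_\alpha$ for all $m\geq n_\alpha$; hence the set $x:=\langle \alpha<\lambda\mid \rho_{\,0}\in C_\alpha\rangle$ is not quite the right object — instead one should fix, by pigeonhole on $\lambda$ (using $\cf(\lambda)\geq\kappa^+>\omega$), a single $m<\omega$ and a set $A\in[\lambda]^{\lambda}$ with $n_\alpha=m$ for all $\alpha\in A$. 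This $A$ lies in $V[G]$ and has size $\lambda$, so by $(\beth)$ it contains some $y\in V$ with $|y|^V=\partial$. Then $\bigcap_{\alpha\in y}C_\alpha\supseteq\langle\rho_m,\rho_{m+1},\dots\rangle$ is infinite; but we need it to be $\mathscr{U}$-large, so actually one must be more careful: take $y\in V$ of size $\partial$ inside $A$, and observe that for every $\beta\in y$ we have $C_\beta\supseteq\{\rho_k\mid k\geq m\}$, which is not enough for $\mathscr{U}$-largeness directly. The correct move is to run the argument on the witnessing sets produced in $V$: since $y\in V$ and $|y|=\partial$, and $\bigcap_{\beta\in y}C_\beta$ is forced to contain the tail of the Prikry sequence, a ground-model reflection (the tail of the Prikry sequence is $\mathscr{U}$-generic, so $\bigcap_{\beta\in y}C_\beta\in\mathscr{U}$ by a density argument, or equivalently $\one\Vdash$ the tail is contained in it, forcing $\bigcap_{\beta\in y}C_\beta\in\mathscr{U}$) yields $\bigcap_{\beta\in y}C_\beta\in\mathscr{U}$, which is exactly ${\rm Gal}(\mathscr{U},\partial,\lambda)$.

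For the converse $(\aleph)\Rightarrow(\beth)$, the plan is: let $x\in V[G]$ with $|x|^{V[G]}=\lambda$, and fix a name $\name{x}$ and names $\langle\name{c}_\xi\mid\xi<\lambda\rangle$ for $\lambda$-many elements of $x$, together with $p\in G$ forcing this. As in the proof of Lemma~\ref{lemfresh}, apply the Strong Prikry Property to the dense open sets $D_\xi:=\{q\mid q\parallel\name{c}_\xi\}$ to obtain, for each $\xi<\lambda$, an integer $n_\xi<\omega$ and a pure extension $p\leq^* p_\xi=(t,A_\xi)$ such that $p_\xi{}^\frown\bar\alpha$ decides $\name{c}_\xi$ whenever $\bar\alpha\in[A_\xi]^{n_\xi}$. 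Using $\cf(\lambda)\geq\kappa^+$, pigeonhole to get $I\in[\lambda]^{\lambda}$, a fixed $n<\omega$, and — after intersecting the $A_\xi$'s over a well-ordered cofinal piece using $\leq^*$-closure — a single condition $q=(t,B)\in G$ with $B\in\mathscr{U}$ such that $q{}^\frown\bar\alpha$ decides $\name{c}_\xi$ for every $\xi$ in a suitable cofinal-in-$\lambda$ set and every $\bar\alpha\in[B]^n$. Now the decided values define, in $V$, functions $g_\xi\colon[B]^n\to\mathrm{Ord}$; for a fixed stem-extension $\bar\alpha$, the set $C_{\bar\alpha}:=\{\xi<\lambda\mid g_\xi(\bar\alpha)\text{ is defined and }g_\xi(\bar\alpha)\in x\}$ — more precisely one wants to color and then apply Galvin. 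The cleaner route: for each $\xi$, the set $A_\xi\in\mathscr{U}$ records "which tails of Prikry sequences put $c_\xi$ into the generic image"; apply ${\rm Gal}(\mathscr{U},\partial,\lambda)$ to $\langle A_\xi\mid\xi\in I\rangle$ to obtain $J\in[I]^{\partial}$ with $\bigcap_{\xi\in J}A_\xi\in\mathscr{U}$, hence containing a tail of the actual Prikry sequence; this forces all the corresponding $c_\xi$ (for $\xi\in J$) to be computed the same way by the generic, and a short argument shows $y:=\{c_\xi\mid\xi\in J\}\in V$ has size $\partial$ and $y\subseteq x$.

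\textbf{Main obstacle.} The delicate point on both sides is the passage from "the tail of the Prikry sequence is contained in a given $V$-set" to "that $V$-set is $\mathscr{U}$-large", and dually, extracting an honest ground-model set $y$ of the \emph{exact} cardinality $\partial$ rather than merely a set of size $\kappa$ or $\omega$. For $(\aleph)\Rightarrow(\beth)$ one must ensure the index set $J$ furnished by Galvin's property, when transported via the decided names, yields $\partial$-many \emph{distinct} elements of $x$ lying in $V$; this requires arranging at the outset that $\xi\mapsto c_\xi$ is forced injective and that the decisions $g_\xi(\bar\alpha)$ genuinely depend on $\xi$ — a thinning argument using $\cf(\lambda)\geq\kappa^+$ and the fact that each condition decides only ${<}\kappa$-much information. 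For $(\beth)\Rightarrow(\aleph)$, the subtlety is that $(\beth)$ only hands back \emph{some} $y\in V$ of size $\partial$ inside our $V[G]$-set $A$, and one must verify that this $y$, a priori just an abstract set of size $\partial$, can be taken to be a subset of the original index set witnessing $n_\alpha=m$, so that the reflection argument $\bigcap_{\alpha\in y}C_\alpha\in\mathscr{U}$ goes through; this is where the homogeneity of the pigeonhole step (fixing $m$ uniformly) is essential, and where one uses that Prikry forcing adds no new $\omega$-sequences of ordinals to ensure the relevant data is already in $V$.
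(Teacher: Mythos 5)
Your proposal follows essentially the same strategy as the paper's proof, and both directions are correct in substance. For $(\beth)\Rightarrow(\aleph)$ your direct argument (Mathias criterion, pigeonhole on the tail-index $n_\alpha$ using $\cf^{V[G]}(\lambda)\geq\kappa^+$, apply $(\beth)$ to the index set, then conclude $\bigcap_{\alpha\in y}C_\alpha\in\mathscr{U}$ because a ground-model set containing a tail of the Prikry sequence must be $\mathscr{U}$-large) is exactly the paper's argument run forwards rather than by contraposition. For $(\aleph)\Rightarrow(\beth)$ the paper is simpler than your route: for each $\xi<\lambda$ it picks an \emph{arbitrary} condition $p_\xi=(t_\xi,A_\xi)$ that outright decides $\lusim{x}_\xi=\check{\xi}_\xi$ (no Strong Prikry Property), pigeonholes the stems using $\cf(\lambda)\geq\kappa^+$ and $\kappa^{<\kappa}=\kappa$, applies ${\rm Gal}(\mathscr{U},\partial,\lambda)$ to the $A_\xi$'s to get $J\in[I]^\partial$ and a single condition $q=(t,\bigcap_{\xi\in J}A_\xi)$ extending every $p_\xi$, $\xi\in J$, so that $q\Vdash\check{y}\subseteq\lusim{x}$ for the fixed ground-model set $y=\{\xi_\xi\mid\xi\in J\}$, and finishes by density. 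Your use of the Strong Prikry Property creates the complication you yourself flag: the decided values $g_\xi(\bar\alpha)$ depend on the finite tuple $\bar\alpha$, so the set $\{c_\xi\mid\xi\in J\}$ of \emph{generic} values is not obviously a ground-model set, and your assertion that $q=(t,B)\in G$ is unjustified ($B$ arises from Galvin's property in $V$ and need not cohere with $G$). This is repairable --- extend $q$ by a specific $\bar\alpha\in[B]^{n}$ to force $\{g_\xi(\bar\alpha)\mid\xi\in J\}\subseteq\lusim{x}$ and then quote density of such conditions, exactly as the paper does --- but the pure-extension machinery buys you nothing here and is the one place where your write-up leaves a real loose end ("a short argument shows\dots").
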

\begin{proof}

$(\aleph)\Rightarrow(\beth)$: Assume ${\rm Gal}(\mathscr{U},\partial,\lambda)$ holds and let $x$ be a set of size $\lambda$ in $V[G]$.
Let $\{\lusim{x}_\alpha\mid \alpha<\lambda\}$ be a sequence of $\mathbb{P}(\mathscr{U})$-names enumerating the elements of $x$.
For each $\alpha<\lambda$ choose an ordinal $\xi_\alpha$ and a condition $p_\alpha=(t_\alpha,A_\alpha)$ so that $p_\alpha\Vdash_{\mathbb{P}(\mathscr{U})}\lusim{x}_\alpha=\check{\xi}_\alpha$. Choose $I\in[\lambda]^\lambda$ and a fixed finite sequence $t$ such that $t_\alpha=t$, for every $\alpha\in I$.
This is possible because $\cf(\lambda)\geq\kappa^+$ and $\kappa^{<\kappa}=\kappa$.
Since $\langle A_\alpha\mid \alpha<\lambda\rangle\subseteq\mathscr{U}$ and  ${\rm Gal}(\mathscr{U},\partial,\lambda)$ holds, there is  a set $J\in[I]^\partial$ for which $B=\bigcap_{\alpha\in J} A_\alpha\in\mathscr{U}$.
Define $q=(t,B)$ and notice that $p_\alpha\leq{q}$ for every $\alpha\in{J}$.
Let $y:=\{\xi_\alpha\mid \alpha\in{J}\}$. Clearly,  $y\in{V}$.
Also, note that $q\Vdash_{\mathbb{P}(\mathscr{U})}\check{y}\subseteq\lusim{x}$.
Since this argument can be rendered above any condition in $\mathbb{P}(\mathscr{U})$ we are done with the proof of $(\aleph)\Rightarrow(\beth)$.

\smallskip

$(\beth)\Rightarrow(\aleph)$: Suppose that ${\rm Gal}(\mathscr{U},\partial,\lambda)$ fails and
fix  $\langle A_\alpha\mid \alpha<\lambda\rangle\subseteq\mathscr{U}$ witnessing this fact.
Let $\langle \rho_n\mid n<\omega\rangle$ be the Prikry sequence derived from the generic $G$.
For each $\alpha<\lambda$ let $n_\alpha<\omega$ be such that $\langle \rho_n\mid n\geq{n_\alpha}\rangle\subseteq{A_\alpha}$. Since $\mathbb{P}(\mathscr{U})$ is $\kappa^+$-cc and $\cf^V(\lambda)\geq \kappa^+$ then $\cf^{V[G]}(\lambda)\geq \kappa^+$. In particular, one can find a set $x\in[\lambda]^{\lambda}$ and  $n<\omega$ such that $n_\alpha=n$, for all $\alpha\in x$.

By way of contradiction assume that $y\subseteq{x}$, $y\in{V}$ and $|y|^V=\partial$.
Put $B:=\bigcap_{\alpha\in y}A_\alpha$.
If $m\geq{n}$ and $\alpha\in{y}$ then $\rho_m\in{A_\alpha}$, hence $\langle \rho_m\mid m\geq{n}\rangle\subseteq{B}$.
By Mathias criterion for genericity (see \cite{Mathias}) one concludes that $B\in\mathscr{U}$. This contradicts our initial assumption that ${\rm Gal}(\mathscr{U},\partial,\lambda)$ fails, as  witnessed by the family $\langle A_\alpha\mid \alpha<\lambda\rangle$. Therefore, the proof is accomplished.
\end{proof}

\section{Open problems}\label{SectionOpenProblems}

In this last section we collect some relevant open questions. 

\subsection{The failure at successors of singulars} 
The first two problems we present --and, perhaps, the most interesting ones-- concern the global failure of Galvin's property and the \textsf{ZFC} status of its strong negation. Specifically,


\begin{question}
\label{qglobalopen} Is it consistent with $\textsf{ZFC}$ that $\Gal{\kappa}{\kappa^{+}}$ simultaneously fails for every regular cardinal $\kappa\geq \aleph_1$?
\end{question}

\begin{question}
\label{qstrongneg} Suppose that $\kappa$ is a strong limit singular cardinal.
Is the strong failure of Galvin's property (i.e., $\neg_{\mathrm{st}}\mathrm{Gal}(\mathscr{D}_{\kappa^+},\kappa^+,\kappa^{++})$) consistent with $\textsf{ZFC}$? Alternatively, is $\neg_{\mathrm{st}}\mathrm{Gal}(\mathscr{D}_{\kappa^+},\kappa^+,\kappa^{++})$ impossible in $\textsf{ZFC}$?
\end{question}

By virtue of Galvin's theorem, a positive answer to Question~\ref{qglobalopen} would require the global failure of the GCH.\footnote{The first model of $\textsf{ZFC}$ where GCH fails everywhere was constructed by Foreman and Woodin in \cite{ForWoo}. As in our case, the authors rely on the assumption that there is  a supercompact cardinal.} Actually, even the more modest configuration  involving successor cardinals seems hard to obtain. Recall that we fail to produce this scenario due to the rather \emph{large gaps} mentioned in our (interleaved) Lévy collapses (see page~\pageref{calculation}). 
Note, however, that these gaps were essential for the further construction of guiding generics in Lemma~\ref{LemmaConstructingPair}. Generally speaking, there is tension between the behavior of the power-set function and the construction of guiding generics and, as a result, between the former and the gaps left by the (interleaved) Lévy collapses. Apart from this, in \S\ref{SectionImpossibility} we demonstrated that a \emph{Ultimate failure of Galvin's property} is impossible. In particular, a variation of the methods by Abraham-Shelah is required if one aims to answer Question~\ref{qglobalopen} in the affirmative. Anyhow, there is still some chance to obtain the consistency of $\neg \mathrm{Gal}(\mathscr{D}_\kappa,\kappa,\kappa^+)$ for all regular cardinal $\kappa$, provided $2^\kappa>\kappa^{+}$. This is a stimulating challenge. 

\smallskip

A natural attempt to answer Question~\ref{qstrongneg} in the nega\-tive would be to begin with a model where $\neg_{\mathrm{st}}\mathrm{Gal}(\mathscr{D}_{\kappa^+},\kappa^+,\kappa^{++})$ holds at a regular cardinal $\kappa$ and later show that Prirky forcing preserves the corres\-ponding witness: this attempt has been pursued  unsatisfactorily in \S\ref{SectionTheStrongFailure} (see Theorem~\ref{thmprikrydestroys}). Likewise, note that involving other classical Prikry type forcing --such as, Prikry forcing with non normal ultrafilters or Extender-based Prikry forcing-- seems to lead to similar complications. The reason for this being that all of these posets project onto the Prikry forcing, and hence none of them can preserve general witnesses. A yet further alternative to address this problem is based on the next observation concerning \emph{density of old sets}: 
\begin{proposition}
\label{clmpreserve} Let $\kappa$ be an infinite cardinal and $\mathbb{P}$ a notion of forcing that preserves $\kappa^+$. Let $G\subseteq\mathbb{P}$ be $V$-generic.
Assume that every set $x\in{V[G]}$ with $|x|^{V[G]}=\kappa^+$ contains a set $y\in{V}$ of with $|y|^V=\kappa^+$. Then,
$$V\models \neg_{\mathrm{st}}\mathrm{Gal}(\mathscr{D}_{\kappa^+},\kappa^+,\lambda)\;\Longrightarrow\;V[G]\models \neg_{\mathrm{st}}\mathrm{Gal}(\mathscr{D}_{\kappa^+},\kappa^+,\lambda).$$
\end{proposition}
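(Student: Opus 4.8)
The plan is to prove the contrapositive-flavoured statement directly: assume $V\models \neg_{\mathrm{st}}\mathrm{Gal}(\mathscr{D}_{\kappa^+},\kappa^+,\lambda)$, fix a witnessing family $\mathcal{C}=\langle C_\alpha\mid \alpha<\lambda\rangle\subseteq\mathscr{D}_{\kappa^+}$ with $|\bigcap\mathcal{D}|<\kappa$ for every $\mathcal{D}\in[\mathcal{C}]^{\kappa^+}$, and show that $\mathcal{C}$ (reinterpreted in $V[G]$) still witnesses $\neg_{\mathrm{st}}\mathrm{Gal}(\mathscr{D}_{\kappa^+},\kappa^+,\lambda)$. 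First I would note that since $\mathbb{P}$ preserves $\kappa^+$, each $C_\alpha$ remains a club at $\kappa^+$ in $V[G]$, so $\mathcal{C}\subseteq\mathscr{D}_{\kappa^+}^{V[G]}$ and $|\mathcal{C}|=\lambda$ is preserved (cardinals $\leq\kappa^+$ and $\lambda$ are all preserved — $\lambda$ because it is either $\leq\kappa^+$ or stays a cardinal as $\mathbb{P}$ is typically assumed small, but in fact we only need $|\mathcal{C}|$ to remain $\lambda$, which it does since $\mathcal{C}$ is an injectively indexed family and injections persist). So the only thing to verify is that no $\kappa^+$-sized subfamily acquires a large intersection.

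The key step is the following: suppose towards a contradiction that in $V[G]$ there is $\mathcal{D}=\langle C_\alpha\mid \alpha\in I\rangle$ with $I\in[\lambda]^{\kappa^+}$ and $|\bigcap\mathcal{D}|^{V[G]}\geq\kappa$. The index set $I$ is a set in $V[G]$ of size $\kappa^+$, so by the density-of-old-sets hypothesis there is $J\in[I]^{\kappa^+}$ with $J\in V$. Now work in $V$: since $J\in[\lambda]^{\kappa^+}$ and $\mathcal{C}$ witnesses $\neg_{\mathrm{st}}\mathrm{Gal}$ in $V$, we have $|\bigcap_{\alpha\in J}C_\alpha|^V<\kappa$. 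But $\bigcap_{\alpha\in J}C_\alpha$ is an intersection of a $V$-set of $V$-sets, hence lies in $V$ and its cardinality is absolute between $V$ and $V[G]$ as long as $\mathbb{P}$ collapses nothing below $\kappa$ — and here one must be slightly careful: the statement only assumes $\kappa^+$ is preserved. However, $|\bigcap_{\alpha\in J}C_\alpha|^V<\kappa$ means this set, call it $B$, has some $V$-cardinality $\mu<\kappa$; in $V[G]$ its cardinality can only go down or stay the same, so $|B|^{V[G]}\leq\mu<\kappa$. Then $\bigcap_{\alpha\in I}C_\alpha\subseteq\bigcap_{\alpha\in J}C_\alpha=B$ gives $|\bigcap\mathcal{D}|^{V[G]}<\kappa$, contradicting the choice of $\mathcal{D}$. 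Hence no such $\mathcal{D}$ exists and $V[G]\models\neg_{\mathrm{st}}\mathrm{Gal}(\mathscr{D}_{\kappa^+},\kappa^+,\lambda)$.

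I expect the main (minor) obstacle to be the bookkeeping around which cardinals are genuinely preserved: one needs that $\kappa$-sized-from-below collapsing does not inflate the intersection, which follows trivially from cardinality being non-increasing under forcing, but it is worth spelling out that we never need $\mathbb{P}$ to be $\kappa$-preserving — only the monotonicity $|B|^{V[G]}\leq|B|^V$ together with preservation of $\kappa^+$ (to keep clubs clubs and $|\mathcal{C}|=\lambda$ honest). Also one should remark that the hypothesis is used exactly once, to replace the possibly-new index set $I$ by a ground-model subset $J$ of the same size — this is the whole point and mirrors the use of $\kappa^+$-cc in Lemma~\ref{lemchain}$(\aleph)$, with the density-of-old-sets property playing the role that the chain condition played there (producing a ground-model club below an arbitrary one). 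I would close by observing that this proposition is the natural companion to Theorem~\ref{propdensity}: whereas that theorem characterizes $\mathrm{Gal}(\mathscr{U},\partial,\lambda)$ in terms of density of old sets, here density of old sets at $\kappa^+$ is precisely the abstract feature of $\mathbb{P}$ needed to transport the \emph{strong} failure upward, suggesting that any Prikry-type poset with this density property is a candidate for preserving a ground-model witness.
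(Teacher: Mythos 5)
Your proof is correct and takes essentially the same route as the paper's: given a $\kappa^+$-sized index set $I$ in $V[G]$, use the density-of-old-sets hypothesis to extract a ground-model $J\in[I]^{\kappa^+}$, apply the ground-model strong failure to get $|\bigcap_{\alpha\in J}C_\alpha|^V<\kappa$, and conclude by monotonicity of intersections and of cardinality under forcing. The extra bookkeeping you include (clubs staying clubs, $|B|^{V[G]}\leq|B|^V$) is harmless and the paper simply leaves it implicit.
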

\begin{proof}
Fix a collection $\mathcal{C}=\langle C_\alpha\mid \alpha<\lambda\rangle$  exemplifying $\neg_{\mathrm{st}}\mathrm{Gal}(\mathscr{D}_{\kappa^+},\kappa^+,\lambda)$ in $V$. 
Let $x\in \mathcal{P}(\lambda)$ with $|x|^{V[G]}= \kappa^+$.
By our assumptions, there is a set $y\subseteq{x}$, $y\in{V}$ such that $|y|^V=\kappa^+$.
Now $|\bigcap_{\alpha\in x}C_\alpha|\leq|\bigcap_{\alpha\in y}C_\alpha|<\kappa$. So, the collection $\mathcal{C}$
witnesses $\neg_{\mathrm{st}}\mathrm{Gal}(\mathscr{D}_{\kappa^+},\kappa^+,\lambda)$ in $V[G]$, as required.
\end{proof}
If the poset $\mathbb{P}$ is  Prikry forcing $\mathbb{P}(\mathscr{U})$ with respect to some normal ultrafilter $\mathscr{U}$ over $\kappa$ then the former density requirement is equivalent to $\mathrm{Gal}(\mathscr{U},\kappa^+,\kappa^+)$ (Theorem~\ref{propdensity}). Hence, a natural attempt to answer Question~\ref{qstrongneg} will be to produce a model where $\mathscr{U}$ is a normal ultrafilter over $\kappa$ and both $\mathrm{Gal}(\mathscr{U},\kappa^+,\kappa^+)$ and $\neg_{\mathrm{st}}\mathrm{Gal}(\mathscr{D}_{\kappa^+},\kappa^+,\kappa^{++})$ hold. Unfortunately, as the following proposition shows, this strategy is doomed to failure:



\begin{proposition} \label{clmgenstfl} Let $\mathscr{F}$ be a $\kappa$-complete filter over $\kappa$ which extends $\mathscr{D}_\kappa$. \linebreak If ${\rm Gal}(\mathscr{F},\kappa^+,\kappa^{++})$ holds 
then $\neg_{\mathrm{st}}\mathrm{Gal}(\mathscr{D}_{\kappa^+},\kappa^+,\kappa^{++})$ fails. In particular, $\mathrm{Gal}(\mathscr{F},\kappa^+,\kappa^+)$ entails the failure of $\neg_{\mathrm{st}}\mathrm{Gal}(\mathscr{D}_{\kappa^+},\kappa^+,\kappa^{++})$.
\end{proposition}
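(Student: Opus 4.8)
The plan is to prove the negated form directly: given \emph{any} family $\mathcal{C}\s \mathscr{D}_{\kappa^+}$ with $|\mathcal{C}|=\kappa^{++}$, I will produce a subfamily $\mathcal{D}\s \mathcal{C}$ of size $\kappa^+$ with $|\bigcap\mathcal{D}|\geq\kappa$, which shows that no such $\mathcal{C}$ can witness $\neg_{\mathrm{st}}\mathrm{Gal}(\mathscr{D}_{\kappa^+},\kappa^+,\kappa^{++})$. The mechanism is the transfer device already used in Proposition~\ref{propmagidor}: one cannot pull a club of $\kappa^+$ back to a club of $\kappa$ over all of $\kappa^+$ (since $\cf(\kappa^+)=\kappa^+\neq\kappa$), but one \emph{can} do so below a fixed accumulation point of cofinality $\kappa$, and the resulting family of $\mathscr{F}$-large subsets of $\kappa$ is exactly the kind of object about which $\mathrm{Gal}(\mathscr{F},\kappa^+,\kappa^{++})$ speaks.

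In detail, I would first enumerate $\mathcal{C}=\{C_\alpha\mid \alpha<\kappa^{++}\}$ injectively, and for each $\alpha$ choose $\beta_\alpha\in \mathrm{acc}(C_\alpha)\cap S^{\kappa^+}_\kappa$ (this set is stationary, in particular nonempty). By the pigeonhole principle there are $\beta\in S^{\kappa^+}_\kappa$ and $I_0\in[\kappa^{++}]^{\kappa^{++}}$ with $\beta_\alpha=\beta$ for all $\alpha\in I_0$. Fix once and for all a continuous strictly increasing cofinal map $\varphi\colon \kappa\to\beta$. For $\alpha\in I_0$ the set $C_\alpha\cap\beta$ is a club in $\beta$, so $E_\alpha:=\varphi^{-1}[C_\alpha\cap\beta]$ is a club in $\kappa$; in particular each $E_\alpha$ lies in $\mathscr{D}_\kappa\s\mathscr{F}$. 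Now either some fixed club $E$ equals $E_\alpha$ for $\kappa^{++}$-many $\alpha\in I_0$ --- in which case I take any $J\in[I_0]^{\kappa^+}$ witnessing it, so $\bigcap_{\alpha\in J}E_\alpha=E\in\mathscr{F}$ --- or $\alpha\mapsto E_\alpha$ attains $\kappa^{++}$ distinct values on $I_0$, and after thinning $I_0$ to an $I_1$ of size $\kappa^{++}$ on which this map is injective, $\mathrm{Gal}(\mathscr{F},\kappa^+,\kappa^{++})$ applied to $\{E_\alpha\mid \alpha\in I_1\}$ yields $J\in[I_1]^{\kappa^+}$ with $\bigcap_{\alpha\in J}E_\alpha\in\mathscr{F}$. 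In either case I set $B:=\bigcap_{\alpha\in J}E_\alpha$; since $B\in\mathscr{F}$, $\mathscr{F}$ is a proper filter and $\mathscr{D}_\kappa\s\mathscr{F}$, the set $B$ cannot be bounded in $\kappa$, hence $|B|=\kappa$ by regularity of $\kappa$.

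It then remains to transfer $B$ back up: $\varphi``B\s \bigcap_{\alpha\in J}(C_\alpha\cap\beta)\s \bigcap_{\alpha\in J}C_\alpha$, and $|\varphi``B|=|B|=\kappa$ because $\varphi$ is injective, so $\mathcal{D}:=\{C_\alpha\mid \alpha\in J\}$ (of size $\kappa^+$, by injectivity of the enumeration) satisfies $|\bigcap\mathcal{D}|\geq\kappa$, as wanted. For the \emph{in particular} clause I would note that $\mathrm{Gal}(\mathscr{F},\kappa^+,\kappa^+)$ implies $\mathrm{Gal}(\mathscr{F},\kappa^+,\kappa^{++})$: given a family of $\kappa^{++}$-many $\mathscr{F}$-large sets, restrict to any $\kappa^+$ of them and apply $\mathrm{Gal}(\mathscr{F},\kappa^+,\kappa^+)$. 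There is no deep obstacle here; the only points needing care are the verification that the pull-backs $E_\alpha$ really are clubs of $\kappa$ --- which is precisely why the argument must be routed through a \emph{single} ordinal $\beta$ with $\cf(\beta)=\kappa$ and a \emph{single} map $\varphi$ rather than pulling back over $\kappa^+$ --- and the harmless bookkeeping above that lets us feed the hypothesis $\mathrm{Gal}(\mathscr{F},\kappa^+,\kappa^{++})$, which is phrased for families (i.e., sets) of $\mathscr{F}$-large sets, rather than for indexed sequences.
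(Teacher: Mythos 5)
Your proposal is correct and follows essentially the same route as the paper's proof: fix (by pigeonhole) a single ordinal of cofinality $\kappa$ that is an accumulation point of $\kappa^{++}$-many of the clubs, pull the clubs back along a fixed cofinal club of order type $\kappa$ to obtain $\mathscr{F}$-large subsets of $\kappa$, apply ${\rm Gal}(\mathscr{F},\kappa^+,\kappa^{++})$, and push the resulting $\mathscr{F}$-large (hence unbounded, hence size-$\kappa$) intersection back up. The extra case distinction on whether $\alpha\mapsto E_\alpha$ is injective, and the explicit monotonicity argument for the \emph{in particular} clause, are harmless bookkeeping that the paper elides.
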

\begin{proof}
Let $\langle C_\alpha\mid \alpha<\kappa^{++}\rangle\s \mathscr{D}_{\kappa^+}$. As usual, we may assume without loss of generality  that $C_\alpha\cap \delta$ is a club on $\delta$, for some $\delta\in S^{\kappa^+}_\kappa$ and every $\alpha<\kappa^{++}$.

Let $E:=\langle \gamma_\beta\mid\beta<\kappa\rangle$ be the increasing enumeration of some club at $\delta$ in order-type $\kappa$.
By our choice on $\delta$, the set $E\cap{C_\alpha}$ is a club at $\delta$ for every $\alpha<\kappa^{++}$. Let $D_\alpha$ be a club at $\kappa$ for which $E\cap{C_\alpha}=\{\gamma_\beta\mid \beta\in{D_\alpha}\}$.
Notice that $D_\alpha\in\mathscr{F}$ for every $\alpha<\kappa^{++}$, since $\mathscr{F}$  extends the club filter $\mathscr{D}_\kappa$.

Applying ${\rm Gal}(\mathscr{F},\kappa^+,\kappa^{++})$ to the collection $\langle D_\alpha\mid \alpha<\kappa^{++}\rangle$ there exists $I\in [\kappa^{++}]^{\kappa^+}$ such that $B=\bigcap_{\alpha\in I}D_\alpha$.
Lifting back to $\delta$, the set defined by $C:=\{\gamma_\beta\mid \beta\in{B}\}$ is a subset of $C_\alpha$ for every $\alpha\in{I}$, and since $|C|=\kappa$ we conclude that $\neg_{\mathrm{st}}\mathrm{Gal}(\mathscr{D}_{\kappa^+},\kappa^+,\kappa^{++})$ cannot hold. 
\end{proof}

  A more modest attempt towards answering Question~\ref{qstrongneg} would be to prove the consistency of $\neg_{\mathrm{st}}\mathrm{Gal}(\mathscr{D}_{\kappa^+},\kappa^{++},\kappa^{++})$ for a singular cardinal $\kappa$:
  \begin{question}\label{qstrongneg2}
  Is the statement $\neg_{\mathrm{st}}\mathrm{Gal}(\mathscr{D}_{\kappa^+},\kappa^{++},\kappa^{++})$ consistent with $\textsf{ZFC}$ for a strong limit singular $\kappa$?
  \end{question}
Concerning the previous question, Corollary~\ref{CorStrongNegationSuper} already showed that in suitable Magidor/Radin generic extensions  $\neg_{\mathrm{st}}\mathrm{Gal}(\mathscr{D}_{\kappa^+},\kappa^{++},\kappa^{++})$ fails for a strong limit singular  cardinal $\kappa$. 
Besides of that, there is a worth mentioning connection between Question~\ref{qstrongneg2} and forcing axioms. Indeed, it was shown in \cite[Theorem~2.8]{MR3604115} that the \emph{Proper Forcing Axiom} (PFA) yields  $\neg_{\mathrm{st}}\mathrm{Gal}(\mathscr{D}_{\aleph_1},\aleph_2,\aleph_2)$. In light of this an appealing avenue of research would be to search for \emph{higher analogues} of PFA yielding $\neg_{\mathrm{st}}\mathrm{Gal}(\mathscr{D}_{{\kappa^+}},\kappa^{++},\kappa^{++})$ for a regular cardinal $\kappa\geq \aleph_1$. Similarly, the same wish extends to singular cardinals, although in this context one will encounter a shortage of parallels of PFA. 
A potential strategy to overcome this problem might bear on the abstract iteration scheme for singular cardinals introduced in the \emph{$\Sigma$-Prikry project} by Rinot, Sinapova and the third author \cite{PartI,PartII,PartIII}.   

\smallskip

In parallel to the above discussion,  it might be illuminating to separate strong limit cardinals from non strong limit ones. 
In the former case, the negation of  Galvin's property requires the use of Prikry-type forcings, and these latter seem to eliminate witnesses for the strong failure (Theorem~\ref{thmprikrydestroys}).
However, if one drops the strong limitude assumption on $\kappa$ then, perhaps, the principle $\neg_{\mathrm{st}}\mathrm{Gal}(\mathscr{D}_{\kappa^+},\kappa^+,\kappa^{++})$ is  obtainable by adding many Cohen subsets to some cardinal $\theta<\kappa$. This leads to the following problem:

\begin{question} 
\label{qnonslimit} Is it consistent with $\textsf{ZFC}$ that $\kappa$ is a non-strong limit singular cardinal and $\neg_{\mathrm{st}}\mathrm{Gal}(\mathscr{D}_{\kappa^+},\kappa^+,\kappa^{++})$ holds?
\end{question}

\subsection{Galvin's property and large cardinals}

Aside of successors of singular cardinals there is a further case that it is not covered by  \cite{MR830084}: namely, the case where $\kappa$ is a weakly (not strongly) inaccessible cardinal. 
\begin{question}
\label{qweaklyinac} Assume $\kappa$ is a weakly (but not strongly) inaccessible cardinal. Is the principle $\neg\Gal{\kappa }{\kappa^+}$ consistent with $\textsf{ZFC}$?
\end{question}
The above is \cite[Question 2.11]{MR3604115}, and we believe that a positive answer is plausible.
In order to prove the consistency of $\Gal{\kappa}{\kappa^+}$ at such cardinals it is necessary that the sequence $\langle 2^\theta\mid \theta<\kappa\rangle$ stabilizes, as proved in \cite[Corollary~2.10]{MR3604115}. One has then to force $2^\theta>\kappa$ (for some $\theta<\kappa$) in order to prepare the ground for a potential failure of $\Gal{\kappa}{\kappa^+}$.

\smallskip

For most large cardinals $\kappa$, Galvin's theorem  entails  ${\rm Gal}(\mathscr{F},\kappa,\kappa^+)$ for all normal filter $\mathscr{F}$ over $\kappa$.  A paradigmatic example of this are measurable cardinals. It would be interesting to regard this question in a context where $\kappa$ is (in some sense) close to be measurable but not necessarily strongly inaccessible: this is the case of real-valued measurable cardinals. 
In this regard, it is known that if $\kappa=2^{\aleph_0}$ is real-valued measurable then $\kappa=\kappa^{<\kappa}$ and so ${\rm Gal}(\mathscr{F},\kappa,\kappa^+)$ holds for all normal filter $\mathscr{F}$ over $\kappa$. Nevertheless, one can force real-valued measurable cardinals to be strictly smaller than the continuum: for instance, by starting with two measurable cardinals $\kappa<\lambda$ and adding $\lambda$-many random reals to $\lambda$ \cite{Sol}. All in all, the above suggests the following interesting question:
\begin{question}  
\label{qrvm} Is it consistent with $\textsf{ZFC}$ that $\kappa$ is a real-valued measurable cardinal and $\mathrm{Gal}(\mathscr{D}_{\kappa},\kappa,\kappa^{+})$ fails (in which case $\kappa<2^{\aleph_0}$)?
\end{question}
A positive answer to the above would yield another in the affirmative to Question~\ref{qweaklyinac}, for every real-valued measurable is weakly inaccessible.


\smallskip

We proved in Theorem~\ref{thmnegmeasurable} that $\neg{\rm Gal}(\mathscr{U},\kappa^+,\lambda)$ is consistent with $\textsf{ZFC}$, provided $\mathscr{U}$ is a normal ultrafilter over $\kappa$ and $\lambda>\kappa$.
In this respect, note that the ultrafilterhood of $\mathscr{U}$ played an important role within the proof of the auxiliary Claim~\ref{Claimpolarized}.
This fact suggests the following interesting problem:
\begin{question}
\label{qinaccm} For $\kappa<\lambda$ cardinals with $\kappa$ measurable, 
does $\textsf{ZFC}$ prove the principle ${\rm Gal}(\mathscr{D}_\kappa,\kappa^+,\lambda)$? Similarly, what is the $\textsf{ZFC}$ status of  ${\rm Gal}(\mathscr{F},\kappa^+,\lambda)$ for  normal filters $\mathscr{F}$ over $\kappa$?
\end{question} 
A similar question arises with respect to small large cardinals.
On the one hand, if $\neg{\rm Gal}(\mathscr{U},\kappa^+,\lambda)$ is consistent at very large cardinals then one expects the same consistency result at small large cardinals.
On the other hand, small large cardinals are not populated by normal ultrafilters. So,

\begin{question}
\label{qinacc} Can one prove in \textsf{ZFC} that ${\rm Gal}(\mathscr{F},\kappa^+,\lambda)$ fails for some $\kappa$-complete filters $\mathscr{F}$ over $\kappa$?
\end{question}

 Back to measurable cardinals, in Theorem~\ref{thmnegmeasurable} we forced $\neg{\rm Gal}(\mathscr{U},\kappa^+,\lambda)$, hence $\neg{\rm Gal}(\mathscr{U},\partial,\lambda)$ is consistent for all $\partial,\lambda>\kappa$. If one wishes to force the opposite relation then the most inviting case is the one where both parameters $\partial$ and $\lambda$ are equal and their cardinalities are  maximal possible: 
\begin{question}
\label{qfull} Let $\kappa$ be measurable.
Is it consistent that ${\rm Gal}(\mathscr{U},2^\kappa,2^\kappa)$ holds for every  $\kappa$-complete ultrafilter $\mathscr{U}$ over $\kappa$?
\end{question} 
Let us consider the concrete example of $\mathscr{D}_\kappa$. By \cite[Theorem 2.8]{MR3604115} if $2^\kappa=\kappa^+$ then $\neg{\rm Gal}(\mathscr{D}_\kappa,\kappa^+,\kappa^+)$.
However, if $2^\kappa>\kappa^+$ then Corollary~\ref{propfullgalvin} can be used to infer the consistency of $2^\kappa=2^{\kappa^+}$ with ${\rm Gal}(\mathscr{D}_\kappa,2^\kappa,2^\kappa)$.
\smallskip

\subsection{Consistency strength of the failure of Galvin's property}
A di\-fferent source of questions is
 the consistency strength of our results.
Consider the simplest case of one strong limit singular cardinal $\kappa$ and the failure of the Galvin property at $\kappa^+$.
We know that one has to violate \textsf{SCH} in order to obtain this configuration, hence large cardinals are essential. Recall that 
we started from a supercompact cardinal in order to have a convenient forcing-indestructibility upon $\kappa$.
It seems to the authors that much less might be needed. In effect, note that  we just need indestructibility under $\mathrm{Add}(\kappa,\lambda)$ (e.g., Theorem~\ref{thmnegmeasurable}) and $\mathbb{S}(\kappa,\lambda)$ (e.g., Theorem~\ref{thmglobal}), being this la\-tter for\-cing --essentially-- a clever combination of adding Cohen subsets to $\kappa$ and clubs at $\kappa^+$ (cf. Definition~\ref{AbSh forcing}). Therefore, it is reasonable to expect
that forcing-indestructibility under $\mathbb{S}(\kappa,\lambda)$ would be  available under weaker large cardinal assumptions. All in all, this leads to the following  problem:

\begin{question}
\label{qconstrength} What is the consistency strength of $\neg\Gal{\kappa^+}{\kappa^{++}}$ for  a strong limit singular $\kappa$? What about $\neg_{\mathrm{st}}\Gal{\kappa^+}{\kappa^{++}}$?
\end{question}

We believe the methods of \cite{MR3928383} will be helpful in this context.

\section*{Acknowledgements}
The authors wish to thank Moti Gitik, Yair Hayut and Menachem Magidor for many clever comments and helpful suggestions.

\bibliographystyle{alpha}
\bibliography{arlist}

\end{document}